\documentclass{amsart}

\usepackage{graphicx}
\usepackage{framed}

\usepackage{dsfont}
\usepackage{mathrsfs}

\usepackage[utf8]{inputenc}

\usepackage[all]{xy}
\usepackage{pb-diagram, pb-xy}

\usepackage{tikz}
\usepackage{pgffor}
\usetikzlibrary{arrows,backgrounds}
\usetikzlibrary{calc}

\usepackage{amssymb}
\usepackage{amsthm}


\usepackage{hyperref}

\usepackage{calc}


\newtheoremstyle{plain}         
  {1\baselineskip}              
  {1\baselineskip}              
  {\slshape}                    
  {}                            
  {\bfseries}                   
  {.}                           
  {.5em}                        
  {\thmnumber{#2 }\thmname{#1}\thmnote{ \normalfont\textsc{(#3)}}} 

\theoremstyle{plain}
\newtheorem{thm}{Theorem}[section]

\newtheorem{prop}[thm]{Proposition}
\newtheorem{cor}[thm]{Corollary}
\newtheorem{lem}[thm]{Lemma}

\theoremstyle{definition}
\newtheorem{example}[thm]{Example}
\newtheorem{remark}[thm]{Remark}

\newtheoremstyle{special}       
  {1\baselineskip}              
  {1\baselineskip}              
  {\slshape}                    
  {}                            
  {\bfseries}                   
  {.}                           
  {.5em}                        
  {\thmnumber{#2 }\thmnote{#3}} 

\theoremstyle{special}

\newtheoremstyle{note}          
  {1\baselineskip}              
  {1\baselineskip}              
  {\normalfont}                 
  {}                            
  {\bfseries}                   
  {:}                           
  {.5em}                        
  {\thmnumber{#2 }\thmname{#1}\thmnote{#3}}    

\theoremstyle{note}

\newtheorem{bem*}[thm]{Bemerkung}

\def\N{\mathbb{N}}

\def\Z{\mathbb{Z}}  

\def\1{\mathds{1}}

\newcommand{\A}{\mathcal{A}}
\newcommand{\NN}{\mathcal{N}}

\def\A{\mathbb{A}}


\newcommand{\bbZ}{{\mathbb Z}}

\newcommand{\calR}{{\mathcal R}}

\newcommand{\one}{{\mathbf{1}}}

\newcommand{\g}{\mathfrak{g}}

\newcommand{\lift}{\operatorname{lift}}
\def\uRep{\underline{\operatorname{Re}}\!\operatorname{p}} 


\usepackage{xypic,dsfont}

\hypersetup{
    colorlinks,
    citecolor=red,
    filecolor=black,
    linkcolor=blue,
    urlcolor=black
}


\begin{document}

\sloppy

\thispagestyle{empty}
\title{Mixed tensors of the General Linear Supergroup}
\author{Thorsten Heidersdorf} 
\address{T.H.: Department of Mathematics, The Ohio State University}
\email{heidersdorf.thorsten@gmail.com}

\date{}

\begin{abstract} We describe the image of the canonical tensor functor from Deligne's interpolating category $\uRep(GL_{m-n})$ to $Rep(GL(m|n))$ attached to the standard representation. This implies explicit tensor product decompositions between any two projective modules and any two Kostant modules of $GL(m|n)$, covering the decomposition between any two irreducible $GL(m|1)$-representations. We also obtain character and dimension formulas. For $m>n$ we classify the mixed tensors with non-vanishing superdimension. For $m=n$ we characterize the maximally atypical mixed tensors and show some applications regarding tensor products.
\end{abstract}

\thanks{2010 {\it Mathematics Subject Classification}: 17B10, 17B20, 18D10.}

\maketitle

\section{Introduction}

In this article we describe the indecomposable summands of the mixed tensor space $V^{\otimes r} \otimes (V^{\vee})^{\otimes s}$ ($r,s \in \N)$ where $V = k^{m|n}$ is the standard representation of the General Linear Supergroup $GL(m|n)$ ($m \geq n$) over an algebraically closed field $k$ of characteristic zero. Such a summand is called a mixed tensor. These results imply decomposition laws for the tensor product between mixed tensors, character and dimension formulas and give us estimates about composition factors and Loewy lengths in tensor products between maximal atypical irreducible modules.

In the category of finite-dimensional algebraic representations $Rep(GL(m|n))$ the decomposition of the tensor product of two irreducible modules is known for a very small class of representations, the direct summands occurring in a tensor power of the standard representation $V \simeq k^{m|n}$. The tensor product product decomposition is given by the Littlewood Richardson Rule. By \cite{Sergeev} \cite{Berele-Regev} the tensor space $V^{\otimes r}$ is completely reducible and the irreducible representations obtained in this way - the covariant representations - can be parametrized by $(m|n)$-hook partitions. It turns out that these representations form only a very small subset of the irreducible $GL(m|n)$-representations. In this article we look at the larger space of mixed tensors $V^{\otimes r} \otimes (V^{\vee})^{\otimes s}$, $r,s \in \N$. Since $V^{\vee}$ is not unitary, this space is not fully reducible. The direct summands can be described via the Khovanov algebras of Brundan and Stroppel \cite{Brundan-Stroppel-5} and their tensor product decomposition can be understood using Deligne's interpolating categories. In \cite{Deligne-interpolation} Deligne constructed for any $\delta \in k$ a karoubian rigid symmetric monoidal category $\uRep(GL_{\delta})$ which interpolates the classical representation categories $Rep(GL(n))$ in the sense that for $\delta=n \in \N$ we have an equivalence of tensor categories $\uRep(GL_{n})/\NN \to Rep(GL(n))$ where $\NN$ denotes the tensor ideal of negligible morphisms \cite{Andre-Kahn}. These interpolating categories possess a distinguished element of dimension $\delta$ which we call the standard representation $st$. Deligne's family of tensor categories are the universal tensor categories on a dualisable object of dimension $\delta$ in the sense of the universal property \ref{Deligne-interpolation}.

In particular for $m-n \in \N_{\geq 0}$ we have two tensor functors starting from the Deligne category $\uRep(GL_{m-n})$: One into $Rep(GL(m-n))$, the other one into $Rep(GL(m|n))$ (both determined by the choice of the standard representations $V = k^{m-n}$ respectively $V = k^{m|n}$). The tensor product decomposition in Deligne's category has been determined by Comes and Wilson \cite{Comes-Wilson}. If we are then able to understand the functor $F_{m|n}: \uRep(GL_{m-n}) \to Rep(GL(m|n))$, $st \mapsto V$, we will be able to decompose tensor products in its image.  Comes and Wilson also determine the kernel of the functor $F_{m|n}$ and show that its image is the space of mixed tensors $T$: The full subcategory of $Rep(GL(m|n))$ of objects which are direct summands in a tensor product $V^{\otimes r} \otimes (V^{\vee})^{\otimes s}$ for some $r,s \in \N$. However Comes and Wilson do not describe the image $F_{m|n}(X)$ of an individual object $X$. 


\subsection{Main results} The space of mixed tensors has also been studied by Brundan and Stroppel \cite{Brundan-Stroppel-5}.  In both approaches the indecomposable mixed tensors $R(\lambda)$ are described by certain pairs $\lambda = (\lambda^L,\lambda^R)$ of partitions, so-called $(m|n)$-cross bipartitions. The advantage of Brundan and Stroppels results is that they permit to analyze the Loewy structures of the mixed tensors and gives conditions on their highest weights. This allows to identify the image of an element under the tensor functor  $\uRep(GL_{m-n}) \to Rep(GL(m|n)$. In section \ref{indecomposable} we define two invariants $d(\lambda)$ and $k(\lambda)$ of a bipartition.

\begin{thm} (see \ref{loewy-length}, \ref{projective-mixed-tensors-1}, \ref{projective-mixed-tensors-2}, \ref{sec:theta}) The Loewy length of a mixed tensor $R(\lambda)$ is $2d(\lambda) + 1$. In particular it is irreducible if and only if $d(\lambda) = 0$ and projective if and only if $k(\lambda) = n$. Every projective module is a mixed tensor. We have an explicit bijection $\theta_n$ between the bipartitions with $k(\lambda) = n$ and the projective covers of irreducible modules. Similarly we have an explicit bijection $\theta_0$ between the bipartitions with $d(\lambda) = 0$ and the irreducible mixed tensors.
\end{thm}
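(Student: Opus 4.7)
The plan is to pull back every claim to the diagrammatic description of $T \subset Rep(Gl(m|n))$ provided by Brundan--Stroppel \cite{Brundan-Stroppel-5}. Under their dictionary, each $(m|n)$-cross bipartition $\lambda$ is encoded by a weight diagram together with an oriented cup/cap diagram, and the indecomposable $R(\lambda)$ corresponds to a standard module over a suitable generalised Khovanov arc algebra. The first step is to identify the combinatorial invariants of section \ref{indecomposable}: I expect $k(\lambda)$ to match the number of cups in the cap diagram attached to $\lambda$ (equivalently the degree of atypicality of the corresponding weight), and $d(\lambda)$ to count the number of cups whose two endpoints carry the ``wrong'' orientation relative to the canonical one. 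This bookkeeping step is largely diagram chasing and is the prerequisite for everything that follows.

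With that dictionary in place, the Loewy length formula $2k(\lambda)+1$ is a direct consequence of Brundan--Stroppel's computation of the radical filtration of indecomposable summands of mixed tensors: the graded lift of $R(\lambda)$ to the Khovanov algebra sits in degrees $-k(\lambda),\ldots,k(\lambda)$, and the radical filtration agrees with the grading filtration. Irreducibility of $R(\lambda)$ is equivalent to the oriented cup diagram having no ``defect'' cups, i.e.\ to $d(\lambda)=0$. For projectivity, one uses that the projective indecomposables in $Rep(Gl(m|n))$ are exactly the weights of maximal atypicality $n$; translating through the dictionary this becomes $k(\lambda)=n$. The statement that every projective is a mixed tensor then follows from the fact that $F_{m|n}$ surjects onto the projective indecomposables: by results of Comes--Wilson \cite{Comes-Wilson} its image is $T$, and the projective cover of the trivial module lies in $V^{\otimes m}\otimes (V^\vee)^{\otimes m}$, from which one generates all other projective covers via translation functors.

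The bijections $\theta_n$ and $\theta_0$ are then defined by the explicit recipe ``bipartition $\mapsto$ weight $\mu(\lambda)$'' read off from the cross-bipartition combinatorics of \cite{Brundan-Stroppel-5}. Restricted to bipartitions with $k(\lambda)=n$ this map lands in the parameter set of maximally atypical weights (which parametrises the projective covers), and restricted to $d(\lambda)=0$ it lands in the set of highest weights of irreducible summands of $V^{\otimes r}\otimes (V^\vee)^{\otimes s}$. Bijectivity in each case follows because the recipe admits an explicit inverse, and surjectivity because every projective cover, respectively every irreducible mixed tensor, has been accounted for by the previous paragraph.

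I expect the main obstacle to be the identification of $k(\lambda)$ and $d(\lambda)$ with the diagrammatic invariants in a way that is compatible with Brundan--Stroppel's labelling conventions; once that is settled, the Loewy length and the two bijections reduce to a careful bookkeeping. A secondary technical point is verifying that the functor $F_{m|n}$ genuinely delivers the \emph{projective cover} rather than just some projective resolution of each irreducible — this will use the uniqueness of the indecomposable summand of $R(\lambda)$ with top $L(\mu(\lambda))$ together with the fact that for $k(\lambda)=n$ the Loewy length $2n+1$ matches the known Loewy length of the projective cover of $L(\mu(\lambda))$.
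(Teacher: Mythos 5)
Your proposal misidentifies the two invariants at the heart of the theorem, which is precisely the step you flag as the "main obstacle." In section \ref{indecomposable} the paper defines $d(\lambda)$ to be the \emph{number of caps} in the cap diagram of the bipartition, $rk(\lambda)=\min(\#\times,\#\circ)$, and $k(\lambda)=d(\lambda)+rk(\lambda)$. You instead guess that $k(\lambda)$ is the number of cups, "equivalently the degree of atypicality," and that $d(\lambda)$ counts wrongly-oriented cups — neither of which matches. (In fact the atypicality of $R(\lambda)$ is $n-rk(\lambda)$, which is not $k(\lambda)$.) This error propagates to your grading claim: the paper's Corollary~\ref{loewy-length} shows the graded lift of $R(\lambda)=G^t_{\Delta\Gamma}L(\zeta)$ sits in degrees $-d(\lambda),\ldots,d(\lambda)$ (via the number $n_\gamma$ of lower circles, bounded by the number of caps in $t$), and then Koszulity (citing \cite{Beilinson-Ginzburg-Soergel}) identifies grading and radical filtrations since top and socle are simple. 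The Loewy length is thus $2d(\lambda)+1$; the statement's $2k(\lambda)+1$ is consistent only when $rk(\lambda)=0$, and the body of the paper uses $d(\lambda)$.

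Two further points are off. First, "the projective indecomposables in $Rep(Gl(m|n))$ are exactly the weights of maximal atypicality $n$" is incorrect: as the paper recalls from \cite{Kac-Rep}, they are the typical irreducibles together with the projective covers of \emph{all} atypical irreducibles, across all atypicalities; nothing restricts to atypicality $n$. The actual criterion $k(\lambda)=n$ is proved in Theorem~\ref{projective-mixed-tensors-2} by comparing the composition-factor formula for $G^t_{\Delta\Gamma}L(\zeta)$ with the BGG-type formula $[P(\lambda^\dagger)]=\sum_{\mu\supset\lambda^\dagger}[K(\mu)]$, $[K(\mu)]=\sum_{\rho\subset\mu}[L(\rho)]$, plus the surjection from $P(\lambda^\dagger)$ given by \cite{Zou}. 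Second, your route to "every projective is a mixed tensor" — that $P(\one)$ lies in $V^{\otimes m}\otimes(V^\vee)^{\otimes m}$ and translation functors generate the rest — is unjustified (e.g.\ for $m=n$ the maximally atypical projective with socle $\one$ is $R(n^n)$, living in degree $(n^2,n^2)$, not $(m,m)$) and is in any case a different and heavier argument than the paper's one-liner: since $st$ is a tensor generator, every object is a quotient of a direct sum of the $T(r,s)$, and if indecomposable projective this surjection splits, exhibiting it as a direct summand (Lemma~\ref{projective-mixed-tensors-1}). Finally, the injectivity of $\theta$ on the two extremal strata is not merely "an explicit inverse exists"; the paper argues that $R(\lambda)=L(\lambda^\dagger)$ (resp.\ $P(\lambda^\dagger)$) is determined by its socle and that $R(\lambda)\cong R(\mu)\Rightarrow\lambda=\mu$ by Comes--Wilson. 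You should repair the identification of $d(\lambda)$ and $k(\lambda)$ before the rest of the sketch can go through.
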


Hence we obtain a decomposition law for tensor products between projective representations of $GL(m|n)$. Another application is an easy description of the dual of an irreducible representation in section \ref{duals}. We study the effect of the cohomological tensor functors $DS: Rep(GL(m|n)) \to Rep(GL(m-1|n-1))$ of \cite{Duflo-Serganova} \cite{Heidersdorf-Weissauer-tensor} on mixed tensors in section \ref{DS}. We show that the kernel of $DS: Rep(GL(m|n)) \to Rep(GL(m-1|n-1)$  consists of the projective representations and that $DS(R(\lambda)) = R(\lambda)$ if $k(\lambda) < n$. Our next aim is characterize which irreducible representations occur as mixed tensors. For that recall that an irreducible representation $L(\mu)$ is a Kostant module if and only if the Kazhdan-Lusztig polynomials $p_{\lambda,\mu}(q)$ are monomials for all $\lambda \leq \mu$. Using that every representation splits into a direct sum of typical representations after repeated application of $DS$ and that every typical representation is a mixed tensor, we obtain the next theorem \ref{Kostant-existence}  \ref{properties-kostant} . 

\begin{thm} Every irreducible mixed tensor $L(\mu)$ is a Kostant module. In particular $p_{\lambda,\mu}(q) =
q^{l(\lambda,\mu)}$ for all $\lambda \leq \mu$ (and some combinatorially defined number $l(\lambda,\mu)$) and therefore $\sum_{i \geq 0} dim Ext^i (K(\lambda),L(\mu)) \leq 1$ for all $\lambda \in X^+$. Conversely, every Kostant module is a (known) Berezin twist of an irreducible mixed tensor.
\end{thm}

As an application we obtain a formula for the tensor product decomposition between two Kostant modules. Since every singly atypical irreducible module is a Kostant-module and every typical module is projective, the last two theorems solve the problem of decomposing tensor products between any two irreducible $GL(m|1)$-representations. Our results imply (theorem \ref{tensor-equivalence}) also the equivalence of tensor categories \[ T/\NN \simeq Rep(GL(m-n)) \] which is used in a crucial way in \cite{Heidersdorf-semisimple}. Since we have character and dimension formulas for every mixed tensor by \cite[Theorem 8.5.2]{Comes-Wilson}, we get character and dimension formulas for Kostant modules and projective modules in section \ref{character-dimension}. 

In the remaining sections \ref{maximal-atypical-m=n} - \ref{sec:tensor-products} we study maximal atypical mixed tensors for $m=n$. In the $m=n$ case no nontrivial maximal atypical irreducible representation of $Rep(GL(n|n))$ is in the image of $F_{n|n}: \uRep(GL_0) \to Rep(GL(n|n))$. We characterise the maximal atypical mixed tensors in proposition \ref{max-atypical} and show in theorem \ref{thm:existence} that every maximal atypical representation can be realised (up to a Berezin-shift) as the socle respectively highest weight constituent in a non-projective mixed tensor. In section \ref{symmetric-powers} we study the class of the smallest maximally atypical mixed tensors (the ones of minimal Loewy length) which we call the symmetric powers $\A_{S^i}, i \in \N$. We apply the results of the previous sections in section \ref{sec:tensor-products} to get some general estimates about composition factors and projective modules in tensor products of irreducible representations. In particular we obtain estimates about the maximal Loewy length of a summand in a tensor product of two maximally atypical irreducible representations. For applications of these results we refer the reader to \cite{Heidersdorf-Weissauer-gl-2-2} \cite{Heidersdorf-Weissauer-Tannaka}. 


\subsection{Acknowledgements} I would like to thank the referee for many helpful suggestions.



\section{Representations of $GL(m|n)$}\label{sec:repr}

{\it Representations}. Let $k$ be an algebraically closed field of characteristic $0$. For the linear supergroup $G=GL(m|n)$, $m \geq n$, over $k$
let $F$ be the category of 
super representations $\rho$ of $GL(m\vert n)$ on finite dimensional super vectorspaces over $k$. We assume throughout the article that $m \geq n$ for simplicity of notation. This is not a restriction since $Gl(m|n) \simeq Gl(n|m)$ for any $m,n$.
The morphisms in the category $F$ are the $G$-linear maps $f:V \to W$ between super representations, 
where we allow even and odd morphisms with respect to the gradings on $V$ and $W$. Let $F^{ev}$ be the subcategory of $F$ with the same objects as $F$ and $Hom_{F^{ev}}(X,Y)=Hom_F(X,Y)_{0}$. We often write $F^{ev} = T_{m|n}$.

{\it The category ${\calR}$}. Fix the morphism $\varepsilon: \mathbb Z/2\mathbb Z \to G_0=GL(m)\times GL(n)$ which maps $-1$ to the element 
$diag(E_m,-E_n)\in GL(m)\times GL(n)$ denoted $\epsilon_{mn}$. Note that
$Ad(\epsilon_{mn})$ induces the parity morphism on the Lie superalgebra $\mathfrak{gl}(m|n)$ of $G$. 
We define the abelian subcategory
$\calR_{m|n} = \calR$ of $T_{m|n}$ as the full subcategory of all objects $(V,\rho)$ in $T_{m|n}$  
with the property $  p_V = \rho(\epsilon_{mn})$; here $\rho$ denotes the underlying homomorphism $\rho: GL(m)\times GL(n) 
\to GL(V)$ of algebraic groups over $k$ and $p_V$ the parity automorphism.
The subcategory ${\calR}$ is stable under the dualities ${}^\vee$ (the ordinary dual) and $^*$ (the graded dual \cite{Germoni-sl}). 
For $G=GL(n|n)$ we usually write ${\calR}_n$ instead of $\calR$ or $\calR_{nn}$.
The abelian category $T_{m|n}$ decomposes as $T_{m|n} = {\calR}_{mn} \oplus \Pi ({\calR}_{mn})$ by\cite{Brundan-Kazhdan}, Cor. 4.44 where $\Pi$ denotes the parity shift functor.

The irreducible representations in $\calR$ are parametrized by the (integral dominant) highest weights $X^+$ \[ \lambda = \sum_{i = 1}^m \lambda_i \epsilon_i + \sum_{j=m+1}^{m+n}  \lambda_j \delta_j = (\lambda_1,\ldots,\lambda_m| \lambda_{m+1}, \ldots, \lambda_{m+n})\] with respect to the choice of the standard Borel group and the usual basis elements $\epsilon_i, \delta_j$ \cite{Germoni-sl}. Here $\lambda_1 \geq \ldots \geq \lambda_{m}$ and $\lambda_{m+1} \geq \ldots \geq \lambda_{m+n}$ are integers and every $\lambda \in \Z^{m+n}$ with these properties parametrises a highest weight of an irreducible representation. The irreducible representations in $T_{m|n}$ are given by the set $ \{L(\lambda), \Pi L(\lambda) \ | \ \lambda \in X^+ \}$.  We denote by $K(\lambda)$ the Kac-module of the weight $\lambda$ and by $P(\lambda)$ the projective cover of the irreducible representation $L(\lambda)$.

\textit{Atypicality.} If $K(\lambda)$ is irreducible the weight $\lambda$ is called typical. If not, $\lambda$ is called atypical. $K(\lambda)$ is irreducible if and only if $K(\lambda)$ is projective \cite{Kac-Rep}. The atypicality of a weight can be measured by a number between $0$ and $n$. If the atypicality is $n$, we say the weight is maximal atypical. Examples are the trivial module $\one$ and the standard representation $V = k^{m|n}$ of highest weight $\lambda = (1,\ldots,0|0,\ldots,0)$ for $m\neq n$. Another example is the Berezin determinant  $ B = Ber = L(1,\ldots,1 \ | \ -1,\ldots,-1)$ of dimension $1$. The abelian categories $T_{m|n}$ and $\calR$ decompose into blocks and the degree of atypicality is a block-invariant. 

{\it Khovanov algebras}. We review some facts from the articles by Brundan and Stroppel  \cite{Brundan-Stroppel-1}, \cite{Brundan-Stroppel-2}, \cite{Brundan-Stroppel-3}, \cite{Brundan-Stroppel-4},  \cite{Brundan-Stroppel-5}. We denote the Khovanov-algebra of \cite{Brundan-Stroppel-4} associated to $GL(m|n)$ by $K(m|n)$. These algebras are naturally graded. For $K(m|n)$ we have a set of weights or weight diagrams which parametrise the irreducible modules (up to a grading shift). This set of weights is again denoted $X^+$. For each weight $\lambda \in X^+$ we have the irreducible module $L(\lambda)$, the indecomposable projective module $P(\lambda)$ with top $L(\lambda)$ and the standard or cell module $V(\lambda)$. If we forget the grading structure on the $K(m|n)$-modules, the main result of \cite{Brundan-Stroppel-4} is:

\begin{thm} There is an equivalence of categories $E$ from $\calR_{m|n}$ to the category of finite-dimensional left-$K(m|n)$-modules such that $EL(\lambda) = L(\lambda)$, $EP(\lambda) = P(\lambda)$ and $EK(\lambda) = V(\lambda)$ for $\lambda \in X^+$.
\end{thm}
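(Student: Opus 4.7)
The plan is to realize $E$ as a Morita equivalence. Fix a projective generator $P = \bigoplus_{\lambda \in X^+} P(\lambda)$ of $\calR_{mn}$ (one summand for each isomorphism class of indecomposable projective) and set $A = \text{End}_{\calR_{mn}}(P)^{op}$. Then the standard functor $\text{Hom}_{\calR_{mn}}(P,-)$ yields an equivalence between $\calR_{mn}$ and finite-dimensional $A$-modules, which automatically sends $P(\lambda)$ to the indecomposable projective $Ae_\lambda$ and $L(\lambda)$ to its simple top. The theorem therefore reduces to (i) an isomorphism of algebras $A \cong K(m|n)$ and (ii) the verification that under this identification $K(\lambda)$ corresponds to the cell module $\calV(\lambda)$.

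For step (i), I would first use Brundan's algorithm for the Kazhdan--Lusztig polynomials of $Gl(m|n)$ from \cite{Brundan-Kazhdan} to compute the composition multiplicities $[K(\lambda):L(\mu)]$, and via BGG reciprocity the numerical data $\dim \text{Hom}(P(\mu), P(\lambda))$. These dimensions coincide with the number of oriented circle diagrams of the appropriate weight type, which is by construction a basis of $\text{Hom}_{K(m|n)}(\calP(\mu), \calP(\lambda))$. To produce the actual ring isomorphism and not just a numerical coincidence, one must assign to each basic circle diagram a concrete morphism in $\calR_{mn}$, built by composing translation functors across and into walls of the Weyl chamber; the surgery relations defining $K(m|n)$ then reduce to identities between such compositions, which can be checked locally in rank-two situations. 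This is the hard part of the proof: writing down generators for $A$ is straightforward, but the relations require a careful case analysis in small rank.

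For step (ii), both $K(\lambda)$ on the supergroup side and $\calV(\lambda)$ on the diagram side are standard modules for a highest weight category structure with the same poset $X^+$ and the same Kazhdan--Lusztig decomposition numbers, namely $K(\lambda)$ is the universal quotient of $P(\lambda)$ with composition factors $L(\mu)$ for $\mu\leq\lambda$ occurring with the prescribed multiplicities, and analogously for $\calV(\lambda)$. Because translation functors preserve standard filtrations, the equivalence constructed in (i) is automatically a highest weight equivalence and must map $K(\lambda)\mapsto \calV(\lambda)$. Once (i) and (ii) are in place, the matching $P(\lambda)\mapsto \calP(\lambda)$ and $L(\lambda)\mapsto \calL(\lambda)$ follows formally from the Morita setup. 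The main obstacle throughout lies in step (i), specifically in verifying that the geometrically defined morphisms between projectives satisfy the surgery relations of the Khovanov algebra; everything else is either numerical bookkeeping or formal highest weight theory.
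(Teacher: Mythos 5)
This theorem is stated in the paper as a direct citation of the main result of Brundan--Stroppel \cite{Brundan-Stroppel-4}; the paper gives no proof, so there is nothing to compare with line by line. What one can assess is whether your sketch would plausibly reconstruct their argument.

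Your general framework is correct: one does realize $E$ as a Morita equivalence by taking a (block-wise) projective generator, forming the endomorphism algebra, and matching it with the Khovanov algebra; steps (ii) and the final bookkeeping then follow essentially as you say, since standard modules are intrinsically characterized by the highest weight category structure once simples and projectives have been matched. Two caveats are worth flagging. First, $X^+$ is infinite, so $P=\bigoplus_{\lambda\in X^+}P(\lambda)$ is not an object of $\calR_{mn}$; you need to work block by block, or with a locally unital algebra, exactly as Brundan and Stroppel do. Second, and more seriously, your proposed execution of step (i) does not match how the result is actually proved. You suggest producing morphisms between projectives via wall-crossing translation functors and then verifying the surgery relations directly by rank-two computations. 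But $Gl(m|n)$ does not admit a clean Weyl-chamber and wall picture; translation functors here are tensoring with $V$ and $V^\vee$ followed by block projection, and the surgery relations of $K(m|n)$ are not verified by hand in \cite{Brundan-Stroppel-4}. What Brundan and Stroppel actually do is chain together a sequence of equivalences: they relate blocks of $\calR_{mn}$ to limits of blocks of parabolic category $\mathcal{O}$ for $\mathfrak{gl}_N(\bbC)$ via Brundan's ``super duality,'' invoke their earlier results (Brundan--Stroppel I--III) identifying the endomorphism algebras of projectives in parabolic category $\mathcal{O}$ with generalized Khovanov arc algebras, and use uniqueness of Koszul gradings to transport the identification. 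So the hard part you correctly isolate is genuinely hard, but your proposed method for it is not the one that works; the isomorphism $A\cong K(m|n)$ is imported from the parabolic category $\mathcal{O}$ side rather than built directly in $\calR_{mn}$.
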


$E$ is a Morita equivalence, hence $E$ will preserve the Loewy structure of indecomposable modules. This will enable us to study questions regarding extensions or Loewy structures in the category of Khovanov modules. 
%

{\it Weight diagrams}. To each highest weight $\lambda\in X^+$  we associate, following \cite{Brundan-Stroppel-4}, two subsets of cardinality $m$ respectively $n$ of the numberline $\Z$
\begin{align*} I_\times(\lambda)\ & =\ \{ \lambda_1  , \lambda_2 - 1, .... , \lambda_m - m +1 \} \\
 I_\circ(\lambda)\ & = \ \{ 1 - m - \lambda_{m+1}  , 2 - m - \lambda_{m+2} , .... ,  n-m- \lambda_{m+n}  \}. \end{align*}

The integers in $ I_\times(\lambda) \cap I_\circ(\lambda) $ are labeled by $\vee$, the remaining ones in $I_\times(\lambda)$ respectively $I_\circ(\lambda)$ are labeled by $\times$ respectively $\circ$. All other integers are labeled by a $\wedge$. 
This labeling of the numberline $\Z$ uniquely characterizes the weight $\lambda$. If the label $\vee$ occurs $r$ times in the labeling, then $r$ is called the degree of atypicality of $\lambda$. Notice that $0 \leq r \leq n$, and $\lambda$ is called maximal atypical if $r=n$. This notion of atypicality agrees with the previous one.

{\it Blocks}. Two irreducible representations $L(\lambda)$ and $L(\mu)$ in $\calR_{m|n}$ are in the same block if and only if the weights $\lambda$ and $\mu$ define labelings with the same position of the labels $\times$ and $\circ$. The degree of atypicality is a block invariant, and the blocks $\Lambda$ of atypicality $r$ are in 1-1 correspondence with pairs of disjoint subsets of $\bbZ$ of cardinality $m-r$ respectively $n-r$.

{\it Bruhat order}. The Bruhat order $\geq$ is the partial order on the set of weight diagrams generated by the operation of swapping a $\vee$ and a $\wedge$, so that getting bigger in the Bruhat order means moving $\vee$'s to the right.

{\it Cups and Caps}. To each such weight diagram with $r$ vertices labelled $\vee$ we associate its cup diagram as in \cite{Brundan-Stroppel-1}. Here a cup is a lower semi-circle joining two vertices. To construct the cup diagram one goes from the left to the right through the weight diagram until one finds a pair of vertices $\vee \ \ \ \wedge$ such that there are only $\times$'s, $\circ$'s or vertices which are already joined by cups between them. Then join $\vee \ \ \wedge$ by a cup. This procedure will result in a diagram with $r$ cups. Now remove all the labels of the vertices and draw rays down to infinity at all vertices labeled with a $\vee$ or $\wedge$ which are not part of a cup. If we draw the picture of a cup diagram we will not draw the rays. Analogously we define a cap to be an upper semi-circle joining two vertices. The cap diagram is build in the same way as the cup diagram. It is obtained from the latter by reflecting along the numberline. 

\begin{example} As an example consider the trivial weight $(0,\ldots,0|0,\ldots,0)$ in $GL(n|n)$. Its weight diagram is given by 

\medskip
\begin{center}
  \begin{tikzpicture}
 \draw (-5.5,0) -- (6.5,0);
\foreach \x in {-4,-3,-2,-1,0} 
     \draw (\x-.1, .2) -- (\x,0) -- (\x +.1, .2);
\foreach \x in {-5,1,2,3,4,5,6} 
     \draw (\x-.1, -.2) -- (\x,0) -- (\x +.1, -.2);
\foreach \x in {} 
     \draw (\x-.1, .1) -- (\x +.1, -.1) (\x-.1, -.1) -- (\x +.1, .1);

\end{tikzpicture}
\end{center}
\medskip

with $n$ $\vee$'s at the vertices $-n+1,\ldots,0$. Its cup diagram is given by

\medskip
\begin{center}
  \begin{tikzpicture}
 \draw (-5.5,0) -- (6.5,0);
\foreach \x in {} 
     \draw (\x-.1, .2) -- (\x,0) -- (\x +.1, .2);
\foreach \x in {} 
     \draw (\x-.1, -.2) -- (\x,0) -- (\x +.1, -.2);
\foreach \x in {} 
     \draw (\x-.1, .1) -- (\x +.1, -.1) (\x-.1, -.1) -- (\x +.1, .1);

\draw [-,black,out=270,in=270](0,0) to (1,0);
\draw [-,black,out=270,in=270](-1,0) to (2,0);
\draw [-,black,out=270,in=270](-2,0) to (3,0);
\draw [-,black,out=270,in=270](-3,0) to (4,0);
\draw [-,black,out=270,in=270](-4,0) to (5,0);

\end{tikzpicture}
\end{center}
\medskip

\end{example}


\section{Bipartitions and indecomposable modules}

For every $\delta \in k$ Deligne  \cite{Deligne-interpolation} \cite{Comes-Wilson} constructed a karoubian rigid symmetric monoidal category (called Deligne's interpolating category) which we denote by denoted $\uRep(GL_{\delta})$. This is a $k$-linear pseudo-abelian rigid tensor category. By construction it contains an object $st$ of dimension $\delta$, called the standard representation. 

\begin{thm}\cite{Deligne-interpolation}\label{Deligne-interpolation} Let $\mathcal{C}$ be a $k$-linear tensor category such that $End(\one) = k$. The functor $F \mapsto F(st)$ is an equivalence $Hom_{\mathcal{C}}^{\otimes} (\uRep(GL_{\delta}),\mathcal{C})$ of the tensor functors $\uRep(GL_{\delta}) \to \mathcal{C}$ with the category of objects in $\mathcal{C}$ which are dualisable of dimension $\delta$ and their isomorphisms.
\end{thm}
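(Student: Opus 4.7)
The plan is to verify that $Rep(Gl_\delta)$ satisfies a universal property by interpreting it concretely as the pseudo-abelian completion of the free rigid symmetric $k$-linear tensor category on a single self-dual generator of dimension $\delta$. First I would recall Deligne's construction: objects are formal words in two symbols $st$ and $st^\vee$ (together with the empty word $\one$), morphisms $Hom([r,s],[r',s'])$ are spanned by oriented (walled) Brauer diagrams with composition by vertical stacking, where every resulting closed loop is replaced by the scalar $\delta$. Tensor product is horizontal juxtaposition, and the rigid structure comes from cups/caps viewed as coevaluation/evaluation. One then passes to the pseudo-abelian envelope so that idempotents split.

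\textbf{Essential surjectivity.} Given an object $V\in\A$ that is dualisable of dimension $\delta$, define a tensor functor $F_V$ on generators by $st\mapsto V$, $st^\vee\mapsto V^\vee$, extend multiplicatively on objects, and send each generating morphism (identity strand, symmetry crossing, cup, cap) to the corresponding canonical morphism in $\A$: $\sigma_{V,V}$ for the crossing, the coevaluation $\eta:\one\to V\otimes V^\vee$ for the cup, and the evaluation $\varepsilon:V^\vee\otimes V\to\one$ for the cap. Finally extend to the Karoubi envelope by $F_V(X,e)=(F_V(X),F_V(e))$. The main thing to check is that $F_V$ is well-defined on morphisms: the snake (zig-zag) identities are exactly the rigidity axioms for $V$, the Reidemeister-type relations among crossings/cups/caps follow from the coherence of a symmetric rigid tensor category, and crucially the closed-loop relation $\bigcirc=\delta\cdot\id_\one$ is equivalent to $\dim(V)=\delta$ by the definition of categorical dimension as $\varepsilon\circ\sigma_{V^\vee,V}\circ\eta$. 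This step is essentially a coherence theorem for rigid symmetric monoidal categories.

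\textbf{Fully faithfulness on morphisms.} The morphisms in $Hom^\otimes(Rep(Gl_\delta),\A)$ are tensor natural isomorphisms. For faithfulness, observe that every object of $Rep(Gl_\delta)$ is obtained from $st$ by iterating dualisation, tensor product, finite direct sums and splitting of idempotents; a tensor natural transformation $\phi\colon F\Rightarrow G$ is therefore determined on all objects once $\phi_{st}$ is fixed, because $\phi_{st^\vee}=(\phi_{st}^\vee)^{-1}$ (tensor naturality applied to $\eta$ and $\varepsilon$) and $\phi_{X\otimes Y}=\phi_X\otimes\phi_Y$. For fullness, given any isomorphism $f\colon F(st)\to G(st)$ in $\A$, define $\phi_{st}=f$, $\phi_{st^\vee}=(f^\vee)^{-1}$, and extend to all tensor words by the tensor structure; compatibility with every generating morphism of $Rep(Gl_\delta)$ reduces to naturality of the symmetry and dinaturality of evaluation/coevaluation, both of which hold in any rigid symmetric tensor category. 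Extending to the Karoubi envelope is routine.

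\textbf{Expected main obstacle.} The delicate point is Step~(1), i.e.\ showing that the abstract diagrammatic calculus defining $Rep(Gl_\delta)$ imposes \emph{exactly} the relations forced by symmetric rigid monoidal structure plus the dimension condition, so that no extra relation has to be imposed or verified in $\A$. Rather than re-proving this coherence statement from scratch, I would cite the standard presentation of the oriented Brauer category (or Deligne's original construction) and invoke its universal property among strict rigid symmetric monoidal $k$-linear categories generated by one object with $\dim(st)=\delta$; the extension to pseudo-abelian targets is then formal via the universal property of the Karoubi envelope.
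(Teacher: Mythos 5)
The paper does not prove this theorem; it is imported verbatim from Deligne's article with the citation \cite{Deligne-interpolation} and serves only as a black box for the rest of the paper. So there is no in-paper argument to compare against, and your proposal is a reconstruction of Deligne's own proof.

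Your outline follows the standard route and is essentially correct: realize $Rep(Gl_\delta)$ as the Karoubi envelope of the oriented (walled) Brauer category, check that the generating morphisms (identities, symmetry, cup, cap) map to the corresponding structure maps of any dualizable object of dimension $\delta$, verify the diagrammatic relations reduce to rigidity axioms plus the loop value $\delta=\dim V$, and then prove full faithfulness of $F\mapsto F(st)$ by showing a tensor natural transformation is determined by (and freely specified at) $st$. Two small points. First, a slip: you call $st$ a \emph{self-dual} generator, which contradicts your own (correct) subsequent description of the oriented Brauer category with two distinct symbols $st$ and $st^\vee$; self-duality is the hallmark of the orthogonal/symplectic Deligne categories $Rep(O_\delta)$, $Rep(Sp_\delta)$, not of $Rep(Gl_\delta)$. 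Second, the load-bearing step — that the diagrammatic relations in the walled Brauer calculus are \emph{exactly} the consequences of symmetric rigid monoidal coherence together with the loop relation, with no hidden extra relations — is the part you defer to the literature. That is the genuine mathematical content of the theorem, so the proposal is better described as an accurate skeleton of Deligne's proof than a self-contained one. There is also a technical hypothesis worth making explicit: to extend $F_V$ from the Brauer category to its Karoubi envelope one needs idempotents to split in $\A$, which in Deligne's setting is guaranteed because tensor categories there are taken abelian (or at least pseudo-abelian).
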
 


In particular, given a dualizable object $X$ of dimension $\delta$ in a $k$-linear pseudoabelian tensor category, a unique tensor functor $F_X: \uRep(GL_{\delta}) \to \mathcal{C}$ exists mapping $st$ to $X$.

Let $\lambda = (\lambda^L,\lambda^R)$ be a bipartition (a pair of partitions). Call $|\lambda| = |\lambda^L| + |\lambda^R|$ (where $|\lambda^L| = \sum \lambda_i^L$) the size (or degree) of the bipartition (notation $\lambda \vdash |\lambda|$) and $l(\lambda) = l(\lambda^L) + l(\lambda^R)$ the length of $\lambda$. We denote by $P$ the set of all partitions, by $\Lambda$ the set of all bipartitions. To each bipartition is attached an indecomposable object $R(\lambda)$ in $\uRep(GL_{\delta})$. By \cite{Comes-Wilson} the assignement $\lambda \to R(\lambda)$ defines a bijection between the set of bipartitions of arbitrary size and the set of isomorphism classes of nonzero indecomposable objects in $\uRep(GL_{\delta})$. By the universal property we have a tensor functor $F_{m|n}: \uRep(GL_d) \to \calR_{m|n}$ for $d = m-n$ given by standard representation of superdimension $m-n$. 

\begin{thm} \cite{Comes-Wilson} The image of $F_{m|n}$ is the space of mixed tensors, the full subcategory of objects which appear as a direct summand in a decomposition of \[ T(r,s): = V^{\otimes r} \otimes (V^{\vee})^{\otimes s} \] for some $r,s \in \N$. The functor $F_{m|n}$ is full. If $\lambda \neq \mu$, we have $F_{m|n}(R(\lambda)) \neq F_{m|n}(R(\mu))$.
\end{thm}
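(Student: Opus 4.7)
The plan is to combine the universal property of Deligne's category with super Schur--Weyl duality for $Gl(m|n)$ and the Comes--Wilson classification of indecomposables in $Rep(Gl_{m-n})$.

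First I would identify the image. Set $T^{r,s} = st^{\otimes r} \otimes (st^\vee)^{\otimes s}$. The universal property (Theorem \ref{Deligne-interpolation}) forces $F_{m|n}(T^{r,s}) = V^{\otimes r} \otimes (V^\vee)^{\otimes s} = T(r,s)$. Since $Rep(Gl_{m-n})$ is generated as a pseudo-abelian rigid tensor category by $st$, every object is a direct summand of some $T^{r,s}$, so the essential image of $F_{m|n}$ is contained in the full subcategory of mixed tensors. Conversely, every mixed tensor is cut out by an idempotent $e \in \End_{\calR_{mn}}(T(r,s))$. Once we know $F_{m|n}$ is full, the induced map on endomorphism algebras is surjective, and standard lifting of idempotents modulo the kernel produces an idempotent in $\End_{Rep(Gl_{m-n})}(T^{r,s})$ whose image under $F_{m|n}$ realises the given mixed tensor. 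Hence the image is exactly the full subcategory of mixed tensors, modulo fullness.

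Next I would establish fullness via the walled Brauer algebra. The endomorphism algebra $\End_{Rep(Gl_{m-n})}(T^{r,s})$ is, by construction of the Deligne category, the walled Brauer algebra $B_{r,s}(m-n)$, and Hom-spaces between arbitrary $T^{r,s}$ and $T^{r',s'}$ are the morphism spaces of the oriented Brauer category at parameter $m-n$. On the supergroup side, super Schur--Weyl duality for mixed tensors (Sergeev; Berele--Regev; walled version by Moon, compare also Brundan--Stroppel) produces a surjective $k$-algebra map $B_{r,s}(m-n) \twoheadrightarrow \End_{Gl(m|n)}(T(r,s))$, with analogous surjections on Hom-spaces. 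The generators on both sides are built from symmetries, evaluations and coevaluations on $st$ (respectively $V$), all preserved by any tensor functor, so the surjection factors through $F_{m|n}$ by the universal property. Surjectivity on Hom-spaces then gives fullness.

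For the injectivity statement, the Comes--Wilson classification provides the $R(\lambda)$ indexed by bipartitions as a complete list of nonzero indecomposables in $Rep(Gl_{m-n})$. Fullness gives a surjection $\End R(\lambda) \twoheadrightarrow \End F_{m|n}(R(\lambda))$, and since $R(\lambda)$ has local endomorphism ring the image, when nonzero, is indecomposable. To separate two nonzero images I would match $\lambda$ with the weight diagram of the highest weight of $F_{m|n}(R(\lambda))$ via the explicit Brundan--Stroppel dictionary between bipartitions and $(m|n)$-cross weight diagrams; distinct $\lambda$ yield distinct highest weights, hence non-isomorphic indecomposable summands.

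The main obstacle is fullness, since one must identify $\End_{Rep(Gl_{m-n})}(T^{r,s})$ with $B_{r,s}(m-n)$ and verify that the resulting action on $V^{\otimes r} \otimes (V^\vee)^{\otimes s}$ coincides with the one induced by $F_{m|n}$, a compatibility ultimately controlled by the universal property but requiring careful bookkeeping of the generating morphisms. A secondary combinatorial task is pinning down precisely which bipartitions lie in $\ker F_{m|n}$, so that the assertion $F_{m|n}(R(\lambda)) \neq F_{m|n}(R(\mu))$ for $\lambda \neq \mu$ holds literally rather than only on the complement of that kernel.
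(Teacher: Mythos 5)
Your approach to the image description and to fullness is essentially the right one, and matches the line of argument in Comes--Wilson (building on Brundan--Stroppel and the earlier mixed Schur--Weyl duality): the endomorphism algebra of $T^{r,s}$ in the Deligne category is the walled Brauer algebra $B_{r,s}(m-n)$, mixed Schur--Weyl duality gives a surjection onto $\End_{Gl(m|n)}(T(r,s))$, and this surjection is exactly the map induced by $F_{m|n}$ because both sides are generated by the tensor-categorical structure morphisms (symmetries, evaluations, coevaluations) that any tensor functor preserves. The idempotent-lifting argument to conclude that the essential image is the full subcategory of mixed tensors is likewise fine.

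The injectivity argument, however, has a genuine gap. You write that you would ``match $\lambda$ with the weight diagram of the highest weight of $F_{m|n}(R(\lambda))$'' and that ``distinct $\lambda$ yield distinct highest weights.'' This is not true in the relevant sense. The map $\theta\colon \Lambda^x \to X^+$ sending a cross bipartition $\lambda$ to the highest weight $\lambda^{\dagger}$ of the socle (equivalently, the top) of $F_{m|n}(R(\lambda))$ is explicitly non-injective in general, as noted in Section~\ref{sec:theta} of the paper, citing \cite{Brundan-Stroppel-5}. Two distinct $(m|n)$-cross bipartitions can give indecomposable mixed tensors with the same socle/top weight, so one cannot distinguish the images by highest weight alone. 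You are also conflating two different weight diagrams here: the weight diagram attached to the bipartition $\lambda$ itself (via $I_{\wedge}(\lambda)$, $I_{\vee}(\lambda)$) and the weight diagram of $\lambda^{\dagger} = \theta(\lambda)$; these are genuinely different, and only the latter lives on the $Gl(m|n)$ side. A correct separation argument must use finer data, for instance the realization $F_{m|n}(R(\lambda)) = G^t_{\Delta\Gamma} L(\zeta)$ from Brundan--Stroppel with the crossingless matching $t$ encoding $\lambda$: the full composition multiplicities and Loewy structure (e.g., the Loewy length $2d(\lambda)+1$ and the constituent $A_{\lambda}$ of highest weight) determine $\lambda$, but this requires invoking the uniqueness in \cite{Brundan-Stroppel-5}, Thm.~8.19, not merely matching socles. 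Your closing observation that the injectivity claim must be read on $(m|n)$-cross bipartitions only (non-cross ones all map to zero) is correct and worth making explicit, but it does not by itself repair the highest-weight argument.
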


A bipartition is said to be $(m|n)$-cross if there exists some $1 \leq i \leq m+1$ with $\lambda_i^L + \lambda^R_{m+2-i} < n+1$. The set of $(m|n)$-cross bipartitions is denoted $\Lambda_{mn}^x$ or simply $\Lambda^x$. By abuse of notation we use the notation $R(\lambda)$ for $F_{m|n}(R(\lambda))$. By \cite{Comes-Wilson} the modules $R(\lambda)= F_{m|n}(R(\lambda))$ are $\neq 0$ if and only if $\lambda$ is an $(m|n)$-cross bipartition. Up to isomorphism the indecomposable nonzero summands of $V^{\otimes r} \otimes (V^{\vee})^{\otimes s}$ are the modules \cite{Brundan-Stroppel-5}, Thm 8.19, \[ \{ R(\lambda) \ | \ \lambda \in \dot\Lambda_{r,s} \ (m|n)-\text{cross } \} \] where ($\delta = m-n$) \begin{align*} \Lambda_{r,s} : = & \{ \lambda \in \Lambda^x \ | \ |\lambda^L| = r-t, \ |\lambda^R| = s-t \text{ for } \ 0 \leq t \leq min(r,s)\} \\ \dot\Lambda_{r,s} : =  & \begin{cases} \Lambda_{r,s}  \quad \quad \quad \quad & \text{ if } \delta\neq 0, \text{ or }  r\neq s \text{ or } r=s=0 \\ \Lambda_{r,s} \setminus (0,0) \ \ \ \ \ & \text{ if }   \delta = 0 \text{ and } r=s>0. \end{cases} \end{align*} 

For any bipartition $\lambda$ define the two sets   \begin{align*} I_{\wedge}(\lambda)  := \{ \lambda_1^L, \lambda_2^L - 1, \lambda_3^L - 2, \ldots \}, \ \ I_{\vee}(\lambda)  := \{1 - \delta -\lambda_1^R, 2 - \delta - \lambda_2^R, \ldots \}.\end{align*} Here we use the convention that a partition is always continued by an infinite number of zeros. To these two sets one can attach a weight diagram in the sense of \cite{Brundan-Stroppel-1} as follows: Label the integer vertices $i$ on the numberline by the symbols $\wedge, \vee, \circ, \times$ according to the rule \[ \begin{cases} \circ &\text{ if } \ i \  \notin I_{\wedge} \cup I_{\vee}, \\ \wedge \ &\text{ if } \ i \in I_{\wedge}, \ i \notin I_{\vee}, \\ \vee &\text{ if } \ i \in I_{\vee}, \ i \notin I_{\wedge}, \\ \times &\text{ if } \ i \in I_{\wedge} \cap I_{\vee}. \end{cases} \] 

We can attach to such a weight diagram a cup and cap diagram exactly as for the weight diagram of a highest weight.

\begin{remark} These weight diagrams differ from the weight diagrams attached to a highest weight. The weight diagram of a highest weight for $GL(m|n)$ has $r$ vertices labeled with a $\vee$ where $0 \leq r \leq n$ (the degree of atypicaliy), $m-r$ vertices labeled with a $\times$, $n-r$ vertices labeled with a $\circ$ and infinitely many $\wedge$'s. In the weight diagram of a bipartition with parameter $\delta$ the integer $i$ in is labelled $\wedge$ for $i << 0$. If $\delta \in \Z$, then $i$ is labelled by a $\vee$ for $i >> 0$ (hence for infinitely many vertices).
\end{remark}

\begin{example} The weight diagram of the bipartition $(\emptyset, \emptyset)$ for $\delta = 0$ is given by 

\medskip
\begin{center}
  \begin{tikzpicture}
 \draw (-5.5,0) -- (6.5,0);
\foreach \x in {1,2,3,4,5} 
     \draw (\x-.1, .2) -- (\x,0) -- (\x +.1, .2);
\foreach \x in {-5,-4,-3,-2,-1,0} 
     \draw (\x-.1, -.2) -- (\x,0) -- (\x +.1, -.2);
\foreach \x in {} 
     \draw (\x-.1, .1) -- (\x +.1, -.1) (\x-.1, -.1) -- (\x +.1, .1);

\end{tikzpicture}
\end{center}
\medskip

where the leftmost $\vee$ is at the vertex $1$. The associated cup diagram has no cups. The object $R(\emptyset,\emptyset)$ is the trivial object $\one$ in $\uRep(GL_{\delta})$ for any $\delta$.

\end{example}

\section{Definition of the modules $R(\lambda)$}\label{indecomposable}

The mixed tensors can be interpreted as the images of certain Khovanov-modules under the equivalence of categories $E^{-1}: K(m|n) \mbox{-}mod \to \calR_{m|n}$. This will give a way to identify the image $F_{m|n}(R(\lambda))$.

\textit{Some terminology of Brundan and Stroppel} \cite{Brundan-Stroppel-1} \cite{Brundan-Stroppel-2} \cite{Brundan-Stroppel-4}. Let $\alpha, \beta$ be weight diagrams for $K(m|n)$. Let $\alpha \sim \beta$ mean that $\beta$ can be obtained from $\alpha$ by permuting $\vee$'s and $\wedge$'s. The equivalence classes of this relation are called blocks. Given $\lambda, \mu \sim \alpha$ one can label the cup diagram $\lambda$ respectively the cap diagram $\mu$ with $\alpha$ to obtain $\underline{\lambda}\alpha$ resp. $\alpha \bar{\mu}$. These diagrams are by definition consistently oriented if and only if each cup respectively cap has exactly one $\vee$ and one $\wedge$ and all the rays labelled $\wedge$ are to the left of all rays labelled $\vee$. Put $\lambda \subset \alpha$ if and only if $\lambda \sim \alpha$ and $\underline{\lambda} \alpha$ is consistently oriented. 

\begin{example} Let $\alpha$ denote the weight diagram 

\medskip
\begin{center}
\begin{tikzpicture}
 \draw (-6,0) -- (6,0);
\foreach \x in {-3,0,1} 
     \draw (\x-.1, .2) -- (\x,0) -- (\x +.1, .2);
\foreach \x in {-5,-4,-2,-1,2,3,4} 
     \draw (\x-.1, -.2) -- (\x,0) -- (\x +.1, -.2);
\foreach \x in {} 
     \draw (\x-.1, .1) -- (\x +.1, -.1) (\x-.1, -.1) -- (\x +.1, .1);

\end{tikzpicture}
\end{center}
\medskip

and consider the two weight diagrams $\lambda_1$ 

\medskip
\begin{center}
\begin{tikzpicture}
 \draw (-6,0) -- (6,0);
\foreach \x in {-3,-1,1} 
     \draw (\x-.1, .2) -- (\x,0) -- (\x +.1, .2);
\foreach \x in {-5,-4,-2,0,2,3,4} 
     \draw (\x-.1, -.2) -- (\x,0) -- (\x +.1, -.2);
\foreach \x in {} 
     \draw (\x-.1, .1) -- (\x +.1, -.1) (\x-.1, -.1) -- (\x +.1, .1);

\end{tikzpicture}
\end{center}
\medskip

and $\lambda_2$

\medskip
\begin{center}
\begin{tikzpicture}
 \draw (-6,0) -- (6,0);
\foreach \x in {-5,-3,-1,0} 
     \draw (\x-.1, .2) -- (\x,0) -- (\x +.1, .2);
\foreach \x in {-4,-2,1,2,3,4} 
     \draw (\x-.1, -.2) -- (\x,0) -- (\x +.1, -.2);
\foreach \x in {} 
     \draw (\x-.1, .1) -- (\x +.1, -.1) (\x-.1, -.1) -- (\x +.1, .1);

\end{tikzpicture}
\end{center}
\medskip

Then the labelled diagram $\underline{\lambda_1}\alpha$ is consistently oriented

\medskip
\begin{center}
\begin{tikzpicture}
 \draw (-6,0) -- (6,0);
\foreach \x in {-3,0,1} 
     \draw (\x-.1, .2) -- (\x,0) -- (\x +.1, .2);
\foreach \x in {-5,-4,-2,-1,2,3,4} 
     \draw (\x-.1, -.2) -- (\x,0) -- (\x +.1, -.2);
\foreach \x in {} 
     \draw (\x-.1, .1) -- (\x +.1, -.1) (\x-.1, -.1) -- (\x +.1, .1);

\draw [-,black,out=270, in=270](-3,0) to (-2,0);
\draw [-,black,out=270, in=270](-1,0) to (0,0);
\draw [-,black,out=270, in=270](1,0) to (2,0);

\end{tikzpicture}
\end{center}
\medskip

but the labelled diagram $\underline{\lambda_2}\alpha$ 

\medskip
\begin{center}
\begin{tikzpicture}
 \draw (-6,0) -- (6,0);
\foreach \x in {-3,0,1} 
     \draw (\x-.1, .2) -- (\x,0) -- (\x +.1, .2);
\foreach \x in {-5,-4,-2,-1,2,3,4} 
     \draw (\x-.1, -.2) -- (\x,0) -- (\x +.1, -.2);
\foreach \x in {} 
     \draw (\x-.1, .1) -- (\x +.1, -.1) (\x-.1, -.1) -- (\x +.1, .1);

\draw [-,black,out=270, in=270](-3,0) to (-2,0);
\draw [-,black,out=270, in=270](-1,0) to (2,0);
\draw [-,black,out=270, in=270](0,0) to (1,0);

\end{tikzpicture}
\end{center}
\medskip

is not consistently oriented. Hence $\lambda_1 \subset \alpha$, but $\lambda_2 \not\subset \alpha$.

\end{example}

A \textit{crossingless matching} is a diagram obtained by drawing a cap diagram underneath a cup diagram and then joining rays according to some order-preserving bijection between the vertices. Given blocks $\Delta, \Gamma$, a $\Delta \Gamma$-matching is a crossingless matching $t$ such that the free vertices (not part of cups, caps or lines) at the bottom are exactly at the same positions as the vertices labelled $\circ$ or $\times$ in $\Delta$; and similarly for the top with $\Gamma$. Given a $\Delta \Gamma$-matching $t$ and $\alpha \in \Delta$ and $\beta \in \Gamma$, one can label the bottom line with $\alpha$ and the upper line with $\beta$ to obtain $\alpha t \beta$. $\alpha t \beta$ is consistently oriented if each cup respectively cap has exactly one $\vee$ and one $\wedge$ and the endpoints of each line segment are labelled by the same symbol. Notation: $\alpha \rightarrow^t \beta$.

For a crossingless $\Delta \Gamma$-matching $t$ and $\lambda \in \Delta, \ \mu \in \Gamma$, label the bottom and the upper line as usual. The \textit{lower reduction} $red(\underline{\lambda}t)$ is the cup diagram obtained from $\underline{\lambda}t$ by removing the bottom number line and all connected components that do not extend up to the top number line. Circles and lines that do not cross the top number line are also called lower circles or lines. The upper reduction $red(t\bar{\mu})$ is the cap diagram obtained from $t\bar{\mu}$ by removing the top line.

\begin{example} Consider the following matching $t$ where we have labelled the upper number line with the weight diagram of the trival representation of $GL(3|3)$:

\medskip
  \begin{tikzpicture}
 \draw (-6,0) -- (6,0);
\foreach \x in {0,-1,-2} 
     \draw (\x-.1, .2) -- (\x,0) -- (\x +.1, .2);
\foreach \x in {-5,-4,-3,-2,1,2,3,4,5} 
     \draw (\x-.1, -.2) -- (\x,0) -- (\x +.1, -.2);
\foreach \x in {} 
     \draw (\x-.1, .1) -- (\x +.1, -.1) (\x-.1, -.1) -- (\x +.1, .1);

\begin{scope} [yshift = -4 cm]

 \draw (-6,0) -- (6,0);
\foreach \x in {} 
     \draw (\x-.1, .2) -- (\x,0) -- (\x +.1, .2);
\foreach \x in {} 
     \draw (\x-.1, -.2) -- (\x,0) -- (\x +.1, -.2);
\foreach \x in {} 
     \draw (\x-.1, .1) -- (\x +.1, -.1) (\x-.1, -.1) -- (\x +.1, .1);
\end{scope}

\draw [-,black,out=270,in=90](-5,0) to (-5,-4);
\draw [-,black,out=270,in=90](-4,0) to (-4,-4);
\draw [-,black,out=270,in=90](-3,0) to (-3,-4);
\draw [-,black,out=270,in=90](-2,0) to (-2,-4);
\draw [-,black,out=270,in=90](3,0) to (1,-4);
\draw [-,black,out=270,in=90](4,0) to (4,-4);
\draw [-,black,out=270,in=90](5,0) to (5,-4);

\draw [-,black,out=270, in=270](0,0) to (1,0);
\draw [-,black,out=270, in=270](-1,0) to (2,0);
\draw [-,black,out=90, in=90](-1,-4) to (0,-4);
\draw [-,black,out=90, in=90](2,-4) to (3,-4);

\end{tikzpicture}
\medskip

We want to label the lower numberline by the weight diagram $\alpha_i$ of a $3$-fold atypical weight of $GL(3|3)$ such that we get a consistently oriented matching. Then we have four choices for $\alpha_i$ (corresponding to the four choices for the distribution of $\vee$ and $\wedge$ in the lower cups):

\begin{enumerate}
 \item Choice 1: $\vee \wedge \vee \wedge$, $\alpha_1 = [2,0,0]$.
 \item Choice 2: $\vee \wedge \wedge \vee$, $\alpha_2 = [3,0,0]$.
 \item Choice 3: $\wedge \vee \vee \wedge$, $\alpha_3 = [2,1,0]$.
 \item Choice 4: $\wedge \vee \wedge \vee$  $\alpha_4 = [3,1,0]$.
\end{enumerate}

The weight $\lambda = [2, 0; 0]$ satisfies  $\lambda \subset \alpha_3$. We calculate the lower reduction $red(\underline{\lambda}t)$ by gluing the cup diagram of $[2,0,0]$ under the lower numberline

\medskip

  \begin{tikzpicture}
 \draw (-6,0) -- (6,0);
\foreach \x in {0,-1,-2} 
     \draw (\x-.1, .2) -- (\x,0) -- (\x +.1, .2);
\foreach \x in {-5,-4,-3,-2,1,2,3,4,5} 
     \draw (\x-.1, -.2) -- (\x,0) -- (\x +.1, -.2);
\foreach \x in {} 
     \draw (\x-.1, .1) -- (\x +.1, -.1) (\x-.1, -.1) -- (\x +.1, .1);

\begin{scope} [yshift = -4 cm]

 \draw (-6,0) -- (6,0);
\foreach \x in {} 
     \draw (\x-.1, .2) -- (\x,0) -- (\x +.1, .2);
\foreach \x in {} 
     \draw (\x-.1, -.2) -- (\x,0) -- (\x +.1, -.2);
\foreach \x in {} 
     \draw (\x-.1, .1) -- (\x +.1, -.1) (\x-.1, -.1) -- (\x +.1, .1);
\end{scope}

\draw [-,black,out=270,in=90](-5,0) to (-5,-4);
\draw [-,black,out=270,in=90](-4,0) to (-4,-4);
\draw [-,black,out=270,in=90](-3,0) to (-3,-4);
\draw [-,black,out=270,in=90](-2,0) to (-2,-4);
\draw [-,black,out=270,in=90](3,0) to (1,-4);
\draw [-,black,out=270,in=90](4,0) to (4,-4);
\draw [-,black,out=270,in=90](5,0) to (5,-4);

\draw [-,black,out=270, in=270](0,0) to (1,0);
\draw [-,black,out=270, in=270](-1,0) to (2,0);
\draw [-,black,out=90, in=90](-1,-4) to (0,-4);
\draw [-,black,out=90, in=90](2,-4) to (3,-4);

\draw [-,black,out=270, in=270](-1,-4) to (0,-4);
\draw [-,black,out=270, in=270](2,-4) to (3,-4);
\draw [-,black,out=270, in=270](-2,-4) to (1,-4);

\end{tikzpicture}

\medskip

We see that the lower reduction is the cup diagram of the trivial representation. What we have checked is that $\lambda$ satisfies $\lambda \subset \alpha_3 \rightarrow^t \, \ red(\underline{\lambda}t) = \underline{\zeta}$ where $\zeta$ denotes the weight diagram of the trivial representation.
\end{example}

If $M = \bigoplus_{j \in \Z} M_j$ is a graded $K(m|n)$-module, write $M\langle j \rangle$ for the same module with new grading $M \langle j \rangle_i := M_{i-j}$. The modules $\{ L(\lambda)\langle j \rangle \ | \ \lambda \in X^+, \ j \in \Z \}$ give a complete set of representatives for the isomorphism classes of irreducible graded $K(m|n)$-modules. The Grothendieck group is the free $\Z$-module with basis the $L(\lambda)\langle j \rangle$. Viewing it instead as a $\Z[q,q^{-1}]$-module so that $q^j [M] : = [M\langle j \rangle]$, $K_0(Rep(K(m|n))$ becomes the free $\Z[q,q^{-1}]$-module with basis $\{ L(\lambda) \ | \ \lambda \in X^+ \}$.

For any $\Delta \Gamma$-matching $t$ we have the special projective functors $G_{\Delta \Gamma}^t$ in the category of graded $K(m|n)$-modules \cite{Brundan-Stroppel-2}.
The mixed tensors $R(\lambda)$ will be the images of certain special cases of these projective functors under $E$. Given a bipartition $\lambda$ we denote by the defect $d(\lambda)$ of $\lambda$ the number of caps in the cap diagram and by the rank of $\lambda$ $rk(\lambda) = min (\# \times,\#\circ)$ the minimal number of $\times$' or $\circ$'s, whichever is smaller. For $\delta \geq 0$ one has $rk(\lambda) = \# \circ$'s. Then put \[ k(\lambda):= d(\lambda) + rk(\lambda).\]

We now define the modules $R(\lambda)$ attached to an $(m|n)$-cross bipartition $\lambda$. For this we first associate to any $\lambda \in \Lambda^x$ a highest weight $\lambda^{\dagger}$. This highest weight will be the heighest weight of the irreducucible socle of $R(\lambda)$. We now define a map $\theta: \Lambda^x \to X^+$ following \cite{Brundan-Stroppel-5}. Fix a bipartition $\lambda \in \Lambda^x$ and denote by $\eta$ the weight diagram \cite[6.1]{Brundan-Stroppel-5} of the trivial bipartition $(\emptyset,\emptyset)$

\medskip
\begin{center}
\begin{tikzpicture}
 \draw (-6,0) -- (6,0);
\foreach \x in {2,3,4,5} 
     \draw (\x-.1, .2) -- (\x,0) -- (\x +.1, .2);
\foreach \x in {-5,-4,-3} 
     \draw (\x-.1, -.2) -- (\x,0) -- (\x +.1, -.2);
\foreach \x in {-2,-1,0,1} 
     \draw (\x-.1, .1) -- (\x +.1, -.1) (\x-.1, -.1) -- (\x +.1, .1);

\end{tikzpicture}
\end{center}
\medskip

where the rightmost $\times$ is at position zero, and there are $\delta = m-n$ crosses. Let $\alpha$ be the weight diagram obtained from $\eta$ by switching the rightmost $k(\lambda)$ $\wedge$'s with the leftmost $k(\lambda)$ $\vee$'s. Let $t$ be the crossingless matching between $\bar{\lambda}$ and $\underline{\alpha}$ obtained as follows: Draw the cap diagram $\overline{\lambda}$ underneath the cup diagram $\underline{\alpha}$ and then join the rays in the unique way such that rays coming from a vertex $a \in \Z$ get joined with rays coming from the vertex $a$ except for a finite number of vertices. Now replace $\alpha$ with the weight diagram of the trivial representation denoted $\zeta$. Then adjust the labels of $\lambda$ that are at the bottoms of line segments to obtain $\lambda^{\dagger}$ such that $\lambda^{\dagger}t\zeta$ is consistently oriented.

Let $\Gamma$ be the block of $\zeta$, $\Delta$ be the block containing $\lambda^{\dagger}$ and put  \[ R(\lambda) = G_{\Delta \Gamma}^t L(\zeta) \] where $G_{\Delta \Gamma}^t$ is a special projective functor from \cite{Brundan-Stroppel-2} \cite{Brundan-Stroppel-5}. We transport $R(\lambda)$ by the equivalence of categories $E: \calR_{m|n} \to K(m|n)\mbox{-}mod$. Then $R(\lambda)$ agrees with $F_{m|n}(R(\lambda))$ by \cite{Brundan-Stroppel-5}. By Morita equivalence the Loewy layers are preserved. We denote by $\lambda^{\dagger}$ the highest weight of the irreducible socle of $R(\lambda)$. This defines a map $\theta: \Lambda^x \to X^+$, $\lambda \mapsto \lambda^{\dagger}$.

\begin{example} We calculate $\lambda^{\dagger}$ for $\lambda = (1^4;1)$ in $\calR_{4|1}$. The weight diagram of $\lambda$ is 

\medskip

\begin{center}
\begin{tikzpicture}
 \draw (-6,0) -- (6,0);
\foreach \x in {-3,2,3,4,5} 
     \draw (\x-.1, .2) -- (\x,0) -- (\x +.1, .2);
\foreach \x in {-5,-4,-2} 
     \draw (\x-.1, -.2) -- (\x,0) -- (\x +.1, -.2);
\foreach \x in {-1,0,1} 
     \draw (\x-.1, .1) -- (\x +.1, -.1) (\x-.1, -.1) -- (\x +.1, .1);

\end{tikzpicture}
\end{center}
\medskip

with the leftmost $\vee$ at the vertex $-3$; and the weight diagram of $\alpha$ is 

\medskip

\begin{center}
\begin{tikzpicture}
 \draw (-6,0) -- (6,0);
\foreach \x in {-3,2,3,4,5} 
     \draw (\x-.1, .2) -- (\x,0) -- (\x +.1, .2);
\foreach \x in {-5,-4,1} 
     \draw (\x-.1, -.2) -- (\x,0) -- (\x +.1, -.2);
\foreach \x in {-2,-1,0} 
     \draw (\x-.1, .1) -- (\x +.1, -.1) (\x-.1, -.1) -- (\x +.1, .1);

\end{tikzpicture}
\end{center}
\medskip

with the leftmost $\vee$ at the vertex $-3$. Hence the crossingless matching $t$ between $\underline{\alpha}$ and $\bar{\lambda}$ is 

\medskip

\begin{center}
  \begin{tikzpicture}
 \draw (-6,0) -- (6,0);
\foreach \x in {} 
     \draw (\x-.1, .2) -- (\x,0) -- (\x +.1, .2);
\foreach \x in {} 
     \draw (\x-.1, -.2) -- (\x,0) -- (\x +.1, -.2);
\foreach \x in {} 
     \draw (\x-.1, .1) -- (\x +.1, -.1) (\x-.1, -.1) -- (\x +.1, .1);

\begin{scope} [yshift = -4 cm]

 \draw (-6,0) -- (6,0);
\foreach \x in {} 
     \draw (\x-.1, .2) -- (\x,0) -- (\x +.1, .2);
\foreach \x in {} 
     \draw (\x-.1, -.2) -- (\x,0) -- (\x +.1, -.2);
\foreach \x in {} 
     \draw (\x-.1, .1) -- (\x +.1, -.1) (\x-.1, -.1) -- (\x +.1, .1);
\end{scope}

\draw [-,black,out=270,in=90](-5,0) to (-5,-4);
\draw [-,black,out=270,in=90](-4,0) to (-4,-4);
\draw [-,black,out=270,in=90](2,0) to (2,-4);
\draw [-,black,out=270,in=90](3,0) to (3,-4);
\draw [-,black,out=270,in=90](4,0) to (4,-4);
\draw [-,black,out=270,in=90](5,0) to (5,-4);

\draw [-,black,out=270, in=270](-3,0) to (1,0);
\draw [-,black,out=90, in=90](-3,-4) to (-2,-4);

\end{tikzpicture}

\end{center}
\medskip

If we label the upper numberline by $\zeta$ and the lower by one by $\lambda$ (so we look at $\lambda t \zeta$) we obtain

\medskip
\begin{center}

  \begin{tikzpicture}
 \draw (-6,0) -- (6,0);
\foreach \x in {-3} 
     \draw (\x-.1, .2) -- (\x,0) -- (\x +.1, .2);
\foreach \x in {-5,-4,1,2,3,4,5} 
     \draw (\x-.1, -.2) -- (\x,0) -- (\x +.1, -.2);
\foreach \x in {-2,-1,0} 
     \draw (\x-.1, .1) -- (\x +.1, -.1) (\x-.1, -.1) -- (\x +.1, .1);

\begin{scope} [yshift = -4 cm]

 \draw (-6,0) -- (6,0);
\foreach \x in {-3,2,3,4,5} 
     \draw (\x-.1, .2) -- (\x,0) -- (\x +.1, .2);
\foreach \x in {-5,-4,-2} 
     \draw (\x-.1, -.2) -- (\x,0) -- (\x +.1, -.2);
\foreach \x in {-1,0,1} 
     \draw (\x-.1, .1) -- (\x +.1, -.1) (\x-.1, -.1) -- (\x +.1, .1);
\end{scope}

\draw [-,black,out=270,in=90](-5,0) to (-5,-4);
\draw [-,black,out=270,in=90](-4,0) to (-4,-4);
\draw [-,black,out=270,in=90](2,0) to (2,-4);
\draw [-,black,out=270,in=90](3,0) to (3,-4);
\draw [-,black,out=270,in=90](4,0) to (4,-4);
\draw [-,black,out=270,in=90](5,0) to (5,-4);

\draw [-,black,out=270, in=270](-3,0) to (1,0);
\draw [-,black,out=90, in=90](-3,-4) to (-2,-4);

\end{tikzpicture}

\end{center}
\medskip

The matching $\lambda t \zeta$ is not consistently orientied any more since at all vertices $\geq 2$ the rays have different labels ($\wedge$'s in $\zeta$, $\vee$'s in $\lambda$). Adjusting the labels in $\lambda$ to make it consistent means turning all $\vee$'s at vertices $\geq 2$ to $\wedge$'s. The diagram obtained is the weight diagram of the highest weight $\lambda^{\dagger}$ 

\medskip

\begin{center}
\begin{tikzpicture}
 \draw (-6,0) -- (6,0);
\foreach \x in {-3} 
     \draw (\x-.1, .2) -- (\x,0) -- (\x +.1, .2);
\foreach \x in {-5,-4,-2,2,3,4,5} 
     \draw (\x-.1, -.2) -- (\x,0) -- (\x +.1, -.2);
\foreach \x in {-1,0,1} 
     \draw (\x-.1, .1) -- (\x +.1, -.1) (\x-.1, -.1) -- (\x +.1, .1);

\end{tikzpicture}
\end{center}
\medskip

with the $\vee$ at the vertex $-3$; and therefore the irreducible socle is given by $L(1,1,1,0|0)$.
\end{example}




\section{Irreducible modules and projective covers}

We describe the $R(\lambda)$ which are irreducible and those which are projective.

\begin{thm} \cite[Theorem 3.4]{Brundan-Stroppel-5} \cite[Theorem 4.11]{Brundan-Stroppel-2} \label{thm:graded-multiplicities} (i) For a given $\Delta \Gamma$-matching $t$, $G_{\Delta \Gamma}^t L(\mu)$ is an indecomposable module with irreducible head and socle which differ only by a grading shift.
(ii) In the graded Grothendieck group \[ [ G_{\Delta \Gamma}^t L(\mu) ] = \sum_{\gamma} (q + q^{-1})^{n_{\gamma}} [L(\gamma)] \] where $n_{\gamma}$ denotes the number of lower circles in $\underline{\gamma}t$ and the sum is over all $\gamma \in \Delta$ such that a) $\underline{\mu}$ is the lower reduction of $\underline{\gamma}t$ and b) the rays of each lower line in $\underline{\gamma}\gamma t$ are oriented so that exactly one is $\vee$ and one is $\wedge$.
 (iii) If we forget the grading then \[ [ G_{\Delta \Gamma}^t L(\mu)] = \sum_{ \gamma \subset \beta \rightarrow^t \mu, \ red(\underline{\lambda}t) = \underline{\mu} } [ L(\gamma) ].\]
\end{thm}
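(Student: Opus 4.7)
My plan is to unfold the explicit diagrammatic construction of the projective functor $G_{\Delta \Gamma}^t$ from \cite{Brundan-Stroppel-2}: it is tensor product with a graded $(K_\Gamma, K_\Delta)$-bimodule $K_{\Delta \Gamma}^t$ whose homogeneous basis consists of all consistently oriented stacked diagrams obtained by placing a cap diagram on top of $t$ on top of a cup diagram. Each of the three parts is then extracted from this combinatorial presentation, with increasing amounts of bookkeeping.

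For part (i), indecomposability and the head/socle claim I would obtain from self-duality. The bimodule $K_{\Delta \Gamma}^t$ is, up to a uniform internal grading shift depending only on the number of cups and caps in $t$, isomorphic to its own dual under the cellular antiinvolution of $K(m|n)$, because the diagrammatic basis is preserved by vertical reflection of $t$. Since $L(\mu)$ is self-dual up to a shift, so is $G_{\Delta \Gamma}^t L(\mu)$, and this forces its graded head and socle to coincide up to a grading shift. Indecomposability then follows by computing the graded endomorphism algebra via adjunction with $G_{\Gamma \Delta}^{t^{*}}$, reducing it to an internal Hom inside $K_\Gamma$-mod twisted by the diagrammatic basis and checking that the result is local.

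For part (ii), I would analyse the action of $G_{\Delta \Gamma}^t$ on the graded Grothendieck class $[L(\mu)]$ through a cellular filtration. The bimodule picture first produces a filtration of $G_{\Delta \Gamma}^t \calV(\mu)$ by cell modules $\calV(\gamma)$, whose graded multiplicities are read off directly from the consistently oriented stacked diagrams on $t$ that close to $\underline{\mu}$ at the bottom. The crucial local input is that each lower circle of $\underline{\gamma} t$ carries a free action of the graded Frobenius algebra $k[x]/(x^2)$, of graded Poincar\'e polynomial $q + q^{-1}$, which accounts for the factor $(q+q^{-1})^{n_\gamma}$. Passing from cell modules to simples via the Kazhdan--Lusztig decomposition numbers for $K(m|n)$ established in \cite{Brundan-Stroppel-1}, whose matrix is unitriangular, the resulting M\"obius inversion preserves these graded multiplicities and yields the stated equality.

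Part (iii) would then follow from (ii) by specialising $q=1$ and re-indexing the sum: each $\gamma$ with $n_\gamma$ lower circles in $\underline{\gamma} t$ gives rise, via the $2^{n_\gamma}$ consistent orientations of those circles, to precisely the set of $\lambda$ with $\lambda \subset \alpha \rightarrow^t \mu$ and $red(\underline{\lambda} t) = \underline{\mu}$, each appearing with multiplicity one. The main obstacle I anticipate is the graded bookkeeping in (ii): one must verify that orientations of circles and of line-segment rays interact correctly with the $q$-degree of the stacked-diagram basis, and that the unitriangular passage from cell-filtration multiplicities to simple composition multiplicities preserves the clean $(q+q^{-1})^{n_\gamma}$ packaging without stray grading shifts, so that the re-indexing step producing (iii) is exact.
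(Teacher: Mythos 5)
The paper does not prove this theorem: it is quoted verbatim from \cite{Brundan-Stroppel-2}, Theorem 4.11 and \cite{Brundan-Stroppel-5}, Theorem 3.4, so there is no in-paper argument to compare your sketch against. That said, your outline does track the actual Brundan--Stroppel proof architecture correctly: $G_{\Delta\Gamma}^t$ is tensoring with a graded bimodule $K_{\Delta\Gamma}^t$ with a stacked-diagram basis; self-duality of this bimodule under the cellular antiinvolution (up to the uniform shift by the number of cups/caps) gives the head/socle agreement, and indecomposability is obtained by computing the endomorphism ring via the biadjoint $G_{\Gamma\Delta}^{t^*}$; the cell filtration of $G_{\Delta\Gamma}^t\calV(\mu)$ plus the $k[x]/(x^2)$ factor attached to each lower circle produces the $(q+q^{-1})^{n_\gamma}$ coefficients; and (iii) is the ungraded specialisation of (ii) with the circle-orientations re-indexing. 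The places you yourself flag as requiring care are indeed where the substance lies: the unitriangularity argument that passes from cell-filtration multiplicities to composition multiplicities must be carried out against the correct degree normalisation of the cellular basis, and the re-indexing from (ii) to (iii) requires a precise bijection between orientations of lower circles in $\underline{\gamma}t$ and the pairs $(\lambda,\alpha)$ indexing (iii), which is a nontrivial diagram chase. Your sketch names these obstacles but does not close them, so as a proof it is incomplete; as a reconstruction of the cited argument's strategy it is accurate.
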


\begin{remark} Note that (iii) does not mean that the multiplicity of a composition factor is always 1. In the summation over $\gamma \subset \beta \rightarrow^t \mu$ the matching $t$ and the labeling of the upper number line in $t$ by $\mu$ are fixed. In a first step we determine all possible labelings $\beta$ of the lower numberline such that the labeled matching is consistently oriented ($\beta \rightarrow^t \mu$). In a second step we determine for each $\beta$ all possible $\gamma$ with $\gamma \subset \beta$. In this way we can get composition factors $L(\gamma)$ with multiplicity $>1$. In a last step we have to discard those which do not satisfy $red(\underline{\gamma}t) = \underline{\mu}$. For an example see \ref{comp-factor-example} below. It follows from Theorem \ref{thm:graded-multiplicities} (ii) that the multiplicity of a composition factor $L(\gamma)$ is $2^{n_{\gamma}}$.
\end{remark}

\begin{example} \label{comp-factor-example} We calculate the composition factors of $R(1^4;1)$ in $Gl(4|1)$. Note that the composition factors depend on $\delta$. Since $\beta \to^t \zeta$ has to be consistently oriented, this fixes $\beta$ up to the labeling of the unique cup, hence such a $\beta$ is either $\lambda^{\dagger}$ or $\tilde{\beta}$

\medskip

\begin{center}
\begin{tikzpicture}
 \draw (-6,0) -- (6,0);
\foreach \x in {-2} 
     \draw (\x-.1, .2) -- (\x,0) -- (\x +.1, .2);
\foreach \x in {-5,-4,-3,2,3,4,5} 
     \draw (\x-.1, -.2) -- (\x,0) -- (\x +.1, -.2);
\foreach \x in {-1,0,1} 
     \draw (\x-.1, .1) -- (\x +.1, -.1) (\x-.1, -.1) -- (\x +.1, .1);

\end{tikzpicture}
\end{center}
\medskip

Since there is only one cup in $\underline{\lambda^{\dagger}}$, the set of $\gamma$'s such that $\gamma \subset \lambda^{\dagger}$ consists of $\lambda^{\dagger}$ and 

\medskip

\begin{center}
\begin{tikzpicture}
 \draw (-6,0) -- (6,0);
\foreach \x in {-4} 
     \draw (\x-.1, .2) -- (\x,0) -- (\x +.1, .2);
\foreach \x in {-5,-3,-2,2,3,4,5} 
     \draw (\x-.1, -.2) -- (\x,0) -- (\x +.1, -.2);
\foreach \x in {-1,0,1} 
     \draw (\x-.1, .1) -- (\x +.1, -.1) (\x-.1, -.1) -- (\x +.1, .1);

\end{tikzpicture}
\end{center}
\medskip

with the $\vee$ at the vertex -4. Now for $\tilde{\beta}$ the set of $\gamma$'s such that $\gamma \subset \tilde{\beta}$ consists of $\tilde{\beta}$ and $\lambda^{\dagger}$. The condition $red(\underline{\gamma}t) = \underline{\zeta}$ is satisfied for all composition factors. It can now be easily seen that the socle filtration is \[ \begin{pmatrix} L(1,1,1,0|0) \\ L(1,1,1,1|-1) + L(1,1,1,-1|1) \\ L(1,1,1,0|0) \end{pmatrix}. \] The module $R(1^4;1)$ is the projective cover of the singly atypical module $L(1,1,1,0|0)$.
\end{example}


\begin{cor}\label{loewy-length} $R(\lambda)$ has Loewy length $2 d(\lambda) + 1$. It is rigid.
\end{cor}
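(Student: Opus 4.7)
The plan is to read off the Loewy structure of $R(\lambda) = G_{\Delta\Gamma}^t L(\zeta)$ directly from the graded composition factor formula of Theorem~\ref{thm:graded-multiplicities}(ii), exploiting Koszulity of the Khovanov arc algebras. From part (i) of that theorem, $R(\lambda)$ is indecomposable with irreducible head and socle differing only by a grading shift; from part (ii),
\[ [R(\lambda)] = \sum_\gamma (q+q^{-1})^{n_\gamma}[L(\gamma)]. \]
Expanding $(q+q^{-1})^{n} = \sum_{k=0}^{n}\binom{n}{k}q^{n-2k}$, the composition factor $L(\gamma)$ contributes in graded degrees $-n_\gamma, -n_\gamma+2, \ldots, n_\gamma$.

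The translation from internal grading to Loewy filtration is the standard one for graded modules over positively graded Koszul algebras, and $K(m|n)$ is Koszul by Brundan--Stroppel: if $M$ is generated in a single graded degree then its radical filtration coincides with the grading filtration. Normalizing the head of $R(\lambda)$ to sit in degree $0$, the graded self-duality forced by (i) (head and socle in opposite extremal degrees) makes the $i$th radical layer exactly the degree $-i$ part. This simultaneously yields Loewy length $= 1 + 2\max_\gamma n_\gamma$ and coincidence of the radical and socle filtrations layer by layer, which is rigidity.

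It then remains to show $\max_\gamma n_\gamma = d(\lambda)$. The matching $t$ was built with the cap diagram $\bar{\lambda}$ along its bottom edge, contributing exactly $d(\lambda)$ caps, and a lower circle of $\underline{\gamma}t$ can arise only when one of these bottom caps is closed off by a cup of $\underline{\gamma}$; this gives the upper bound $n_\gamma \leq d(\lambda)$. For equality I would build the maximizing $\gamma$ explicitly by starting from $\lambda^\dagger$ and flipping $\wedge\leftrightarrow\vee$ at the endpoints of each cap of $\bar{\lambda}$, producing a cup diagram $\underline{\gamma}$ that closes every bottom cap of $t$, and then check that this $\gamma$ satisfies the two orientation conditions (a),(b) of Theorem~\ref{thm:graded-multiplicities}(ii). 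The main obstacle will be this last combinatorial verification: unwinding the construction of $t$ (the shift from $\eta$ to $\alpha$, the relabeling by $\zeta$, and the label adjustment $\lambda\mapsto\lambda^\dagger$) so that the proposed $\gamma$ genuinely occurs in the sum with multiplicity $(q+q^{-1})^{d(\lambda)}$. Once this is established, both the Loewy length formula and rigidity follow from the graded-Koszul framework of the second paragraph.
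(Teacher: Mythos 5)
Your approach is the paper's: translate the internal $\mathbb{Z}$-grading of $R(\lambda) = G^t_{\Delta\Gamma}L(\zeta)$ into its Loewy filtration via Koszulity and the simplicity of head and socle (Theorem~\ref{thm:graded-multiplicities}(i)) --- the paper invokes the Beilinson--Ginzburg--Soergel proposition stated right after the corollary for exactly the grading-to-Loewy step, and rigidity drops out the same way for both of you. The only substantive difference is that the paper simply asserts the grading runs over $\{-d(\lambda),\ldots,d(\lambda)\}$, leaning implicitly on the grading conventions for $G^t_{\Delta\Gamma}$ in Brundan--Stroppel, while you try to recover this from the $(q+q^{-1})^{n_\gamma}$ expansion in part~(ii) and reduce it to $\max_\gamma n_\gamma = d(\lambda)$; your upper bound $n_\gamma\leq d(\lambda)$ (a lower circle of $\underline{\gamma}t$ must close a cap of $\bar\lambda$ on the bottom edge of $t$) is correct and clean.

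The lower bound, which you flag as the open step, is where the gap lies, and the flipping construction you propose is a detour you do not need. By part~(i) the module has simple head and socle differing only by a grading shift, so it is generated in its minimal degree and cogenerated in its maximal one; since each $L(\gamma)$ appears symmetrically in degrees $-n_\gamma,\ldots,n_\gamma$, the extremal constituent is forced to be $L(\lambda^\dagger)$, whence $\max_\gamma n_\gamma = n_{\lambda^\dagger}$ with no auxiliary $\gamma$ required. The remaining task is to verify $n_{\lambda^\dagger}=d(\lambda)$, i.e.\ that $\underline{\lambda^\dagger}$ closes all $d(\lambda)$ bottom caps of $t$. This is still a genuine unwinding of the construction of $t$ and of $\lambda^\dagger$ --- it is what the Brundan--Stroppel statement that the socle of $G^t_{\Delta\Gamma}L(\mu)$ sits in degree $\mathrm{caps}(t)$, combined with $\mathrm{caps}(t)=d(\lambda)$ by definition of $t$, delivers directly --- but it is cleaner than building a new $\gamma$ by flipping $\wedge\leftrightarrow\vee$ at cap endpoints, whose membership in the index set of part~(ii) (conditions (a) and (b)) would itself have to be established and which, if $\underline{\lambda^\dagger}$ already closes the caps, moves you away from the maximizer rather than toward it.
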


\begin{proof} Let $R(j)$ be the submodule of $R(\lambda)$ spanned by all graded pieces of degree $\geq j$. Then \[ R(\lambda) = R(- d(\lambda)) \supset R(- d(\lambda) +1) \supset \ldots \supset  R(d(\lambda))\] with successive semisimple quotients $R(j)/R(j+1)$ of degree $j$. By \cite{Brundan-Stroppel-4} every block of ${\calR}_{mn}$ is Koszul. We already know that the top and socle are simple. Since Koszul algebras are quadratic, the following proposition finishes the proof. 
\end{proof}

\begin{prop} \cite[Proposition 2.4.1]{Beilinson-Ginzburg-Soergel} Let $A$ be a graded ring such that i) $A_0$ is semisimple, ii) A is generated by $A_1$ over $A_0$. Let $M$ be a graded $A$-module of finite length. If $soc(M)$ (resp. $top(M))$ is simple, the socle (resp. the radical) filtration on $M$ coincides with the grading filtration (up to a  shift).
\end{prop}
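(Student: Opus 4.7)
My strategy is to identify the Jacobson radical $J$ of $A$ with the homogeneous ideal $A_{\geq 1} := \bigoplus_{i \geq 1} A_i$, upgrade this to $J^k = A_{\geq k}$ using the degree-$1$ generation, and then use the simple-top hypothesis together with the graded Nakayama lemma to reduce to the case where $M$ is cyclic and concentrated in a half-line of degrees; for such an $M$ the radical filtration and grading filtration are visibly the same. The socle case follows by contravariant graded duality.

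Since $A_0$ is semisimple, the surjection $A \twoheadrightarrow A/A_{\geq 1} \cong A_0$ has semisimple target, so $J \subseteq A_{\geq 1}$. Conversely, on any graded module of finite length the ideal $A_{\geq 1}$ acts nilpotently, since iterating raises grading degrees past the bounded support, so $A_{\geq 1} \subseteq J$ as operators on such modules. Hypothesis (ii) then gives inductively $A_k = A_1 \cdot A_{k-1}$, hence $J^k = (A_{\geq 1})^k = A_{\geq k}$ for every $k \geq 0$. Now assume $\mathrm{top}(M) = M/JM$ is simple. As a simple $A$-module annihilated by $J$ it descends to a simple $A_0$-module, and being a graded module over a ring concentrated in degree $0$ it splits as a direct sum of its graded pieces, so simplicity forces concentration in a single grading degree; after a shift we may place that degree at $0$. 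Graded Nakayama lifts a generator of $M/JM$ to an element $m \in M_0$ generating $M$, so $M_i = A_i m$ for $i \geq 0$ and $M_i = 0$ for $i < 0$. Consequently
\[ J^k M \;=\; A_{\geq k}\, m \;=\; \bigoplus_{i \geq k} M_i, \]
which is precisely the $k$-th step of the grading filtration, and the radical case is done.

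For the socle case I would invoke the contravariant graded duality $M \mapsto M^\vee$ with $M^\vee_i := \mathrm{Hom}_k(M_{-i}, k)$, viewed as a module over $A^{\mathrm{op}}$, which still satisfies (i) and (ii). This functor exchanges $\mathrm{top}$ with $\mathrm{soc}$ and carries the radical filtration of $M^\vee$ over $A^{\mathrm{op}}$ to the socle filtration of $M$ over $A$, so the case just proved gives the result. \emph{The main obstacle} is precisely this last step: the grading filtration on $M$ is not self-dual and one must check that the duality pairs the simple socle of $M$ with the correct grading layer on $M^\vee$, so as to pin down the shift in the statement. An alternative that sidesteps $M^\vee$ uses the identification $\mathrm{soc}^k(M) = \{m \in M : J^k m = 0\}$ combined with the injective hull of the simple socle inside $M$, running a Nakayama-type argument from the bottom up; this avoids $A^{\mathrm{op}}$-bookkeeping but still requires locating the grading degree of $\mathrm{soc}(M)$.
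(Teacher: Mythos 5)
Your argument for the radical case is complete and correct: the identification of $\mathrm{rad}(M)$ with $A_{\geq 1}M$ (inclusion one way because $M/A_{\geq 1}M$ is a semisimple $A_0$-module, the other way because $A_{\geq 1}$ acts nilpotently on finite-length graded modules), the equality $(A_{\geq 1})^k = A_{\geq k}$ from degree-$1$ generation, Nakayama to produce a homogeneous generator $m \in M_0$, and then $J^kM = A_{\geq k}m = M_{\geq k}$. The paper itself only cites \cite{Beilinson-Ginzburg-Soergel} and gives no proof, so there is nothing to compare against there; your argument is the standard one.

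The two worries you flag for the socle case are both resolvable, and you should close them rather than leave them as ``the main obstacle.'' First, $A^{\mathrm{op}}$ does satisfy hypothesis (ii): generation of $A$ by $A_1$ over $A_0$ is equivalent to $A_k$ being the $A_0$-bimodule spanned by products $a_1\cdots a_k$ with all $a_i \in A_1$, and this description is manifestly left--right symmetric, so $A_k = A_{k-1}A_1$ as well. (Also note the duality $M^\vee = \bigoplus_i \mathrm{Hom}_k(M_{-i},k)$ implicitly assumes $A$ is a finite-dimensional graded algebra over a field, which is true in the intended application to Khovanov algebras but is not part of the hypotheses as stated.) Second, the direct alternative you mention is actually the cleanest route and makes the shift bookkeeping transparent: one has $\mathrm{soc}^k(M) = \{m \in M : A_{\geq k}m = 0\}$, a graded submodule, and $\mathrm{soc}(M) = M_d$ where $d$ is the top degree of $M$. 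The inclusion $M_{\geq d-k+1} \subseteq \mathrm{soc}^k(M)$ is immediate. Conversely, if $m \in \mathrm{soc}^k(M)$ were a nonzero homogeneous element of degree $j \leq d-k$, then $Am$ is a nonzero submodule, hence has nonzero socle contained in $\mathrm{soc}(M) = M_d$; but $(Am)_d = A_{d-j}m \subseteq A_{\geq k}m = 0$ since $d-j \geq k$, a contradiction. This gives $\mathrm{soc}^k(M) = M_{\geq d-k+1}$ exactly, pinning down the shift, with no appeal to $A^{\mathrm{op}}$ or to a ground field.
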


\begin{cor} Every indecomposable module in $\calR$ with irreducible top and socle is rigid.
\end{cor}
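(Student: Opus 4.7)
The strategy is to imitate the proof of Corollary \ref{loewy-length}: transport $M$ through the equivalence $E$ to an indecomposable $K(m|n)$-module, exhibit a graded lift, and then apply the Beilinson--Ginzburg--Soergel proposition \emph{twice} -- once invoking the simple top and once the simple socle -- so that both the radical and the socle filtrations are forced to coincide with the grading filtration, hence with each other.

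Concretely, I would first observe that by Brundan--Stroppel every block of $K(m|n)$ is Koszul; in particular $K(m|n)$ is positively graded with $K(m|n)_0$ semisimple and is generated in degrees $0$ and $1$, which are precisely the hypotheses of the cited BGS proposition. Denote by $\calL(\lambda)$ the top and by $\calL(\mu)$ the socle of $E(M)$. Since $E(M)$ has simple top, it is a quotient of the graded indecomposable projective cover $\calP(\lambda)$, whose grading filtration already coincides with its radical filtration by BGS. One then produces a graded lift $\tilde M$ of $E(M)$ with $\tilde M_0 = \calL(\lambda)$ whose grading filtration agrees with its radical filtration -- the standard graded lift for modules with simple top over a Koszul algebra.

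On this lift $\tilde M$ the proposition now applies in two ways: via the simple top it gives that the radical filtration equals the grading filtration (up to a shift), and via the simple socle that the socle filtration equals the grading filtration (up to a shift). Consequently the radical and socle filtrations of $\tilde M$ coincide, and rigidity of $M$ follows by forgetting the grading.

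The main obstacle is the construction of the graded lift $\tilde M$: the kernel of an arbitrary surjection $\calP(\lambda) \twoheadrightarrow M$ need not be a graded submodule of $\calP(\lambda)$. I expect that Koszulity of $K(m|n)$ -- together with the additional structure of the Khovanov algebras already used in Corollary \ref{loewy-length} -- lets one replace the surjection by a homogeneous one, after which $\tilde M$ is simply the graded quotient; but justifying this step is where the real work lies, since without it the BGS proposition cannot be invoked at all.
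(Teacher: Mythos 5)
You have reconstructed the argument the paper intends: transport $M$ through $E$ to an indecomposable $K(m|n)$-module, invoke the Beilinson--Ginzburg--Soergel proposition twice (once for the simple top to identify the radical filtration with the grading filtration, once for the simple socle to identify the socle filtration with the grading filtration), and conclude that the two filtrations agree. You are also right that everything hinges on $E(M)$ being a \emph{graded} module, and right to be suspicious of that step: the proposition speaks only of graded modules, and Koszulity alone does not guarantee that a finite-length module with simple top and socle is gradable. Already over the monomial Koszul algebra $A=k\langle x,y\rangle/(xy,yx)$ the cyclic, cocyclic module $A/(x+y^2)$ is not gradable (a grading with the top in degree $0$ would force $\bar y$ and $\bar y^2$ into $M_1$, but then $y\cdot\bar y=\bar y^2$ would have to lie in $M_2=0$). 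That particular module is uniserial, hence trivially rigid, so it is not a counterexample to the corollary itself; but it does show that the graded lift you are after cannot be manufactured from Koszulity alone, which is the only hypothesis the surrounding text supplies.

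The paper does not close this gap either. The corollary is stated without proof immediately after the cited proposition, and the only nearby argument (the Loewy-length computation for $R(\lambda)$) applies BGS to modules that come equipped with an explicit grading, as images $G_{\Delta\Gamma}^t L(\zeta)$ of graded projective functors. To make the corollary correct in the generality stated one must either restrict to gradable modules or supply a gradability theorem that uses more of the structure of $K(m|n)$ than its Koszulity (its graded cellular or quasi-hereditary structure is the natural place to look). Your instinct that replacing the surjection $\calP(\lambda)\twoheadrightarrow E(M)$ by a homogeneous one is the missing step is exactly right, but the justification you defer is genuinely absent, both in your write-up and in the paper.
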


\begin{cor} $R(\lambda)$ is irreducible if and only if $d(\lambda) = 0$.
\end{cor}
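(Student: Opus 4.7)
The plan is to derive the statement as an essentially immediate consequence of the Loewy length formula established in Corollary \ref{loewy-length}. Recall that a module of finite length is irreducible if and only if it is indecomposable with Loewy length one. So the strategy is to combine the formula $\ell\ell(R(\lambda)) = 2d(\lambda)+1$ with the indecomposability and simple-socle/simple-head property of $R(\lambda) = G_{\Delta \Gamma}^t L(\zeta)$ coming from Theorem \ref{thm:graded-multiplicities}(i).

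For the ``if'' direction, suppose $d(\lambda) = 0$. Then the filtration $R(\lambda) = R(-d(\lambda)) \supset \ldots \supset R(d(\lambda))$ exhibited in the proof of Corollary \ref{loewy-length} degenerates to a single graded piece, so $R(\lambda)$ coincides with its socle; by Theorem \ref{thm:graded-multiplicities}(i) this socle is simple, hence $R(\lambda)$ is irreducible. For the converse, if $R(\lambda)$ is irreducible its Loewy length equals one, and the identity $2d(\lambda)+1 = 1$ forces $d(\lambda) = 0$.

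There is no real obstacle here: all of the technical content (Koszulity of the blocks of $\calR_{mn}$ via \cite{Brundan-Stroppel-4}, identification of the grading filtration with the socle/radical filtration via \cite{Beilinson-Ginzburg-Soergel}, and the indecomposability with simple head and socle of $G_{\Delta\Gamma}^t L(\zeta)$) has already been absorbed into the preceding results. The only point that one should keep track of is that the equivalence $E$ between $\calR_{mn}$ and $K(m|n)\text{-}\mathrm{mod}$ is a Morita equivalence, so the Loewy structure and irreducibility transport faithfully between the two categories, and hence the criterion formulated on the Khovanov side is the criterion on the supergroup side.
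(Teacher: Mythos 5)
Your argument is correct and matches the paper's intended reasoning exactly: the corollary is stated immediately after the Loewy length formula $\ell\ell(R(\lambda)) = 2d(\lambda)+1$ precisely because it is the instant consequence you describe, namely that Loewy length one is equivalent to $d(\lambda)=0$, and Loewy length one together with indecomposability (or simplicity of the socle from Theorem~\ref{thm:graded-multiplicities}(i)) gives irreducibility. Your added remark about the Morita equivalence transporting Loewy structure is the same point the paper relies on when it transfers $G_{\Delta\Gamma}^t L(\zeta)$ from Khovanov modules to $\calR_{mn}$.
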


\subsection{Tensor generators} A representation $X$ of a supergroup $G$ is a tensor generator if every representation is a quotient of a finite direct sum of representations $X^{\otimes r} \otimes (X^{\vee})^{\otimes s}$ for some $r,s \geq 0$. If $G$ is an algebraic group, every faithful representation is a tensor generator. The same statement is true for representations of supergroups as observed first by Weissauer.
%

\begin{thm} \cite[Section 7.1]{Comes-Heidersdorf} Let $\rho: G \to GL(V)$ be a faithful representation. Then any finite dimensional $k[G]$-comodule is a quotient of a finite multiple of some iterated tensor product of the $k[G]$-comodules $V$ and $V^{\vee}$.
\end{thm}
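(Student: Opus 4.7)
The plan is to deduce the statement by showing that the immersivity of $\rho$ is equivalent to the surjectivity of $\rho^*\colon k[Gl(V)] \to k[G]$, i.e.\ to $\rho$ realising $G$ as a closed sub-supergroup of $Gl(V)$. Applying the Weissauer--Masuoka splitting theorem to both $k[G]$ and $k[Gl(V)]$ gives $k[G] \cong k[G_0] \otimes (\mathfrak{g}_-)^*$ and $k[Gl(V)] \cong k[Gl(V)_0] \otimes (\mathfrak{gl}(V)_-)^*$, and one verifies that $\rho^*$ respects this decomposition. Its surjectivity then splits into two pieces: (a) $\rho_0^*\colon k[Gl(V)_0] \to k[G_0]$ is surjective by the classical theorem applied to the faithful underlying representation $\rho_0$; (b) the induced map $(\mathfrak{gl}(V)_-)^* \to (\mathfrak{g}_-)^*$ is surjective by functoriality, since it comes from the surjection on linear duals associated with the injection $d\rho|_{\mathfrak{g}_-}\colon \mathfrak{g}_- \hookrightarrow \mathfrak{gl}(V)_-$ supplied by immersivity.

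Since $k[Gl(m|n)]$ is generated as a $k$-algebra by the matrix coefficients of the standard representation $V$ and of its dual $V^\vee$ (the entries of the standard supermatrix and of its inverse, the Berezinian being expressible in them), surjectivity of $\rho^*$ transports this generating property to $k[G]$, in terms of $V$ and $V^\vee$ viewed as $G$-comodules. Then, for a finite-dimensional $k[G]$-comodule $W$, the coaction yields the standard embedding $W \hookrightarrow W_0 \otimes C_W$, where $W_0$ is $W$ with trivial coaction and $C_W \subseteq k[G]$ is the finite-dimensional subcoalgebra of matrix coefficients of $W$. Being finite-dimensional and contained in the algebra generated by matrix coefficients of $V$ and $V^\vee$, $C_W$ fits into $C_T$ for some $T = \bigoplus_i V^{\otimes r_i} \otimes (V^\vee)^{\otimes s_i}$. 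As a right $G$-comodule, $C_T \cong T^{\oplus \dim T}$ by a direct computation with matrix coefficients, so $W$ embeds as a subobject of $T^{\oplus N}$ for a suitable $N$.

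To strengthen ``subobject'' to ``quotient'', I would apply the preceding construction to $W^\vee$, obtaining an embedding $W^\vee \hookrightarrow (T')^{\oplus N'}$ with $T' = \bigoplus_j V^{\otimes r_j'} \otimes (V^\vee)^{\otimes s_j'}$, and then dualise to produce the surjection $((T')^\vee)^{\oplus N'} \twoheadrightarrow W$. Since $(V^{\otimes r} \otimes (V^\vee)^{\otimes s})^\vee \cong V^{\otimes s} \otimes (V^\vee)^{\otimes r}$ lies in the same class, the codomain is a finite multiple of an iterated tensor product of $V$ and $V^\vee$, as required. The main obstacle I anticipate is the compatibility check in Step 1: one must verify that the Weissauer--Masuoka splittings are sufficiently natural with respect to $\rho$ so that surjectivity of $\rho^*$ genuinely decomposes into the two pieces (a) and (b). Once $G$ has been realised as a closed sub-supergroup of $Gl(V)$, the remainder is a standard Tannakian d\'evissage.
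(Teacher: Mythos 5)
The paper provides no proof here — it only remarks that the result ``can be easily proven using the splitting theorem'' — so your proposal is being judged against what the statement claims rather than against a reference argument. Your step 1, deducing that $\rho$ is a closed immersion, is the right idea, and the compatibility concern you raise is best dispatched by sidestepping the non-canonical splitting altogether: use instead the canonical filtration of $k[G]$ by powers of the ideal $I$ generated by the odd part. A map of Hopf superalgebras preserves $I$ and hence this filtration, and on the associated graded $\rho^*$ becomes $\rho_0^* \otimes \Lambda\bigl((d\rho|_{\mathfrak{g}_{\bar 1}})^*\bigr)$, which is surjective by your (a) and (b); since the filtration is finite, $\rho^*$ is surjective and $G$ is a closed subgroup of $Gl(V)$.

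The genuine gap is in step 2. The assertion ``$C_T \cong T^{\oplus \dim T}$ as right $G$-comodules'' is false: the coefficient map $T^\vee \otimes T \to C_T$ is surjective but almost never injective (for instance $C_{\one \oplus \one} = k$, not $\one^{\oplus 4}$, and more to the point $C_{V \oplus V} = C_V$). So $C_T$ is only a quotient of $T^{\oplus \dim T}$, and your argument exhibits $W$ as a subobject of a quotient of $T^{\oplus N}$, i.e.\ as a subquotient — not a subobject. The dualisation trick in your last paragraph therefore also only yields a subquotient. Subquotient is precisely what the classical Tannakian argument delivers, and it is also what the paper actually uses downstream (the proof that every indecomposable projective is a mixed tensor starts from the subquotient form and uses that projectives are also injective in $\calR_{mn}$ to split both the surjection and the inclusion). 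To obtain the literal ``quotient'' conclusion of the theorem you need an additional input; over $Gl(m|n)$ one could appeal to the existence of projective covers among the mixed tensors, but that would be circular at this point, and for a general $G$ with an immersive representation the quotient form is not established by the argument you give.
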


\subsection{Projective covers.} Recall that the indecomposable projective modules in $Rep(GL(m|n))$ are the irreducible typical modules and the projective covers of the irreducible atypical modules by \cite{Kac-Rep} .

\begin{lem}\label{projective-mixed-tensors-1} Every indecomposable projective module appears as some $R(\lambda)$.
\end{lem}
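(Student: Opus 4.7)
My plan is to reduce the lemma to the following statement: \emph{every indecomposable projective module in $\calR_{mn}$ is a direct summand of some $T(r,s) = V^{\otimes r}\otimes(V^\vee)^{\otimes s}$.} By the Comes--Wilson theorem recalled above, the image of $F_{m|n}$ is the full subcategory of mixed tensors --- that is, direct summands of the $T(r,s)$ --- and its indecomposable objects are exactly the $R(\lambda)$'s. Therefore the lemma is equivalent to this highlighted statement.

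\textbf{Main argument via translation.} To produce every indecomposable projective as such a summand, I would first note that tensoring with the dualizable object $V$ (or $V^\vee$) preserves projectivity via the adjunction $\Hom(P\otimes V,-)=\Hom(P,V^\vee\otimes -)$. Combined with the tensor-generator property of $V$ recalled in the preceding subsection --- every irreducible $L(\mu)$ arises as a composition factor of some $T(r,s)$ --- this enables a translation-style argument: it suffices to exhibit a \emph{single} indecomposable projective summand $P_0$ of some $T(r_0,s_0)$, since then $P_0\otimes V^{\otimes r}\otimes(V^\vee)^{\otimes s}$ is projective and a summand of $T(r_0+r,s_0+s)$, so all of its indecomposable direct summands are projective. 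By tensoring further with $V$, $V^\vee$ and projecting onto blocks, one reaches the projective cover of every irreducible. As $P_0$ one can take (for $m\ne n$) the projective cover of the trivial module, which appears as a summand of $V\otimes V^\vee$ because $\one$ does; for $m=n$ a small explicit Khovanov-algebra computation produces a suitable $P_0$ inside some $T(r_0,s_0)$.

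\textbf{Alternative via Khovanov algebras, and the main obstacle.} A more structural alternative uses the Morita equivalence $E:\calR_{mn}\to K(m|n)$-mod: Brundan--Stroppel express each indecomposable projective $\calP(\mu)$ as the image of $L(\zeta)$ under a special projective functor whose $\Delta\Gamma$-matching is built from the cap/cup diagram of $\mu$. Since this is precisely the recipe defining $R(\lambda)=G^t_{\Delta\Gamma}L(\zeta)$ in Section \ref{indecomposable}, the bipartition $\lambda$ producing $P(\mu)$ can in principle be read off from $\mu$. The main obstacle is then combinatorial rather than conceptual: given an atypical weight $\mu$, one must construct an $(m|n)$-cross bipartition $\lambda$ so that the $\eta$-normalization, the matching $t$, and the adjustment $\lambda\mapsto\lambda^\dagger$ reproduce $\mu$ exactly, and verify $k(\lambda)=n$ so that $R(\lambda)$ is in fact the projective cover anticipated by the introduction's theorem. (The typical case is immediate: $L(\mu)=P(\mu)$ is already irreducible, so it equals some $R(\lambda)$ with $d(\lambda)=0$.)
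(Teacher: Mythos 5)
Your proposal misses the one-line argument the paper actually uses, and the route you propose instead has a concrete error.

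The paper's proof is immediate from the \emph{quotient} form of the tensor-generator property (which is precisely how the paper defines it): ``every representation is a quotient of a finite direct sum of representations $X^{\otimes r}\otimes(X^\vee)^{\otimes s}$.'' Given this, an indecomposable projective $M$ receives a surjection from a direct sum of $T(r,s)$'s; being projective, that surjection splits, so $M$ is a direct summand of some $T(r,s)$, hence of the form $R(\lambda)$. You instead recall the tensor-generator property only in the weaker ``composition factor'' form, and then build a translation-functor argument to compensate. That argument contains a gap that would make it fail: you take $P_0=P(\one)$ and assert it appears as a summand of $V\otimes V^\vee$ ``because $\one$ does.'' This does not follow, and in fact is false for $n>0$: $\one$ has maximal atypicality $n$, so $\one\neq P(\one)$, and (for $m>n$) the paper itself records $V\otimes V^\vee=\one\oplus L(1,0,\ldots,0\,|\,0,\ldots,0,-1)$, in which $P(\one)$ is visibly not a summand. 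So you have no seed $P_0$ for the induction. Even if one were to find a projective seed by tensoring further, the remaining step ``by tensoring with $V,V^\vee$ and projecting onto blocks one reaches every projective cover'' is asserted rather than proved, and making it precise essentially requires the quotient form of the tensor-generator property that would have finished the proof directly. Your Khovanov-algebra alternative is, as you say yourself, left as a combinatorial exercise and does not constitute a proof.
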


\begin{proof} The module $V$ is a tensor generator of $\calR_{m|n}$. Hence every module $M \in \calR$ appears as a subquotient of some direct sum of $T(r,s)$. If $M$ is indecomposable projective the surjection will split, hence $M$ appears as a direct summand. 
\end{proof}

Since every atypical weight appears in the socle and top of its projective cover we obtain also

\begin{cor} The map $\theta: \Lambda^x \to X^+$ is surjective.
\end{cor}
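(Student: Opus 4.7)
The plan is to produce, for an arbitrary $\mu \in X^+$, a bipartition $\lambda \in \Lambda^x$ with $\lambda^\dagger = \mu$. The natural candidate is handed to us by the preceding lemma: take the projective cover $P(\mu) \in \calR_{mn}$ and write $P(\mu) \cong R(\lambda)$ for some $(m|n)$-cross bipartition $\lambda$ via Lemma \ref{projective-mixed-tensors-1}. The task then reduces to showing that the irreducible socle of this particular $R(\lambda)$ is $L(\mu)$.

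For this I would invoke Theorem \ref{thm:graded-multiplicities}(i), which tells us that any module of the form $G_{\Delta\Gamma}^t L(\zeta)$, and hence every $R(\lambda)$ by construction, has simple head and simple socle differing only by a grading shift; as ungraded modules, head and socle are therefore isomorphic. Since the head of $P(\mu)$ is $L(\mu)$ by definition of a projective cover, the socle of $R(\lambda) = P(\mu)$ must also be $L(\mu)$, so $\theta(\lambda) = \lambda^\dagger = \mu$.

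I do not foresee a genuine obstacle: all the substance is already packaged into Lemma \ref{projective-mixed-tensors-1} (existence of some $\lambda$ with $R(\lambda) \cong P(\mu)$) and the head/socle symmetry statement of Brundan--Stroppel. The only case that warrants a brief remark is $\mu$ typical: then $P(\mu) = L(\mu) = K(\mu)$ is already irreducible and the identification of its head with its socle is automatic.
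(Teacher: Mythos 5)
Your argument is correct and follows the same route the paper sketches: reduce to projective covers via Lemma \ref{projective-mixed-tensors-1}, then observe that $L(\mu)$ sits in the socle of $P(\mu)=R(\lambda)$, so $\theta(\lambda)=\mu$. Your appeal to Theorem \ref{thm:graded-multiplicities}(i) for the head/socle symmetry of $R(\lambda)$ is a valid way to pin down the socle; the paper invokes the equivalent general fact that an atypical weight occurs in both socle and top of its projective cover.
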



\begin{lem} The crosses and circles of the bipartition $\lambda$ are at the same vertices as the crosses and circles of the highest weight $\lambda^{\dagger}$. In particular $at(R(\lambda)) = n- rk(\lambda)$.
\end{lem}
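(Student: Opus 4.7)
The plan is to trace the construction of $\lambda^{\dagger}$ from section \ref{indecomposable} step by step and verify that no step moves a $\times$ or $\circ$ label; the atypicality statement will then follow by a cardinality count in the weight diagram. The core observation is that the $\times$ and $\circ$ positions are block invariants: they label exactly those vertices which are \emph{not} endpoints of cups or caps, and the definition of a $\Delta\Gamma$-matching forces the free vertices at the top and bottom to agree with them.

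First I would recall that the cap diagram $\bar{\lambda}$ attached to the weight diagram of $\lambda$ has its $\times$ and $\circ$ vertices at exactly the same positions as $\lambda$, because the procedure for building $\bar{\lambda}$ never joins a $\times$ or $\circ$ to anything. Hence in the crossingless matching $t$ from $\bar{\lambda}$ to $\underline{\alpha}$, the free vertices at the bottom sit precisely at the positions of the $\times$ and $\circ$ of $\lambda$. After replacing $\alpha$ by $\zeta$ on top, the relabelling defining $\lambda^{\dagger}$ only adjusts $\vee/\wedge$ symbols at the bottoms of line segments; the free bottom vertices retain their $\times$ and $\circ$ labels. This gives the first assertion of the lemma.

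For the atypicality statement, I would use that $R(\lambda)$ is indecomposable by theorem \ref{thm:graded-multiplicities}, so it lies in a single block, namely the block of its simple socle $\lambda^{\dagger}$. The atypicality of a block equals $|I_\times \cap I_\circ| = \#\vee$, and the cardinalities $|I_\times| = m$, $|I_\circ| = n$ give $\#\vee = n - \#\circ$. The hypothesis $m \geq n$ forces $\delta \geq 0$, so $rk(\lambda) = \#\circ$; combined with the first part of the lemma this yields $at(R(\lambda)) = n - rk(\lambda)$.

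The only real obstacle is the bookkeeping of the Brundan--Stroppel conventions---in particular verifying carefully that the formation of $\bar{\lambda}$, the matching $t$, and the final relabelling all preserve $\times$ and $\circ$ positions. Once the conventions are laid out, the rest of the argument is a straightforward count.
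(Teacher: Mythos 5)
Your proof is correct and follows essentially the same route as the paper. The paper disposes of the first assertion with a one-line observation (only $\vee$/$\wedge$ labels are touched when passing from $\lambda$ to $\lambda^\dagger$), which is exactly what your more detailed walk through the $\Delta\Gamma$-matching conventions establishes; and your cardinality count $\#\vee = n - \#\circ$ in the highest-weight diagram together with $rk(\lambda) = \#\circ$ for $\delta = m-n \geq 0$ is the intended (implicit) argument for the atypicality statement.
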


\begin{proof} Only the labels $\vee$ or $\wedge$ in the weight diagram of $\lambda$ are changed when applying $\theta$.
\end{proof}

We use the following notation: If $\lambda$ is a weight or weight diagram, we write $\lambda(i)$ for the $i$-th vertex.

\begin{thm}\label{projective-mixed-tensors-2} A mixed tensor $R(\lambda)$ is projective if and only if $k(\lambda) = n$. In this case $R(\lambda) = P(\lambda^{\dagger})$. 
\end{thm}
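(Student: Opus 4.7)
The plan is to compare the Loewy structure of $R(\lambda)$ with that of the projective cover $P(\lambda^{\dagger})$ in $\calR_{mn}$. The key ingredients are: (a) by Theorem \ref{thm:graded-multiplicities}(i), $R(\lambda)$ has simple head and simple socle which agree on the ungraded level, both equal to $L(\lambda^{\dagger})$; (b) the Loewy length of $R(\lambda)$ is $2d(\lambda) + 1$, by Corollary \ref{loewy-length}; (c) the atypicality of $R(\lambda)$ is $n - rk(\lambda)$, by the preceding lemma; and (d) since $K(m|n)$ is a Koszul Frobenius algebra, every indecomposable projective $P(\mu)$ in a block of atypicality $r$ has simple socle $L(\mu)$ and Loewy length exactly $2r + 1$.

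For the ``if'' direction, assume $k(\lambda) = n$, so that $d(\lambda) = n - rk(\lambda)$ coincides with the atypicality $r$ of $\lambda^{\dagger}$. Since $R(\lambda)$ has head $L(\lambda^{\dagger})$, projectivity of $P(\lambda^{\dagger})$ yields a surjection $\pi \colon P(\lambda^{\dagger}) \twoheadrightarrow R(\lambda)$. I would then argue that $\pi$ must be injective: otherwise $\ker \pi$ is a nonzero submodule and therefore contains the simple socle $L(\lambda^{\dagger})$ of $P(\lambda^{\dagger})$, so $\pi$ factors through $P(\lambda^{\dagger})/L(\lambda^{\dagger})$. Because the socle of $P(\lambda^{\dagger})$ is simple, it equals the bottom nonzero radical layer of $P(\lambda^{\dagger})$, so the quotient has Loewy length exactly $2r$. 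But $R(\lambda)$ has Loewy length $2d(\lambda) + 1 = 2r+1$, contradicting the fact that a quotient cannot increase the Loewy length. Hence $\pi$ is an isomorphism and $R(\lambda) = P(\lambda^{\dagger})$.

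The converse direction is short: if $R(\lambda)$ is projective, then being indecomposable with head $L(\lambda^{\dagger})$ forces $R(\lambda) = P(\lambda^{\dagger})$, and so its Loewy length equals $2r+1$ for $r = n - rk(\lambda)$. Matching this with $2d(\lambda)+1$ gives $d(\lambda) = n - rk(\lambda)$, i.e., $k(\lambda) = n$.

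The main obstacle is justifying ingredient (d). The simplicity of the socle follows from $K(m|n)$ being Frobenius, so that $P(\mu)$ is also the injective envelope of $L(\mu)$. The exact Loewy length $2r+1$ is more delicate: it should follow from Koszulity of $K(m|n)$ together with the explicit description of the maximal chains in the Bruhat order through a block of atypicality $r$; both ingredients are available in the Brundan--Stroppel series of papers cited in Section 1.
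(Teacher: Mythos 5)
Your approach is genuinely different from the paper's, and the logical skeleton is sound, but it hinges on an ingredient you explicitly flag as unproved and which is in fact the crux of the matter.

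The paper's proof starts from the same surjection $\pi\colon P(\lambda^{\dagger}) \twoheadrightarrow R(\lambda)$ (from Zou's lemma), but then compares $[R(\lambda)]$ and $[P(\lambda^{\dagger})]$ \emph{in the Grothendieck group}: it writes both as explicit sums over weight diagrams using $[P(\lambda^{\dagger})] = \sum_{\mu\supset\lambda^{\dagger}}[K(\mu)]$ and $[G^t_{\Delta\Gamma}L(\zeta)] = \sum_{\mu\subset\alpha\to^t\zeta,\, red(\underline{\mu}t)=\underline{\zeta}}[L(\mu)]$, then checks combinatorially that the two index sets coincide exactly when $k(\lambda)=n$ and that for $k(\lambda)<n$ the second is a proper subset of the first. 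This settles both directions at once (equal composition factors force $\pi$ to be an isomorphism; a proper subset forces $R(\lambda)\neq P(\lambda^{\dagger})$). You instead compare \emph{Loewy lengths}, which is cleaner conceptually but outsources the combinatorics to your ingredient (d).

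The gap is precisely (d), the claim that $P(\mu)$ in a block of atypicality $r$ has Loewy length exactly $2r+1$. Your ``if'' direction needs the inequality $LL(P(\lambda^{\dagger}))\le 2r+1$ (so that $P/\mathrm{soc}(P)$ has Loewy length $\le 2r < 2r+1 = LL(R(\lambda))$), and your ``only if'' direction needs the reverse inequality $LL(P(\lambda^{\dagger}))\ge 2r+1$; you need the full equality. But note that inside the paper's framework, the only available handle on $LL(P(\mu))$ comes from Lemma~\ref{projective-mixed-tensors-1} (every indecomposable projective is some $R(\nu)$) together with Corollary~\ref{loewy-length} ($LL(R(\nu))=2d(\nu)+1$); turning that into $LL(P(\mu))=2r+1$ is \emph{equivalent} to knowing $k(\nu)=n$ for that $\nu$, which is the theorem you are trying to prove. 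So unless you supply an independent source for $LL(P(\mu))=2r+1$ (e.g.\ a precise citation from Brundan--Stroppel's graded Kazhdan--Lusztig theory showing that the graded pieces of $P(\mu)$ sit in degrees $0$ through $2r$, combined with the BGS rigidity used in~\ref{loewy-length}), the argument is circular. Your remark that it ``should follow from Koszulity\ldots'' correctly identifies the right direction, but as stated this is exactly the missing step, not a solved one, and without it neither implication goes through.

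A secondary point: the phrase ``the quotient has Loewy length exactly $2r$'' already presupposes $LL(P(\lambda^{\dagger}))=2r+1$, so the conditional structure of the argument should be made explicit — as written it reads as though that quotient's Loewy length is computed independently.
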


\begin{proof} For every indecomposable  module $M$ with $top(M) = L(\lambda^{\dagger})$ there exists a surjection $P(\lambda^{\dagger}) \to M$ by \cite{Zou}, lemma 3.4. If $M$ has the same composition factors as $P(\lambda)$, this surjection has to have trivial kernel and gives an isomorphism. By \cite{Brundan-Stroppel-4} the following formulas hold in the Grothendieck group \[ [P(\lambda^{\dagger})] = \sum_{\mu \supset \lambda^{\dagger}} [K(\mu)], \ \ \ \ [K(\mu)] = \sum_{\gamma \subset \mu} [L(\gamma)].\] On the other hand \[ [R(\lambda)] = [G_{\Delta \Gamma}^t L(\zeta)] = \sum_{\gamma \subset \beta \to^t \zeta, red(\underline{\gamma}t) = \underline{\zeta}} \ [L(\gamma)].\] We will show that we get the same composition factors for $P(\lambda^{\dagger})$ and $R(\lambda)$ if $k(\lambda) = n$. Assume now that $k(\lambda) = n$. Since $\zeta$ and $t$ are fixed, the conditions $\beta \to^t \zeta$ implies that $\beta(i)$ is fixed up to the choice of the position of $\vee$ and $\wedge$ in each cup: All other coordinates are determined by the condition that the endpoints of line segments of $t$ must be labelled by the same symbol (and implies that $\beta$ has $n$ cups and no free $\vee$'s). Hence any such $\beta$ differs from $\lambda^{\dagger}$ only by the position of $\vee$ and $\wedge$ in each cup. The set of $\beta$ so obtained is the set of $\beta$ with $\beta \supset \lambda^{\dagger}$: the condition that there cannot be free $\vee$'s to the left of free $\wedge$'s forces all $n$ $\vee$'s to be bound in cups. Hence \[ R(\lambda) = G_{\Delta \Gamma}^t L(\zeta) = \sum_{\gamma \subset \beta \supset \lambda^{\dagger}, red(\underline{\gamma}t) = \underline{\zeta}} \ [L(\gamma)].\] It is easy to see that the condition $red(\underline{\gamma}t) = \underline{\zeta}$ is always satisfied for $k(\lambda) = n$, hence we know $R(\lambda) = P(\lambda^{\dagger})$ for $k(\lambda) =n$. If $k(\lambda) = n$, the condition $\beta \to^t \zeta$ is equivalent to $\beta \supset \lambda^{\dagger}$. For $k(\lambda) = n-r$, $r >0$, the condition $\beta \to^t \zeta$ is stricter than the condition $\beta \supset \lambda^{\dagger}$. Hence the composition factors of $R(\lambda)$ are just a proper subset of the ones of $P(\lambda^{\dagger})$. 
\end{proof}





\section{The socle of $R(\lambda)$}\label{sec:theta}

As noted by \cite{Brundan-Stroppel-5} the map $\theta: \Lambda^x \to X^+$ is in general not injective. 

\begin{lem} $\theta$ is injective a) on the set of bipartitions satisfying $d(\lambda) =0$ and b) on the set of bipartitions satisfying $k(\lambda) = n$.
\end{lem}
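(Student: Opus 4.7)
The plan is to reduce both injectivity statements to the Comes--Wilson fact (cited in Section~2) that the assignment $\lambda \mapsto F_{m|n}(R(\lambda))$ is injective on isomorphism classes for $\lambda$ ranging over the $(m|n)$-cross bipartitions $\Lambda^x$. The unifying observation will be that on each of the two subsets in question, the indecomposable module $R(\lambda)$ is already pinned down up to isomorphism in $\calR_{mn}$ by the single weight $\lambda^{\dagger} = \theta(\lambda)$. Once this is in place, the equality $\theta(\lambda) = \theta(\mu)$ forces $R(\lambda) \cong R(\mu)$ as objects of $\calR_{mn}$, and Comes--Wilson immediately yields $\lambda = \mu$.

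For part (a), suppose $d(\lambda) = d(\mu) = 0$. By the corollary to \ref{loewy-length} stating that $R(\lambda)$ is irreducible precisely when $d(\lambda) = 0$, both $R(\lambda)$ and $R(\mu)$ are irreducible. Since $\lambda^{\dagger}$ is by definition the highest weight of the (now full) socle, we have $R(\lambda) \cong L(\lambda^{\dagger})$ and $R(\mu) \cong L(\mu^{\dagger})$. If $\lambda^{\dagger} = \mu^{\dagger}$ then $R(\lambda) \cong R(\mu)$, and injectivity of $F_{m|n}$ concludes $\lambda = \mu$.

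For part (b), suppose $k(\lambda) = k(\mu) = n$. Theorem~\ref{projective-mixed-tensors-2} identifies $R(\lambda) \cong P(\lambda^{\dagger})$ and $R(\mu) \cong P(\mu^{\dagger})$. Since the projective cover of an irreducible module is unique up to isomorphism, the equality $\lambda^{\dagger} = \mu^{\dagger}$ again gives $R(\lambda) \cong R(\mu)$, and Comes--Wilson closes the argument.

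The only bookkeeping point, which is essentially automatic, is that both subsets sit inside $\Lambda^x$: in either case $R(\lambda)$ is manifestly nonzero (an irreducible, respectively a projective cover), and nonvanishing of $R(\lambda)$ is precisely the $(m|n)$-cross condition. There is no real obstacle; all the combinatorial content has been absorbed into the two previously stated theorems and the Comes--Wilson injectivity. A more hands-on alternative would be to recover the weight diagram of $\lambda$ directly from $\lambda^{\dagger}$ by using that $\theta$ preserves the positions of $\times$ and $\circ$ and alters only the $\vee/\wedge$ labels at endpoints of the line segments of $t$, but the above shortcut is cleaner and uses only results already at hand.
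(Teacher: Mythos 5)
Your proof is correct and follows the same route as the paper: in both cases $R(\lambda)$ is pinned down up to isomorphism by $\lambda^{\dagger}$ (as $L(\lambda^{\dagger})$ when $d(\lambda)=0$, as $P(\lambda^{\dagger})$ when $k(\lambda)=n$), and one then concludes from the Comes--Wilson fact that $R(\lambda)=R(\mu)$ implies $\lambda=\mu$.
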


\begin{proof}
If $k(\lambda) = n$, then $R(\lambda) = P(\lambda^{\dagger})$. If $d(\lambda) = 0$, then $R(\lambda) = L(\lambda^{\dagger})$. Both $P(\lambda^{\dagger})$ and $L(\lambda^{\dagger})$ are determined by their socle. We are done since $R(\lambda) = R(\mu)$ if and only if $\lambda = \mu$.
\end{proof}

Since $\theta$ is injective for minimal and maximal defect we can describe its inverse $\theta^{-1}: X^+ \to \Lambda$ in these two cases. Here and in the following we use implicitely the following obvious lemma. For the notation $\lambda t \alpha$ see the beginning of section \ref{indecomposable}. In the next lemma $\alpha$ refers to the weight diagram used in the definition of $R(\lambda)$.

\begin{lem} For $\lambda \in \Lambda^x$ the labelled matching $\lambda t \alpha$ is consistently oriented.
\end{lem}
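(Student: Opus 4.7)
To verify that $\lambda t \alpha$ is consistently oriented, the plan is to check the three kinds of components of the diagram separately: the cups of $\underline{\alpha}$ (labelled from above by $\alpha$), the caps of $\bar{\lambda}$ (labelled from below by $\lambda$), and the line segments of $t$ whose endpoints are labelled by $\alpha$ at the top and by $\lambda$ at the bottom.

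The cup count is handled by construction. Since $\alpha$ is built from $\eta$ by switching the rightmost $k(\lambda)$ $\wedge$'s with the leftmost $k(\lambda)$ $\vee$'s, the pattern obtained in a neighbourhood of the $\delta$ crosses is $\vee\cdots\vee \times\cdots\times \wedge\cdots\wedge$ with $k(\lambda)$ $\vee$'s preceding and $k(\lambda)$ $\wedge$'s following the cross-block. Running the cup-algorithm on this pattern yields precisely $k(\lambda)$ nested cups, each of which has a $\vee$ on its left leg and a $\wedge$ on its right leg, so $\underline{\alpha}$ is consistently oriented by $\alpha$. Consistency of the caps of $\bar{\lambda}$ is a general property of the cap diagram: $\underline{\lambda}\lambda$ is consistent by definition, and $\bar{\lambda}$ is just the reflection of $\underline{\lambda}$ across the numberline, so $\lambda \bar{\lambda}$ is consistent.

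The real content is the consistency of line segments, which is where the specific construction of $\alpha$ and $t$ is essential. By construction $t$ joins the ray at position $a$ of $\bar{\lambda}$ to the ray at position $a$ of $\underline{\alpha}$ for all but finitely many $a$. For $a \ll 0$ both $\lambda$ and $\alpha$ are labelled $\wedge$, and for $a \gg 0$ both are labelled $\vee$, so the overwhelming majority of identity-matched line segments are automatically consistent. For the finitely many middle positions where rays on the two sides do not sit at the same vertex, the matching is forced by the crossingless requirement, and the labels line up because of the combinatorial identity $k(\lambda)=d(\lambda)+rk(\lambda)$: after removing the $d(\lambda)$ caps of $\bar{\lambda}$ and the $k(\lambda)$ cups of $\underline{\alpha}$, the residual free $\wedge$'s on each side appear on the left and the residual free $\vee$'s on the right, in equal numbers on top and bottom, and are then necessarily paired up like-to-like by the unique crossingless matching.

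The main obstacle is the last paragraph: verifying that the finitely many exceptional line segments pair $\wedge$ with $\wedge$ and $\vee$ with $\vee$. The cleanest way to carry this out is to use the global ``bracket matching'' property of weight diagrams, which says that in any weight diagram, the unbound $\wedge$'s all lie to the left of the unbound $\vee$'s. Applying this to both $\lambda$ and $\alpha$ and combining with the fact that $\alpha$ was chosen to have exactly $k(\lambda)$ cups (matching the number of caps plus rank of $\lambda$), one forces the order-preserving matching of free rays to connect $\wedge$'s to $\wedge$'s and $\vee$'s to $\vee$'s. The free-vertex condition at $\times$ and $\circ$ positions is built into the setup of $t$ as a crossingless matching, so no additional check is needed there.
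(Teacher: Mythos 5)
The paper records this lemma without proof (the text simply calls it ``obvious''), so there is no argument in the paper against which to compare your attempt; I can only evaluate it on its own merits. Your decomposition into three checks --- the $k(\lambda)$ nested cups of $\underline{\alpha}$, the $d(\lambda)$ caps of $\bar{\lambda}$, and the line segments of $t$ --- is the correct one, and the first two checks are handled correctly (the cups of $\alpha$ each have $\vee$ on the left leg and $\wedge$ on the right because of the $\vee\cdots\vee\times\cdots\times\wedge\cdots\wedge$ pattern; the caps of $\bar{\lambda}$ are consistent by the defining property of the cap diagram). The essential content, which you rightly flag as the main obstacle, is the line-segment check, and your route is sound: the cup/cap algorithm forces the free $\wedge$-rays to lie strictly to the left of the free $\vee$-rays on both boundaries, and an order-preserving crossingless matching that eventually agrees with the identity will pair like with like provided the ray counts agree. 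That agreement is exactly what $k(\lambda)=d(\lambda)+rk(\lambda)$ buys you: truncating to a large interval $[-N,N]$, the top $\underline{\alpha}$ contributes $N+1-\delta-k(\lambda)$ free $\wedge$-rays while the bottom $\bar{\lambda}$ contributes $N+1-\#\times-d(\lambda)$, and these coincide because $\#\times=rk(\lambda)+\delta$ for $\delta=m-n\ge 0$. You invoke the identity rather than carrying this count out explicitly, but that is the only missing detail in an otherwise correct plan.
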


As a consequence any vertex where $\alpha(i) \neq \zeta(i)$ will result in a switch of a label in $\lambda$ when passing from $\lambda \to \lambda^{\dagger}$. This results in the following simplified description for $\lambda \mapsto \lambda^{\dagger}$.

\subsection{An algorithm.} We now give a more explicit description of the map $\theta$, i.e. we describe how to transform the weight diagram of the bipartition $\lambda$ into the weight diagram of the highest weight $\lambda^{\dagger}$. The weight diagram $\alpha$ differs from $\zeta$ in the following way: To the left of the $m-n$ crosses we have $n - k(\lambda)$ different labels and to the right of the $m-n$ crosses infinitely many. Define $M$ to be the largest vertex labelled with a $\times$ or $\circ$ or part of a cup in the weight diagram of $\lambda$. The matching $t$ will be the identity (meaning $t$ connects the $i$-th vertex of $\alpha$ with the $i$-th vertex of $\lambda$) from vertices greater or equal to \[ T= max(k(\lambda) + 1, M+1).\] Since $\alpha(i) \neq \zeta(i)$ for all $i \geq T$, all labels in $\lambda$ at vertices greater or to $T$ will be switched (i.e. a $\vee$ turns into a $\wedge$ and vice versa) when passing from $\lambda$ to $\lambda^{\dagger}$. Now define \begin{align*} X= \begin{cases} 0 & M+1 \leq k(\lambda) + 1 \\ M- k(\lambda) & \text{else.} \end{cases} \end{align*} A free vertex is one which does not have a cross, or a circle or is not part of a cup. 

\begin{cor} The weight diagram of $\lambda^{\dagger}$ is obtained from the weight diagram of $\lambda$ by switching all labels at vertices $\geq T$ and switching the first $X + n-k(\lambda)$ free vertices $< T$. 
\end{cor}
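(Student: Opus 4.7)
The plan is to derive the algorithm from the characterization of $\lambda^{\dagger}$ as the unique weight for which $\lambda^{\dagger} t \zeta$ is consistently oriented, combined with the preceding lemma that $\lambda t \alpha$ is consistently oriented. Granting these, $\lambda^{\dagger}(a)$ differs from $\lambda(a)$ precisely at those bottom vertices $a$ whose line in $t$ has its top endpoint at a position $b$ with $\alpha(b)\neq\zeta(b)$; at the cap endpoints of $\lambda$ no lines reach the top, the cap consistency condition is automatically satisfied, and the labels there are inherited unchanged from $\lambda$.

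First I would compare $\alpha$ and $\zeta$ position by position, using the explicit recipe that $\alpha$ is obtained from $\eta$ by swapping the rightmost $k(\lambda)$ $\wedge$'s with the leftmost $k(\lambda)$ $\vee$'s, and that $\zeta$ is the weight diagram of the trivial representation of $Gl(m|n)$. The disagreements occur in exactly two regions: on the left, at the $n-k(\lambda)$ positions in $[-m+1,-\delta-k(\lambda)]$ where $\alpha=\wedge$ and $\zeta=\vee$; and on the right, at every position $\geq k(\lambda)+1$ where $\alpha=\vee$ and $\zeta=\wedge$.

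For $a\geq T$ I would observe that past both the maximal nontrivial vertex $M$ of $\lambda$ and the rightmost cup endpoint $k(\lambda)$ of $\alpha$, both weight diagrams consist only of free $\vee$'s; cumulative ray counts coincide, so the default vertical line $a\leftrightarrow a$ is used. The top label then switches from $\vee$ (in $\alpha$) to $\wedge$ (in $\zeta$), forcing the flip at every $a\geq T$, which is the first clause of the statement.

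For the vertices $<T$ the argument is order-theoretic. The top free vertices $<T$ at which $\alpha$ and $\zeta$ disagree comprise the $n-k(\lambda)$ vertices of the left region together with the $X$ vertices in $[k(\lambda)+1,T-1]$ (empty if $M\leq k(\lambda)$, so $X=0$; otherwise of size $M-k(\lambda)$). Since $t$ is crossingless and order-preserving, and becomes vertical both far to the left (where $\lambda$ and $\alpha$ share an infinite $\wedge$-tail) and at $\geq T$, these $X+n-k(\lambda)$ distinguished top vertices pair with the $X+n-k(\lambda)$ rightmost free bottom vertices $<T$, while the remaining free bottom vertices $<T$ pair with top positions at $\leq -m$ where $\alpha=\zeta=\wedge$ and require no flip. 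The main obstacle is to verify this order-preserving bookkeeping rigorously by tracking cumulative free-ray counts through the cap and cup structure of the middle region.
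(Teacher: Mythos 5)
Your argument is correct and follows precisely the route the paper itself sketches: it uses the two facts that $\lambda t\alpha$ and $\lambda^{\dagger}t\zeta$ are consistently oriented to reduce the flip condition to a comparison of $\alpha$ with $\zeta$ along the lines of $t$, identifies the two disagreement regions $[-m+1,-\delta-k(\lambda)]$ and $[k(\lambda)+1,\infty)$, and then uses that $t$ is the identity at vertices $\geq T$ together with the order-preserving crossingless structure of $t$ to conclude that exactly the $X+n-k(\lambda)$ rightmost bottom rays below $T$ hit disagreement positions. This is the same reasoning as the preceding ``An algorithm'' subsection of the paper, which serves as the paper's (unlabelled) proof of this corollary; the only detail left implicit in both accounts is the tail-count bookkeeping (e.g.\ that $\#\times_\lambda-\#\circ_\lambda=\delta$ so that the finitely many bottom and top rays between the two vertical tails are in bijection), which you explicitly flag as the remaining step.
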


\textbf{Example.} Let us consider typical weights. Say the $\times$ are at position $v_1 > v_2 > \ldots > v_m$ and the circles at position $w_1 > \ldots > w_n$. Then \begin{align*} \lambda_1^{\dagger} = v_1, & \lambda_2^{\dagger} = v_2 + 1, \ldots, \lambda_m^{\dagger} = v_m + m-1, \lambda_{m+1}^{\dagger} = w_n + m - 1, \ldots, \\ & \lambda_{m+n}^{\dagger} = w_1 + m -n.\end{align*} The inverse $\lambda^{\dagger} \mapsto \lambda$: Given any typical weight $\lambda^{\dagger}$ we distinguish two cases. Either $T = M+1$ where $M$ is the rightmost vertex labelled with $\times$ or $\circ$, or $T = n+1$. If $T=n+1$, all the free vertices up to $n$ are labelled with $\wedge$'s and the remaining ones to the right with $\vee$'s. Otherwise there will be $\vee$'s in the $T - n - 1$ free positions to the left of the rightmost cross or circle. If $T=M+1$ we switch all the labels (from $\wedge$ to $\vee$ and vice versa) at vertices $\geq M+1$ as well as the labels at the first $M-n$ free vertices left of $M$. This describes $\theta$ and $\theta^{-1}$ for $\lambda^{\dagger}$ typical.


\subsection{The map $\theta$ in the typical case}  If $\lambda^{\dagger}$ is typical, an explicit expression for the two maps $\theta$ and $\theta^{-1}$ can be given in terms of the coordinates of the bipartition using \cite{Moens-van-der-Jeugt-character}, \cite{Moens-van-der-Jeugt-composite}. The authors define a subset $\Lambda^{st} \subset \Lambda^x$ and attach to such a bipartition $(\mu,\nu)$ (called $\mathfrak{gl}(m|n)$-standard) the highest weight $\tilde{\Theta}(\mu,\nu) =\lambda_{\mu,\nu}$. Conversely to any typical weight $\lambda^{\dagger}$ there is an attached bipartition $(\mu,\nu)$ \cite[lemma 3.15]{Moens}.

\begin{lem} Let $\lambda$ be such that $R(\lambda) = L(\lambda^{\dagger})$ is typical. Then $\lambda^{\dagger} = \lambda_{\lambda^L,\lambda^R}$ and the inverse $\theta^{-1}(\lambda^{\dagger})$ is given by the rule of  \cite[lemma 3.15]{Moens}
\end{lem}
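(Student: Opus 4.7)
The plan is to combine the explicit typical-case algorithm derived in the previous subsection with the dictionary between weight diagrams and bipartitions given by the definitions of $I_\wedge$ and $I_\vee$. First I would record the setup: typicality of $R(\lambda) = L(\lambda^\dagger)$ forces $d(\lambda) = 0$ (no cups in the weight diagram of $\lambda$) together with atypicality zero, i.e.\ $n - rk(\lambda) = 0$. Hence $k(\lambda) = n$, the threshold $T = \max(k(\lambda)+1, M+1)$ simplifies, and the weight diagram of $\lambda$ is determined by the positions $v_1 > \cdots > v_m$ of its crosses and $w_1 > \cdots > w_n$ of its circles together with the labels $\wedge, \vee$ placed on the remaining vertices.

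Next I would apply the typical example formula displayed just above to write
\[ \lambda^\dagger = (v_1,\, v_2+1,\, \ldots,\, v_m + m - 1 \mid w_n + m-1,\, w_{n-1} + m - 2,\, \ldots,\, w_1 + m - n). \]
Then I would translate the $v_i$ and $w_j$ back into entries of $\lambda^L$ and $\lambda^R$ via $I_\wedge(\lambda) = \{\lambda^L_i - i + 1\}$ and $I_\vee(\lambda) = \{j - \delta - \lambda^R_j\}$, using that the crosses lie in $I_\wedge \cap I_\vee$ and the circles in $\Z \setminus (I_\wedge \cup I_\vee)$. Substituting these expressions produces an explicit formula for $\lambda^\dagger$ as a function of the partitions $\lambda^L, \lambda^R$ and $\delta = m - n$.

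Finally I would compare this explicit formula with the composite highest weight $\lambda_{(\lambda^L, \lambda^R)} = \tilde\theta(\lambda^L, \lambda^R)$ of Moens--Van der Jeugt. Their recipe attached to a $\mathfrak{gl}(m|n)$-standard bipartition produces a typical highest weight by the same displacement pattern: the first $m$ coordinates by adding a staircase $(0, 1, \ldots, m-1)$ to the cross-positions (encoding $\lambda^L$), and the last $n$ by adding a shifted staircase to the circle-positions (encoding $\lambda^R$, shifted by $\delta$). A term-by-term match gives $\lambda^\dagger = \lambda_{(\lambda^L, \lambda^R)}$, and reversing the identical dictionary recovers the bipartition from a given typical $\lambda^\dagger$, which is precisely the procedure of \cite{Moens}, lemma 3.15. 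The only real obstacle is bookkeeping: aligning the decreasing-index conventions $v_1 > \cdots > v_m$, $w_1 > \cdots > w_n$ of the Brundan--Stroppel diagrams with the partition-index convention of Moens--Van der Jeugt and tracking the shift by $\delta$ in $I_\vee$; once this dictionary is pinned down the identification is a direct comparison of formulas with no additional content.
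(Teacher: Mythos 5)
Your approach is genuinely different from the paper's. The paper's argument is character-theoretic and very short: for $d(\lambda)=0$, no $\vee\wedge$-pairs exist in the cap diagram of the bipartition, so $lift(\lambda)=\lambda$ and $ch\,R(\lambda) = s_{\lambda^L,\lambda^R}$ by \cite{Comes-Wilson}; on the other hand $ch\,L(\lambda_{\lambda^L,\lambda^R}) = s_{\lambda^L,\lambda^R}$ by \cite{Moens-van-der-Jeugt-composite}; since characters determine irreducibles, $\lambda^\dagger = \lambda_{\lambda^L,\lambda^R}$. That sidesteps all coordinate bookkeeping. You instead aim for a direct combinatorial verification by computing $\lambda^\dagger$ explicitly from $(\lambda^L,\lambda^R)$ and matching the result against Moens--Van der Jeugt's recipe. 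In principle this would give a self-contained proof and yield an explicit coordinate formula as a by-product, which is additional value the character argument does not provide.

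However, as written there is a real gap, and it sits exactly where the lemma has content. You correctly note that the crosses of $\lambda$ sit at $I_\wedge\cap I_\vee$ and the circles at $\Z\setminus(I_\wedge\cup I_\vee)$, but these are not simple substitutions into $\{\lambda^L_i - i + 1\}$ and $\{j-\delta-\lambda^R_j\}$: you must first determine \emph{which} indices $i,j$ produce coinciding values, and that intersection pattern depends nontrivially on the interplay of $\lambda^L$, $\lambda^R$, and $\delta$ (it is essentially the Frobenius-type coordinate gymnastics underlying the notion of $\mathfrak{gl}(m|n)$-standardness). The final step, ``a term-by-term match gives $\lambda^\dagger = \lambda_{(\lambda^L,\lambda^R)}$,'' is stated as if it were mechanical, but you never write down the Moens--Van der Jeugt formula and you never perform the match; asserting it is circular, since the agreement of the two recipes is precisely what is to be proved. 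To close the argument you would need to (a) extract from $I_\wedge,I_\vee$ the positions $v_1>\cdots>v_m$ and $w_1>\cdots>w_n$ as explicit functions of $(\lambda^L,\lambda^R,\delta)$, and (b) compare with the explicit expression for $\lambda_{\mu,\nu}$ in \cite{Moens}, coordinate by coordinate, taking care with the reversal $w_n,\ldots,w_1$ in the odd block and the shift by $\delta$. Until (a) and (b) are actually carried out, the proposal is a plausible plan rather than a proof.
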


\begin{proof} The set $\Lambda^{st}$ is a subset of $\Lambda^x$. Hence both $\lambda^{\dagger}$ and $\lambda_{\mu,\nu}$ are defined on $\Lambda^{st}$. Every typical weight in $X^+$ is in the image of $\tilde{\theta}$ by \cite[lemma 3.15]{Moens}. The character of $L(\lambda_{\mu,\nu})$ is computed in \cite{Moens-van-der-Jeugt-composite} and is given by the supersymmetric Schur function $s_{\mu,\nu}$. Similarly the character of $R(\lambda) = L(\lambda^{\dagger})$ is computed in \cite{Comes-Wilson}. The two characters are equal. Since the character determines the irreducible representation the result follows. 
\end{proof}

Note that the condition $\mathfrak{gl}(m|n)$-standard of loc.cit is not equivalent to the condition $(m|n)$-cross. Furthermore the map which associates to any bipartition the weight $\lambda_{\mu,\nu}$ does in general not agree with $\lambda \mapsto \lambda^{\dagger}$.


\subsection{Kostant weights} A weight $\mu$ is called a Kostant weight if the cup diagram of $L(\mu)$ is
completely nested. In other words if its weight diagram is $\wedge \vee \wedge
\vee$-avoiding in the sense that there are no vertices $i<j<k<l$ labelled in this order by $\wedge \vee \wedge \vee$.

\begin{lem} Every irreducible mixed tensor is a Kostant module.
\end{lem}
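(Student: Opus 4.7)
The plan is to combine the earlier corollary ($R(\lambda)$ is irreducible iff $d(\lambda)=0$, in which case $R(\lambda)=L(\lambda^{\dagger})$) with the explicit algorithmic description of $\lambda\mapsto\lambda^{\dagger}$ from the ``An algorithm'' subsection above. So I would immediately reduce to showing that whenever $d(\lambda)=0$, the weight diagram of $\lambda^{\dagger}$ is $\wedge\vee\wedge\vee$-avoiding. First I would rephrase $d(\lambda)=0$ combinatorially: the absence of cups in $\lambda$'s cup diagram means that reading the weight diagram from left to right and ignoring $\times$- and $\circ$-labels, one never encounters the pattern $\vee\wedge$. Hence there is a position $p$ (the leftmost $\vee$ in $\lambda$) such that every free label at a position $<p$ is $\wedge$ and every free label at a position $\geq p$ is $\vee$.

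The main step is to locate the $\vee$-labels of $\lambda^{\dagger}$ using the algorithm. Writing $T=\max(k(\lambda)+1,M+1)$ and $Y=X+n-k(\lambda)$, the algorithm flips all labels at positions $\geq T$ and the rightmost $Y$ free labels at positions $<T$. Using the separation property of the previous paragraph, the labels of $\lambda$ at positions $\geq T$ consist of $\wedge$'s on $[T,p-1]$ (if $p>T$) followed by $\vee$'s beyond, and the free labels at positions $<T$, when scanned leftward from $T$, consist first of the $\vee$'s in $[p,T-1]$ (if $p<T$) and then of the $\wedge$'s at positions $<p$. Splitting into the cases $p\leq T$ versus $p>T$ and keeping track of $X$ and $Y$, I would verify that the set of positions carrying a $\vee$-label in $\lambda^{\dagger}$ is a single consecutive run of free positions, interspersed only by the $\times$- and $\circ$-labels that $\lambda^{\dagger}$ inherits from $\lambda$, while every position outside that run is labelled $\wedge$, $\times$, or $\circ$. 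Equivalently, no $\wedge$-label of $\lambda^{\dagger}$ lies strictly between two of its $\vee$-labels.

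The main obstacle is the bookkeeping in this case analysis: in each subcase (distinguishing $T=M+1$ from $T=k(\lambda)+1$, and $p\leq T$ from $p>T$, and within $p\leq T$ whether the first $Y$ free positions exhaust all $\vee$'s in $[p,T-1]$ or not) one must check that the flipped positions align precisely with the claimed consecutive block and that the resulting number of $\vee$'s equals the expected atypicality $n-rk(\lambda)$ of $\lambda^{\dagger}$. Once the structural fact is in place, the conclusion is immediate: any four vertices $i<j<k<l$ labelled $\wedge\vee\wedge\vee$ would require a $\wedge$ strictly between two $\vee$'s, which the structural claim has excluded. Therefore the weight diagram of $\lambda^{\dagger}$ is $\wedge\vee\wedge\vee$-avoiding and $L(\lambda^{\dagger})$ is a Kostant module.
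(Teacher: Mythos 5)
Your proof is correct and takes essentially the same approach as the paper's: reduce to the combinatorial claim that $d(\lambda)=0$ forces the free labels of $\lambda$ to read $\wedge\cdots\wedge\vee\cdots\vee$ (after discarding $\times$ and $\circ$), then observe that the passage $\lambda\mapsto\lambda^{\dagger}$ flips precisely a rightward suffix of the free positions, so that the result is of the form $\wedge^{*}\vee^{*}\wedge^{*}$ and hence $\wedge\vee\wedge\vee$-avoiding. This is exactly the argument the paper gives, just with more of the bookkeeping spelled out. One small remark: the heavy case-splitting you anticipate (on $T=M+1$ versus $T=k(\lambda)+1$, on $p\le T$ versus $p>T$, etc.) is not really necessary once one notices that the algorithm's two flipping steps — all positions $\ge T$, plus the rightmost $X+n-k(\lambda)$ free positions below $T$ — combine into flipping a single suffix of the sequence of free vertices. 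Flipping any suffix of a word of the form $\wedge^{*}\vee^{*}$ yields a word of the form $\wedge^{*}\vee^{*}\wedge^{*}$, which has no $\wedge$ strictly between two $\vee$'s and is therefore automatically $\wedge\vee\wedge\vee$-avoiding, with no further case analysis.
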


\begin{proof} This follows from the simplified algorithm since the weight diagram of a
bipartition with $d(\lambda) = 0$ looks like

\medskip
\begin{center}

  \begin{tikzpicture}
 \draw (-5,0) -- (6,0);
\foreach \x in {1,2,3,4,5} 
     \draw (\x-.1, .2) -- (\x,0) -- (\x +.1, .2);
\foreach \x in {-4,-3,-2,-1,0} 
     \draw (\x-.1, -.2) -- (\x,0) -- (\x +.1, -.2);
\foreach \x in {} 
     \draw (\x-.1, .1) -- (\x +.1, -.1) (\x-.1, -.1) -- (\x +.1, .1);

\end{tikzpicture}
\medskip

\end{center} after removing the crosses and circles. Applying $\theta$ means specifying a vertex,
say $V$, and switching all free labels at vertices $\geq V$. This will not create
any neighbouring vertices labelled $\vee \wedge \vee \wedge$.
\end{proof}

\begin{cor} \label{properties-kostant} If $L(\mu)$ is an irreducible mixed tensor then:
\begin{enumerate} 
\item The Kazhdan-Lusztig polynomials are monomials: $p_{\lambda,\mu}(q) =
q^{l(\lambda,\mu)}$ for all $\lambda \leq \mu$ and $l(\lambda,\mu)$ as defined in \cite[5.2]{Brundan-Stroppel-2}. In particular $\sum_{i \geq 0} dim Ext^i (K(\lambda),L(\mu)) \leq 1$ for all $\lambda \in X^+$.
\item $L$ possesses a resolution by multiplicity free direct sums of Kac modules
(BGG-resolution).
\end{enumerate}
\end{cor}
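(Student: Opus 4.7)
The plan is to reduce all three assertions to established properties of Kostant modules in the Khovanov diagram algebra $K(m|n)$, since the preceding lemma ensures that any irreducible mixed tensor $L(\mu)$ has a Kostant weight, i.e.\ its cup diagram $\underline{\mu}$ is completely nested. So the corollary is really a statement about $\wedge\vee\wedge\vee$-avoiding weights, and the three parts should be obtainable as direct consequences of the combinatorics developed by Brundan--Stroppel together with Koszulity of $\calR_{mn}$.

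For (1), I would appeal to the diagrammatic formula for Kazhdan--Lusztig polynomials in $K(m|n)$: $p_{\lambda,\mu}(q)$ counts oriented cup diagrams $\underline{\lambda}\mu$ weighted by $q^{\text{(number of clockwise cups)}}$. When the cup diagram $\underline{\mu}$ is fully nested, for each $\lambda \leq \mu$ there is at most one consistent orientation of $\underline{\lambda}\mu$, so $p_{\lambda,\mu}(q)$ is either zero or a single monomial $q^{l(\lambda,\mu)}$. The function $l(\lambda,\mu)$ is the number of cups that get a $\vee$ on the left, which is the natural length function in this setting. This monomiality characterises Kostant weights diagrammatically and is recorded in the Brundan--Stroppel papers cited throughout the article.

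For (2), by Koszulity of every block of $\calR_{mn}$ (as used already in the proof of \ref{loewy-length}) and the standard relation between graded Ext and KL polynomials, one has
\[ \sum_{i\geq 0} \dim \Ext^i(K(\lambda),L(\mu)) \;=\; p_{\lambda,\mu}(1) \]
(more precisely, the graded Euler characteristic specialises to the KL polynomial evaluated at $q=1$, because Kac modules are the standard modules $\calV(\lambda)$ under $E$). Since by (1) this polynomial is either $0$ or $1$, assertion (2) is immediate. For (3), once the Ext-bound of (2) is in hand, the BGG resolution can be constructed by the now-standard procedure: one builds a complex whose $i$-th term is the direct sum of $K(\lambda)$ with $\lambda$ at ``distance'' $i$ from $\mu$, and the multiplicity-freeness of (2) together with Koszul duality guarantees exactness. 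This is the same argument used in the classical setting (Lepowsky, Enright--Shelton) and carried out for Lie superalgebras in the work of Coulembier and collaborators.

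The only real content is (1); (2) and (3) are formal consequences of (1) plus Koszulity and Koszul duality of the Khovanov algebra, both of which are theorems of Brundan--Stroppel. The main (expository) obstacle is thus verifying the monomiality of the KL polynomials for nested cup diagrams, but this is a direct unwinding of the definition once one reads off that a nested $\underline{\mu}$ admits no choice in the orientation of the diagram $\underline{\lambda}\mu$. I would therefore present the proof as a short invocation of these three cited facts rather than rederive any of them.
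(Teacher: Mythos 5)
Your proposal is correct and takes essentially the same route as the paper: reduce via the preceding lemma to the fact that $L(\mu)$ has a Kostant weight, then invoke the Brundan--Stroppel results on Kostant weights (the paper cites \cite{Brundan-Stroppel-2}, Lemma 7.2 and Theorem 7.3, and gives no further argument). The extra detail you provide on monomiality of KL polynomials for nested cup diagrams and on Koszulity is a faithful unwinding of those cited results rather than a different proof.
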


\begin{proof} This are properties of Kostant weights \cite{Brundan-Stroppel-2}, lemma
7.2 and theorem 7.3. 
\end{proof}

\begin{remark} In fact Kostant modules are precisely the irreducible representations whose associated Kazhdan-Lusztig polynomials are monomials. Note also that this corollary is a generalization of \cite{Cheng-Kwon-Lam} who dealt with the case of covariant representations.
\end{remark}


\subsection{Tensor products.} \label{sec:tensor} We recall a formula for the decomposition $F_{m|n}(R(\lambda)) \otimes F_{m|n}(R(\mu))$ based on the decomposition in Deligne's category.

\textit{Caps.} We attach to the weight diagram of a bipartition a cap-diagram as in \cite{Brundan-Stroppel-1}. For integers $i<j$ one says that $(i,j)$ is a $\vee \wedge$-pair if they are joined by a cap. For $\lambda, \mu \in \Lambda$ one says that $\mu$ is linked to $\lambda$ if there exists an integer $k \geq 0$ and bipartitions $\nu^{(n)}$ for $0 \leq n \leq k$ such that $\nu^{(0)} = \lambda,  \nu^{(k)} = \mu$ and the weight diagramm of $\nu^{(n)}$ is obtained from the one of $\nu^{(n-1)}$ by swapping the labels of some pair $\vee \wedge$-pair. Then put \[ D_{\lambda, \mu}(m-n) = D_{\lambda, \mu} = \begin{cases} 1 \ \ \mu \text{ is linked to } \lambda \\ 0 \ \ \text{otherwise.} \end{cases} \] One has $D_{\lambda,\lambda} = 1$ for all  $\lambda$. Further $D_{\lambda,\mu} =  0$ unless $\mu = \lambda$ or $|\mu| = (|\lambda^L| - i, |\lambda^R| - i)$ for some $i> 0$.  Let $t$ be an indeterminate and $R_{\delta}$ respective $R_t$ the split Grothendieck rings of $\uRep(GL_{\delta})$ over $k$ respective of $\uRep(GL_t))$ over the fraction field $k(t)$. Now define $\lift_{\delta}:R_{\delta} \to R_t$ as the $\Z$-linear map defined by $\lift_{\delta}(\lambda) = \sum_{\mu} D_{\lambda,\mu} \mu$. By \cite[Theorem 6.2.3]{Comes-Wilson} $\lift_{\delta}$ is a ring isomorphism for every $\delta \in k$. 


\textit{Tensor products.} To get the tensor product in $\calR_{m|n}$ the tensor product is computed in $\uRep(GL_d)$ and then pushed to $Rep(GL(m|n))$ by means of the tensor functor $F_{m|n}$. We identify elements of $R_{\delta}$ and $R_t$ with formal linear combinations of bipartitions. We also often use a $;$ to separate $\lambda^L$ and $\lambda^R$ in order to avoid brackets. For example $(i;1^j)$ denotes the element in $R_{\delta}$ corresponding to the indecomposable element $R((i), (1^j))$. By \cite[Theorem 7.1.1]{Comes-Wilson} the following decomposition holds for arbitrary bipartitions in $R_t$: \[ \lambda \mu = \sum_{v \in \Lambda} \Gamma_{\lambda \mu}^{\nu} \nu\] with the numbers \cite[Theorem 5.1.2]{Comes-Wilson} \[ \Gamma_{\lambda \mu}^{\nu} = \sum_{\alpha,\beta,\eta,\theta \in P} (\sum_{\kappa \in P} c_{\kappa \alpha}^{\lambda^L} c_{\kappa \beta}^{\mu^R}) \ (  \sum_{\gamma \in P} c_{\gamma \eta}^{\lambda^R} c_{\gamma \theta}^{\mu^L}) \ c_{\alpha \theta}^{\nu^L} c_{\beta \eta}^{\nu^R}, \] where the $c$-coefficients are the Littlewood-Richardson numbers. In particular if $\lambda \vdash (r,s)$, $\mu \vdash (r',s')$, then $\Gamma_{\lambda\mu}^{\nu} = 0$ unless $|\nu| \leq (r+r',s+s')$. As a special case we obtain \[ (\lambda^L; 0) \ (0;\mu^R) = \sum_{\nu} \sum_{\kappa \in P} c_{\kappa \nu^L}^{\lambda^L} c_{\kappa \nu^R}^{\mu^R} \nu \] in $R_t$. So to decompose tensor products in $\uRep(GL_{\delta})$ apply the following three steps: Determine the image of the lift $\lift_{\delta}(\lambda \mu)$ in $R_t$, use the formula above and then take $\lift_{\delta}^{-1}$.

\textit{Projective modules.} Note that $Proj$ is a tensor ideal in $\calR_{m|n}$, i.e. the tensor product of a projective module with any other module will split in a direct sum of projective modules. Since every projective module is in the image of $F_{m|n}$ and we have an explicit bijection $\theta$ between the projective modules and bipartitions with $k(\lambda) = n$,  the tensor product formula in the Deligne category gives us an  algorithm for the decomposition.

%

\medskip

\begin{example} We compute the tensor product $P(1,1,1,0|0) \otimes P(1,1,1,0|0)$ in $Rep(GL(4|1))$. The corresponding bipartition is $\theta^{-1}(1,1,1,0|0) = (1^4;1)$. We have \[ \lift(1^4;1) = (1^4;1) \oplus (1^3;0).\] So we have to compute the tensor product \[ ((1^4;1) + (1^3;0)) \otimes ((1^4;1) + (1^3;0) \] in $R_t$. This decomposes in $R_t$ as \begin{align*} & (2^4;2) + (2^4;1^2) + ((2^3,1^2); 1^2) + 4((2^3,1);1) + 2(2^3;0)  + ((2^2,1^4);2) \\ & + ((2,1^4); 1^2) + 4((2^2,1^3);1) + 4((2^2,1^2);0) + ((2,1^6);2)  + ((2,1^6);1^2)\\ & + 4((2,1^5);1)  + 4((2,1^4);0) + (1^8;2) + (1^8;1^2) + 4 (1^7;1) + 4(1^6;0) \end{align*} and gives in $\calR_{4|1}$ the decomposition \begin{align*} & P(1,1,1,0|0) \otimes P(1,1,1,0|0) = \\ & P(2,2,2,1|-1) \oplus P(2,2,2,-1|1) \oplus 2P(2,2,2,0|0) \\  & \oplus L(2,2,1,-1|2) \oplus L(2,1,0,0|1) \oplus 4 L(2,2,1,0|1) \oplus 4 L(2,2,0,0|0) \\ & \oplus L(2,1,1,-1|3) \oplus L(2,1,0,0|3) \oplus 4 L(2,1,1,0|2) \oplus 4L(2,1,1,1|1) \\ &  \oplus L(1,1,1,-1|4)   \oplus L(1,1,0,0|4)   \oplus L(1,1,1,0|3) \oplus L(1,1,1,1|2).  \end{align*} 
\end{example}




\subsection{Duals} \label{duals} We also obtain a description of the dual of any irreducible module. Brundan \cite{Brundan-Kazhdan} gave an algorithm using certain operators on crystal graphs. For an algorithm on the cup diagram $\underline{\lambda}$ see \cite{Brundan-Stroppel-2}. Any irreducible module occurs as socle and head in its projective cover. Clearly $P(\lambda^{\dagger})^{\vee} = P((\lambda^{\dagger})^{\vee})$. On the other hand $P(\lambda^{\dagger})^{\vee} = R(\lambda^L, \lambda^R)^{\vee}  = R(\lambda^R, \lambda^L) = P((\lambda^{\dagger})^{\vee})$. So to compute the dual of an irreducible module, take its highest weight and associate to it the unique $(m|n)$-cross bipartition $(\lambda^L,\lambda^R)$ labelling its projective cover, switch it to $\tilde{\lambda} =(\lambda^R,\lambda^L)$ and then compute $\tilde{\lambda}^{\dagger}$. Then  $L(\lambda^{\dagger})^{\vee} = L(\tilde{\lambda}^{\dagger}).$ For an explicit description of the dual of an irreducible $GL(n|n)$-module using this see \cite{Heidersdorf-Weissauer-tensor}.

\begin{example} We compute the duals of the irreducible modules in the maximal atypical block of $\calR_2$. Since every such module is a Berezin-twist of one of the $S^i:=[i,0] = L(i,0|0,-i), \ i \in \N$, we may restrict to this case. The projective cover of $S^i$ is the module $R((i+1,1); (2,1^i))$. Hence the dual of the projective cover $P[i,0]$ is the module $R((2,1^i),(i+1,1))$. The irreducible module in the socle has weight $[1,1-i]$, hence \[ (S^i)^{\vee} = [1,1-i], \] i.e. $S^i = Ber^{i-1} (S^i)^{\vee}$. In particular the representations $ Ber^{-l} S^{2l+1}$ are selfdual.
\end{example}

\subsection{Contravariant modules for $m=n$.} The contravariant modules are the modules in the decomposition $T(0,r) = (V^{*})^{\otimes r}$. Hence they are the duals of the covariant modules $\{ \lambda \} = S_{\lambda}(V)$. Recall that the highest weight of $\{ \lambda\}= : L(\mu)$ is obtained as follows: Put $\mu_i = \lambda_i$ for $i=1,\ldots,n$ and $\mu_{n+i} = max(0, \lambda^*_i - n)$ for $i=1,\ldots,n$ where $\lambda^*$ is the conjugate partition and $\lambda$ is an $(n,n)$-hook partition \cite{Berele-Regev} \cite{Sergeev}. The set of this partitions is denoted by $H(n,n)$. Put further $(\lambda_1,\ldots, \lambda_r)^v = (-\lambda_r, \ldots,- \lambda_1)$. Recall that for $\lambda \in H(n,n)$ we have $\lambda^* \in H(n,n)$. 

\begin{lem} $\{\lambda\}^{\vee}$ has highest weight $\mu^v$ where $\mu$ is the highest weight of $\{ \lambda^* \}$. 
\end{lem}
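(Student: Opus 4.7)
The plan is to reduce the lemma to an identity between highest weights of mixed tensors and to verify that identity via the weight-diagram formalism. Since $\{\lambda\}=S_\lambda(V)$ appears as a direct summand of $V^{\otimes|\lambda|}$ for $\lambda\in H(n,n)$, the Comes--Wilson parametrization identifies $\{\lambda\}=R(\lambda,\emptyset)$. The duality $R(\lambda^L,\lambda^R)^\vee=R(\lambda^R,\lambda^L)$ recalled in the preceding subsection then gives $\{\lambda\}^\vee=R(\emptyset,\lambda)$ of highest weight $(\emptyset,\lambda)^\dagger$. Similarly $\{\lambda^*\}=R(\lambda^*,\emptyset)$ and $\mu=(\lambda^*,\emptyset)^\dagger$, so the lemma is equivalent to the identity
\[(\emptyset,\lambda)^\dagger=\bigl((\lambda^*,\emptyset)^\dagger\bigr)^v.\]

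First I would compare the two bipartition weight diagrams. For $\delta=0$ one reads off
\begin{align*}
I_\wedge(\emptyset,\lambda)&=\{1-i:i\ge 1\}, & I_\vee(\emptyset,\lambda)&=\{i-\lambda_i:i\ge 1\},\\
I_\wedge(\lambda^*,\emptyset)&=\{\lambda^*_i-i+1:i\ge 1\}, & I_\vee(\lambda^*,\emptyset)&=\{i:i\ge 1\}.
\end{align*}
Trivially $\Z=\{i:i\ge 1\}\sqcup\{1-i:i\ge 1\}$, and the classical Maya-diagram (beta-numbers) duality for conjugate partitions gives $\Z=\{i-\lambda_i:i\ge 1\}\sqcup\{\lambda^*_j-j+1:j\ge 1\}$. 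Hence $I_\wedge(\lambda^*,\emptyset)=\Z\setminus I_\vee(\emptyset,\lambda)$ and $I_\vee(\lambda^*,\emptyset)=\Z\setminus I_\wedge(\emptyset,\lambda)$, which in the bipartition labeling ($\times=I_\wedge\cap I_\vee$, $\circ=\Z\setminus(I_\wedge\cup I_\vee)$) says precisely that the two weight diagrams agree on their $\wedge$- and $\vee$-vertices and differ only by exchanging the labels $\times\leftrightarrow\circ$.

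Next I would verify that the algorithm of the preceding corollary computing $\lambda^\dagger$ is invariant under this swap. Since $d(\lambda)=0$ and $\delta=0$, there are no cups and $\#\times=\#\circ$, so $k(\lambda)=rk(\lambda)=\#\circ$; moreover $M$ is the largest vertex carrying a $\times$ or a $\circ$. All of $k(\lambda)$, $M$, $T=\max(k(\lambda)+1,M+1)$, $X$, and the set of free vertices (those labelled $\wedge$ or $\vee$) depend only on the underlying set of $\times\cup\circ$-positions and on the $\wedge,\vee$-labeling, never on which of those vertices are $\times$ versus $\circ$. The switches $\wedge\leftrightarrow\vee$ prescribed by the algorithm thus commute with $\times\leftrightarrow\circ$, so the two resulting highest-weight diagrams of $(\emptyset,\lambda)^\dagger$ and $(\lambda^*,\emptyset)^\dagger$ are again related by exchanging $\times\leftrightarrow\circ$.

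Finally I would observe that on highest-weight diagrams this swap is exactly the involution $v$: from $I_\times(\mu)=\{\mu_i-i+1\}_{i=1}^n$, $I_\circ(\mu)=\{i-n-\mu_{n+i}\}_{i=1}^n$ and $\mu^v_i=-\mu_{2n+1-i}$, the reindexing $i\mapsto n+1-i$ yields $I_\times(\mu^v)=I_\circ(\mu)$ and $I_\circ(\mu^v)=I_\times(\mu)$. Chaining the three steps gives $(\emptyset,\lambda)^\dagger=\mu^v$, as desired. The main obstacle is the Maya-diagram complementarity of $\{i-\lambda_i\}$ and $\{\lambda^*_j-j+1\}$, which is classical once recognized; the remaining arguments are a matter of bookkeeping between the two labeling conventions.
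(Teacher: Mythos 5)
Your proof is correct, and it follows a genuinely different and more structural route than the paper's. The paper also begins by realizing $\{\lambda\}^{\vee}$ as the socle of $R(\emptyset,\lambda)$ and applying $\theta$, but on the other side it writes out the coordinates of $\mu^v$ directly from the covariant highest-weight formula and the definition of $()^v$, then extracts $I_\times$ and $I_\circ$ from those coordinates and performs a direct (``tedious but elementary'') comparison of the two resulting diagrams. You instead realize $\{\lambda^*\}$ as the mixed tensor $R(\lambda^*,\emptyset)$ and apply $\theta$ on that side too, so the entire lemma reduces to the combinatorial identity $(\emptyset,\lambda)^\dagger = ((\lambda^*,\emptyset)^\dagger)^v$. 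Your two key observations---(i) the bipartition diagrams of $(\emptyset,\lambda)$ and $(\lambda^*,\emptyset)$ agree on $\wedge,\vee$ and differ only by exchanging $\times\leftrightarrow\circ$, which for $\delta=0$ is exactly the classical Maya/beta-number complementarity $\Z=\{i-\lambda_i\}\sqcup\{\lambda_j^*-j+1\}$, and (ii) both the $\theta$-algorithm and the involution $v$ interact with weight diagrams only through data (number of cups, $\min(\#\times,\#\circ)$, positions of the union $\{\times\}\cup\{\circ\}$, free $\wedge/\vee$ vertices) that are blind to the $\times$-vs-$\circ$ distinction---replace the paper's coordinate bookkeeping with a transparent symmetry argument. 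The one thing worth making fully explicit in your step (ii) is that the cup/cap construction treats $\times$ and $\circ$ identically (both are skipped over when matching $\vee\,\wedge$ pairs), and that $rk(\lambda)=\min(\#\times,\#\circ)$ is symmetric, so $d(\lambda)$, $k(\lambda)$, $M$, $T$ and the free-vertex set are all indeed swap-invariant; with that spelled out, the argument is complete and arguably cleaner than the original.
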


\begin{proof} We compute the weight diagram of $\{ \lambda \}^{\vee}$ in two different ways. The highest weight of $\{ \lambda \}^{\vee}$ is the highest weight in the socle of the mixed tensor $R(0,\lambda)$, and its weight diagram can be calculated using the description of $\theta$. Now we compute the highest weight of $\mu^v$. The highest weight of $\{\lambda\}$ is given by  $\mu_i = \lambda_i$ for $i=1,\ldots,n$ and $\mu_{n+i} = max(0, \lambda^{*}_i - n)$ for $i=1,\ldots,n$. For the transposed partition $\lambda^*_i = \# \{ \lambda_i \ | \ \lambda_i \geq i\}$. Hence the highest weight of $\lambda^{*}$ is given by $\mu_i = \lambda^{*}_i = \# \{ \lambda_i \ | \ \lambda_i \geq i\}$ for $i=1,\ldots,n$ and $\mu_{n+i} = max(0, \lambda_i - n)$ for $i=1,\ldots,n$. Applying $()^v$ yields the proposed highest weight of $\{ \lambda\}^{\vee}$ \begin{align*}  \mu =  (- max(0,\lambda_n - n), \ldots, - max(0, \lambda_1 - n) \ |  \  - \lambda_n^*, \ldots, - \lambda_1^*)).\end{align*} Now we determine $I_{\times}$ and $I_{\circ}$ of this weight according to the rules of Brundan-Stroppel. It is a tedious but elementary inspection to see that one obtains the same weight diagram in both cases.
\end{proof}







\section{The Duflo-Serganova functor}\label{DS}

Let $M$ be a $\g = \mathfrak{gl}(m|n)$-module. For any $x \in X = \{x \in \g_1 \ | \ [x,x] = 0\}$ there exists $g \in GL(m) \times GL(n)$ and isotropic mutually orthogonal linearly independent roots $\alpha_1, \ldots, \alpha_k$ such that $Ad_{g}(x) = x_1 + \ldots + x_k$ with $x_i \in \g_{\alpha_i}$. The number $k$ is called the rank of $x$ \cite{Serganova-kw}.  For any $x$ of $rk(x) = k$ we have the cohomological tensor functor $M \mapsto F_x(M)$  from $\calR_{m|n} \to T_{m-k|n-k} = \calR_{m-k|n-k} \oplus \Pi \calR_{m-k|n-k}$ \cite{Duflo-Serganova} \cite{Serganova-kw} \cite{Heidersdorf-Weissauer-tensor}. We quote \cite[Theorem 2.1]{Serganova-kw} \cite[Corollary 2.2]{Serganova-kw}.

\begin{thm} If $at(M) < rk(x)$, then $F_x(M) = 0$. If $at(M) = rk(x)$, then $F_x(M)$ is a typical module. If $rk(x) = r$, then $at(F_x(M)) = at(M) - r$.
\end{thm}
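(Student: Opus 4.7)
The plan is to reduce everything to the rank $1$ case and then do an explicit central-character calculation. After conjugating by $Gl(m) \times Gl(n)$ I may assume $x = x_1 + \cdots + x_r$ with $x_i \in \g_{\alpha_i}$ for mutually orthogonal linearly independent isotropic roots $\alpha_1,\ldots,\alpha_r$. Because any two of these roots are orthogonal and linearly independent, $\alpha_i + \alpha_j$ is not a root of $\g$, so $[x_i,x_j]=0$ for all $i,j$. This commutativity lets me write $F_x$ as an iterated composition $F_{x_r} \circ \cdots \circ F_{x_1}$, where after applying $F_{x_1}$ the remaining operators $x_2,\ldots,x_r$ descend to rank $1$ odd elements of the smaller Lie superalgebra $\mathfrak{gl}(m-1|n-1)$ obtained as the centraliser of $x_1$ modulo the line $k\cdot x_1$ (together with a chosen dual). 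Once this reduction is justified, the three assertions of the theorem follow by iterating the corresponding rank $1$ statements.

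For rank $1$, I would analyse $F_x$ through the action of the centre $Z(U(\g))$. The atypicality of an irreducible $L(\lambda)$ equals the number of mutually orthogonal isotropic roots $\beta$ with $(\lambda+\rho,\beta)=0$. The key claim is that $F_x(L(\lambda)) = 0$ whenever $\alpha$ is not atypical for $\lambda$, i.e.\ whenever $(\lambda+\rho,\alpha) \neq 0$. When $L(\lambda)$ is typical (Kac's condition), $L(\lambda) = K(\lambda)$ is projective, and I would prove $F_x(P) = 0$ for any projective $P$ directly: $P$ is a summand of an induced module $U(\g) \otimes_{U(\g_0 \oplus \g_1^+)} V$ and the PBW decomposition with respect to the subalgebra spanned by $x$ and its $\g_{-\alpha}$-partner makes the complex $(P,x)$ isomorphic to the tensor product of an acyclic Koszul complex with something, hence has zero homology. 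The atypical-but-non-vanishing case then follows by analysing the one remaining piece of the PBW computation, showing directly that $at(F_x(L(\lambda))) = at(\lambda) - 1$.

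Assembling the pieces: applying Step $2$ iteratively to the decomposition $x = x_1+\cdots+x_r$, each $F_{x_i}$ either annihilates its input (if no atypical root survives to pair with $\alpha_i$) or strictly reduces atypicality by one. If $at(M) < r$ then after at most $at(M)+1$ steps the module is killed, which gives the first statement. If $at(M) = r$ then the final result has atypicality zero, i.e.\ is typical, which gives the second. The general formula $at(F_x(M)) = at(M)-r$ falls out of the same iteration when $at(M) \geq r$.

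The principal obstacle is the vanishing $F_x(M) = 0$ for typical $M$ (equivalently, for projectives). Everything else is bookkeeping on weight diagrams or central characters, but this vanishing is a genuine homological input: one must verify that the differential $x$ on a Kac module is acyclic, which requires either a careful PBW/Koszul argument using the subalgebra generated by $x$ and a chosen $\g_{-\alpha}$-partner, or else invocation of associated-variety machinery à la Duflo–Serganova to identify the support of $M$ inside the self-commuting cone $X$. All the rest (compositional property of $F_x$, descent to $\mathfrak{gl}(m-1|n-1)$, tracking of atypicality through weight diagrams) is routine once this acyclicity is in hand.
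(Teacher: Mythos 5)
The theorem you are asked to prove is not proved in the paper at all: it is quoted verbatim from Serganova's work (\cite{Serganova-kw}, Theorem~2.1 and Corollary~2.2) and the author gives no argument. So there is no ``paper's own proof'' to compare against; what follows is an assessment of your blind sketch on its own terms.

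Your overall architecture --- reduce to rank one, then establish the vanishing $F_x(P)=0$ for projective $P$ and the drop in atypicality for the atypical survivors --- is the right shape, and you correctly single out the projective vanishing as the genuine homological content. That part of the plan (restrict along a copy of $\mathfrak{gl}(1|1)$ generated by $x$, a partner $y\in\g_{-\alpha}$, and $h=[x,y]$; observe that $U(\g)$, hence any induced or projective module, is free as a $U(\mathfrak{gl}(1|1))$-module; conclude the complex $(P,x)$ is a sum of acyclic Koszul pieces) is standard and does work once written out carefully.

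The real gap is in the reduction step. You write ``this commutativity lets me write $F_x$ as an iterated composition $F_{x_r}\circ\cdots\circ F_{x_1}$,'' but commutativity of the $x_i$ does not by itself give this. What you have is a module with $r$ pairwise (super)commuting square-zero odd operators, hence a multi-complex; iterating cohomology produces the $E_2$-page of a spectral sequence whose abutment is the total cohomology $F_x(M)$, and nothing forces that spectral sequence to degenerate. In fact the relation between $F_y\circ F_x$ and $F_{x+y}$ is a known subtlety in the Duflo--Serganova literature: there is a natural comparison map, but it is not an isomorphism in general, and establishing when it is requires work (Gorelik and others have studied exactly this). Serganova's own route around this is different: she works with the associated variety $X_M=\{x\in X : F_x(M)\neq 0\}$, notes it is a $G_0$-stable closed conical subset of $X$, and since the $G_0$-orbits on $X$ are the rank strata it suffices to kill a \emph{single} nice $x$ of rank $at(M)+1$ --- no iteration of $F$ is needed. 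If you want to keep the iteration strategy, you must add the argument that the relevant spectral sequence degenerates for the modules at hand (or replace it with a direct rank-$r$ computation or the $G_0$-equivariance argument). As written, that step is an assertion, not a proof.

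Two smaller remarks. First, your observation that $[x_i,x_j]=0$ because $\alpha_i+\alpha_j$ is not a root is correct for $\mathfrak{gl}(m|n)$, but it deserves an explicit check (for other basic classical $\g$ this can fail). Second, the paper's statement contains a typo --- ``$at(M)<rk(M)$'' should read ``$at(M)<rk(x)$'' --- and you implicitly corrected it; worth flagging.
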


From now on we will study the Duflo-Serganova tensor functor for special $x$. We define \[ x_r = \begin{pmatrix} 0 & \epsilon_r \\ 0 & 0 \end{pmatrix}, \ \ \epsilon_r = diag(1,\ldots,1,0,\ldots,0) \] with $r$ 1's on the diagonal. We denote the corresponding tensor functor by $DS_{x_r}$. If $r=1$ we simply write $DS$. An easy computation shows the next lemma.

\begin{lem} $DS_{x_r}$ maps $V$ to the standard representation of $GL(m-r|n-r)$.
\end{lem}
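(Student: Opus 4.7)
The proof is a direct computation using the definition of the Duflo--Serganova functor as $DS_x(M) = \ker(x|_M)/\operatorname{im}(x|_M)$, where $x$ acts on $M$ via the $\mathfrak{gl}(m|n)$-module structure. The plan is simply to apply $x_r$ to the underlying super vector space of $st$, identify kernel and image with explicit coordinate subspaces, and then recognize the quotient as the defining representation of the residual general linear supergroup.

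Concretely, choose the standard homogeneous basis $e_1,\dots,e_m,f_1,\dots,f_n$ of $st=k^{m|n}$, with the $e_i$ even and the $f_j$ odd. With respect to this basis $x_r$ acts by the block matrix $\bigl(\begin{smallmatrix}0&\epsilon_r\\0&0\end{smallmatrix}\bigr)$, so $x_r\cdot f_j=e_j$ for $1\le j\le r$, $x_r\cdot f_j=0$ for $r<j\le n$, and $x_r\cdot e_i=0$ for all $i$. Hence
\[
\ker(x_r|_{st}) \;=\; \operatorname{span}(e_1,\dots,e_m,f_{r+1},\dots,f_n), \qquad
\operatorname{im}(x_r|_{st}) \;=\; \operatorname{span}(e_1,\dots,e_r),
\]
and therefore
\[
DS_{x_r}(st) \;=\; \ker(x_r)/\operatorname{im}(x_r) \;\cong\; \operatorname{span}(\overline{e_{r+1}},\dots,\overline{e_m},\overline{f_{r+1}},\dots,\overline{f_n}),
\]
which is $k^{m-r|n-r}$ as a super vector space.

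To finish, I would verify that the induced action on this quotient is the tautological one of $Gl(m-r|n-r)$. By the general construction of $DS_{x_r}$ (see \cite{Duflo-Serganova}, \cite{Serganova-kw}, \cite{Heidersdorf-Weissauer-tensor}), the target category is $\calR_{m-r,n-r}$ and the residual group is identified with the centralizer/normalizer of $x_r$ inside $Gl(m)\times Gl(n)$ modulo the subgroup acting trivially on $\ker(x_r)/\operatorname{im}(x_r)$; this centralizer contains in particular the block subgroup $Gl(m-r)\times Gl(n-r)$ acting on the last $m-r$ even and last $n-r$ odd coordinates, and this action visibly preserves $\ker(x_r)$, restricts on the quotient to the standard action on $k^{m-r|n-r}$, and commutes with $x_r$.

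The only non-routine point is really the identification of $DS_{x_r}$ with its manifest coordinate realization on the basis, after which the statement is immediate. Since this bookkeeping has already been set up in the paragraph preceding the lemma (the explicit form of $x_r$ and $\epsilon_r$), there is no essential obstacle, and the lemma reduces to the one-line observation that the pair (kernel, image) above is $(k^{m-r|n}\oplus 0,\,k^{r|0})$ inside $k^{m|n}$.
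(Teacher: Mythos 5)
Your proof is correct and is exactly the "easy computation" the paper alludes to in the sentence preceding the lemma: you apply $x_r$ in coordinates, identify $\ker(x_r)/\operatorname{im}(x_r)$ with $k^{m-r|n-r}$, and check that the residual $Gl(m-r|n-r)$ acts tautologically on the quotient. The paper does not write out the details, so your version simply makes the intended argument explicit.
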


\begin{prop} Under $DS_{x_r}$ \begin{align*} R(\lambda) \mapsto \begin{cases} 0 & k(\lambda) > n-r \\ R(\lambda)  & \text{else. } \end{cases} \end{align*} In the case $r=1$ this specialises to \begin{align*} R(\lambda) \mapsto \begin{cases} 0  & R(\lambda) \ projective \\ R(\lambda) &  \text{else. } \end{cases}. \end{align*}  
\end{prop}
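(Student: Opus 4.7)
The plan is to exploit the universal property of Deligne's category. Since $m-n = (m-r)-(n-r)$, the same Deligne category $Rep(Gl_{m-n})$ maps into both $\calR_{m|n}$ and $\calR_{m-r|n-r}$. By the preceding lemma we have $DS_{x_r}(st_{m|n}) = st_{m-r|n-r}$, so $DS_{x_r} \circ F_{m|n}$ and $F_{m-r|n-r}$ are two tensor functors $Rep(Gl_{m-n}) \to \calR_{m-r|n-r}$ sending the standard object to the same target. Theorem \ref{Deligne-interpolation} then forces an isomorphism $DS_{x_r} \circ F_{m|n} \cong F_{m-r|n-r}$ of tensor functors, and evaluating on the indecomposable $R(\lambda)$ in Deligne's category yields
\[ DS_{x_r}(R(\lambda)) \;=\; DS_{x_r}(F_{m|n}(R(\lambda))) \;\cong\; F_{m-r|n-r}(R(\lambda)). \]

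The Comes--Wilson theorem cited in the excerpt identifies the right-hand side: it vanishes if and only if $\lambda$ fails to be $(m-r|n-r)$-cross, and otherwise coincides with the mixed tensor $R(\lambda) \in \calR_{m-r|n-r}$. The proposition thereby reduces to the combinatorial claim that, among $(m|n)$-cross bipartitions, $\lambda$ is $(m-r|n-r)$-cross if and only if $k(\lambda) \leq n-r$. One direction is essentially formal: if $\lambda$ is $(m-r|n-r)$-cross, then $R(\lambda) \in \calR_{m-r|n-r}$ is nonzero with atypicality $(n-r) - rk(\lambda) \geq 0$, and the standard bound $d(\lambda) \leq \mathrm{atyp}$ (each cap uses one $\vee$ from the finite part of the weight diagram) gives $k(\lambda) = d(\lambda) + rk(\lambda) \leq n-r$. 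The converse I would verify by direct diagrammatic inspection: writing the failure of $(m-r|n-r)$-crossness as $\lambda^L_i + \lambda^R_{m-r+2-i} \geq n-r+1$ for all $1 \leq i \leq m-r+1$, one checks via bookkeeping on $I_\wedge(\lambda)$ and $I_\vee(\lambda)$ that this forces either $rk(\lambda) > n-r$ or enough additional $\vee\wedge$-cups to push $k(\lambda)$ above $n-r$.

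For the $r=1$ specialization the argument collapses neatly: since $k(\lambda) \leq n$ for every $(m|n)$-cross bipartition, the condition $k(\lambda) > n-1$ is equivalent to $k(\lambda) = n$, which by Theorem \ref{projective-mixed-tensors-2} is precisely the condition that $R(\lambda) = P(\lambda^\dagger)$ be projective. The main obstacle is the converse direction of the combinatorial equivalence in the previous paragraph: while the forward implication is a clean atypicality--defect count, extracting $k(\lambda) > n-r$ from the failure of $(m-r|n-r)$-crossness requires a somewhat delicate bookkeeping on the weight diagram that is not captured by the universal property formalism.
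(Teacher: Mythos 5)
Your argument is the paper's: the universal property of Deligne's category forces the commutative diagram $DS_{x_r}\circ F_{m|n}\cong F_{m-r|n-r}$, and the Comes--Wilson nonvanishing criterion reduces the proposition to the combinatorial claim that, among $(m|n)$-cross bipartitions, $(m-r|n-r)$-crossness is equivalent to $k(\lambda)\le n-r$. The paper's own proof is no more explicit about this last step than you are — it states it only for $r=1$ and leaves it at that — so your identification of the residual combinatorics as the non-formal content is accurate.

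On the direction you call formal: the parenthetical ``each cap uses one $\vee$ from the finite part of the weight diagram'' is shaky as stated, since the bipartition's weight diagram carries infinitely many $\vee$'s. A tighter route to $d(\lambda)\le at(R(\lambda))$ is via Loewy lengths: if $R(\lambda)\neq 0$ in $\calR_{m-r,n-r}$ then it has simple top $L(\lambda^{\dagger})$, hence is a quotient of $P(\lambda^{\dagger})$, so its Loewy length $2d(\lambda)+1$ is bounded by that of $P(\lambda^{\dagger})$, namely $2\,at(\lambda^{\dagger})+1 = 2((n-r)-rk(\lambda))+1$, which gives $k(\lambda)\le n-r$. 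For the converse, what is used silently (in the paper too) is the Brundan--Stroppel / Comes--Wilson characterization that $\lambda$ is $(m|n)$-cross precisely when $k(\lambda)\le n$; taking that as input closes both directions for all $r$ at once, and then your $r=1$ specialization via Theorem \ref{projective-mixed-tensors-2} goes through exactly as you wrote it.
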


\begin{proof} This follows from the diagram \[ \xymatrix{ \uRep (GL_{m-n}) \ar[d]^{F_{m|n}} \ar[dr]^{F_{m-r|n-r}} & \\ T_{m|n} \ar[r]^{DS_{x_r}} & T_{m-r|n-r} }. \] Since $DS_{x_r}$ maps the standard representation to the standard representation, the universal property of Deligne's category implies that the diagram is commutative. In the case $r=1$ the kernel of $DS_{x}$ consists of the $(m-1|n-1)$-cross bipartitions which are not $(m|n)$-cross. This is equivalent to $k(\lambda) = n$ which is equivalent to $R(\lambda)$ projective. 
\end{proof}

\textbf{Remark.} This is a special case of a more general result \cite{BKN-complexity}: If $M$ is $^*$-invariant, then $M$ is projective if and only if $F_x(M) = 0$ for some $x$ of rank 1. 
\medskip

\textbf{Example.} If $M:= R((n-1,n-2,\ldots,1); (n-1,n-2,\ldots,1)^*)$ in $GL(n|n)$, then the socle has weight $(n-2,n-3,\ldots,1,0,0\ |\ 0,0,-1,\ldots,-n+2)$. We obtain $M_x = P(n-2,n-3,\ldots,1,0 \ | \ 0,-1,\ldots,-n+2)$ in $Rep(GL(n-1|n-1))$ for $x$ of rank 1.

\begin{lem} Let $y \in X$ be of rank $r$ such that $DS_y$ maps the standard representation to the standard representation. Then $DS_y = DS_{x_r}$ when restricted to $T$. 
\end{lem}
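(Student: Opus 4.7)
The plan is to invoke the universal property of Deligne's interpolating category (Theorem \ref{Deligne-interpolation}) to force the two functors to agree on every object coming from $F_{m|n}$, and then to extend this agreement to all of $T$ using fullness of $F_{m|n}$.

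First I would consider the composed tensor functors
\[ DS_y \circ F_{m|n}, \ DS_{x_r} \circ F_{m|n} : Rep(Gl_{m-n}) \longrightarrow \calR_{m-r,n-r} \oplus \Pi \calR_{m-r,n-r}. \]
Both compositions are tensor functors out of $Rep(Gl_{m-n})$. By hypothesis $DS_y$ sends the standard representation of $Gl(m|n)$ to the standard representation of $Gl(m-r|n-r)$, and the same holds for $DS_{x_r}$ by the lemma preceding the proposition. Hence both compositions send the distinguished object $st \in Rep(Gl_{m-n})$ to the standard representation of $Gl(m-r|n-r)$, which is dualizable of superdimension $(m-r)-(n-r) = m-n$.

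Next, Theorem \ref{Deligne-interpolation} asserts that the category of tensor functors out of $Rep(Gl_{m-n})$ is equivalent, via evaluation at $st$, to the category of dualizable objects of dimension $m-n$ in the target. In particular, any two tensor functors sending $st$ to isomorphic dualizable objects of the correct dimension are canonically isomorphic as tensor functors. Applying this, both composed functors are canonically isomorphic to $F_{m-r,n-r}$, and therefore to each other. (This is the same commutative triangle invoked in the proof of the preceding proposition, now applied to $y$ in place of $x_r$.)

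Finally I would transfer this isomorphism from the compositions to the restrictions to $T$. By definition $T$ is the full subcategory of direct summands of objects $V^{\otimes r} \otimes (V^\vee)^{\otimes s}$, which is precisely the essential image of $F_{m|n}$. Fullness of $F_{m|n}$ (the quoted theorem of Comes--Wilson) combined with the pseudoabelian structure of $Rep(Gl_{m-n})$ ensures that any idempotent cutting out a summand of $F_{m|n}(X)$ lifts to an idempotent of $X$ and splits there, so every object of $T$ is isomorphic to some $F_{m|n}(Y)$. The canonical tensor isomorphism of composed functors therefore restricts to a tensor isomorphism $DS_y|_T \cong DS_{x_r}|_T$. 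The only delicate point is this last descent step; everything else is a direct application of the universal property, so the whole proof is essentially a diagram chase.
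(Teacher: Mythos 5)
Your proof is correct and follows exactly the same route as the paper, which simply says ``This follows from the diagram above and the universal property of Deligne's category.'' You have spelled out the application of the universal property (both $DS_y\circ F_{m|n}$ and $DS_{x_r}\circ F_{m|n}$ send $st$ to an object of dimension $m-n$, so they are isomorphic as tensor functors) and the descent to $T$ via fullness and pseudoabelianness, which the paper leaves implicit.
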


\begin{proof} This follows from the diagram above and the universal property of Deligne's category. 
\end{proof}

\begin{lem} If $R(\lambda)$ is irreducible, so is $DS_{x_r} (R(\lambda))$. 
\end{lem}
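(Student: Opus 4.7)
The plan is to reduce the lemma to the already established characterisation of irreducibility $d(\lambda) = 0$ together with the previous proposition describing $DS_{x_r}(R(\lambda))$.

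First, I would recall that by the earlier corollary $R(\lambda)$ is irreducible in $\calR_{mn}$ if and only if $d(\lambda) = 0$. By the preceding proposition, $DS_{x_r}(R(\lambda))$ is either $0$ or equals the mixed tensor $R(\lambda)$ in the smaller category $\calR_{m-r,n-r}$ (and the first case occurs precisely when $k(\lambda) > n - r$). So there is nothing to do if $DS_{x_r}(R(\lambda)) = 0$; assume we are in the nonzero case $k(\lambda) \leq n - r$, and we must show the resulting $R(\lambda) \in \calR_{m-r,n-r}$ is still irreducible.

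The crucial observation is that the weight diagram attached to a bipartition $\lambda$, via the two sets
\[
 I_{\wedge}(\lambda) = \{\lambda_1^L, \lambda_2^L - 1, \ldots\}, \qquad
 I_{\vee}(\lambda) = \{1 - \delta - \lambda_1^R, 2 - \delta - \lambda_2^R, \ldots\},
\]
depends on $\lambda$ only through the difference $\delta$. Because $\delta = m - n = (m-r) - (n-r)$ is preserved when passing from $\calR_{mn}$ to $\calR_{m-r,n-r}$, the weight diagram of $\lambda$, and therefore its cup/cap diagram, is literally the same object in both settings. In particular the defect $d(\lambda)$, being the number of caps in this cap diagram, is an invariant of $\lambda$ and $\delta$ alone, independent of the specific pair $(m,n)$.

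Consequently $d(\lambda) = 0$ in $\calR_{mn}$ forces $d(\lambda) = 0$ in $\calR_{m-r,n-r}$, and by the irreducibility criterion the mixed tensor $R(\lambda) \in \calR_{m-r,n-r}$ is irreducible, which is exactly what we need. There is no real obstacle here beyond making explicit this invariance of the weight diagram under the shift $(m,n) \mapsto (m-r,n-r)$; once that is noted, the lemma follows by combining it with the two prior results.
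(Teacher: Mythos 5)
Your proof is correct and follows essentially the same argument as the paper: $R(\lambda)$ is irreducible iff $d(\lambda) = 0$, and the defect of a bipartition depends only on $\delta = m-n = (m-r)-(n-r)$, so it is unchanged by $DS_{x_r}$. You merely spell out the invariance of the weight diagram in slightly more detail and explicitly handle the degenerate case $DS_{x_r}(R(\lambda)) = 0$, neither of which changes the substance.
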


\begin{proof} $R(\lambda)$ is irreducible if and only if $d(\lambda) = 0$. The defect of a bipartition only depends on the difference $m-n = (m-r) - (n-r)$.
\end{proof}




\section{Irreducible representations in the image}

\begin{lem} Let $\Gamma$ be a block of atypicality $k \leq n$. Then $\Gamma$ contains a unique irreducible mixed tensor.
\end{lem}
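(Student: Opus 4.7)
Plan: The strategy is combinatorial. By the corollary in Section~\ref{indecomposable} an irreducible mixed tensor is exactly an $R(\lambda)=L(\lambda^\dagger)$ with $d(\lambda)=0$, and by the lemma preceding Theorem~\ref{projective-mixed-tensors-2} the positions of the $\times$ and $\circ$ labels in the weight diagrams of $\lambda$ and $\lambda^\dagger$ agree. Since the block $\Gamma$ is characterised by these positions, the statement reduces to the combinatorial claim: for any prescribed sets $X$ of size $m-k$ and $O$ of size $n-k$ of $\times$- and $\circ$-positions in $\mathbb{Z}$, there is exactly one $(m|n)$-cross bipartition $\lambda$ with $d(\lambda)=0$ whose weight diagram has $\times$'s at $X$ and $\circ$'s at $O$.

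For such a $\lambda$ the cup diagram has no caps, which forces the rigid pattern that no $\vee$ may appear to the left of a $\wedge$ among the non-$\times,\circ$ vertices. Hence the labelling is determined by a single cutoff $c\in\mathbb{Z}$: the non-$\times,\circ$ positions $\leq c$ are labelled $\wedge$ and those $>c$ are labelled $\vee$. The bipartition $\lambda$ is then encoded by the triple $(X,O,c)$, and the remaining task is to pin down $c$.

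The requirements that $I_\wedge(\lambda)$ and $I_\vee(\lambda)$ come from honest partitions $\lambda^L$ and $\lambda^R$ translate into two numerical balance equations, namely $\Delta_L(c):=|I_\wedge\cap\mathbb{Z}_{>0}|-|\mathbb{Z}_{\leq 0}\setminus I_\wedge|=0$ and $\Delta_R(c):=|I_\vee\cap\mathbb{Z}_{\leq -\delta}|-|\mathbb{Z}_{>-\delta}\setminus I_\vee|=0$. As $c$ crosses a non-$\times,\circ$ position, $\Delta_L$ jumps by $+1$ and $\Delta_R$ by $-1$, while both are constant at positions in $X\cup O$; hence each equation has a unique solution. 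Evaluating at $c=0$ yields $\Delta_L+\Delta_R=|X|-|O|-\delta=(m-k)-(n-k)-(m-n)=0$, so the two conditions coincide and pick out the same cutoff. This produces a unique candidate bipartition $\lambda$ in the block.

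The step I expect to require the most care is verifying that this bipartition is genuinely $(m|n)$-cross, so that $R(\lambda)\neq 0$. This should follow from the hypothesis $k\leq n$, which forces $k(\lambda)=rk(\lambda)=n-k\leq n$; combined with $d(\lambda)=0$ it yields an index $i$ realising the cross inequality $\lambda_i^L+\lambda_{m+2-i}^R<n+1$, exhibiting the desired unique irreducible mixed tensor $R(\lambda)$ in $\Gamma$.
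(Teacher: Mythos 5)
Your argument is correct, and it takes a genuinely different route from the paper's. The paper establishes existence by observing that the block $\Gamma$ determines a typical $Gl(m-k|n-k)$-module $L^{\mathrm{core}}$, which is projective and hence a mixed tensor $R(\lambda_\Gamma)$ with $d(\lambda_\Gamma)=0$, and then notes that the same bipartition works for $(m|n)$ because the weight diagram of a bipartition depends only on $\delta=m-n$; uniqueness is deduced by hitting two hypothetical candidates with the Duflo--Serganova functor $DS_{x_k}$, which preserves the bipartition label and sends both to $L^{\mathrm{core}}$. You instead argue entirely at the level of weight-diagram combinatorics: $d(\lambda)=0$ forces the cutoff structure, the balance conditions $\Delta_L=\Delta_R=0$ (which encode that $I_\wedge,I_\vee$ really arise from partitions $\lambda^L,\lambda^R$) pin down the cutoff, and the identity $\Delta_L+\Delta_R=|X|-|O|-\delta=0$ shows the two requirements are consistent. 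This buys you a proof that does not invoke $DS_{x_k}$ nor the identification of typical modules as projective mixed tensors; the paper's argument, in exchange, recycles structural facts it has already established and avoids the bookkeeping with Maya-diagram balances. One step you flag but do not fully carry out is the verification that the resulting bipartition is $(m|n)$-cross; this is in fact easy from your cutoff picture: writing $a=|X\cap\mathbb{Z}_{>c}|$, the element $s_{a+1}$ of $I_\wedge$ is $\leq c$ while $t_{m+1-a}\in I_\vee$ is $>c$ (since $m+1-a>m-k-a=|X\cap\mathbb{Z}_{\leq c}|$), and $s_{a+1}<t_{m+1-a}$ translates directly into $\lambda^L_{a+1}+\lambda^R_{m+1-a}<n+1$; filling in this short computation would make the argument complete.
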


\begin{proof} The block is characterized by the position of the $m-k$ crosses and $n-k$ circles on the number line. Denote by $L^{core}$ the typical $GL(m-k|n-k)$-module which is given by this position of the circles and crosses. Then $L^{core} = R(\lambda_{\Gamma})$ for a unique bipartition $\lambda_{\Gamma}$ of $rk(\lambda_{\Gamma}) = n-k$ and $d(\lambda_{\Gamma}) = 0$. This bipartition defines also an irreducible mixed tensor in $\Gamma \subset \calR_{m|n}$ since the weight diagram of a bipartition depends only on $m-n = m-k - (n-k)$. Assume that we would have two irreducible mixed tensors $R(\lambda_{\Gamma})$ and $R(\lambda'_{\Gamma})$ in $\Gamma$. Then both map to $L^{core}$ when applying $DS$ $k$ times or $DS_{x_k}$ one time. Since $DS(R(\lambda)) = R(\lambda)$ this implies  $R(\lambda_{\Gamma}) = R(\lambda'_{\Gamma})$.
\end{proof}

\begin{thm} Every Kostant module is a Berezin-twist of an irreducible mixed tensor.\label{Kostant-existence}
\end{thm}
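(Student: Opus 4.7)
\begin{bew} The plan is to combine the preceding lemma (a unique irreducible mixed tensor per block of atypicality $\leq n$) with the fact that Berezin twist acts on weight diagrams by rigid horizontal translation, reducing to a combinatorial match.

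First I would verify that tensoring with $Ber^j$ translates the weight diagram by $j$. The weight of $L(\mu) \otimes Ber^j$ is $\mu + (j,\ldots,j \mid -j,\ldots,-j)$, and inserting this into the formulas for $I_\times(\mu)$ and $I_\circ(\mu)$ shifts every element by $j$. Consequently Berezin twist preserves atypicality and the Kostant property, permutes blocks, and sends the unique irreducible mixed tensor of a block $\Gamma$ to the unique irreducible mixed tensor of the shifted block.

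Now let $L(\mu)$ be a Kostant module of atypicality $k<m$. By the previous lemma together with the translation description above, showing that some Berezin twist of $L(\mu)$ is an irreducible mixed tensor amounts to showing that the nested cup configuration of $\mu$ relative to its $\times$/$\circ$ core coincides, up to a global shift, with the relative configuration produced by the $\theta$-algorithm on some bipartition $\lambda$ with $d(\lambda)=0$. In such a bipartition one has $k(\lambda)=n-k$, the unreduced diagram has all free $\vee$'s to the left of all free $\wedge$'s, and the algorithm switches labels at vertices $\geq T$ together with the first $X + k$ free vertices $<T$; unwinding this produces a canonical nested pattern of cups placed at a definite position relative to the rightmost $\times$/$\circ$ vertex $M$. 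Given an arbitrary Kostant diagram of atypicality $k$, the presence of at least one cross (guaranteed by $k<m$) supplies the anchor $M$ and thus pins down a unique horizontal shift, and inverting the $\theta$-algorithm at this canonical position then produces the required bipartition $\lambda$.

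The main obstacle is precisely this last combinatorial step: reading off from the $\theta$-algorithm the exact canonical relative position of the cups and verifying that every nested cup configuration of atypicality $k<m$ is a horizontal translate of such a canonical diagram. The hypothesis $k<m$ enters exactly because a cross is needed as the anchor for the shift; this matches the obstruction recalled in the introduction that for $m=n$ no nontrivial maximal atypical irreducible $Gl(n|n)$-module is a mixed tensor. \qed \end{bew}
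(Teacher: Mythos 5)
Your strategy is essentially the paper's: use the algorithm of section~\ref{sec:theta} describing $\theta$ on defect-zero bipartitions, together with the preceding lemma that each block of atypicality $\leq n$ carries a unique irreducible mixed tensor, and then exhibit the Berezin shift explicitly. But there are two problems.

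First, your sentence ``sends the unique irreducible mixed tensor of a block $\Gamma$ to the unique irreducible mixed tensor of the shifted block'' is false, and it is the crux of why the theorem has content. The $\theta$-algorithm is anchored at the absolute vertex $k(\lambda)+1$, which does not translate along with the block, so the set of irreducible mixed tensor weight diagrams is \emph{not} translation-invariant. Concretely, for $Gl(2|1)$ the unique irreducible mixed tensor in the block of $\one$ is $\one$ itself (with $\vee$ at $-1$, $\times$ at $0$), but the unique irreducible mixed tensor in the block obtained by shifting everything by $+1$ (the block of $Ber$) is the standard representation $V$ ($\vee$ at $-1$, $\times$ at $1$), not $Ber$ ($\vee$ at $0$, $\times$ at $1$). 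If your claim were true, the theorem would force every Kostant module to \emph{be} an irreducible mixed tensor, which already fails for $Ber$. The correct statement is precisely what the theorem proves: $Ber$ is a Berezin twist of an irreducible mixed tensor, without being one itself.

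Second, and more seriously, the combinatorial core of the argument is not carried out; you acknowledge that ``reading off from the $\theta$-algorithm the exact canonical relative position of the cups and verifying that every nested cup configuration ... is a horizontal translate of such a canonical diagram'' is the main obstacle and then stop. This is not a formality: the paper's proof consists almost entirely of this step. It introduces $\vee^{\min}$, $\vee^{\max}$, $M$ (rightmost cross/circle), and $z$ (crosses/circles strictly between $\vee^{\min}$ and $M$), distinguishes the cases $\vee^{\max}>M$ and $\vee^{\max}<M$, produces the explicit target positions $n-k$ and $n-2k-z$ for the shifted $\vee$, and checks that the resulting diagram matches the $\theta$-image of a typical core bipartition $\lambda_{\Gamma'}$ via the rules of section~\ref{sec:theta}. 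Your identification of the $k<m$ hypothesis as guaranteeing the existence of the cross anchor is correct and matches the paper's use, but without the explicit case analysis and verification the proof is incomplete.
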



\begin{proof} If $m=n$, the maximal atypical Kostant modules are the Berezin powers $Ber^r$, $r \in \Z$. Since $Ber^0 = \one$ is a mixed tensor, the assertion is clear in this case. We now exclude this case and describe the Berezin twist explicitely. We use the description how to obtain $\lambda^{\dagger}$ from $R(\lambda_{\Gamma})$ in the typical $GL(m-k|n-k)$-case from section \ref{sec:theta}. The highest weight $\lambda^{\dagger}$ of the mixed tensor $R(\lambda_{\Gamma})$ in $\calR_{m|n}$ is then obtained as follows from the weight diagram of the bipartition $\lambda$: If $M$ is the rightmost vertex labelled $\times$ or $\circ$ we distinguish the two cases a) $n-k \geq M$ or b) $M \geq n-k$. If $n-k \geq M$ we switch exactly the labels at vertices $> n-k$. In case b) we switch all labels at vertices $>M$ and the first $M-n + 2k$ free labels $<M$. If $L$ is a $k$-fold atypical Kostant module, consider the weight diagram of its highest weight. We denote by $\vee^{min}$ the vertex with the leftmost label $\vee$, by $z$ the number of crosses and circles at vertices $>  \vee^{min}$ and $<M$, and by $\vee^{max}$ the vertex with the leftmost label $\vee$. If $\vee^{max} > M$, we move $\vee^{max}$ with a Berezin-twist to the vertex $n-k$. If $\vee^{max} < M$ we move $\vee^{min}$ with a Berezin twist to the position $M - (M-n +2k) - z = n - 2k - z$. In both cases we get an irreducible mixed tensor. Indeed we find a typical $GL(m-k|n-k)$-module $L^{core'} = R(\lambda_{\Gamma'})$ with the same $\times$, $\circ$-labeling as $Ber^{\ldots} \otimes L$. By the rules of $\lambda_{\Gamma'} \mapsto \lambda_{\Gamma'}^{\dagger}$ we get $R(\lambda_{\Gamma'}) \simeq Ber^{\ldots} \otimes L \in \calR_{m|n}$.
\end{proof}

In particular we have now an algorithm to decompose the tensor product between any two Kostant-modules in $\calR_{m|n}$: Given two $\lambda,\mu$ Kostant weights we shift both into $T$ \begin{align*} L(\lambda) \otimes Ber^{\lambda'} = L(\tilde{\lambda})  \in T, \ \ L(\mu) \otimes Ber^{\mu'} = L(\tilde{\mu})  \in T \end{align*} where $\lambda', \ \mu'$ are described in the proof of \ref{Kostant-existence} . Therefore \begin{align*} L(\lambda) \otimes L(\mu) & = ( L(\tilde{\lambda}) \otimes L(\tilde{\mu})) \ \otimes \ (Ber^{\lambda'} \otimes Ber^{\mu'}) \\ & = \bigoplus_{\nu} d_{\tilde{\lambda}, \tilde{\mu}}^{\nu} L(\nu) \otimes Ber^{\lambda' + \mu'}\end{align*} for certain coefficients $d_{\tilde{\lambda}, \tilde{\mu}}^{\nu}$ which can be calculated explicitely as in section \ref{sec:tensor}. In particular the tensor product of two such modules can be decomposed explicitely (see an example below).

\begin{example} The irreducible module with weight $(6,4,2,1,1,0|-2,-2,-2,-2)$ is a 3-fold atypical Kostant module in $\calR_{6|4}$. Twisting with $B^{-1}$  gives the mixed tensor $R(\lambda_{\Gamma'}) =R((5,3,1);5)$.  
\end{example}

\begin{example} The highest weight $\mu = (12,12,10,10,10,10,0|-11,-11,-12)$ of $GL(7|3)$ is maximal atypical with rightmost $\vee$ at position 8 and two crosses at position 11 and 12 to the right. Now twist $L(\mu)$ with $Ber^{-10}$ to move $\vee$ to position -2 and obtain $\tilde{\mu} = (2,2,0,0,0,0,-10|-1,-1,-2)$. We get $Ber^{-10} \otimes L(\mu) = R(2,2; 13,1)$.
\end{example}

\begin{example} It is an easy exercise to check that the $(n-1)$-times atypical irreducible mixed tensors in $\calR_{n}$ are the \[ R(i; 1^j), \ i \geq 0, \ j \neq i, \ (i,j) \neq (0,0) \text{ and their duals } R(1^j; i).\] For $\lambda  = (i; 1^j)$ we get $R(\lambda) = L(\lambda^{\dagger}) = L(i,0,\ldots,0|0,\ldots,0,-j)$.
\end{example}

\subsection{Character and dimension formula.}\label{character-dimension} By Comes and Wilson \cite{Comes-Wilson}, thm 8.5.2, we have a character and dimension formula for mixed tensors which is much easier than the general formulas of \cite{Su-Zhang}. By loc.cit the character of a mixed tensor is given as \[ ch \ R(\lambda) = \sum_{\mu} D_{\lambda,\mu}(m-n) s_{\mu} \] where the sum runs over the bipartition's occurring in $\lift(\lambda)$ and $s_{\mu}$ is the composite supersymmetric Schur polynomial associated to $\lambda$. Given an arbitrary Kostant module $L(\lambda)$ (which is not a Berezin power) and the unique Berezin-twist $Ber^{r}$ with $Ber^{r} \otimes L(\mu) = R(\lambda_{\Gamma'})$, the character of $L(\lambda)$ is \begin{align*} ch \ L(\lambda) & = ch \ Ber^{-r} \cdot \ ch R(\lambda_{\Gamma'}) \\ & = ch \ Ber^{-r} \cdot \ s_{\lambda_{\Gamma'}}. \end{align*} A similar formula has been obtained before in \cite{Chmutov-Hoyt-Reif}. Since the dimension does not change after tensoring with $Ber^{-r}$we get \[ dim \ L(\lambda) = dim \ R(\lambda_{\Gamma'}) = d_{\lambda_{\Gamma'}}.\]

\subsection{The case $GL(m|1)$}

In the case $GL(m|1)$ and $SL(m|1)$ every weight is a Kostant weight. Since $Ber$ is trivial in the $SL$-case we obtain:

\begin{cor} Up to a twist of a suitable power of $Ber$ every irreducible module of $GL(m|1)$ is in $T$. Every irreducible module of $SL(m|1)$ is in $T$.
\end{cor}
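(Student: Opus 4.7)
The plan is to reduce the statement directly to the results already obtained in this paper, using the elementary fact that in $Gl(m|1)$ the atypicality of a weight is at most one.

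First I would split the set of highest weights $\lambda \in X^+$ of $Gl(m|1)$ into two cases according to atypicality. If $\lambda$ is typical, then $L(\lambda) = K(\lambda)$ is projective, so Lemma \ref{projective-mixed-tensors-1} (every indecomposable projective is a mixed tensor) shows $L(\lambda) \in T$, and no Berezin twist is needed. If $\lambda$ is atypical, then it is maximally atypical of degree $1$, so the weight diagram of $\lambda$ contains exactly one vertex labelled $\vee$. In particular the four-label pattern $\wedge\vee\wedge\vee$ cannot occur, so $\lambda$ is automatically a Kostant weight.

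Next, assuming $m \geq 2$, the atypicality of $\lambda$ is $1 < m$, so Theorem \ref{Kostant-existence} applies and produces an integer $r$ such that $\mathrm{Ber}^{r} \otimes L(\lambda)$ is an irreducible mixed tensor; this settles the $Gl(m|1)$ statement for $m \geq 2$. The edge case $Gl(1|1)$ has to be handled separately because Theorem \ref{Kostant-existence} requires atypicality strictly less than $m$; here a direct inspection suffices since the maximally atypical irreducibles of $Gl(1|1)$ are precisely the Berezin powers $L(a|-a) = \mathrm{Ber}^{a}$, so each becomes the trivial module (which lies in $T$ as $T(0,0)$) after twisting by $\mathrm{Ber}^{-a}$.

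For the $Sl(m|1)$-claim I would observe that $\mathrm{Ber}$ restricts to the trivial representation on $Sl(m|1)$, so twisting by any power of $\mathrm{Ber}$ is the identity on $Sl(m|1)$-modules. Consequently every irreducible $Sl(m|1)$-module is the restriction of an irreducible $Gl(m|1)$-module, and the $Gl(m|1)$-statement above immediately upgrades from \emph{up to Berezin twist} to an on-the-nose membership in $T$. The only genuine obstacle is the small-rank case $Gl(1|1)$ outside the range of Theorem \ref{Kostant-existence}, but it is handled by the direct identification of maximally atypical irreducibles with Berezin powers.
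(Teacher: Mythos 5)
Your argument is correct and follows essentially the same route the paper intends: every $Gl(m|1)$-weight is a Kostant weight (only one $\vee$ can occur), so Theorem \ref{Kostant-existence} gives the Berezin twist when the atypicality $1<m$, while the $m=1$ case is absorbed by the observation that maximally atypical $Gl(n|n)$ Kostant modules are Berezin powers, and triviality of $\mathrm{Ber}$ on $Sl(m|1)$ disposes of the last claim. The only mild divergence is that you route the typical case through Lemma \ref{projective-mixed-tensors-1} (projectives are mixed tensors) rather than through Theorem \ref{Kostant-existence} with $k=0$; both work, and your version makes explicit the $Gl(1|1)$ edge case that the paper covers only by a preceding remark.
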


%

By \cite{Germoni-sl} \cite{Goetz-Quella-Schomerus} the indecomposable modules in $\calR_{m|1}$ are the (Anti-)ZigZag-modules $Z^r(\lambda)$, $\bar{Z}^r(\lambda)$ and the projective hulls of the irreducible atypical representations. 

\begin{cor} If $l(\lambda) \leq m-1$, $R(\lambda^{\dagger})$ is irreducible singly atypical. If $l(\lambda) > m - 1$ and $d(\lambda)  = 0$ then $R(\lambda ) = L(\lambda^{\dagger})$ is typical. If $\lambda$ is any bipartition with $d(\lambda) = 1$ then $R(\lambda) = P(\lambda^{\dagger})$. 
\end{cor}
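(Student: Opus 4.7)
\begin{bew}
The plan rests on the observation that $k(\lambda)=d(\lambda)+rk(\lambda)\le n=1$ forces $d,rk\in\{0,1\}$ with $d+rk\le 1$, together with a length-rank dichotomy established by direct weight-diagram analysis. The third statement is then immediate: if $d(\lambda)=1$ the bound forces $rk(\lambda)=0$ and $k(\lambda)=1=n$, so by Theorem \ref{projective-mixed-tensors-2} we have $R(\lambda)=P(\lambda^{\dagger})$.

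For the first two statements let $a=l(\lambda^L)$ and $b=l(\lambda^R)$. I would prove the following length lemma: $a+b\le m-1$ implies $d(\lambda)=rk(\lambda)=0$, while $a+b\ge m$ together with $d(\lambda)=0$ implies $rk(\lambda)=1$. For the first direction, the tails of $I_\wedge$ and $I_\vee$ are $(-\infty,-a]$ and $[b+2-m,\infty)$, and when $a+b\le m-1$ they already cover $\Z$. Using $\lambda_i^L\ge 1$ for $i\le a$ one checks $\lambda_i^L-i+1\ge b+2-m$, so every non-tail $I_\wedge$-position lies in the tail of $I_\vee$ and yields a cross rather than a $\wedge$; symmetrically the non-tail $I_\vee$-positions become crosses. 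Thus all $\wedge$'s lie in the far-left tail and all $\vee$'s in the far-right tail, precluding both $\vee\wedge$-cups and interior circles. This gives $d=rk=0$ and, combined with \ref{loewy-length}, proves the first statement: $R(\lambda)=L(\lambda^{\dagger})$ is irreducible of atypicality $n-rk=1$.

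For the converse direction I use the bound $rk\le n=1$ coming from $(m|1)$-crossness together with the identity $|\circ|=|\times|-m+1$, which follows from the asymptotic counts $|I_\wedge\cap[-K,K]|=K+1$ and $|I_\vee\cap[-K,K]|=K+m-1$. In the boundary case $a+b=m$ the gap $[1-a,b+1-m]$ collapses to the single point $p=1-a$, and the equations $\lambda_i^L=i-a$ and $\lambda_i^R=i+a-m$ needed to place $p$ in a non-tail component both contradict $\lambda_i\ge 1$, so $p$ is forced to be a circle and $rk=1$. The case $a+b>m$ I would reduce to the boundary case by induction on $l(\lambda)$, removing a corner box from whichever of $\lambda^L,\lambda^R$ is longer in a way that preserves $(m|1)$-crossness and the property $d=0$. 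The second statement then follows: $rk=1$ gives $R(\lambda)=L(\lambda^{\dagger})$ typical. The main obstacle is this induction for $l>m$, where one must verify case-by-case that a suitable corner box exists without destroying the $(m|1)$-cross property or creating a new cup in the reduced diagram.
\end{bew}
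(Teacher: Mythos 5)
Your handling of the third claim is correct and concise: $k(\lambda)\le n=1$ (which the paper guarantees via $at(R(\lambda))=n-rk(\lambda)\ge 0$ and $d(\lambda)\le n-rk(\lambda)$, or more directly from the $DS_{x_r}$ proposition) combined with $d(\lambda)=1$ forces $k(\lambda)=1$, and Theorem \ref{projective-mixed-tensors-2} gives projectivity. Your tail-covering argument for the first claim is also complete and correct: when $a+b\le m-1$ the tails $(-\infty,-a]$ and $[b+2-m,\infty)$ cover $\Z$, and the inequalities $\lambda_i^L-i+1\ge 2-a>b+1-m$ and $i-(m-1)-\lambda_i^R\le b-m<-a$ push every non-tail position of one set into the tail of the other, so there are no circles, all $\wedge$'s sit strictly left of all $\vee$'s, and hence $d(\lambda)=rk(\lambda)=0$.

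The genuine gap is in the second claim. You prove the boundary case $a+b=m$ cleanly (the single gap vertex $p=1-a$ cannot lie in $I_\wedge$ or $I_\vee$, so it is a circle, and that argument does not even need $d(\lambda)=0$), but for $a+b>m$ you propose an induction on $l(\lambda)$ by removing a corner box, and you explicitly acknowledge you cannot verify that this removal preserves both $(m|1)$-crossness and $d=0$. As stated, this does not constitute a proof. The good news is that no induction is needed: a direct argument finishes the case $a+b>m$. Suppose for contradiction $\#\circ=0$ and $d(\lambda)=0$. The gap $[1-a,\,b+1-m]$ has $a+b-m+1\ge 2$ points. Since $p=1-a$ cannot be a non-tail element of $I_\wedge$ (that would force $\lambda_i^L=i-a\le 0$) and is not in either tail, $\#\circ=0$ forces $p\in I_\vee$, so $p$ is labelled $\vee$. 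Because $d(\lambda)=0$ there can be no $\wedge$ strictly to the right of this $\vee$; but every non-tail element of $I_\wedge$ lies at a position $\ge 2-a>p$, so each of them must also lie in $I_\vee$ (i.e.\ be a $\times$). Combined with $\#\circ=0$, every gap point therefore lies in $I_\vee$, and as the gap is disjoint from the $I_\vee$-tail $[b+2-m,\infty)$, every gap point is a non-tail element of $I_\vee$. But every non-tail element of $I_\vee$ equals $i-(m-1)-\lambda_i^R\le i-m\le b-m$, which is strictly less than the rightmost gap point $b+1-m$. Contradiction, so $rk(\lambda)\ge 1$, hence $=1$. You should also be aware that the route the paper intends is different and shorter once the later results of Section \ref{maximal-atypical-m>n} are available: the lemma there that $sdim\,R(\lambda)\neq 0$ iff $l(\lambda)\le m-n$, together with $R(\lambda)=L(\lambda^{\dagger})$ irreducible (from $d(\lambda)=0$) and the Kac--Wakimoto/Serganova criterion that an irreducible module has nonzero superdimension iff it is maximally atypical, immediately gives both the first and second claims without any weight-diagram combinatorics. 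Your direct combinatorial approach buys independence from the superdimension machinery, but it must be completed as above for the second claim.
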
 

\begin{cor} In the decomposition $L(\lambda) \otimes L(\mu)$ between two irreducible $GL(m|1)$-modules no (Anti)ZigZag module $Z^l(a)$ with $l \geq 2$ appears as a direct summand. 
\end{cor}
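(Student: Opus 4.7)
The plan is to reduce the decomposition of $L(\lambda) \otimes L(\mu)$ to a sum of mixed tensors, up to a Berezin-twist, and then invoke the structural classification of mixed tensors for $Gl(m|1)$ recorded in the two preceding corollaries. First, since every irreducible $Gl(m|1)$-module is a Berezin-twist of an indecomposable mixed tensor, there exist integers $a,b$ and indecomposable mixed tensors $R_1, R_2 \in T$ with $L(\lambda) = Ber^a \otimes R_1$ and $L(\mu) = Ber^b \otimes R_2$. Because $Ber$ is one-dimensional and invertible, the product factors as
\[ L(\lambda) \otimes L(\mu) \;=\; Ber^{a+b} \otimes (R_1 \otimes R_2). \]

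Second, since $T$ is the image of the tensor functor $F_{m|1}$, it is closed under tensor products, so $R_1 \otimes R_2 = \bigoplus_i R(\nu_i)$ splits as a direct sum of indecomposable mixed tensors. In the $Gl(m|1)$ case the atypicality is bounded by $1$, so the defect of any bipartition $\nu_i$ is $0$ or $1$; by the structural corollary above each $R(\nu_i)$ is therefore either irreducible (when $d(\nu_i)=0$) or a projective cover $P(\nu_i^{\dagger})$ of an atypical irreducible (when $d(\nu_i)=1$). Tensoring with the invertible one-dimensional module $Ber^{a+b}$ is an auto-equivalence of $\calR_{m1}$ that preserves both irreducibility and projectivity, so every indecomposable summand of $L(\lambda) \otimes L(\mu)$ is either irreducible or a projective cover.

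Finally, by Germoni's classification the indecomposables in $\calR_{m1}$ split into three disjoint families: irreducibles, projective covers of atypical irreducibles, and (Anti-)ZigZag modules $Z^l(a)$ with $l \geq 2$. Since the previous step shows the summands all lie in the first two families, no $Z^l(a)$ with $l \geq 2$ can occur. The only subtle point is the compatibility of the Berezin-twist with the tensor product decomposition and with the irreducible-versus-projective dichotomy, but this is immediate from the invertibility of $Ber$; so there is no genuine obstacle.
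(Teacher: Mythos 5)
Your proposal is correct and follows essentially the same route the paper intends: the corollary is stated without explicit proof because it is meant to follow directly from the two preceding corollaries (every irreducible $Gl(m|1)$-module is, up to a Berezin twist, a mixed tensor, and every indecomposable mixed tensor for $Gl(m|1)$ is either irreducible or a projective cover), combined with the observations that $T$ is closed under tensor products and that twisting by the invertible line $Ber$ preserves the Krull-Schmidt decomposition and the irreducible/projective dichotomy. The one point that could be tightened is your phrase ``the atypicality is bounded by $1$, so the defect of any bipartition $\nu_i$ is $0$ or $1$'': the bound $d(\nu_i) \leq 1$ comes not directly from atypicality but from $d(\nu_i) \leq k(\nu_i) \leq n = 1$ (equivalently, from the enumeration in the preceding corollary), which you do effectively invoke, so the argument goes through.
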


Since any irreducible $GL(m|1)$-module is up to an explicit Berezin-Twist in $T$, the tensor product formula in Deligne's category and the description of the image of $F_{m|1}$ solves the problem of decomposing the tensor product of any two irreducible $GL(m|1)$-representations. 

\medskip

\textbf{Example.} We compute $L(2,0,0,0|0) \otimes L(1,0,0,0|-1)$ in $\calR_{4|1}$. Applying $\theta^{-1}$ we see that the corresponding bipartitions are $(2;0)$ and $(1;1)$. Since the defect is zero, we only have to compute $(2;0) \otimes (1;1)$ in $R_3$. By \cite{Comes-Wilson}, example 7.1.3, we have \[ (2;0) \otimes (1;1) = ((2,1);1) + (3;1) + (1^2;0) + (2;0) \] for $\delta = 3$ in $R_{\delta}$. Hence \begin{align*} & L(2,0,0,0|0)   \otimes   L(1,0,0,0|-1) = \\  & \ \ \  \  L(1,1,0,0|0) \oplus L(2,0,0,0|0) \oplus L(3,0,0,0|-1) \oplus L(2,1,0,0|-1)\end{align*} in $Rep(GL(4|1))$.

\medskip

ZigZag modules \cite{Germoni-sl} \cite{Goetz-Quella-Schomerus} of length greater than 1 never occur in the image of $F_{m|1}$. However the tensor product between an indecomposable projective module with a ZigZag-module is easily reduced to the known cases by the following well-known fact:

\begin{prop} Let $P$ be projective and $M$ any module. Then $ P \otimes M = \bigoplus_i P \otimes M_i$ where the sum runs over the composition factors $M_i$ of $M$. 
\end{prop}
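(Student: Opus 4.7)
The plan is to induct on the composition length of $M$, combining two facts about the category $\calR_{mn}$: first, the tensor product is exact in each variable because $\calR_{mn}$ is rigid (every object has a dual, so $P \otimes -$ is both a left and right adjoint); and second, the projective modules form a tensor ideal, so $P \otimes X$ is projective for every $X \in \calR_{mn}$, as already used in the discussion preceding the statement.

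Fix a composition series $0 = N_0 \subset N_1 \subset \cdots \subset N_\ell = M$ with simple quotients $M_i := N_i/N_{i-1}$. The case $\ell = 1$ is tautological. In general, applying the exact functor $P \otimes -$ to
\[ 0 \longrightarrow N_{\ell-1} \longrightarrow M \longrightarrow M_\ell \longrightarrow 0 \]
yields a short exact sequence
\[ 0 \longrightarrow P \otimes N_{\ell-1} \longrightarrow P \otimes M \longrightarrow P \otimes M_\ell \longrightarrow 0. \]
By the tensor-ideal property, both $P \otimes N_{\ell-1}$ and $P \otimes M_\ell$ are projective.

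The key step is to split this sequence. I would invoke the well-known fact that the category $\calR_{mn}$ of finite-dimensional super representations of $Gl(m|n)$ is Frobenius: the projective and injective objects coincide. Hence $P \otimes N_{\ell-1}$ is injective and the displayed sequence splits, giving an isomorphism
\[ P \otimes M \;\cong\; (P \otimes N_{\ell-1}) \oplus (P \otimes M_\ell). \]
The inductive hypothesis applied to $N_{\ell-1}$ gives $P \otimes N_{\ell-1} \cong \bigoplus_{i=1}^{\ell-1} P \otimes M_i$, and the two displays combine to yield the assertion.

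The main (and only) obstacle is the splitting of the short exact sequence, for which one needs more than mere projectivity of the outer terms: one needs injectivity of at least one of them. This is supplied by the Frobenius property of $\calR_{mn}$; alternatively, one can argue directly via $\mathrm{Ext}^1(P \otimes M_\ell,\, P \otimes N_{\ell-1}) = 0$, which follows from projectivity of the first argument together with the coincidence of projectives and injectives. Once this splitting is in place, the induction is routine and no further calculation is required.
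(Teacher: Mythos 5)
Your inductive strategy matches the paper's almost exactly: the paper also induces on length, tensors the short exact sequence with $P$, and uses the tensor-ideal property to split it. But your parenthetical claim that splitting "needs more than mere projectivity of the outer terms" and that "one needs injectivity of at least one of them" is incorrect, and it leads you to pull in the Frobenius property of $\calR_{mn}$ as an extra input that is not actually required. In your displayed sequence
\[ 0 \longrightarrow P \otimes N_{\ell-1} \longrightarrow P \otimes M \longrightarrow P \otimes M_\ell \longrightarrow 0 \]
the term $P \otimes M_\ell$ is projective by the tensor-ideal property, and any epimorphism onto a projective object splits; that alone gives the direct-sum decomposition. The paper's one-line argument ("tensoring with $P$ and using that $Proj$ is a tensor ideal we see that the sequence splits") is exactly this observation, applied to a sequence $0 \to M_i \to M \to M' \to 0$ with the simple factor on the left rather than the right — in that arrangement projectivity of $P \otimes M'$ again splits the surjection. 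So the Frobenius property, while true for $\calR_{mn}$ and a perfectly valid alternative splitting argument, is superfluous here, and the surrounding remark about projectivity being insufficient should be deleted.
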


\begin{proof} Use induction on the length of $M$. If $M$ is of length $n$ consider a sequence \[ \xymatrix{ 0 \ar[r] & M_i \ar[r] & M \ar[r] & M' \ar[r] & 0 } \] with $length(M') = n-1$. Tensoring with $P$ and using that $Proj$ is a tensor ideal we see that the sequence splits.  
\end{proof}

\begin{cor} Let $P$ be an indecomposable projective $GL(m|1)$-Module. Then \begin{align*} P \otimes Z^r(a)  = \bigoplus_{a_i}  P \otimes L(a_i), \ \ P \otimes \overline{Z}^r(a)  = \bigoplus_{a_i}  P \otimes L(a_i) \end{align*} where the sums run over the composition factors $L(a_i)$ of $Z^r(a)$ respectively $\overline{Z}^r(a)$.
\end{cor}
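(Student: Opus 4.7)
The plan is to apply the preceding proposition directly to the two cases $M = Z^r(a)$ and $M = \overline{Z}^r(a)$. First I would recall from \cite{Germoni-sl} that the (Anti-)ZigZag modules are indecomposable objects in $\calR_{m1}$ of finite length, whose composition factors are precisely the irreducibles $L(a_i)$ appearing in the right-hand side of the claimed identity. This places them within the hypotheses of the preceding proposition, which via induction on the length of $M$ applies to any finite length module.

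Invoking the proposition with $M = Z^r(a)$ (respectively $M = \overline{Z}^r(a)$) yields the two decompositions
\[ P \otimes Z^r(a) = \bigoplus_i P \otimes L(a_i), \qquad P \otimes \overline{Z}^r(a) = \bigoplus_i P \otimes L(a_i), \]
where each sum is indexed by the composition factors of the corresponding ZigZag module. I do not anticipate any genuine obstacle here: the corollary is an immediate consequence of the preceding proposition together with the known composition series of the (Anti-)ZigZag modules in \cite{Germoni-sl}. The reason to record it separately is practical — since in the $Gl(m|1)$ setting every irreducible $L(a_i)$ is, up to a Berezin-twist, an irreducible mixed tensor, the tensor products $P \otimes L(a_i)$ on the right can be computed using the algorithm developed in the previous subsections from Deligne's category, reducing the decomposition of $P \otimes Z^r(a)$ to problems already solved.
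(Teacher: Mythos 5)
Your argument is correct and is exactly the paper's (implicit) reasoning: the corollary follows immediately by applying the preceding proposition to $M = Z^r(a)$ and $M = \overline{Z}^r(a)$, using that these are finite-length modules with known composition factors. Nothing further is needed.
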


All in all the only remaining unknown tensor products in the $GL(m|1)$-case are the tensor products $Z^r(a) \otimes Z^s(b)$, ($r$ or $s$ $\geq 2$) and vice versa for the Anti-ZigZag-modules. If $r,s$ are odd their tensor product decomposes as given by the Littlewood-Richardson Rule for $GL(m-n)$ modulo contributions of superdimension 0 \cite{Heidersdorf-semisimple}.

\subsection{Irreducible maximal atypical mixed tensors for $m >n$} For two weights $\lambda = (\lambda_1, \ldots,\lambda_m \ | \ \lambda_{m+1}, \ldots, \lambda_{m+n})$ and $\mu = (\mu_1,\ldots, \mu_m \ | \ \mu_{m+1},\ldots, \mu_{m+n})$ say that $\lambda \succeq \mu$ if there exists $i \in \{1,\ldots,m\}$ with the property $\lambda_j = \mu_j$ for all $j<i$ and $\lambda_i > \mu_i$. We will see in lemma \ref{degree} that $\{ \lambda^L\} \otimes \{\lambda^R\}^{\vee} = R(\lambda) \oplus \bigoplus R(\mu^j)$ with $|\mu^j| < |\lambda|$.

\begin{lem}  Let $R(\lambda)$ be maximally atypical irreducible. Then $R(\lambda) = L(\lambda^{\dagger})$ with $L(\lambda^{\dagger}) \succ L(\mu_j^{\dagger})$ for all $j$.
\end{lem}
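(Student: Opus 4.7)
\begin{bew}
The plan is to identify $\lambda^{\dagger}$ with the sum $\alpha+\beta$ of the highest weights of the two tensor factors, and then to deduce the comparison by a multiplicity-one argument combined with super-dominance. Write $\alpha$ for the highest weight of the covariant module $\{\lambda^L\}=S_{\lambda^L}(V)$ in $\calR_{mn}$ and $\beta$ for the highest weight of the contravariant module $\{\lambda^R\}$; I should note that, because $\mathfrak{gl}(m|n)$ has the positive odd root $\epsilon_m-\delta_1$, $\beta$ must be taken as the dominant representative at the top of the super-dominance order on $S_{\lambda^R}(V)^{\vee}$, not as the naive $-w_0$-transform of the highest weight of $S_{\lambda^R}(V)$.

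The hard part of the argument is the identity
\[
\lambda^{\dagger}\;=\;\alpha+\beta.
\]
I would establish this by running the $\theta$-algorithm of Section~\ref{sec:theta} on the weight diagram of $\lambda$. In the maximally atypical irreducible case the algorithm simplifies dramatically: $k(\lambda)=0$, the diagram has $\delta=m-n$ crosses, no circles, and all $\wedge$'s lie to the left of all $\vee$'s, so the algorithm reduces to flipping all labels at positions $\geq T=\max I_\times(\lambda)+1$ together with the first $\max I_\times(\lambda)+n$ free labels below $T$. Reading off $I_\times(\lambda^{\dagger})$ and $I_\circ(\lambda^{\dagger})$ from the resulting diagram and comparing coordinate by coordinate with the Sergeev--Berele--Regev formula for $\alpha$ and its analogue for $\beta$ (obtained by dualising) should yield the identity. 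The odd positive root $\epsilon_m-\delta_1$ is responsible for the crucial ``shift'' in $\beta$ that distinguishes it from the purely classical case.

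Once the identity is in hand, the remaining argument is formal. The vector $v_\alpha\otimes v_\beta$ is killed by every positive root vector---the even ones by Leibniz, the odd ones because both $v_\alpha$ and $v_\beta$ are highest-weight vectors in their respective irreducible factors---so it is a highest-weight vector of weight $\lambda^{\dagger}$ in $\{\lambda^L\}\otimes\{\lambda^R\}$, and $L(\lambda^{\dagger})$ appears as a composition factor. Any weight decomposition $\alpha+\beta=\mu+\nu$ with $\mu$ a weight of $\{\lambda^L\}$ and $\nu$ a weight of $\{\lambda^R\}$ forces $\mu=\alpha$, $\nu=\beta$ by super-dominance on each factor; together with the one-dimensionality of the two highest-weight spaces this gives multiplicity one for $L(\lambda^{\dagger})$ inside the tensor product. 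Since $R(\lambda)=L(\lambda^{\dagger})$ already accounts for this copy, no $R(\mu_j)$ contains $L(\lambda^{\dagger})$ as a composition factor; in particular $\mu_j^{\dagger}\neq\lambda^{\dagger}$ for every $j$.

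Finally, $L(\mu_j^{\dagger})$ is the head of the summand $R(\mu_j)$ and hence a composition factor of the tensor product, so $\mu_j^{\dagger}\leq\lambda^{\dagger}$ in super-dominance, with strict inequality by the previous paragraph. Writing $\lambda^{\dagger}-\mu_j^{\dagger}=\sum n_i\alpha_i$ as a non-zero non-negative combination of simple roots, and using that in Deligne's decomposition $\mu_j=(\nu^L;\nu^R)$ satisfies $\nu^L\subsetneq\lambda^L$ or $\nu^R\subsetneq\lambda^R$ componentwise, one checks that the combination involves at least one even $\mathfrak{gl}(m)$ simple root or the odd root $\epsilon_m-\delta_1$, each of which alters some coordinate in $\{1,\dots,m\}$. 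Consequently $\lambda^{\dagger}$ and $\mu_j^{\dagger}$ disagree on the first $m$ coordinates and dominance restricted to those coordinates upgrades to the strict lex inequality $L(\lambda^{\dagger})\succ L(\mu_j^{\dagger})$. \qed
\end{bew}
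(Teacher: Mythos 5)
The route you take is genuinely different from the paper's. The paper's proof is purely combinatorial: it defines the quantity $I_x^{\max}$ (largest vertex labelled $\times$ or $\vee$) and asserts that the lemma reduces to the inequality $I_x^{\max}(\lambda)\geq I_x^{\max}(\mu_j)$, to be checked from the $\theta$-algorithm. You instead pass through highest-weight theory of the tensor product $\{\lambda^L\}\otimes\{\lambda^R\}^{\vee}$: establish $\lambda^{\dagger}=\alpha+\beta$, get multiplicity one by a standard weight-space argument, deduce strict super-dominance $\mu_j^{\dagger}<\lambda^{\dagger}$, and then try to upgrade to the lex order $\succ$. The structural idea is sound, and the multiplicity-one step is a clean replacement for some of the combinatorics, but the proposal leaves two real gaps.

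First, the identity $\lambda^{\dagger}=\alpha+\beta$ is the load-bearing fact and it is only announced (\emph{``running the $\theta$-algorithm $\ldots$ should yield the identity''}) rather than carried out. Since $\lambda^{\dagger}$ is defined by a non-trivial cup/cap algorithm and $\alpha,\beta$ come from the Sergeev--Berele--Regev hook formula and its dual, this is genuinely a computation, and without it the rest of the argument has nothing to stand on. (Note also that one cannot deduce it for free from maximality: if $\lambda^{\dagger}$ were \emph{not} equal to $\alpha+\beta$, then the constituent $L(\alpha+\beta)$ would sit inside one of the other summands $R(\mu_j)$, and the lemma would in fact be false; so this identity is logically equivalent to a good part of what you are trying to prove, not an independent input.)

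Second, and more seriously, the passage from strict super-dominance $\mu_j^{\dagger}<\lambda^{\dagger}$ to the ordering $\succ$ is not a formality. The order $\succ$ is strict lexicographic comparison on the \emph{first $m$ coordinates only}. Dominance says $\lambda^{\dagger}-\mu_j^{\dagger}=\sum a_i\alpha_i$ with $a_i\geq 0$ not all zero, but a priori the positive $a_i$ could all sit on the $\mathfrak{gl}(n)$ simple roots $\delta_j-\delta_{j+1}$, in which case the first $m$ coordinates agree and $\succ$ fails. You acknowledge this and assert that the decomposition must involve a $\mathfrak{gl}(m)$ root or $\epsilon_m-\delta_1$, justifying it by $\nu^L\subsetneq\lambda^L$ or $\nu^R\subsetneq\lambda^R$; but the map $\mu\mapsto\mu^{\dagger}$ is not monotone in any simple coordinatewise sense, so the inclusion of bipartitions does not transparently translate into a statement about which coordinates of the highest weights differ. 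This is precisely where the combinatorics is unavoidable, and it is exactly the point the paper's $I_x^{\max}$ reduction is designed to address; your proposal does not replace that analysis, it just defers it. Both gaps must be closed for the proof to be complete.
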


\begin{proof} Define $I_x^{max}(\lambda) =$ largest vertex with a $\times$ or $\vee$. Then the statement amounts to show that $I_x^{max}(\lambda) \geq I_x^{max}(\mu_j)$ for all $j$. This is a combinatorial exercise using our description of $\theta$.
\end{proof}

Hence the maximally atypical $R(\lambda)$ for $m>n$ of $sdim \neq 0$ could be characterized as follows: Take all the tensor products of two $(m|n)$-Hook partitions $\lambda^L, \lambda^R$ such that $(\lambda^L,\lambda^R)$ is $(m|n)$-cross. Then the $R(\lambda)$ are the indecomposable modules in the decomposition $\{\lambda^L \} \otimes \{\lambda^R\}^{\vee}$ which satisfy $R(\lambda) = L(\lambda^{\dagger}) \succ L(\mu_j^{\dagger})$ for all $j$. 






\section{Elementary properties of the $R(\lambda)$}

Given two $(m|n)$-Hook partitions $\lambda^L, \lambda^R$ we form the bipartition $(\lambda^L,\lambda^R)$. It is in general not $(m|n)$-cross. 

\begin{lem} \label{degree} Let $\lambda^L$, $\lambda^R$ be two $ (m|n)$-Hook partitions such that $\lambda = (\lambda^L,\lambda^R)$ is $(m|n)$-cross. Then $\{\lambda^L\} \otimes \{\lambda^R\}^{\vee}$ contains $R(\lambda)$ as a direct summand. In the decomposition \[ \{\lambda^L\} \otimes \{\lambda^R\}^{\vee} = R(\lambda) \oplus \bigoplus R(\mu^j) \] all $\mu^j$ satisfy $(\mu^j)^L_i \leq \lambda^L_i$ and $(\mu^j)^R_i \leq \lambda^R_i$ for all $i$ and $|\mu^j| < |\lambda|$.
\end{lem}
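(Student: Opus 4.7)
The plan is to transfer the computation to Deligne's category $Rep(Gl_{m-n})$, where we already dispose of explicit tensor product formulas, and then push forward via $F_{m|n}$.

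First I would identify the factors. Since $\lambda^L$ and $\lambda^R$ are $(m|n)$-hook partitions, the irreducible covariant module $\{\lambda^L\}=S_{\lambda^L}(V)$ equals $F_{m|n}(R(\lambda^L,\emptyset))$, because $R(\lambda^L,\emptyset)$ is by construction the Schur functor $S_{\lambda^L}$ applied to the standard object of $Rep(Gl_{m-n})$, which is sent to $V$. Dually, $R(\emptyset,\lambda^R)$ is the Schur functor applied to $st^\vee$, and $F_{m|n}(R(\emptyset,\lambda^R))=\{\lambda^R\}^\vee$. Hence
\[ \{\lambda^L\}\otimes\{\lambda^R\}^\vee = F_{m|n}\bigl(R(\lambda^L,\emptyset)\otimes R(\emptyset,\lambda^R)\bigr).\]

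Second, I would compute the tensor product in $Rep(Gl_{m-n})$. In $R_t$ the Comes--Wilson formula specialises (as explicitly stated in the text) to
\[ (\lambda^L;\emptyset)\cdot(\emptyset;\lambda^R) \;=\; \sum_{\nu^L,\nu^R}\Bigl(\sum_\kappa c_{\kappa\nu^L}^{\lambda^L}c_{\kappa\nu^R}^{\lambda^R}\Bigr)(\nu^L,\nu^R).\]
Taking $\kappa=\emptyset$ forces $\nu^L=\lambda^L$ and $\nu^R=\lambda^R$ and contributes $\lambda$ with coefficient $1$; for every other term $\kappa\neq\emptyset$, the Littlewood--Richardson rule gives the componentwise inclusions $\nu^L\subseteq\lambda^L$, $\nu^R\subseteq\lambda^R$ and the strict inequalities $|\nu^L|<|\lambda^L|$, $|\nu^R|<|\lambda^R|$, hence $|\nu|<|\lambda|$.

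Third, I would pass to $R_{m-n}$ by applying $lift^{-1}$. Because $\lambda^R=\emptyset$ (resp.\ $\lambda^L=\emptyset$) rules out the size constraint $|\mu|=(|\lambda^L|-i,|\lambda^R|-i)$ with $i>0$, the lifts of the two factors are themselves, so the displayed sum \emph{is} the lift of the desired product and we only have to invert the lift. As $lift$ is unitriangular with respect to size, $\lambda$ survives with coefficient $1$ at the top, and every additional summand $\mu^j$ is linked to some $\nu$ appearing above. The key check here is that a single $\vee\wedge$-swap can only shrink the two partitions componentwise: removing a label $\wedge$ at a position $q$ from $I_\wedge(\nu)$ and reinserting it at a smaller position $p<q$ replaces the decreasing sequence $(\nu^L_k-(k-1))$ by one whose $k$-th entry is $\leq\nu^L_k-(k-1)$ in every slot, and the symmetric statement holds for $I_\vee$ and $\nu^R$. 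Iterating gives $(\mu^j)^L\subseteq\lambda^L$, $(\mu^j)^R\subseteq\lambda^R$ and $|\mu^j|<|\lambda|$.

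Finally I would apply $F_{m|n}$: by hypothesis $\lambda$ is $(m|n)$-cross so $R(\lambda)\neq 0$ stays as a genuine summand, and the surviving $R(\mu^j)$ are those whose bipartition is $(m|n)$-cross. The main obstacle is the combinatorial check in the third step—that a $\vee\wedge$-swap decreases both partitions componentwise—which is a direct but careful manipulation of the sorted sequences $I_\wedge$ and $I_\vee$; everything else is bookkeeping with the universal property of $Rep(Gl_{m-n})$ and the Littlewood--Richardson rule.
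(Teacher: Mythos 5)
Your proposal follows the same route as the paper: compute $(\lambda^L;\emptyset)\cdot(\emptyset;\lambda^R)$ in $R_t$ via the Comes--Wilson formula, isolate the $\kappa=0$ term to get $\lambda$ with coefficient $1$, note that the remaining $\nu$ are componentwise $\subseteq\lambda$ and of strictly smaller size by the Littlewood--Richardson rule, and then invert the lift. The only difference is that you spell out the combinatorial check that a $\vee\wedge$-swap shrinks both partitions componentwise, which the paper compresses to the phrase ``decreasing the coefficients of the bipartition'' --- a welcome clarification, but not a different argument.
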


\begin{proof} 
In $R_t$  \[ (\lambda^L,0) \otimes (0,\lambda^R) = \sum_{\nu} \sum_{\kappa \in P} c_{\kappa,\nu^L}^{\lambda^L} c_{\kappa,\nu^R}^{\lambda^{R}} \ \nu. \] Putting $\kappa = 0$ yields $\nu^L = \lambda^L, \ \nu^R = \lambda^R$. Hence \[ (\lambda^L,0) \otimes (0,\lambda^R) =  (\lambda^L,\lambda^R) +  \sum_{\nu} \sum_{\kappa \in P, \kappa \neq 0} c_{\kappa,\nu^L}^{\lambda^L} c_{\kappa,\nu^R}^{\lambda^{R}} \ \nu. \] All other bipartitions $\nu = (\nu^L, \nu^R)$ will have degree stricty smaller than $(\lambda^L,\lambda^R)$ and length $\geq$ than $l(\lambda)$. By Comes-Wilson $\lift_{d}(\lambda) = \lambda + \ldots$ where the other bipartitions are obtained by swapping successively $\vee \wedge$-pairs, i.e. decreasing the coefficients of the bipartition. Since $\lambda$ is the largest bipartition, $R(\lambda)$ will occur with multiplicity one in the decomposition. 
\end{proof}

For any two partitions $\lambda^L, \lambda^R$ such that the pair $(\lambda^L,\lambda^R)$ is $(m|n)$-cross we define \[ \A_{\lambda^L \lambda^R} := \{\lambda^L\} \otimes \{\lambda^R\}^{\vee}.\] Recall from section \ref{sec:repr} that $()^*$ denotes the graded dual.                                                                                                                           

\begin{prop} $R(\lambda^L,\lambda^R)$ is $^*$-invariant 
\end{prop}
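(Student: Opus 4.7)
The plan is to transfer the problem through the Morita equivalence $E\colon \calR_{mn}\to K(m|n)\text{-mod}$ and exploit the diagrammatic symmetry in the construction of $R(\lambda)$. Under $E$, the graded duality $*$ corresponds to the canonical duality $\circledast$ on graded $K(m|n)$-modules built from the natural graded anti-involution of $K(m|n)$ (horizontal reflection of diagrams), and each irreducible $\calL(\mu)$ is $\circledast$-self-dual in the Brundan--Stroppel framework.

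Starting from the definition $R(\lambda)=G^{t}_{\Delta\Gamma}\calL(\zeta)$, the trivial module $\calL(\zeta)$ is $\circledast$-invariant, so the crux is to control how $\circledast$ interacts with the projective functor $G^{t}_{\Delta\Gamma}$. From the bimodule presentation of these functors in \cite{Brundan-Stroppel-2} one obtains a natural isomorphism $(G^{t}_{\Delta\Gamma}M)^{\circledast}\cong G^{\tilde t}_{\Gamma\Delta}(M^{\circledast})$ for every graded module $M$, where $\tilde t$ is the crossingless matching obtained by horizontally reflecting $t$. Applied to $M=\calL(\zeta)$ this yields $R(\lambda)^{\circledast}\cong G^{\tilde t}_{\Gamma\Delta}\calL(\zeta)$.

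It then remains to identify the right-hand side with $R(\lambda)$ itself. Both modules are indecomposable with simple socle $\calL(\lambda^{\dagger})$, and Theorem \ref{thm:graded-multiplicities}(ii) expresses their graded Grothendieck classes by the same palindromic formula
\[
\sum_{\gamma}(q+q^{-1})^{n_{\gamma}}[\calL(\gamma)].
\]
Combined with the Koszulity of the block and the rigidity of $R(\lambda)$ from Corollary \ref{loewy-length}, these data pin the module down up to isomorphism, yielding $R(\lambda)^{*}\cong R(\lambda)$.

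The most delicate step is this last uniqueness claim; while rigid modules with simple top and socle over a Koszul algebra are expected to be determined by their graded composition multiplicities, making this fully rigorous may require additional input from the Koszul duality picture for $K(m|n)$. If that route stalls, the fallback is to verify $G^{\tilde t}_{\Gamma\Delta}\calL(\zeta)\cong G^{t}_{\Delta\Gamma}\calL(\zeta)$ directly, by exploiting at the bimodule level the manifest reflection symmetry of the diagram defining $R(\lambda)$ applied to the trivial representation $\calL(\zeta)$.
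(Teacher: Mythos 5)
Your approach is genuinely different from the paper's and, as written, has real gaps.

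The paper proves $*$-invariance by a short degree argument entirely inside $\calR_{mn}$, with no recourse to the Khovanov-algebra machinery: the module $\A_{\lambda^L,\lambda^R}=\{\lambda^L\}\otimes\{\lambda^R\}^\vee$ is manifestly $*$-invariant (irreducibles are $*$-self-dual), and by the preceding lemma $R(\lambda)$ occurs in its decomposition with multiplicity one and with \emph{strictly maximal} degree $|\lambda|$. If $R(\lambda)^*\ne R(\lambda)$ then $R(\lambda)^*=R(\mu_j)$ for some summand with $|\mu_j|<|\lambda|$; running the same argument inside the $*$-invariant $\A_{(\mu^j)^L,(\mu^j)^R}$ forces $R(\mu_j)^*=R(\nu)$ with $|\nu|<|\mu_j|$, but $R(\mu_j)^*=R(\lambda)$ and $|\lambda|>|\mu_j|$, a contradiction. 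That is the whole proof.

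Concretely, the problems with your route are as follows. First, the claimed isomorphism $(G^{t}_{\Delta\Gamma}M)^{\circledast}\cong G^{\tilde t}_{\Gamma\Delta}(M^{\circledast})$ does not typecheck under the paper's conventions: $G^t_{\Delta\Gamma}$ maps $\Gamma$-modules to $\Delta$-modules, so the right-hand side would require $M^\circledast$ to live in $\Delta$, whereas $M=\calL(\zeta)$ lies in $\Gamma$. What one can extract from \cite{Brundan-Stroppel-2} is a compatibility of $G^t$ with $\circledast$ up to a reflection of the matching \emph{and} a grading shift, but you would then have to identify the resulting functor applied to $\calL(\zeta)$ with $R(\lambda)$ again, which is precisely the step you flag as delicate. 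Second, the uniqueness claim — that an indecomposable rigid module over a Koszul block with prescribed simple socle and prescribed graded composition multiplicities is determined up to isomorphism — is not established in the paper nor in the cited references, and is not obviously true in this generality. Your fallback ("verify $G^{\tilde t}_{\Gamma\Delta}\calL(\zeta)\cong G^{t}_{\Delta\Gamma}\calL(\zeta)$ directly by exploiting reflection symmetry") is exactly the hard point restated, not a solution. The degree argument the paper uses is shorter, avoids all of this diagram-algebra bookkeeping, and only uses the already-established facts that $R(\lambda)$ appears with multiplicity one and maximal size in the $*$-invariant $\A_{\lambda^L,\lambda^R}$.
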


\begin{proof} Clearly $\A_{\lambda^L\lambda^R}$ is $^*$-invariant since irreducible modules are $^*$-invariant. In the decomposition \[ \A_{\lambda^L\lambda^R} = R(\lambda^L,\lambda^R) \oplus \bigoplus_i R(\mu^j) \] $R(\lambda^L,\lambda^R)$ occurs as a direct summand with multiplicity 1; and $|\lambda| > |\mu^j|$. Assume $R(\lambda)$ would not be $^*$-invariant. Then there exists a $\mu^j$ occuring with multiplicity 1 in the decomposition with $R(\lambda)^* = R(\mu^j)$. Write $\mu^j = ((\mu^j)^L, (\mu^j)^R)$. As for $\lambda$, $R(\mu^j)$ occurs with multiplicity 1 in the decomposition of the $^*$-invariant \[ \A_{(\mu^j)^L,(\mu^j)^R} = R(\mu^j) \oplus \bigoplus_j R(\nu^j)\]  with degree strictler larger then the other bipartitions $\nu^j$. Hence there exists a $\nu^j$ with $R(\mu^j)^* = R(\nu^j)$. Since applying $^*$ is the identity, this forces $\nu^j = \lambda$. However $|\lambda| > |\mu^j| > |\nu^j|$, a contradiction.  
\end{proof}

\begin{lem} If $R(\lambda)$ is maximally atypical then $d(\lambda) \geq d(\mu^j)$ for all $j$. If $R(\lambda)$ is maximally atypical and irreducible then $\{\lambda^L\} \otimes \{ \lambda^R\}^{\vee}$ is completely reducible and splits into maximally atypical irreducible summands.
\end{lem}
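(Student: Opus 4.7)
For the first assertion I would use the containment established in the previous lemma: every $\mu^j$ satisfies $(\mu^j)^L_i \le \lambda^L_i$ and $(\mu^j)^R_i \le \lambda^R_i$ componentwise. Via the definitions
\[
I_{\wedge}(\lambda) = \{\lambda^L_1, \lambda^L_2 - 1, \lambda^L_3 - 2, \ldots\}, \qquad I_{\vee}(\lambda) = \{1-\delta-\lambda^R_1, 2-\delta-\lambda^R_2, \ldots\},
\]
this componentwise containment translates into the weight diagram of $\mu^j$ having its $\wedge$-vertices shifted weakly to the left and its $\vee$-vertices shifted weakly to the right relative to $\lambda$. Since a cap in the greedy nested matching is produced by a $\vee$-vertex paired with a $\wedge$-vertex strictly to its right, such a coordinated shift can only destroy matchings and never create new ones. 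I would make this precise by induction on the number of elementary coordinate reductions (decreasing one entry of $\lambda^L$ or $\lambda^R$ by $1$), with a short case analysis of how the greedy pairing updates under a single step.

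The second assertion follows from the first on the side of irreducibility: if $d(\lambda) = 0$ then $d(\mu^j) = 0$ for every $j$, so each $R(\mu^j) = L((\mu^j)^\dagger)$ is irreducible and $\{\lambda^L\}\otimes\{\lambda^R\}^\vee$ is completely reducible. To upgrade this to maximal atypicality of each summand I would exploit the rigid structure imposed by $rk(\lambda)=0$ and $d(\lambda)=0$: the weight diagram of $\lambda$ then consists of an initial stretch of $\wedge$'s, a middle block containing all the $\times$'s, and a final stretch of $\vee$'s, with $I_{\wedge}(\lambda)\cup I_{\vee}(\lambda) = \Z$. Combining this rigidity with the Comes--Wilson formula
\[
\Gamma^{\nu}_{(\lambda^L;0),(0;\lambda^R)} = \sum_{\kappa \in P} c^{\lambda^L}_{\kappa, \nu^L}\, c^{\lambda^R}_{\kappa, \nu^R},
\]
I would check that every nonzero $\mu^j$ which survives in $\calR_{mn}$ (i.e.\ is $(m|n)$-cross) still satisfies $rk(\mu^j)=0$, and is therefore maximally atypical.

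\textbf{Main obstacle.} The delicate step is precisely this maximal-atypicality claim: a priori the coordinate reductions spread $\wedge$'s and $\vee$'s apart and open up $\circ$-vertices, which would decrease the atypicality. Ruling this out appears to require combining the rigid shape of $\lambda$ with the $(m|n)$-cross condition on $\mu^j$. A cleaner alternative would be to invoke the Duflo--Serganova functors: since $k(\lambda)=0<n$ the module $R(\lambda)$ is non-projective, so $DS_{x_r}(R(\lambda)) = R(\lambda)$ for every admissible $r$, and applying $DS_{x_r}$ to the whole decomposition reduces the problem to $\calR_{(m-r)(n-r)}$, allowing an induction on $n$.
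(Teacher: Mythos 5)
Your route to the first assertion does not work as stated, and the failure is instructive. You claim that since the componentwise decrease $\mu^j \le \lambda$ pushes $I_\wedge$ to the left and $I_\vee$ to the right, it "can only destroy matchings and never create new ones," hence $d(\mu^j) \le d(\lambda)$. That claim is false already in rank $1$: take $\delta = 0$, $\lambda = ((2),(2))$ and $\mu = ((1),(1))$, so $\mu \le \lambda$ componentwise. One computes $I_\wedge(\lambda)=\{2,-1,-2,\ldots\}$, $I_\vee(\lambda)=\{-1,2,3,\ldots\}$, giving the diagram $\cdots\wedge\,\times\,\circ\,\circ\,\times\,\vee\cdots$ with $d(\lambda)=0$ and $rk(\lambda)=2$; whereas $I_\wedge(\mu)=\{1,-1,-2,\ldots\}$, $I_\vee(\mu)=\{0,2,3,\ldots\}$ give $\cdots\wedge\,\vee\,\wedge\,\vee\cdots$ with $d(\mu)=1$, $rk(\mu)=0$. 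The defect went \emph{up}. The reason is that the entries of $I_\wedge$ and $I_\vee$ are not literally the $\wedge$- and $\vee$-labels: when an $I_\wedge$-entry sits on top of an $I_\vee$-entry the vertex is labelled $\times$, and shifting can un-merge such a $\times$ into a genuine $\vee\,\wedge$-pair, creating a new cap while simultaneously destroying a $\circ$. What is monotone under componentwise decrease is not $d$ but $k(\lambda) = d(\lambda) + rk(\lambda)$. This is exactly what the paper's proof uses: since $R(\lambda)$ maximally atypical means $rk(\lambda)=0$, one has $d(\lambda)=k(\lambda)\ge k(\mu^j)\ge d(\mu^j)$. Note that the maximal atypicality hypothesis is therefore already essential for the \emph{first} assertion, whereas your plan tries to prove a universal $d$-monotonicity and only invokes the hypothesis in the second part.

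Once the first assertion is phrased in terms of $k$, the second assertion is immediate and your "delicate step" disappears: $R(\lambda)$ maximally atypical and irreducible forces $k(\lambda)=d(\lambda)+rk(\lambda)=0$, hence $k(\mu^j)=0$, hence both $d(\mu^j)=0$ (irreducible) and $rk(\mu^j)=0$ (maximally atypical). Neither the Comes--Wilson/rigidity detour nor the $DS$-induction is needed; the latter, as you set it up, would moreover still have to rule out summands killed by $DS_{x_n}$, which again comes back to the same invariant $k$. The one point your plan does get right is the reduction of part two to part one via $d(\mu^j)=0$, and the observation that componentwise containment is the correct input; you just attached the monotonicity claim to the wrong quantity.
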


\begin{proof} $R(\lambda)$ is maximally atypical if and only if $rk(\lambda) = 0$. Hence $k(\lambda) \geq k(\mu^j)$ implies the first statement. If $R(\lambda)$ is additionally irreducible, then $d(\mu^j) = 0$ for all $j$. 
\end{proof}


\begin{lem} In the tensor product \[ R(\lambda) \otimes R(\mu) = \sum_i \kappa_{\lambda \mu}^{\nu^i} R(\nu^i)\] all $\nu_i$ satisfy \[ k(\nu^i) \geq max (k(\lambda), k(\mu)).\]
\end{lem}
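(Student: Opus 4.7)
The natural tool is the Duflo--Serganova functor $DS_{x_r}$, together with the key property established earlier: $DS_{x_r}$ annihilates $R(\lambda)$ precisely when $k(\lambda) > n-r$, and otherwise sends $R(\lambda) \in \calR_{mn}$ to the indecomposable mixed tensor $R(\lambda) \in \calR_{m-r,n-r}$ attached to the same bipartition. Since $DS_{x_r}$ is a tensor functor, we can use it as a detector of projectivity at a sliding ``level''.

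Without loss of generality assume $k(\lambda) \geq k(\mu)$, and assume $k(\lambda) \geq 1$ (the claim is empty otherwise, since $k(\nu_i) \geq 0$ is automatic). Set $r := n - k(\lambda) + 1$, so that $1 \leq r \leq n$ (recall $k(\lambda) \leq n$ always, as $d(\lambda) + rk(\lambda) \leq n$). By construction $k(\lambda) > n-r$, hence $DS_{x_r}(R(\lambda)) = 0$. Applying the tensor functor $DS_{x_r}$ to the decomposition
\[ R(\lambda) \otimes R(\mu) = \bigoplus_i \kappa_{\lambda\mu}^{\nu_i} R(\nu_i) \]
yields on the left hand side $DS_{x_r}(R(\lambda)) \otimes DS_{x_r}(R(\mu)) = 0$ by monoidality, and on the right hand side $\bigoplus_i \kappa_{\lambda\mu}^{\nu_i} DS_{x_r}(R(\nu_i))$.

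Now I partition the summands according to whether $k(\nu_i) > n-r$ or $k(\nu_i) \leq n-r$. Those in the first class are killed by $DS_{x_r}$; those in the second class are sent to the nonzero indecomposables $R(\nu_i) \in \calR_{m-r,n-r}$, and these are pairwise non-isomorphic since the assignment of bipartitions to mixed tensors in $\calR_{m-r,n-r}$ is injective on nonzero classes. Hence the image of the right hand side is a direct sum of pairwise distinct indecomposables with multiplicities $\kappa_{\lambda\mu}^{\nu_i}$, and its vanishing forces every $\nu_i$ appearing with $\kappa_{\lambda\mu}^{\nu_i} > 0$ to satisfy $k(\nu_i) > n-r = k(\lambda) - 1$, i.e.\ $k(\nu_i) \geq k(\lambda) \geq k(\mu)$, as required.

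The only point that requires care is the identification of the $DS_{x_r}$-images as distinct indecomposables so that no cancellation can occur across the sum; this is precisely the content of the proposition about $DS_{x_r}$ acting on mixed tensors via the commuting diagram with $F_{m-r,n-r}$ and Deligne's universal property, together with the injectivity of $\lambda \mapsto F_{m-r,n-r}(R(\lambda))$ on nonzero classes quoted from Comes--Wilson. The edge cases $k(\lambda) = 0$ (trivial) and $k(\lambda) = n$ (both factors projective, so the whole tensor product lies in the tensor ideal $\mathrm{Proj}$, giving $k(\nu_i) = n$) are consistent with the argument and can be mentioned as sanity checks.
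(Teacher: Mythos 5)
Your proof is correct and uses the same core tool as the paper (the Duflo--Serganova functor $DS_{x_r}$ together with the commuting diagram through Deligne's category), but applies it one level further down. The paper sets $n' = \max(k(\lambda),k(\mu))$ and applies $DS_{x_{n-n'}}$, so that $R(\lambda)$ becomes \emph{projective} (not zero) in $\calR_{m'n'}$; it then invokes that $\mathrm{Proj}$ is a tensor ideal to conclude that every surviving summand $R(\nu_i)$ is projective there, i.e.\ $k(\nu_i)=n'$, while the kernel supplies the summands with $k(\nu_i)>n'$. You instead take $r = n-k(\lambda)+1$, so that $R(\lambda)$ itself is annihilated, the entire tensor product maps to zero, and each summand must separately land in the kernel. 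This trades the tensor-ideal property of $\mathrm{Proj}$ for the simpler additivity of $DS_{x_r}$, which is a small but genuine simplification. One remark: the worry about ``cancellation'' among distinct indecomposables is unnecessary --- a direct sum of objects in an additive category is zero if and only if every summand is zero, so the injectivity of $\gamma \mapsto R(\gamma)$ in $\calR_{m-r,n-r}$ is not needed for this step. Also, in your sanity check for $k(\lambda)=n$, only $R(\lambda)$ need be projective (not both factors), but since $\mathrm{Proj}$ is a one-sided tensor ideal the conclusion still holds and this does not affect the main argument.
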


\begin{proof} Let $n' =  max (k(\lambda), k(\mu))$. Apply $DS_{n-n'}: \calR_{m|n} \to \calR_{m'|n'} \oplus  \Pi \calR_{m'|n'}$. Without loss of generalisation $n' = k(\lambda)$. Then $R(\lambda)$ is projective in $\calR_{m'|n'}$. The projective modules form an ideal, hence $R(\lambda ) \otimes R(\mu)$ decomposes in $\calR_{m'|n'}$ into indecomposable projective modules. Since the tensor product comes from the Deligne category \[ \xymatrix{ & \uRep(GL_{m-n}) \ar[dr]^{F_{m'|n'}} \ar[dl]_{F_{m|n}} & \\ \calR_{m|n} \ar[rr]^{DS_{n-n'}} & & \calR_{m'|n'} \oplus \Pi \calR_{m'|n'} }\] we have in $\calR_{m|n}$ \[ \sum_i \kappa_{\lambda \mu}^{\nu_i} R(\nu_i) \oplus ker(DS_{n-n'}) \] with $k(\nu^i) \geq n'$ for all $i$. Further $ker(DS_{n-n'})$ are the mixed tensors $R(\gamma)$ with $n'< k(\gamma) \leq n$.
\end{proof}

\begin{example} Any irreducible summand in $R(\lambda) \otimes R(\mu)$ has atypicality $\leq n - max(k(\lambda),k(\mu))$.
\end{example}

We denote by $T^i_{mn}$ the subset of mixed tensors with $k(\lambda) \geq i$. Recall that a set of objects $\mathcal{I}$ in a tensor category $\mathcal{C}$ is an ideal if $X\otimes Y\in \mathcal{I}$ whenever $X\in \mathcal{C}$ and $Y\in\mathcal{I}$.

\begin{cor} The $T^i$ are ideals in $T$. We have strict inclusion \[ T^0 \supsetneq T^1 \supsetneq \ldots \supsetneq T^n \] with $T^0 =T$ and $T^n = Proj$.
\end{cor}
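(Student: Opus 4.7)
The plan is to verify the three assertions---the ideal property, the identification of the endpoints, and the strictness of the chain---in order, relying on results already established in the paper.

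First, the ideal property is an immediate consequence of the lemma preceding this corollary. If $R(\lambda)\in T^i$ and $R(\mu)\in T$ is arbitrary, then every summand $R(\nu)$ appearing in the decomposition of $R(\lambda)\otimes R(\mu)$ satisfies $k(\nu)\geq \max(k(\lambda),k(\mu))\geq k(\lambda)\geq i$ and therefore lies in $T^i$. Hence $T\otimes T^i\subseteq T^i$, which is what is meant by $T^i$ being a tensor ideal in $T$.

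Next I would settle the endpoints. The equality $T^0=T$ is tautological because $k(\lambda)\geq 0$ always. For $T^n=\mathrm{Proj}$, combine Lemma~\ref{projective-mixed-tensors-1}, which says every indecomposable projective module is a mixed tensor, with Theorem~\ref{projective-mixed-tensors-2}, which states that a mixed tensor $R(\lambda)$ is projective if and only if $k(\lambda)=n$. The latter also implicitly bounds $k(\lambda)\leq n$ on all of $T$, so $T^n$ is genuinely the top of the filtration rather than a strictly smaller piece.

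For the strict inclusions, it suffices to exhibit, for each $0\leq i\leq n-1$, a mixed tensor with $k(\lambda)=i$. The cleanest source is the opening lemma of Section~\ref{character-dimension}: every block of atypicality $a\leq n$ contains a unique irreducible mixed tensor. Choosing the block of atypicality $a=n-i$ (which exists, since atypicality ranges freely over $\{0,1,\ldots,n\}$ by the Brundan--Stroppel setup) furnishes an irreducible $R(\lambda)$ with $d(\lambda)=0$ and $rk(\lambda)=n-a=i$, hence $k(\lambda)=i$. For $i<n$ this module is not projective, so it witnesses $T^i\setminus T^{i+1}\neq\emptyset$.

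There is no real obstacle here: the corollary is bookkeeping built on top of the preceding lemma and the earlier characterizations of projective and irreducible mixed tensors in terms of $k(\lambda)$. The one place where care is required is confirming that for each $0\leq i\leq n-1$ a block of atypicality $n-i$ actually contains an irreducible mixed tensor in $\calR_{mn}$, which is the content of the lemma cited above.
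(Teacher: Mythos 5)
Your proof is correct and follows the route the paper intends: the paper states this corollary without proof, immediately after the lemma that all summands $R(\nu_i)$ of $R(\lambda)\otimes R(\mu)$ satisfy $k(\nu_i)\geq\max(k(\lambda),k(\mu))$, so the ideal property is a one-line consequence; the endpoints come from $k(\lambda)\geq 0$ trivially and from Lemma~\ref{projective-mixed-tensors-1} plus Theorem~\ref{projective-mixed-tensors-2}; and for strictness, for each $0\leq i\leq n-1$ the unique irreducible mixed tensor in a block of atypicality $n-i$ has $d(\lambda)=0$ and $rk(\lambda)=i$, hence $k(\lambda)=i$, giving an element of $T^i\setminus T^{i+1}$, exactly as you argue. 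Two small remarks: the lemma about a unique irreducible mixed tensor per block lives in the section ``Irreducible representations in the image,'' not in the subsection labelled \ref{character-dimension}; and the bound $k(\lambda)\leq n$ (needed so that $T^n$ is genuinely the set with $k(\lambda)=n$ rather than $\geq n$) deserves a sentence rather than the word ``implicitly''---it follows because $d(\lambda)\leq at(R(\lambda))=n-rk(\lambda)$, since the Loewy length $2d(\lambda)+1$ of $R(\lambda)$ cannot exceed the Loewy length $2\,at+1$ of the projective cover in its block.
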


By \cite{Serganova-kw} any two irreducible objects of atypicality $k$ generate the same ideal in $\calR_{m|n}$. Therefore write $I_k$ for the ideal generated by an irreducible object of atypicality $k$. Clearly $I_0 = Proj$ and $I_n = T_n$ since it contains the identity. This gives the following filtration of $\calR$ \[Proj = I_0 \subsetneq I_1 \subsetneq \ldots I_{n-1} \subsetneq I_n = \calR_{m|n}\] with strict inclusions by \cite{Serganova-kw} and \cite{Kujawa-generalized-kac-wakimoto}. 

\begin{lem} $I_k|_T = T^{n-k}$ for all $k=0,\ldots,n$. 
\end{lem}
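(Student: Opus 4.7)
The plan is to show that both $I_k|_T$ and $T^{n-k}$ coincide with the class of mixed tensors killed by the Duflo--Serganova functor $DS_{x_{k+1}}$. (The case $k=n$ is trivial, as $I_n=\calR_{mn}$ and $T^0=T$; so assume $k<n$.)

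By the proposition on the action of $DS_{x_r}$ on mixed tensors in Section~6, one has $DS_{x_{k+1}}(R(\lambda))=0$ precisely when $k(\lambda)>n-(k+1)$, that is $k(\lambda)\geq n-k$. Consequently $\ker(DS_{x_{k+1}})\cap T=T^{n-k}$, and the lemma reduces to identifying $I_k$ with the tensor ideal $\ker(DS_{x_{k+1}})$ of $\calR_{mn}$.

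The easy inclusion $I_k\subseteq\ker(DS_{x_{k+1}})$ is immediate: any irreducible $L$ of atypicality $k$ has $at(L)=k<k+1=rk(x_{k+1})$, hence $DS_{x_{k+1}}(L)=0$ by Serganova's theorem (the first theorem of Section~6). Since $DS_{x_{k+1}}$ is a tensor functor, $\ker(DS_{x_{k+1}})$ is a tensor ideal, so it contains the ideal $I_k$ generated by $L$. Restricting to $T$ gives $I_k\cap T\subseteq T^{n-k}$.

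The main obstacle is the reverse inclusion, and for this I would invoke the Serganova--Kujawa classification of tensor ideals of $\calR_{mn}$ cited just before the lemma: the chain $I_0\subsetneq\ldots\subsetneq I_n$ exhausts all tensor ideals, so $\ker(DS_{x_{k+1}})$ must equal some $I_j$ with $j\geq k$ (by the easy inclusion). To rule out $j\geq k+1$, it suffices to exhibit an irreducible in $I_{k+1}$ on which $DS_{x_{k+1}}$ does not vanish: pick any bipartition $\lambda$ with $d(\lambda)=0$ and $rk(\lambda)=n-k-1$, so that $R(\lambda)=L(\lambda^{\dagger})$ is irreducible of atypicality $n-rk(\lambda)=k+1$, and therefore lies in $I_{k+1}$; since $k(\lambda)=rk(\lambda)=n-k-1<n-k$, the proposition of Section~6 yields $DS_{x_{k+1}}(R(\lambda))\neq 0$, as desired. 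This forces $\ker(DS_{x_{k+1}})=I_k$, and restricting to $T$ concludes the proof.
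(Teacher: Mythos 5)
Your argument takes a genuinely different route from the paper's. You reduce everything to the single tensor ideal $\ker DS_{x_{k+1}}$: the identification $\ker DS_{x_{k+1}} \cap T = T^{n-k}$ is immediate from the proposition computing $DS_{x_r}$ on $R(\lambda)$, and the inclusion $I_k \subseteq \ker DS_{x_{k+1}}$ follows cleanly from the fact that the kernel of a tensor functor is a thick tensor ideal and that it contains every irreducible of atypicality $\leq k < rk(x_{k+1})$. The paper instead stays inside $T$ throughout: for $I_k|_T \subseteq T^{n-k}$ it uses that $I_k$ is generated by the unique irreducible mixed tensor $R(\mu)$ with $k(\mu)=n-k$, together with the earlier estimate $k(\nu_i)\geq\max(k(\lambda),k(\mu))$; for the reverse, it writes $R(\lambda)$ as a direct summand of $R(\lambda^L,0)\otimes R(0,\lambda^R)$ and bounds the atypicality of these two irreducible factors to place $R(\lambda)$ in $I_k$ directly. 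Your approach is slicker on the forward inclusion, and if it works it actually proves more, namely $I_k = \ker DS_{x_{k+1}}$ on all of $\calR_{mn}$.

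However, the reverse inclusion in your proposal leans on a fact the paper neither proves nor cites: that $0, I_0,\ldots, I_n$ are \emph{all} the tensor ideals of $\calR_{mn}$. The references to Serganova and Kujawa just before the lemma are invoked only for two points — that equi-atypical simples generate the same ideal, and that the chain $I_0\subsetneq\cdots\subsetneq I_n$ is strictly increasing. Neither gives exhaustion, and a priori $\ker DS_{x_{k+1}}$ could be a thick tensor ideal sitting strictly between $I_k$ and $I_{k+1}$. Ruling that out needs the full classification of thick tensor ideals of $\calR_{mn}$ by atypicality; that classification is true, but it is a substantially deeper external input than anything the paper uses in this section, and you should either cite a source for it or prove it. Once it is granted, your witness $R(\lambda)=L(\lambda^{\dagger})$ with $d(\lambda)=0$, $rk(\lambda)=n-k-1$ (which exists by the lemma on unique irreducible mixed tensors in each block) correctly excludes $\ker DS_{x_{k+1}}=I_j$ for $j\geq k+1$, and the argument closes. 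As written, though, this is a gap relative to what the paper makes available, whereas the paper's own reverse inclusion is elementary and self-contained within $T$.
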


\begin{proof} For any atypicality $k$ there exists an irreducible mixed tensor with that atypicality, hence $I_k|_T \subset T^{n-k}$. Conversely let $R(\lambda) \in T^{n-k}$. It occurs as a direct summand in $R(\lambda^L,0) \otimes R(0,\lambda^R)$. Then $max(k(\lambda^L,0), k(0,\lambda^R)) \leq n-k$, hence $rk(\lambda^L,0), rk(0,\lambda^R) \leq n-k$, hence $at(R(\lambda^L,0), at(R(0,\lambda^R)) \geq k$, hence $R \in I_l$ for any $l \geq k$.
\end{proof}

\subsection{The constituent of highest weight}

We have seen that the irreducible modules in $T$ are the ones with $d(\lambda) = 0$. We describe the constituent of highest weight of $R(\lambda)$ for $d(\lambda) > 0$. The constituents of $R(\lambda)$ are given by  $[R(\lambda)]=  [G_{\Delta \Gamma}^t L(\zeta)] = \sum_{ \gamma \subset \beta \rightarrow^t \zeta, \ red(\underline{\gamma}t) = \underline{\zeta} } [ L(\gamma)]$. The condition $\gamma \subset \beta$ implies $\beta \geq \gamma$ in the Bruhat order, hence the constituent of highest weight must be among the $\beta \to^t \zeta$. We define $A_{\lambda}$ by taking the weight diagram of $\lambda^{\dagger}$ and by labelling all caps in the matching $t$ by $\wedge \vee$. This is the maximal element in the Bruhat order among all the possible $\beta$. It will give the constituent of highest weight if $A_{\lambda}$ satisfies the condition $red(\underline{A_{\lambda}}t) = \underline{\zeta}$. This is an easy combinatorial exercise.

\begin{lem} $A_{\lambda}$ is the constituent of highest weight of $R(\lambda)$. It occurs with multiplicity 1 in the middle Loewy layer. 
\end{lem}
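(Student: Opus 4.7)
The plan is to apply part (iii) of Theorem~\ref{thm:graded-multiplicities} to $R(\lambda)=G^t_{\Delta\Gamma}L(\zeta)$. This describes the ungraded composition factors as those $L(\nu)$ for which there exists $\alpha$ with $\nu\subset\alpha$, $\alpha\to^t\zeta$, and $red(\underline\nu t)=\underline\zeta$. The key structural point is that the relation $\nu\subset\alpha$ allows $\alpha$ to differ from $\nu$ only by swapping cup-endpoint labels of $\underline\nu$ from $\vee\wedge$ into $\wedge\vee$ (the rays of $\nu$ already have $\wedge$'s to the left of $\vee$'s by the cup-formation algorithm); in the Bruhat order this moves $\vee$'s to the right, so $\nu\leq\alpha$. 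The Bruhat-maximum composition factor is therefore obtained by choosing $\alpha$ as large as possible subject to $\alpha\to^t\zeta$ and then setting $\nu=\alpha$, since $\alpha\subset\alpha$ is automatic.

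To pin down this maximum I would argue that $\alpha\to^t\zeta$ forces the labels of $\alpha$ at the endpoints of each line segment of $t$ to agree with those of $\zeta$, while at the endpoints of each cap of $t$ it only imposes consistent orientation, i.e.\ a binary choice between $\vee\wedge$ and $\wedge\vee$. Since Bruhat-bigger means pushing $\vee$'s to the right, the maximum is obtained by labelling every cap by $\wedge\vee$. Because $\lambda^\dagger$ already agrees with $\zeta$ on the line-segment endpoints and inherits the $\vee\wedge$ labels of $\lambda$ on the caps, this Bruhat-maximum weight is exactly $A_\lambda$. That $A_\lambda$ really appears as a constituent then amounts to the remaining combinatorial check $red(\underline{A_\lambda}t)=\underline\zeta$, announced in the statement as routine: the $\wedge\vee$ labelling at each cap of $t$ means no cup of $\underline{A_\lambda}$ shares both endpoints with a cap of $t$, so no cup of $\underline{A_\lambda}$ closes into a circle with $t$, and after deleting the bottom number line one reads off precisely the nested cup pattern $\underline\zeta$.

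For the multiplicity one statement I would use part (ii) of Theorem~\ref{thm:graded-multiplicities}: in the graded Grothendieck group $L(A_\lambda)$ appears in $[R(\lambda)]$ with coefficient $(q+q^{-1})^{n_{A_\lambda}}$, where $n_{A_\lambda}$ is the number of lower circles in $\underline{A_\lambda}t$. The diagrammatic observation used for the reduction check gives $n_{A_\lambda}=0$, so $L(A_\lambda)$ appears only in graded degree $0$ and with multiplicity one. Using that the grading filtration of the Koszul block agrees up to shift with the Loewy filtration (Corollary~\ref{loewy-length}), degree $0$ is the middle Loewy layer, which is the claim. The principal obstacle is the bookkeeping between three interacting diagrams---the cap diagram of $\lambda$, the cup diagram at the top of the matching $t$, and the weight $\zeta$---needed to make both the reduction identity and the vanishing $n_{A_\lambda}=0$ rigorous, but once a careful picture is drawn these verifications reduce to inspection.
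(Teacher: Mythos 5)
Your proposal follows the paper's route exactly: extract the composition factors from Theorem~\ref{thm:graded-multiplicities}(iii), note that $\mu\subset\alpha$ forces $\mu\le\alpha$ in the Bruhat order so the highest-weight constituent must arise as some $\alpha\to^t\zeta$, and realize the Bruhat maximum $A_\lambda$ by labelling every cap of $t$ with $\wedge\vee$. You also usefully supply the multiplicity and middle-Loewy-layer step---which the paper asserts but does not spell out---by reading off the coefficient $(q+q^{-1})^{n_{A_\lambda}}$ from part (ii), arguing $n_{A_\lambda}=0$, and matching grading with Loewy filtration through Koszulity. One caution: ``no cup of $\underline{A_\lambda}$ shares both endpoints with a cap of $t$'' does not by itself exclude lower circles, since a lower circle in a crossingless diagram can traverse several cups and caps (e.g.\ cups $(1,4),(2,3)$ against caps $(1,2),(3,4)$); what actually closes this step is the orientation data itself: cups of $\underline{A_\lambda}$ read $\vee$ at the left endpoint and $\wedge$ at the right, while by construction caps of $t$ read $\wedge$ at the left and $\vee$ at the right under $A_\lambda$, so any connected alternating path is forced strictly rightward and cannot return to its start, giving $n_{A_\lambda}=0$ and, with the same observation, the reduction identity.
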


\begin{cor}\label{projective-k-0} Two direct sums $\bigoplus P_i$, $\bigoplus Q_j$ of projective modules are equal in $\calR_{m|n}$ if and only if they are equal in $K_0$.
\end{cor}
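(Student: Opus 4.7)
The forward direction is immediate from additivity of classes, so the plan is to establish the converse. First I would apply the Krull--Schmidt theorem in the finite-length abelian category $\calR_{mn}$ to rewrite both direct sums canonically as $\bigoplus_{\lambda \in X^+} P(\lambda)^{\oplus m_\lambda}$ and $\bigoplus_{\lambda \in X^+} P(\lambda)^{\oplus n_\lambda}$. Krull--Schmidt then says the two modules are isomorphic in $\calR_{mn}$ precisely when $m_\lambda = n_\lambda$ for every $\lambda$, so the entire statement reduces to the assertion that the classes $\{[P(\lambda)] : \lambda \in X^+\}$ are $\Z$-linearly independent in $K_0(\calR_{mn})$.

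For that linear independence I would use the highest weight structure on $\calR_{mn}$ via the Kac modules $K(\mu)$. BGG reciprocity (which in the Brundan--Stroppel setting is encoded in the formula $[P(\lambda^\dagger)] = \sum_{\mu \supset \lambda^\dagger}[K(\mu)]$ already used in the proof of Theorem~\ref{projective-mixed-tensors-2}) gives a Kac expansion $[P(\lambda)] = \sum_\mu (P(\lambda):K(\mu))\,[K(\mu)]$ with $(P(\lambda):K(\lambda)) = 1$ and all other contributions from $\mu \neq \lambda$ linked to $\lambda$. Combined with the standard unitriangular expansion $[K(\mu)] = [L(\mu)] + \sum_{\rho < \mu} d_{\mu\rho}[L(\rho)]$ in the Bruhat order on $X^+$, one obtains
\[
[P(\lambda)] \;=\; [L(\lambda)] \;+\; \sum_{\mu < \lambda} c_{\lambda\mu}\,[L(\mu)],
\]
a unitriangular expansion in the $\Z$-basis $\{[L(\mu)]\}_{\mu \in X^+}$ of $K_0(\calR_{mn})$. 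Unitriangular families are automatically linearly independent, so the assumed identity $\sum_\lambda (m_\lambda - n_\lambda)[P(\lambda)] = 0$ forces $m_\lambda = n_\lambda$ for all $\lambda$, finishing the argument.

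There is no real obstacle here: the proof is a formal consequence of Krull--Schmidt together with the highest weight structure, both of which have already been invoked throughout the paper. The only point requiring a little care is choosing a partial order on $X^+$ in which the Cartan matrix $[P(\lambda):L(\mu)]$ is unitriangular; the Bruhat order recalled in Section~1 serves this purpose, and the needed triangularity is precisely the content of the Kac filtration used in the proof of Theorem~\ref{projective-mixed-tensors-2}.
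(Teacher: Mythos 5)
Your strategy—Krull--Schmidt plus linear independence of the classes $[P(\lambda)]$ in $K_0$—is a legitimate and natural route, and it is genuinely different from the paper's proof, which works purely combinatorially: the paper gives a greedy algorithm that reads the set of highest-weight constituents $A_\lambda$ directly off the multiset of composition factors (going right-to-left through the weight diagrams, peeling off one projective at a time), relying on the fact that $A_\lambda$ and $P(\lambda^\dagger)$ determine each other. Your version is shorter, more conceptual, and would transfer verbatim to any highest weight category with the usual finiteness; the paper's version has the virtue of being an actual algorithm to reconstruct the decomposition.

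However, the key step of your argument as written is false. You claim the unitriangular expansion
\[
[P(\lambda)] = [L(\lambda)] + \sum_{\mu < \lambda} c_{\lambda\mu}\,[L(\mu)],
\]
but the Cartan matrix $C_{\lambda\mu} = [P(\lambda):L(\mu)]$ is symmetric (by BGG reciprocity, $C = D^TD$ with $D_{\mu\rho} = [K(\mu):L(\rho)]$), and a symmetric matrix that is unitriangular for some partial order is the identity matrix. Concretely, $P(\one)$ in $\calR_{1,1}$ has composition factors $\one, Ber, Ber^{-1}, \one$, and $Ber > \one$ in the Bruhat order, so higher composition factors genuinely occur. The triangularity you want does hold, but with respect to the \emph{Kac module basis}, not the simple basis: $[P(\lambda)] = [K(\lambda)] + \sum_{\mu > \lambda}(P(\lambda):K(\mu))\,[K(\mu)]$ (this is exactly $\mu \supset \lambda$ in the Brundan--Stroppel notation, and $\supset$ goes \emph{up} in the Bruhat order). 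Since the $[K(\mu)]$ are themselves a $\Z$-basis of $K_0$ (by the genuine unitriangularity of $[K(\mu)]$ in terms of $[L(\rho)]$), taking a minimal element of the support of a putative relation $\sum a_\lambda [P(\lambda)] = 0$ and comparing coefficients of the corresponding Kac class forces that coefficient to vanish, giving the desired linear independence. So the proof is correct after replacing the simple basis with the Kac module basis in the triangularity argument.
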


\begin{proof} It suffices to test this for a single block $\Gamma$. It is easy to see that $A_{\lambda}$ and $R(\lambda) = P(\lambda^{\dagger})$ determine each other. Hence it is equivalent to give the direct sum $\bigoplus_{i \in I}P_i$ in $K_0$ or the set $ \{ A_{i} \}_{i \in I}$. Hence $\bigoplus P_i = \bigoplus Q_j$ if and only if $\{ A_i \}_{i \in I} = \{ A_j \}_{j \in J}$. We are done if we can determine the set $\{ A_i \} $ uniquely from the decomposition $[\bigoplus P_i]$ in $K_0$. We will give an algorithm to do so. The block is represented by the numberline with $k$ $\vee$'s (with variable position) and $m-k$ $\times$ and $n-k$ $\circ$ (with fixed position). Let $P$ be the set of composition factors of $\bigoplus P_i$. It may be identified with the set of the corresponding weight diagrams. We go from the right to the left through these diagrams. Let $i_1$ be the rightmost position with a $\vee$ in $P$. We restrict to the subset $P_{i_1}$ of $P$ of diagrams with a $\vee$ at position $i_1$. From $i_1$ we move to the left. Let $i_2$ be the next position with a $\vee$ among the diagrams in $P_{i_1}$. Let $P_{i_1 i_2}$ the set of weight diagrams with a $\vee$ at position $i_1$ and $i_2$.  Iterating this procedure we obtain $P_{i_1 i_2 \ldots i_k}$. This set consists of the weight diagram of a unique weight, possibly with multiplicity $\geq 1$ (since $\times$, $\circ$ and $\vee$'s are fixed). We claim that this weight is of the form $A_{i}$ for some $P_i$. This is clear: The weight determines a composition factor $L(...)$ of some $P(a)$. If $L(...) \neq A_{a}$, then $A_{a} > L(...)$ in contradiction to the construction above. The factor $A_{i}$ determines the corresponding projective module $P_i$. We remove all the composition factors of the copies of $P_i$ from $P$. Now we apply the same algorithm again to the set $P \setminus r[P_i]$ to obtain again a weight of the form $A_{l}$ with corresponding projective module $P_l$. We remove its composition factors etc until there are no weights left in $P$. Hence we have constructed all the weights $A_{i}$ from the $K_0$-decomposition.    
\end{proof}










\section{Multiplicities and tensor quotients}\label{maximal-atypical-m>n}

By the universal property of Deligne's category there exists for $\delta = d \in \N$ a full tensor functor $F_d: \uRep(GL_d) \to Rep(GL(d))$. Given a bipartition $\lambda = (\lambda^L,\lambda^R)$ of length $\leq d$, $\lambda^L = (\lambda^L_1,\ldots,\lambda^L_s,0,\ldots), \ \lambda_s^L > 0$, $\lambda^R = (\lambda^R_1,\ldots,\lambda^L_t,0,\ldots), \ \lambda_t^R > 0$, put \[ wt(\lambda) = \lambda_1^L \epsilon_1 + \ldots + \lambda^L_s \epsilon_s - \lambda^R_t \epsilon_{d+1-t} - \ldots - \lambda_1^R \epsilon_d.\] This defines the irreducible $GL(d)$-module $L(wt(\lambda))$ with highest weight $wt(\lambda)$. By \cite{Comes-Wilson} \begin{align*} F_d(R(\lambda)) = \begin{cases} L(wt(\lambda)) & l(\lambda) \leq d \\ 0 & l(\lambda) > d. \end{cases} \end{align*} This defines a bijection between bipartitions of length $\leq d$ with highest weights of $GL(d)$. For $d=m-n>0$ we have the two tensor functors \[ \xymatrix{ & \uRep(GL_{m-n}) \ar[dr]^{F_{m-n}} \ar[dl]_{F_{m|n}} & \\ \calR_{m|n} & & Rep(GL(m-n))} \] given by mapping the standard representation to the two standard representations (for $m=n$, $GL(m-n)$ is the empty group). Another tensor functor is the following: By \cite{Weissauer-gl} there exists a purely transcendental field extension $K/k$ of transcendence degree $n$ and a $K$-linear weakly exact tensor functor \[ \rho: \calR_{m|n} \otimes_k K \to Rep  (GL(m-n)) \otimes svec_K.\] By \cite{Weissauer-gl} each simple maximal atypical object $L(\mu)$ maps to the isotypic representation $\Pi^{p(\mu)} m(\mu) \rho(V)$ where $m(\mu)$ is a positive integer, $V$ is the ground state (see loc.cit) of the block of $\mu$ and $p(\mu)$ is the parity of $\mu$. After a suitable specialisation of $\rho$ we may assume that $\rho$ is defined over $k$ and maps the standard to the standard representation. Hence we get the commutative diagramm of tensor functors (due to Deligne's universal property)  \[ \xymatrix{ & \uRep(GL_{m-n}) \ar[dl]_{F_{m|n}} \ar[dd]^{F_{m-n}\otimes svec} \\ Rep(GL(m|n)) \ar[dr]^{\rho} & \\ & Rep(GL(m-n)) \otimes svec. } \] Here the functor $F_{m-n}\otimes svec$ maps $R(\lambda)$ to the even representation \[ L(wt(\lambda)) \in Rep(GL(m-n)) \subset Rep(GL(m-n)) \otimes svec.\]

\begin{lem} $R(\lambda)$ has superdimension $\neq 0$ if and only if $l(\lambda) \leq m-n$.
\end{lem}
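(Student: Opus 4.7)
The approach is to exploit the commutative diagram of tensor functors displayed just above the lemma, together with the basic principle that a $k$-linear symmetric monoidal functor between rigid tensor categories preserves categorical dimensions (equivalently, superdimensions) of dualisable objects, since the superdimension is defined purely in terms of the evaluation/coevaluation morphisms.

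First, I would apply this principle to Weissauer's functor $\rho$: for any dualisable $X \in \calR_{mn}$ one has $\mathrm{sdim}(X) = \mathrm{sdim}_{Rep(Gl(m-n)) \otimes svec}(\rho(X))$. Specialising to $X = F_{m|n}(R(\lambda))$, the commutativity of the triangle yields
\[ \mathrm{sdim}\bigl(F_{m|n}(R(\lambda))\bigr) \;=\; \mathrm{sdim}\bigl((F_{m-n} \otimes svec)(R(\lambda))\bigr). \]
By the formula recalled just above the lemma, $(F_{m-n} \otimes svec)(R(\lambda))$ is the even object $L(wt(\lambda))$ when $l(\lambda) \leq m-n$, and is the zero object when $l(\lambda) > m-n$.

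In the case $l(\lambda) > m-n$ this immediately gives $\mathrm{sdim}(F_{m|n}(R(\lambda))) = 0$. In the case $l(\lambda) \leq m-n$, the image sits in the purely even subcategory $Rep(Gl(m-n))$, so its superdimension is just its ordinary dimension $\dim L(wt(\lambda))$, which is a strictly positive integer since $L(wt(\lambda))$ is a nonzero finite-dimensional representation of $Gl(m-n)$. This establishes both directions of the equivalence.

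I do not anticipate any real obstacle: all the heavy lifting is done by the results of Comes--Wilson (the formula for $F_{m-n}$) and Weissauer (existence of the functor $\rho$ making the diagram commute), both of which are stated in the excerpt. The only thing one has to be careful about is noting that the $svec$ tensor factor is trivial on the image, so the superdimension in $Rep(Gl(m-n))\otimes svec$ coincides with the ordinary dimension in $Rep(Gl(m-n))$.
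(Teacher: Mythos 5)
Your proof is correct and takes essentially the same route as the paper's: both reduce the statement to the commutative triangle relating $F_{m|n}$, $F_{m-n}\otimes svec$, and Weissauer's functor $\rho$, and both use that tensor functors preserve (super)dimensions, so that $\mathrm{sdim}\, R(\lambda) = \dim L(wt(\lambda))$, which is zero precisely when $l(\lambda) > m-n$ by the Comes--Wilson formula for $F_{m-n}$. Your write-up is a little more explicit than the paper's about the two cases and about why the $svec$ factor plays no role, but there is no substantive difference in approach.
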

 
\begin{proof} This follows from the commutative diagram above. Use the bijection between the highest weights of $GL(d)$ and bipartitions of length $\leq m-n$ to choose for any $(m|n)$-cross bipartition $\lambda$ the irreducible highest weight module $L(wt(\lambda))$. By the commutativity the indecomposable module $R(\lambda)$ has to map to $L(wt(\lambda))$. Its superdimension is the dimension of $L(wt(\lambda))$.
\end{proof}

The mixed tensors form a pseudoabelian tensor subcategory $T$ of $\calR_{m|n}$. It is closed under duals ($T(r,s)^{\vee} = T(s,r)$) and contains the identity. The functor of Weissauer \[ \rho: \calR_{m|n} \to Rep(GL(m-n))\otimes svec \] can be restricted to $T$. Let us denote by $\NN$ the tensor ideal of negligible morphisms \cite{Heidersdorf-semisimple} \cite{Andre-Kahn}.

\begin{thm}\label{tensor-equivalence} The functor $\rho_T : T \to Rep(GL(m-n)) \otimes svec $ factorises over $T/\NN$ and defines an equivalence of tensor categories \[ T/\NN \simeq Rep(GL(m-n)).\] It maps the element $R(\lambda)$ to the irreducible element $L(wt(\lambda))$.
\end{thm}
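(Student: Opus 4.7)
The plan is to exploit the commutative triangle
\[ \xymatrix{ & Rep(Gl_{m-n}) \ar[dl]_{F_{m|n}} \ar[dr]^{F_{m-n}} & \\ T \ar[rr]^{\rho_T} & & Rep(Gl(m-n)) } \]
of tensor functors, using two inputs recalled above: $F_{m|n}$ is full with essential image $T$ (Comes--Wilson), while $F_{m-n}$ is full, essentially surjective, and realises Deligne's equivalence $Rep(Gl_{m-n})/\NN \simeq Rep(Gl(m-n))$. First I would check that $\rho_T$ actually takes values in the even subcategory $Rep(Gl(m-n))$ of $Rep(Gl(m-n))\otimes svec$: since $T$ is generated as a pseudoabelian tensor category by $V$ and $V^\vee$ and $\rho(V)$ is purely even by the choice of specialisation, the image of $\rho_T$ is purely even.

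Next I would verify that $\rho_T$ is full. Given $X = F_{m|n}(\tilde X)$ and $Y = F_{m|n}(\tilde Y)$ in $T$ and any morphism $g\colon \rho_T(X)\to \rho_T(Y)$, commutativity of the diagram identifies $g$ with a morphism $F_{m-n}(\tilde X)\to F_{m-n}(\tilde Y)$; by fullness of $F_{m-n}$ this lifts to some $\tilde g$, and then $F_{m|n}(\tilde g)\in \Hom_T(X,Y)$ satisfies $\rho_T(F_{m|n}(\tilde g)) = g$.

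Using this fullness I would show that $\rho_T$ descends to a fully faithful functor $\bar\rho\colon T/\NN \to Rep(Gl(m-n))$. Any tensor functor between rigid categories preserves traces and thus sends negligible morphisms to negligibles; since $Rep(Gl(m-n))$ is semisimple (the group $Gl(m-n)$ is reductive in characteristic zero) with nonzero categorical dimension on each simple, its negligible ideal vanishes, so $\rho_T$ kills $\NN$. Conversely, if $\rho_T(f)=0$ then for each $h\in \Hom_T(Y,X)$ one has $\rho_T(\mathrm{str}(f\circ h)) = \mathrm{str}(\rho_T(f)\circ \rho_T(h)) = 0$; since the restriction of $\rho_T$ to $\End(\one_T)=k$ is the identity, $\mathrm{str}(f\circ h)=0$ for every such $h$, so $f\in\NN$. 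Essential surjectivity of $\bar\rho$ onto $Rep(Gl(m-n))$ is then immediate from essential surjectivity of $F_{m-n}$ through the diagram, and the identification $\bar\rho(R(\lambda))=L(wt(\lambda))$ follows from $\rho_T(F_{m|n}(R(\lambda)))=F_{m-n}(R(\lambda))=L(wt(\lambda))$ by the Comes--Wilson formula; those $R(\lambda)$ with $\ell(\lambda)>m-n$ become zero in $T/\NN$ precisely because $\rho_T$ sends their identities to $0$, forcing those identities to be negligible.

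The main obstacle is tracking the interplay of the three ideals at play --- $\ker F_{m|n}$ in the Deligne category, the negligibles $\NN_D$ there, and the negligibles $\NN$ in $T$ --- in order to ensure that the quotient $T/\NN$ is neither too fine nor too coarse. The crucial ingredient is the preservation of the trace on $\End(\one)=k$ by tensor functors, which allows one to pull a potential failure of negligibility in $T$ back along $F_{m|n}$ into the Deligne category and push it forward along $F_{m-n}$ to the target, producing the needed three-way compatibility.
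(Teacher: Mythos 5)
Your proposal is correct and follows the same structural spine as the paper: both exploit Weissauer's tensor functor $\rho$ and Deligne's universal property to obtain the commutative triangle through $Rep(Gl_{m-n})$, and both reduce the key point to fullness of $\rho_T$. The difference is in how the factorisation through $T/\NN$ is justified. The paper is terse here: it invokes a criterion from \cite{Heidersdorf-semisimple} that a full tensor functor to a semisimple target factorises over the negligible quotient, justifies fullness by the remark that indecomposables map to irreducibles, and then gets fully faithfulness ``from Schur's lemma in the semisimple tensor category $T/\NN$''. You instead unfold that black box: you show directly that $\rho_T$ kills $\NN$ because tensor functors preserve traces and the semisimple target has no nonzero negligibles, and, conversely, that $\rho_T(f)=0$ forces $f\in\NN$ by pulling the vanishing of $\mathrm{str}(f\circ h)$ back along the identity $\End(\one)=k\to k$. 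This makes the equality $\ker\rho_T=\NN$ explicit rather than delegated to a cited lemma, and gives a cleaner, self-contained argument for faithfulness of the induced functor. Your explicit lifting argument for fullness (through $Rep(Gl_{m-n})$ using fullness of both $F_{m|n}$ from Comes--Wilson and $F_{m-n}$ from Deligne) is also more careful than the paper's one-line remark. Finally, you correctly address a point the paper leaves implicit: the equivalence is claimed with $Rep(Gl(m-n))$ rather than $Rep(Gl(m-n))\otimes svec$, and your observation that $\rho_T$ lands in the even part because $V$ and $V^\vee$ generate $T$ and $\rho(V)$ is even after suitable specialisation is exactly the needed justification. In short: same route, but you supply the middle steps the paper outsources.
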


\begin{proof} The functor will factorize if $\rho_T$ is full \cite{Heidersdorf-semisimple}. This follows from the commutative diagram since an indecomposable module maps to an irreducible module. $R(\lambda) \mapsto L(wt(\lambda))$ is forced by the commutativity of the diagram. By the bijection between highest weights of $GL(m-n)$ and bipartitions of lenght $\leq m-n$ the functor is one-to-one on objects. Fully faithful follows from Schur's lemma in the semisimple tensor category $T/\NN$. \end{proof}



%


Theorem \ref{tensor-equivalence} plays a crucial role in \cite{Heidersdorf-semisimple}. An analogous theorem holds in the orthosymplectic case \cite{Comes-Heidersdorf}. The following proposition is a simple combinatorial exercise.

\begin{prop} Let $\lambda$ be a bipartition of length $\leq m-n$. Then $d(\lambda) = 0$. In particular the maximal atypical $R(\lambda)$ with $sdim R(\lambda) \neq 0$ are irreducible.
\end{prop}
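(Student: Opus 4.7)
The plan is to unpack the combinatorial definition of the weight diagram of $\lambda$ and show that the $\wedge$-labels and $\vee$-labels do not overlap in the relevant regions, so no cup can ever form. Set $s := l(\lambda^L)$ and $t := l(\lambda^R)$, so by hypothesis $s+t \leq \delta$ where $\delta=m-n$. Recall that a cup (equivalently a cap) is produced by a $\vee \wedge$-pair in the weight diagram, so $d(\lambda)=0$ is equivalent to showing that every position labelled $\wedge$ lies strictly to the left of every position labelled $\vee$.

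The first step is to locate the $\wedge$- and $\vee$-regions explicitly. Writing out the definitions,
\[
I_{\wedge}(\lambda) = \{\lambda_i^L - i + 1 : i \geq 1\}, \qquad I_{\vee}(\lambda) = \{j - \delta - \lambda_j^R : j \geq 1\},
\]
one observes that for $i > s$ one has $\lambda_i^L = 0$, so every integer $\leq -s$ lies in $I_{\wedge}$; symmetrically every integer $\geq t+1-\delta$ lies in $I_{\vee}$. Consequently any position $i \geq t+1-\delta$ belongs to $I_{\vee}$ and therefore cannot be labelled $\wedge$, while any position $i \leq -s$ belongs to $I_{\wedge}$ and cannot be labelled $\vee$. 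This gives the bounds
\[
\max\{i : \text{label}(i)=\wedge\} \leq t - \delta, \qquad \min\{i : \text{label}(i)=\vee\} \geq -s+1.
\]

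The second step is the arithmetic comparison: the hypothesis $s+t \leq \delta$ rearranges to $t - \delta \leq -s$, hence
\[
\max\{i : \text{label}(i)=\wedge\} \leq t-\delta \leq -s < -s+1 \leq \min\{i : \text{label}(i)=\vee\}.
\]
Thus every $\wedge$ is strictly to the left of every $\vee$, the cap/cup-construction produces no semi-circles, and $d(\lambda)=0$ as claimed.

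For the second assertion, combine this with the preceding lemma: if $R(\lambda)$ has non-vanishing superdimension then $l(\lambda) \leq m-n$, so by the statement just proved $d(\lambda)=0$, and by the earlier corollary (irreducibility of $R(\lambda)$ is equivalent to $d(\lambda)=0$) the module $R(\lambda)$ is irreducible; the maximal-atypicality hypothesis is not actually needed for this implication. There is no real obstacle beyond the bookkeeping of the two regions; the only subtlety is noticing that the trailing zeros of $\lambda^L$ and $\lambda^R$ fill up the outer tails of $I_{\wedge}$ and $I_{\vee}$, which is exactly what forces the bounds above.
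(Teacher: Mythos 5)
The paper offers no argument here — it declares the proposition ``a simple combinatorial exercise'' and moves on — so your proof is supplying exactly the detail the author elided. Your argument is correct: with $s=l(\lambda^L)$, $t=l(\lambda^R)$, the trailing zeros of $\lambda^L$ make every integer $\leq -s$ lie in $I_{\wedge}$, the trailing zeros of $\lambda^R$ make every integer $\geq t+1-\delta$ lie in $I_{\vee}$, so (by the labelling rule, which assigns $\times$ rather than $\wedge$ or $\vee$ to the overlap) any $\wedge$ sits at a vertex $\leq t-\delta$ and any $\vee$ at a vertex $\geq -s+1$. The hypothesis $s+t\leq\delta$ then separates the two regions, so no vertex labelled $\vee$ lies to the left of a vertex labelled $\wedge$, no cup/cap can be drawn, and $d(\lambda)=0$. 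Your observation that the ``maximal atypical'' qualifier is inert — the chain $sdim\,R(\lambda)\neq 0 \Rightarrow l(\lambda)\leq m-n \Rightarrow d(\lambda)=0 \Rightarrow R(\lambda)$ irreducible uses only the preceding lemma and Corollary~\ref{loewy-length} and works for any $R(\lambda)$ — is also right; the phrasing in the proposition is just emphasis on the intended application. Since the paper has no proof to compare against, there is no route divergence to discuss; yours is the argument one would expect.
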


The equivalence $T/\NN \simeq Rep(GL(m-n))$ means that for $\lambda,  \mu$ bipartitions of lenght $\leq m-n$, the tensor product  $R(\lambda) \otimes R(\mu)$ is given by the Littlewood-Richardson rule for $GL(m-n)$ up to superdimension 0.

\begin{cor} Let $\lambda$ and $\mu$ be such that $l(\lambda) + l(\mu) \leq m-n$. Then $R(\lambda) \otimes R(\mu)$ splits completely into irreducible maximally atypical modules. The decomposition rule is given by the Littlewood-Richardson rule for $GL(m-n)$.
\end{cor}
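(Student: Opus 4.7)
The plan is to compute the decomposition first in Deligne's interpolating category $Rep(Gl_{m-n})$ via the Comes--Wilson formula, push it through the tensor functor $F_{m|n}$ to $\calR_{mn}$, and match multiplicities by applying the tensor functor $F_{m-n} \otimes svec = \rho \circ F_{m|n}$ from the commutative diagram of Theorem~\ref{tensor-equivalence}. The length hypothesis $l(\lambda) + l(\mu) \leq m-n$ is exactly what forces every summand to be irreducible, maximally atypical, and non-negligible.

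The first step is the length estimate. By Comes--Wilson, the coefficient $\Gamma_{\lambda\mu}^{\nu}$ is a sum of products of Littlewood--Richardson coefficients $c_{\kappa\alpha}^{\lambda^L}$, $c_{\kappa\beta}^{\mu^R}$, $c_{\gamma\eta}^{\lambda^R}$, $c_{\gamma\theta}^{\mu^L}$, $c_{\alpha\theta}^{\nu^L}$, $c_{\beta\eta}^{\nu^R}$. The containments $\alpha \subset \lambda^L$, $\beta \subset \mu^R$, $\eta \subset \lambda^R$, $\theta \subset \mu^L$ forced by nonvanishing LR-coefficients give $l(\alpha) \leq l(\lambda^L)$, $l(\beta) \leq l(\mu^R)$, $l(\eta) \leq l(\lambda^R)$, $l(\theta) \leq l(\mu^L)$, together with $l(\nu^L) \leq l(\alpha) + l(\theta)$ and $l(\nu^R) \leq l(\beta) + l(\eta)$. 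Summing yields $l(\nu) \leq l(\lambda) + l(\mu) \leq m-n$ for every $\nu$ with $\Gamma_{\lambda\mu}^{\nu} \neq 0$.

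From this length bound I would deduce the three geometric facts needed. By the preceding proposition, $l(\nu) \leq m-n$ implies $d(\nu) = 0$, so $R(\nu) = L(\nu^{\dagger})$ is irreducible. A quick inspection of $I_{\wedge}$ and $I_{\vee}$ shows that $R(\nu)$ is maximally atypical: with $s = l(\nu^L)$ and $t = l(\nu^R)$, the zero-tails of $\nu^L, \nu^R$ contribute the rays $\{-s, -s-1, \ldots\} \subset I_{\wedge}$ and $\{t+1-\delta, t+2-\delta, \ldots\} \subset I_{\vee}$, and the gap between them is empty as soon as $s+t \leq \delta = m-n$. Hence $I_{\wedge} \cup I_{\vee} = \Z$, no $\circ$ appears, $rk(\nu) = 0$, and the atypicality of $R(\nu)$ is $n$. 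Choosing any index $i$ with $l(\nu^L) < i < m+2 - l(\nu^R)$ (possible since $l(\nu) \leq m$) gives $\nu_i^L = \nu_{m+2-i}^R = 0$, so $\nu$ is $(m|n)$-cross and $R(\nu) \neq 0$ in $\calR_{mn}$.

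Finally, to identify the coefficients with Littlewood--Richardson coefficients, apply the commutative diagram $F_{m-n} \otimes svec = \rho \circ F_{m|n}$ to the Deligne identity $R(\lambda) \otimes R(\mu) = \sum_{\nu} \Gamma_{\lambda\mu}^{\nu} R(\nu)$: the image in $Rep(Gl(m-n))$ is $L(wt(\lambda)) \otimes L(wt(\mu)) = \sum_{\nu} \Gamma_{\lambda\mu}^{\nu}\, L(wt(\nu))$. Since $\nu \mapsto wt(\nu)$ is a bijection between bipartitions of length $\leq m-n$ and highest weights of $Gl(m-n)$, and the left-hand side decomposes by the (rational) Littlewood--Richardson rule, the $\Gamma_{\lambda\mu}^{\nu}$ coincide with the corresponding LR coefficients. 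The same multiplicities appear downstairs in $\calR_{mn}$ because $F_{m|n}$ is injective on isomorphism classes of nonzero $R(\nu)$. The main obstacle is the length bookkeeping for $\nu$ in the Comes--Wilson formula; once that is in place, irreducibility, maximal atypicality, and the LR-multiplicities all follow mechanically.
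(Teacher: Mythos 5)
Your proposal is essentially correct and follows the same route as the paper: compute in the interpolating category, push through $F_{m|n}$, and match against $Rep(Gl(m-n))$ via the commutative diagram of Theorem~\ref{tensor-equivalence}. Your length bookkeeping for $\Gamma_{\lambda\mu}^{\nu}$ is exactly what upgrades the paper's ``modulo $\NN$'' statement to the corollary's unconditional one.

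There is one genuine, if easily repaired, gap. The Comes--Wilson formula you quote for $\Gamma_{\lambda\mu}^{\nu}$ computes the product in the \emph{generic} ring $R_t$, not directly in $R_{m-n}$. To obtain the decomposition in $Rep(Gl_{m-n})$ one must still pass through $lift_{m-n}$ and $lift_{m-n}^{-1}$, and your argument implicitly treats this step as invisible. That it \emph{is} invisible here has to be argued: since $l(\lambda), l(\mu) \leq m-n$ forces $d(\lambda)=d(\mu)=0$, the cap diagrams of $\lambda$ and $\mu$ have no caps, so they are linked only to themselves and $lift_{m-n}$ fixes them; the same holds for every $\nu$ surviving your length estimate, so the entire identity $\lambda\mu=\sum_\nu\Gamma_{\lambda\mu}^{\nu}\nu$ is fixed termwise by $lift_{m-n}$ and hence holds verbatim in $R_{m-n}$. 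This is exactly the content of the paper's ``alternative approach'' lemma and should be stated rather than assumed. A second, very minor point: your paragraph invokes the big diagram with Weissauer's functor $\rho$, but for identifying $\Gamma_{\lambda\mu}^{\nu}$ with Littlewood--Richardson coefficients you in fact only use the functor $F_{m-n}:Rep(Gl_{m-n})\to Rep(Gl(m-n))$; the $\rho$-diagram is what the paper uses to establish $T/\NN\simeq Rep(Gl(m-n))$ in the first place, and you separately appeal to the injectivity of $F_{m|n}$ on nonzero indecomposables to transport multiplicities, which is sound. Finally, your elementary check that $s+t\leq\delta$ forces $I_\wedge\cup I_\vee=\Z$ (hence maximal atypicality) is a pleasant replacement for the paper's route through the superdimension lemma and Kac--Wakimoto, and the explicit $(m|n)$-cross verification, while also deducible from $sdim\neq 0$, is a useful independent sanity check.
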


\subsection{An alternative approach} We provide an alternative proof that $T/\NN \simeq Rep(GL(m-n))$ which does not use the existence of a tensor functor $Rep(GL(m|n)) \to Rep(GL(m-n)) \otimes svec$. Assume we have proven that $l(\lambda) \leq m-n$ implies $d(\lambda) = 0$ and therefore that $R(\lambda)$ is irreducible in $Rep(GL(m|n))$.

\begin{lem} Let $\lambda,  \nu$ be bipartitions of lenght $\leq m-n$. Then their tensor product is given by the Littlewood-Richardson rule for $GL(m-n)$ up to superdimension 0. More precisely \[ R(\lambda) \otimes R(\mu) = \bigoplus_{\nu, \ l(\nu) \leq m-n} c_{wt(\lambda),wt(\mu)}^{wt(\nu)} R(\nu) \ mod \ \NN \] where $c_{wt(\lambda),wt(\mu)}^{wt(\nu)}$ denotes the multiplicity of the $GL(m-n)$-representation $L(wt(\nu))$ in the decomposition $L(wt(\lambda)) \otimes L(wt(\mu))$.
\end{lem}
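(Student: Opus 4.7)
The plan is to compute $R(\lambda) \otimes R(\mu)$ inside Deligne's category $Rep(Gl_{m-n})$ via Comes--Wilson, transport the resulting formula to both $\calR_{mn}$ and $Rep(Gl(m-n))$ using the two available tensor functors $F_{m|n}$ and $F_{m-n}$, and then match coefficients. First I would invoke the Comes--Wilson decomposition quoted in the paper to write, in the Grothendieck ring of $Rep(Gl_{m-n})$,
\[ R(\lambda) \otimes R(\mu) = \sum_{\nu} \Gamma^{\nu}_{\lambda \mu} R(\nu), \]
and then apply $F_{m|n}$. For any $\nu$ appearing with $l(\nu) \leq m-n$ one checks (combinatorially, from the $(m|n)$-cross condition $\lambda_i^L + \lambda^R_{m+2-i} < n+1$) that $\nu$ is automatically $(m|n)$-cross, so $F_{m|n}(R(\nu)) = R(\nu)$ is a nonzero mixed tensor in $\calR_{mn}$. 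Thus the decomposition pushes down literally to $\calR_{mn}$.

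Next I would use the immediately preceding proposition: $R(\nu)$ has superdimension $\neq 0$ if and only if $l(\nu) \leq m-n$. Every summand with $l(\nu) > m-n$ has vanishing superdimension; its identity morphism is then negligible, so such a summand belongs to $\NN$. Therefore in $T/\NN$ we have
\[ R(\lambda) \otimes R(\mu) \equiv \sum_{\nu,\ l(\nu)\leq m-n} \Gamma^{\nu}_{\lambda \mu}\, R(\nu) \pmod{\NN}. \]
It remains to identify the $\Gamma^{\nu}_{\lambda \mu}$ with the classical Littlewood--Richardson coefficients in this range. For this I apply the second tensor functor $F_{m-n}: Rep(Gl_{m-n}) \to Rep(Gl(m-n))$, which sends $R(\nu)$ to $L(wt(\nu))$ if $l(\nu) \leq m-n$ and to $0$ otherwise. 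Being a tensor functor, it yields
\[ L(wt(\lambda)) \otimes L(wt(\mu)) = \sum_{\nu,\ l(\nu)\leq m-n} \Gamma^{\nu}_{\lambda\mu}\, L(wt(\nu)) \]
in the semisimple category $Rep(Gl(m-n))$. Comparing with the classical Littlewood--Richardson decomposition $L(wt(\lambda)) \otimes L(wt(\mu)) = \sum_{\nu} c^{wt(\nu)}_{wt(\lambda),wt(\mu)} L(wt(\nu))$ and using uniqueness of decomposition into simples forces $\Gamma^{\nu}_{\lambda\mu} = c^{wt(\nu)}_{wt(\lambda),wt(\mu)}$ whenever $l(\nu) \leq m-n$, which is exactly the desired identity.

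The main obstacle is not any single step but rather the careful bookkeeping that (i) every $\nu$ appearing in the Comes--Wilson decomposition with $l(\nu) \leq m-n$ genuinely survives in $\calR_{mn}$ (i.e.\ is $(m|n)$-cross), and (ii) all other contributions can be discarded modulo $\NN$. Once the preceding proposition on superdimension and the length/cross compatibility are in place, the argument reduces to a formal transfer of identities along the commuting triangle of tensor functors built from Deligne's universal property, and the Littlewood--Richardson identification follows by semisimplicity of $Rep(Gl(m-n))$.
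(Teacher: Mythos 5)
Your proof is correct and follows essentially the same strategy as the paper's: both work in the generic Deligne category, use the two tensor functors $F_{m|n}$ and $F_{m-n}$, invoke the superdimension criterion to discard summands with $l(\nu)>m-n$ modulo $\NN$, and identify the surviving coefficients with Littlewood--Richardson numbers by pushing to the semisimple category $Rep(Gl(m-n))$. The only cosmetic point is that your initial labeling of the coefficients as $\Gamma^{\nu}_{\lambda\mu}$ is premature---that formula lives in $R_t$, whereas the decomposition you need is in $R_{m-n}$; this is harmless because your $F_{m-n}$ step determines the coefficients anyway, but the paper handles it by observing that $lift_d$ fixes $\lambda,\mu$ and the surviving $\nu_i$ (as they all have defect zero), so the $R_t$ and $R_d$ decompositions agree after truncation.
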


\begin{proof} (cf. the proof of 7.1.1 in \cite{Comes-Wilson}) Let $\nu_1, \ldots \nu_k$ be bipartitions such that \[ \lambda \mu = \nu_1 + \ldots \nu_k\] in $R_t$. Since $\lift(\lambda) = \lambda, \ \lift(\mu) = \mu$ we may assume mod $\NN$ that all $\nu_i$ have length $\leq m-n = d$. So $d$ satisfies $d \geq l(\nu_i)$ for all $i$ and $\lift_d$ fixes $\lambda, \mu, \nu_1, \ldots \nu_k$. Hence $ \lambda \mu = \nu_1 + \ldots \nu_k$ holds in $R_d$ as well. Using the tensor functor $F_d: \uRep(GL_d) \to Rep(GL(d))$ which maps $R(\lambda)$ to $L(wt(\lambda))$ we obtain \begin{align*} L(wt(\lambda)) \otimes L(wt(\mu)) & = L(wt(\nu_1)) \oplus \ldots \oplus L(wt(\nu_k)) \\ & = \bigoplus_{\nu, l(\nu) \leq m-n} c_{wt(\lambda),wt(\mu)}^{wt(\nu)} R(\nu) \end{align*} by the Littlewood-Richardson rule in $Rep(GL(d))$. Taking the preimage one obtains modulo $\NN$ the result.
\end{proof}

It is now easy to recover the equivalence $T/\NN \simeq Rep(GL(m-n))$ . Since \[F_{m|n}: \uRep (GL_{m-n}) \to \calR_{m|n}\] has its image in $T$ we can consider the diagram \[ \xymatrix{ & \uRep(GL_{m-n}) \ar[dl]^{F_{m|n}} \ar[dd]^{F_{m-n}} \\ T \ar[d] & \\ T/\NN \ar@{.>}[r] & Rep(GL(m-n)). } \] Using the bijection between the irreducible elements $R(\lambda)$ and the irreducible elements in $Rep(GL(m-n))$,
we define the lower horizontal functor by putting $R(\lambda) \mapsto L(wt(\lambda))$ on objects. Since both categories are semisimple tensor categories, Schur's lemma holds and the functor sends the morphism $id:R(\lambda) \to R(\lambda)$ to $id:L(wt(\lambda)) \to L(wt(\lambda))$. The results on the tensor products show that this defines a tensor functor. It is clearly fully faithful.




------------------------------------------------------







\section{Maximally atypical $R(\lambda)$ for $m =n$}\label{maximal-atypical-m=n}

For $m=n$ no nontrivial maximally atypical irreducible modules are in $T$ because their superdimension does not vanish \cite{Serganova-kw}, \cite{Weissauer-gl}. Assume from now on that $\lambda^{\dagger}$ is in the maximal atypical block $\Gamma$, i.e. the weight diagram has no $\times$, no $\circ$ and exactly $n$ $\vee$'s. Recall that a maximal atypical irreducible representation is irreducible if and only if its weight is of the form $(\lambda_1,\ldots,\lambda_n \ |  \ - \lambda_n, \ldots, - \lambda_1)$. In this case we write $[\lambda_1,\ldots,\lambda_n]$ for this representation.

\begin{prop}\label{max-atypical} $R(\lambda^L,\lambda^R)$ is maximal atypical if and only if $\lambda^R = (\lambda^L)^*$. 
\end{prop}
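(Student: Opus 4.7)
The plan is to translate maximal atypicality of $R(\lambda)$ into a purely combinatorial condition on the weight diagram attached to the bipartition, and then to recognise that condition as the classical Maya/Frobenius identity relating a partition to its conjugate.

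The first step is to use $at(R(\lambda)) = n - rk(\lambda) = n - \min(\#\times,\#\circ)$ from section \ref{indecomposable}, so that maximal atypicality is equivalent to $rk(\lambda) = 0$. For $m=n$ (so $\delta = 0$) I would do a direct asymptotic count of the weight diagram: for $N$ sufficiently large, $|I_\wedge(\lambda^L) \cap [-N,N]| = N+1$ and $|I_\vee(\lambda^R) \cap [-N,N]| = N$, because outside a finite window $I_\wedge$ contains all integers $\leq -l(\lambda^L)$ and $I_\vee$ contains all integers $\geq l(\lambda^R)+1$. Their sum equals $2N+1 = |[-N,N]|$, and inclusion-exclusion then gives $\#\times = \#\circ$ in the weight diagram of $\lambda$. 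So $rk(\lambda)=0$ is upgraded to the stronger statement $\#\times = \#\circ = 0$, meaning $I_\wedge(\lambda^L)$ and $I_\vee(\lambda^R)$ partition $\mathbb{Z}$.

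The second step is to invoke the classical identity $\mathbb{Z} \setminus I_\wedge(\mu) = I_\vee(\mu^*)$ valid for any partition $\mu$ (here $\delta=0$, so the two formulas are $I_\wedge(\mu) = \{\mu_i - i + 1\}_{i\geq 1}$ and $I_\vee(\mu^*) = \{j - \mu^*_j\}_{j\geq 1}$). Together these strictly decreasing resp.\ strictly increasing sequences partition $\mathbb{Z}$: this is immediate for $\mu = \emptyset$, and a short induction on $|\mu|$ shows that adding a box to $\mu$ amounts to swapping two neighbouring beads in the Maya diagram, which on the conjugate side is exactly the same swap. Since the increasing enumeration of $I_\vee$ recovers the underlying partition via $\nu_j = j - \lambda^R_j$, the equality $I_\vee(\lambda^R) = \mathbb{Z} \setminus I_\wedge(\lambda^L) = I_\vee((\lambda^L)^*)$ is equivalent to $\lambda^R = (\lambda^L)^*$. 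Chaining this with the first step proves both directions.

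The only real obstacle is the bookkeeping in the counting step: one must verify the off-by-one pattern $|I_\wedge \cap [-N,N]| - |I_\vee \cap [-N,N]| = 1$ carefully, and this uses $\delta = 0$ in an essential way. The Frobenius complement identity is a standard fact about Maya diagrams, so the only delicate point there is matching conventions (the paper's convention of continuing partitions by zeros is exactly the one that makes $I_\wedge, I_\vee$ complementary).
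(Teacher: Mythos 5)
Your proof is correct, and it follows essentially the same route as the paper's own (terse) argument: reduce maximal atypicality to the statement that the weight diagram of the bipartition has no $\times$ and no $\circ$, i.e.\ that $I_{\wedge}(\lambda^L)$ and $I_{\vee}(\lambda^R)$ partition $\mathbb{Z}$, and then recognise that partition condition as the Frobenius/Maya-diagram complement identity, which pins $\lambda^R = (\lambda^L)^*$. Where you add genuine value is in the two details the paper waves at: the asymptotic count showing $\#\times = \#\circ$ for $\delta = 0$ (which upgrades $rk(\lambda)=0 \Leftrightarrow \#\circ = 0$ to $\#\times = \#\circ = 0$ without appealing to the block classification that $\#\times - \#\circ = m-n$), and the explicit statement and verification of the identity $\mathbb{Z} \setminus I_{\wedge}(\mu) = I_{\vee}(\mu^*)$, which the paper dismisses as "easy to see." Your counting step and off-by-one check are both correct: for large $N$, $I_{\wedge}\cap[-N,N]$ is $\{\lambda^L_i - i + 1 : 1\le i \le N+1\}$ and $I_{\vee}\cap[-N,N]$ is $\{i - \lambda^R_i : 1\le i \le N\}$, giving $2N+1$ in total, and inclusion--exclusion against $|[-N,N]|=2N+1$ forces $\#\circ = \#\times$.
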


\begin{proof} Since there are no $\circ$ and no $\times$ \[ I_{\vee} \cup I_{\wedge} = \Z, \ \  I_{\vee} \cap I_{\wedge} = \emptyset.\] Hence $\lambda^L$ and $\lambda^R$ determine each other uniquely. It is easy to see that the conjugate partition satisfies these conditions.

\end{proof}

\begin{cor} $T(r,s)$ contains a maximally atypical summand only for $r=s$.
\end{cor}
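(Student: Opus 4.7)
\begin{bew}[Proof sketch] The plan is to combine the classification of indecomposable summands of $T(r,s)$ recalled earlier in the paper with Proposition \ref{max-atypical}.

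Recall that the nonzero indecomposable summands of $T(r,s) = V^{\otimes r}\otimes (V^\vee)^{\otimes s}$ are precisely the $R(\lambda)$ with $\lambda \in \dot\Lambda_{r,s}$, in particular with $|\lambda^L| = r-t$ and $|\lambda^R| = s-t$ for some $0\leq t \leq \min(r,s)$. In particular any bipartition $\lambda=(\lambda^L,\lambda^R)$ appearing as an indecomposable summand satisfies $|\lambda^L|-|\lambda^R| = r-s$.

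Now suppose $R(\lambda)$ is a maximally atypical summand. By Proposition \ref{max-atypical} (applied in the $m=n$ case) we have $\lambda^R = (\lambda^L)^\ast$, the conjugate partition. Since a partition and its conjugate have equal size, $|\lambda^R|=|(\lambda^L)^\ast|=|\lambda^L|$. Combined with the previous paragraph this forces $r-s=|\lambda^L|-|\lambda^R|=0$, i.e.\ $r=s$.

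There is essentially no obstacle here: the result is a direct consequence of the two facts (size constraint on summands of $T(r,s)$, and the shape condition $\lambda^R=(\lambda^L)^\ast$ for maximal atypicality) and the trivial observation $|\mu|=|\mu^\ast|$. \qed
\end{bew}
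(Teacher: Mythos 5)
Your proof is correct and uses essentially the same ingredients as the paper: the classification of indecomposable summands $R(\lambda)$ of $T(r,s)$ via $\dot\Lambda_{r,s}$, Proposition \ref{max-atypical} giving $\lambda^R=(\lambda^L)^*$ for maximal atypicality, and the fact that a partition and its conjugate have the same size. The paper phrases it via the projection $pr_\Gamma T(r,s) = \bigoplus R(\lambda,\lambda^*)$ with $|\lambda|=r-t$, $|\lambda^*|=s-t$, but the underlying argument is identical.
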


\begin{proof} By \cite{Brundan-Stroppel-5} and the characterisation of maximally atypical $R(\lambda) $ \[ pr_{\Gamma} \ T(r,s) = \bigoplus R(\lambda, \lambda^*) \] where $|\lambda| =r-t, \ |\lambda^{*}| = s-t$. Since $|\lambda| = |\lambda^{*}|$ this can only happen for $r=s$. 
\end{proof}

\textbf{Notation.} From now on we always write $R(\lambda)$ where $\lambda$ is a partition such that $(\lambda, \lambda^{*})$ is $(n|n)$-cross.


\subsection{The involution $I$} Recall that the dual of an indecomposable element in $T$ is given by $R(\lambda^L, \lambda^R)^{\vee} = R(\lambda^R,\lambda^L)$. Similarly we define \[ I R(\lambda^L, \lambda^R) := R(({\lambda^R})^*, ({\lambda^L})^*).\] 

\begin{lem} This is a well-defined operation on $T$ for $m=n$ (ie. $(({\lambda^R})^*, ({\lambda^L})^*)$ is again $(n|n)$-cross). $I$ is an involution and commutes with Tannaka duality. $I$ is the identity if and only if $R(\lambda)$ is maximally atypical.
\end{lem}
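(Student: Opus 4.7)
The lemma asserts four things: well-definedness of $I$, the involution property, commutation with Tannaka duality, and the fixed-point characterization. I will address them in that order, with essentially all of the combinatorial content concentrated in the first.

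The three formal parts should go quickly. The involution property will follow immediately from $(\mu^*)^* = \mu$ applied twice. Commutation with duality will use the formula $R(\mu^L, \mu^R)^\vee = R(\mu^R, \mu^L)$ from the Duals subsection: a direct two-line check shows both composites send $R(\lambda^L, \lambda^R)$ to $R((\lambda^L)^*, (\lambda^R)^*)$. The fixed-point statement reduces to $\lambda^L = (\lambda^R)^*$ (equivalently, by conjugation, $\lambda^R = (\lambda^L)^*$), which is precisely the maximal-atypicality criterion of proposition \ref{max-atypical}.

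The main obstacle is well-definedness: showing that $((\lambda^R)^*, (\lambda^L)^*)$ is $(n|n)$-cross whenever $(\lambda^L, \lambda^R)$ is. My strategy is to convert a witness $i_0 \in \{1,\ldots,n+1\}$ for $\lambda$ — so $a + b \leq n$ with $a := \lambda^L_{i_0}$ and $b := \lambda^R_{n+2-i_0}$ — into a candidate witness $j_0 := b+1$ for $I\lambda$. Weak monotonicity of $\lambda^R$ together with $\lambda^R_{n+2-i_0} = b$ forces the parts of $\lambda^R$ that are $\geq b+1$ to sit at indices $\leq n+1-i_0$, bounding $(\lambda^R)^*_{b+1} \leq n+1-i_0$; the inequality $a + b \leq n$ gives $n+1-b > a$, so symmetrically $(\lambda^L)^*_{n+1-b} \leq i_0 - 1$, and summing yields the required bound. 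The index choice $j_0 = b+1$ is the one nonformal step — motivated by wanting both conjugate-partition entries to strictly exceed the original witness values — and once it is in hand, verification of the edge cases ($a = 0$, $b \in \{0, n\}$, $i_0 \in \{1, n+1\}$) will be mechanical.
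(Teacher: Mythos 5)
Your argument is correct and matches the paper's: both convert an $(n|n)$-cross witness $i_0$ for $\lambda$ (with $a=\lambda^L_{i_0}$, $b=\lambda^R_{n+2-i_0}$, $a+b\le n$) into one for $I\lambda$ by bounding $(\lambda^R)^*_{b+1}\le n+1-i_0$ and $(\lambda^L)^*_{n+1-b}\le i_0-1$ via monotonicity, and your index choice $j_0=b+1$ is exactly the paper's construction with the parameter $k$ taken to be $n-b$. The reductions of the remaining three assertions to $(\mu^*)^*=\mu$, to $R(\mu^L,\mu^R)^\vee=R(\mu^R,\mu^L)$, and to Proposition \ref{max-atypical} are likewise what the paper intends by ``the other statements are clear.''
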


\begin{proof} Let $i \in {1,\ldots,n}$ have the property $\lambda_{i+1}^L + \lambda^R_{n-i+1} \leq n$, so $\lambda_{i+1}^L \leq k$ and $\lambda^R_{n-i+1} \leq n-k$ for some $k$. Then $(\lambda_{k+1}^L)^* \leq i$ and $(\lambda^R_{n-k+1})^* \leq n-i$, hence $(\lambda^L_{k+1})^*  + (\lambda_{n-k+1}^R)^* \leq n$. The other statements are clear.
\end{proof}

\begin{remark} For $m>n$ the bipartition $((\lambda^R)^*, (\lambda^L)^*)$ may fail to be $(m|n)$-cross.
\end{remark}

\begin{lem}  $I$ preserves dimensions.
 \end{lem}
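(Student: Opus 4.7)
By Tannaka duality $R(\lambda^L,\lambda^R)^{\vee} = R(\lambda^R,\lambda^L)$ has the same dimension as $R(\lambda^L,\lambda^R)$. Writing $\tau$ for the involution on bipartitions $(\mu^L,\mu^R) \mapsto ((\mu^L)^*,(\mu^R)^*)$ that conjugates both parts simultaneously, the assertion that $I$ preserves dimensions is therefore equivalent to the equality
$$\dim R(\lambda) \;=\; \dim R(\tau\lambda). \qquad (\star)$$

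The plan is to prove $(\star)$ by induction on $|\lambda| = |\lambda^L| + |\lambda^R|$, the case $|\lambda|=0$ being trivial. For the inductive step, the key tool is the decomposition from the lemma above,
$$\{\lambda^L\} \otimes \{\lambda^R\}^{\vee} \;=\; R(\lambda^L,\lambda^R) \;\oplus\; \bigoplus_j R(\mu^j), \qquad |\mu^j| < |\lambda|,$$
and the analogous one for $\tau\lambda$. Two ingredients are needed: (a) $\dim\{\mu\} = \dim\{\mu^*\}$ for every $(n|n)$-hook partition $\mu$ in $\calR_{nn}$; (b) the multiset $\{\mu^j\}_j$ appearing on the right-hand side for $\lambda$ is sent bijectively to the corresponding multiset for $\tau\lambda$ via $\mu^j \mapsto \tau\mu^j$.

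Ingredient (a) is the standard specialization symmetry of supersymmetric Schur polynomials at $m=n$: the value $s_\mu(1,\ldots,1 \,|\, 1,\ldots,1)$ is unchanged by $\mu \leftrightarrow \mu^*$, since the two blocks of $n$ variables play symmetric roles and the $(n|n)$-hook condition itself is invariant under conjugation. Ingredient (b) will follow from the Comes-Wilson tensor-product formula
$$\Gamma^{\nu}_{\lambda\mu} \;=\; \sum_{\alpha,\beta,\eta,\theta} \Bigl(\sum_\kappa c^{\lambda^L}_{\kappa\alpha} c^{\mu^R}_{\kappa\beta}\Bigr) \Bigl(\sum_\gamma c^{\lambda^R}_{\gamma\eta} c^{\mu^L}_{\gamma\theta}\Bigr) c^{\nu^L}_{\alpha\theta} c^{\nu^R}_{\beta\eta},$$
together with the Littlewood-Richardson conjugation identity $c^{\gamma}_{\alpha\beta} = c^{\gamma^*}_{\alpha^*\beta^*}$; applying that identity to each factor and reindexing the dummies by $\tau$ yields $\Gamma^{\nu}_{\lambda\mu} = \Gamma^{\tau\nu}_{\tau\lambda,\tau\mu}$ in the generic Grothendieck ring $R_t$. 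Granting (a) and (b), both tensor products above have equal total dimension, the $R(\mu^j)$ summands contribute equally by induction, and $(\star)$ follows.

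The main obstacle will be to propagate the symmetry $\Gamma^{\nu}_{\lambda\mu} = \Gamma^{\tau\nu}_{\tau\lambda,\tau\mu}$ from $R_t$ back to $R_0$, where the decomposition in $\calR_{nn}$ actually takes place. Concretely, one must verify $D_{\tau\lambda,\tau\mu}(0) = D_{\lambda,\mu}(0)$, so that $lift_0$ commutes with $\tau$. The function $D$ is defined via $\vee\wedge$-cap swaps on the weight diagram of a bipartition, and it is not a priori clear how conjugation of both component partitions affects this diagram. A direct computation shows however that under $\tau$ the sets $I_{\wedge}$ and $I_{\vee}$ both transform by the rule $S \mapsto 1-(\Z \setminus S)$; the weight diagram of $\tau\lambda$ is thus obtained from that of $\lambda$ by reflecting through the half-integer $1/2$ and simultaneously swapping the labels $\wedge \leftrightarrow \vee$ and $\circ \leftrightarrow \times$. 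Under this operation the cap structure is preserved, $D$ is invariant, and every step of the induction closes.
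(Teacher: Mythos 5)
Your argument is correct, but it takes a genuinely different route from the paper's. The paper avoids induction entirely: it quotes the closed dimension formula $\dim R(\lambda)=\sum_{\mu\subset\lambda}D_{\lambda,\mu}d_\mu$ from Comes--Wilson (43), gets $d_\mu=d_{\mu^*}$ from Moens' identity $s_\mu(x|y)=s_{\mu^*}(y|x)$ for the \emph{composite} supersymmetric Schur function specialized at $x=y=(1,\ldots,1)$, and then observes that $D_{\lambda,\mu}=D_{\lambda^*,\mu^*}$ because $D_{\lambda,\mu}$ is a decomposition number for the walled Brauer algebra $B_{r,s}$ (Cox--de~Visscher), which is manifestly conjugation-invariant. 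You instead induct on $|\lambda|$ using the decomposition $\{\lambda^L\}\otimes\{\lambda^R\}^\vee=R(\lambda)\oplus\bigoplus R(\mu^j)$ and need only the symmetry of ordinary supersymmetric Schur specializations for the covariant factors, plus the Littlewood--Richardson conjugation identity to get $\Gamma^\nu_{\lambda\mu}=\Gamma^{\tau\nu}_{\tau\lambda,\tau\mu}$ in $R_t$. Your real contribution is the combinatorial argument for the $D$-symmetry: whereas the paper declares $D_{\lambda\mu}=D_{\lambda^*\mu^*}$ ``clear'' via walled Brauer cell-module theory, you verify it directly from the bipartition weight-diagram construction, showing that $\tau$ reflects the diagram through $1/2$ while swapping $\wedge\leftrightarrow\vee$ and $\circ\leftrightarrow\times$, which preserves the cap structure and hence the linking relation. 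This is more self-contained within the paper's own combinatorial framework, at the cost of a longer argument and one inductive layer. Note also that the well-definedness of $\tau$ on $(n|n)$-cross bipartitions, which your induction silently assumes, follows from the paper's preceding lemma that $I$ is well-defined together with the fact that the dual swap $(\lambda^L,\lambda^R)\mapsto(\lambda^R,\lambda^L)$ preserves cross-ness.
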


\begin{proof} Since the dimension is preserved under dualising $(\lambda^L, \lambda^R) \mapsto (\lambda^R,\lambda^L)$, we only have to take care of $(\lambda^L,\lambda^R) \mapsto ((\lambda^L)^*),(\lambda^R)^*))$. By \cite[(43)]{Comes-Wilson} \[ dim R(\lambda) = \sum_{\mu \subset \lambda} D_{\lambda,\mu} d_{\mu} \] where $d_{\mu}$ is obtained from the composite supersymmetric Schur polynomial $s_{\mu}(\underline{x},\underline{y})$, $\underline{x} = (x_1,\ldots,x_n), \ \underline{y} = (y_1, \ldots,y_n)$ by setting $x_i = 1 = y_i$ for all $i=1,\ldots,n$. By \cite[2.39]{Moens} $s_{\mu}(x|y) = s_{\mu^*}(y|x)$, hence $d_{\mu} = d_{\mu^*}$. Let $\lambda \vdash (r,s)$. Then $\lambda^* \vdash (r,s)$. By \cite{Cox-deVisscher} the number $D_{\lambda \mu}$ is the decomposition number $[\Delta_{r,s}(\lambda): L_{r,s}(\mu)]$ where $\Delta_{r,s}$ is a cell module for the walled Brauer algebra $B_{rs}$. It is clear that $D_{\lambda\mu} = D_{\lambda^*,\mu^*}$, hence if $\sum_{\mu \subset \lambda} D_{\lambda \mu} d_{\mu} = d_{\mu_1} + \ldots + d_{\mu_r}$, then $\sum_{\mu' \subset \lambda^*} D_{\lambda^* \mu'} d_{\mu'} = d_{\mu_1^*} + \ldots d_{\mu_r^*}$.
\end{proof}

\begin{example} $I( (i;1^j)) = (j;1^i)$, hence $\lambda^{\dagger} = [i,0|0,-j]$ and $I\lambda^{\dagger} = [j,0|0,-i]$.
\end{example}



\section{Existence of maximal atypical mixed tensors for $m=n$}\label{existence}

Our aim in the next sections is to obtain some information about the tensor products of maximal atypical irreducible modules by embedding them either in the socle or as the constituent of highest weight (in the Bruhat order) in a mixed tensor. Of course every irreducible representation occurs in the socle and top of its projective cover, but it is preferable for the applications in section \ref{sec:tensor-products} to work with modules of smaller Loewy length. Let $[\lambda] = [\lambda_1, \ldots, \lambda_n]$ be maximally atypical in $\calR_n$. We normalize $[\lambda]$ so that $\lambda_n = 0$. More generally a weight with $\lambda_n \geq 0$ will be called positive. If $k \in \{1,\ldots,n\}$ is the biggest index with $\lambda_k \neq 0$ we say that the weight is of length $k$. Such a weight defines a partition $\lambda$ of length $k$. 


\begin{lem} If $l(\lambda) \leq k$, then $d(\lambda) \leq k$. If $\lambda_1 \leq k$, then $d(\lambda) \leq k$. In particular a (maximal atypical) mixed tensor can be projective only if $l(\lambda) \geq n$ and $\lambda_1 \geq n$. 
\end{lem}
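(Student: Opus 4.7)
The plan is to work directly with the weight diagram of the bipartition $(\lambda,\lambda^*)$, isolating its ``head'' (the finite active region) from its ``tail'' (the $\wedge$-tail on the far left and the $\vee$-tail on the far right), and to observe that no cap in the cap diagram can involve a tail vertex.

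For the first implication, set $\ell := l(\lambda)$ and inspect $I_\wedge = \{\lambda_i - (i-1)\}_{i\geq 1}$. For $i > \ell$ the entry equals $-(i-1)$, producing the infinite tail $\{-\ell,-\ell-1,\ldots\}$, while the $\ell$ head entries $\lambda_i - (i-1)$ for $i \leq \ell$ are strictly greater than $-\ell$. Since $\delta = 0$ and $\lambda^R = \lambda^*$, the smallest element of $I_\vee$ is $1 - \lambda_1^* = 1 - \ell > -\ell$. Because a cap pairs a $\vee$ on the left with a $\wedge$ on the right, and every $\vee$ lies strictly to the right of every tail $\wedge$, no cap can have its right endpoint in the tail; hence each cap consumes one of the at most $\ell$ head $\wedge$'s, giving $d(\lambda) \leq \ell \leq k$.

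For the second implication I would rerun the same argument with the roles of $I_\wedge$ and $I_\vee$ swapped: the head of $I_\vee$ has $\lambda_1$ entries $j - \lambda_j^*$, its tail starts at position $\lambda_1 + 1$, and the largest element of $I_\wedge$ is $\lambda_1$, so every cap must consume one of the $\lambda_1$ head $\vee$'s. Alternatively one can invoke duality: $R(\lambda^L,\lambda^R)^\vee = R(\lambda^R,\lambda^L)$ preserves Loewy length and hence defect, reducing the second inequality to the first applied to $\lambda^*$ (whose length is $\lambda_1$). The projective consequence is then immediate from Theorem~\ref{projective-mixed-tensors-2}: projectivity of $R(\lambda)$ is equivalent to $k(\lambda) = n$, and in the maximal atypical block $rk(\lambda) = 0$, so $k(\lambda) = d(\lambda)$; the two inequalities then force $l(\lambda) \geq n$ and $\lambda_1 \geq n$.

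The only real subtlety is the bookkeeping between head and tail entries: the bipartition's weight diagram carries infinitely many labels of each type, so one must verify the strict inequalities ``every tail $\wedge$ is left of every $\vee$'' and ``every $\wedge$ is left of every tail $\vee$'' that exclude tail vertices as cap endpoints. Once that is in place, the rest is a short positional check.
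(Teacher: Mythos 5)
The paper states this lemma without a written proof, treating it as a combinatorial exercise, so your argument fills the gap rather than replicates one. The argument is correct: in the maximally atypical $m=n$ block a cap joins a $\vee$ on the left to a $\wedge$ on the right, and you correctly verify the two strict positional inequalities that forbid tail endpoints. Concretely, since $\lambda^R=\lambda^*$ and $\delta=0$, the $\wedge$-tail occupies exactly the vertices $\leq -l(\lambda)$, while every $\vee$ sits at a vertex $\geq 1-l(\lambda)$; hence every cap's $\wedge$-endpoint is one of the $l(\lambda)$ head $\wedge$'s, giving $d(\lambda)\leq l(\lambda)$. Dually, the $\vee$-tail occupies vertices $\geq \lambda_1+1$ while every $\wedge$ is at a vertex $\leq\lambda_1$, so every cap's $\vee$-endpoint is one of the $\lambda_1$ head $\vee$'s, giving $d(\lambda)\leq\lambda_1$. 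Your alternative route via $R(\lambda)^\vee=R(\lambda^*)$ and the fact that Tannaka duality preserves Loewy length (hence, by Corollary~\ref{loewy-length}, preserves $d$) also works and is cleaner. Finally, the reduction of the projectivity statement to $k(\lambda)=d(\lambda)$ (using $rk(\lambda)=0$ in a maximally atypical block) and then Theorem~\ref{projective-mixed-tensors-2} is exactly the intended deduction.
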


\begin{example} There is a unique projective mixed tensor $R(\lambda)$ with $\lambda$ of smallest degree. It is given by $\lambda = (n,n-1,\ldots,1)$ and gives the projective cover of $[\lambda^{\dagger}] = [n-1,n-2,\ldots,1,0]$.
\end{example}

\begin{thm} \label{thm:existence} 1) For every positive weight $\lambda^{\dagger} = [\lambda^{\dagger}_1,\ldots,\lambda^{\dagger}_n]$ of length $k$ exists a unique mixed tensor $R(\lambda)$ of defect $k$ and length$(\lambda^L) = k$ and $socle (R(\lambda)) = [\lambda^{\dagger}]$. 2) For every positive weight $[\lambda]$ of length $k$ the mixed tensor $R(\lambda)$ has defect $\leq k$ and contains $[\lambda]$ with multiplicity 1 in the middle Loewy layer. $[\lambda]$ is the constituent of highest weight in $R(\lambda)$.
\end{thm}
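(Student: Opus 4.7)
The plan is to reduce both parts to combinatorial analysis of the algorithm for $\theta$ from Section \ref{sec:theta}, exploiting that in the maximal-atypical case a bipartition is determined by a single partition. By Proposition \ref{max-atypical} every maximally atypical bipartition has the form $(\mu, \mu^*)$, so throughout I only need to specify the single partition $\mu = \lambda^L$.

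For Part 1, I would invert $\theta$ explicitly by running the algorithm of Section \ref{sec:theta} in reverse. A positive max-atypical weight $[\lambda_1^\dagger, \ldots, \lambda_n^\dagger]$ of length $k$ has its $n$ $\vee$'s at positions $\lambda_i^\dagger - i + 1$, which differ from the trivial-weight $\vee$-positions $\{1 - i : i = 1, \ldots, n\}$ at exactly the $k$ indices $i \le k$. These $k$ displaced $\vee$'s together with their natural $\wedge$-partners encode a unique cap diagram, from which one reads off a bipartition $(\mu, \mu^*)$ with $l(\mu) = k$ and $\theta(\mu, \mu^*) = \lambda^\dagger$. Verifying $d(\mu, \mu^*) = k$ amounts to checking that positivity of $\lambda^\dagger$ forces the $k$ caps to be non-nested, so that the defect equals the cap count; this is a short combinatorial check using $\lambda_1^\dagger \ge \cdots \ge \lambda_k^\dagger \ge 1$. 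Uniqueness follows because the $\theta$-algorithm is deterministic and the conditions $l(\lambda^L) = k$, $d(\lambda) = k$, $\theta(\lambda) = \lambda^\dagger$ together pin down $\mu$ completely.

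For Part 2, I would take $\lambda^L = \lambda$ and $\lambda^R = \lambda^*$ directly. The defect bound $d(\lambda, \lambda^*) \le k$ is the preceding Lemma applied with the partition $\lambda$ of length $k$. For the remaining assertions I would invoke the Lemma of Section 8 stating that the weight $A_\lambda$ --- obtained from the diagram of $\lambda^\dagger$ by labelling every cap in the matching $t$ by $\wedge\vee$ --- is the highest-weight constituent of $R(\lambda)$ and occurs with multiplicity one in the middle Loewy layer. It then suffices to verify $A_{(\lambda, \lambda^*)} = [\lambda]$, which reduces to a direct computation: labelling the caps by $\wedge\vee$ undoes precisely the cap-flipping step of $\theta$ while leaving the remaining labels of $\lambda^\dagger$ fixed, so the $\vee$-positions of $A_{(\lambda, \lambda^*)}$ coincide with $\{\lambda_i - i + 1 : i = 1, \ldots, n\}$, which is the weight diagram of $[\lambda]$.

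The main obstacle is the bookkeeping needed to correlate four combinatorial objects: the partition $\lambda$, the infinite weight diagram of the bipartition $(\lambda, \lambda^*)$, the socle weight $\lambda^\dagger$, and the highest-weight constituent $A_\lambda$. The $\theta$-algorithm branches on whether $M < k(\lambda) + 1$ or $M \ge k(\lambda) + 1$, where $M$ is the rightmost cup/cross/circle vertex, and requires tracking the parameters $T$ and $X$. The absence of crosses and circles in the maximal-atypical case simplifies matters, but one still has to carry out the explicit position-by-position identification in all four diagrams. I expect this combinatorial verification --- rather than any new conceptual step --- to be the technical heart of the proof.
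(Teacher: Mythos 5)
Your proposal follows the same general outline as the paper (explicit inversion of $\theta$ for Part 1, verification of $A_{(\lambda,\lambda^*)} = [\lambda]$ for Part 2), but the uniqueness claim in Part 1 has a real gap. You assert that uniqueness ``follows because the $\theta$-algorithm is deterministic and the conditions $l(\lambda^L)=k$, $d(\lambda)=k$, $\theta(\lambda)=\lambda^\dagger$ together pin down $\mu$.'' Determinism of $\theta$ only means each input has a unique output; it says nothing about injectivity, and the paper explicitly warns in Section \ref{sec:theta} that $\theta$ is \emph{not} injective in general. The paper's actual argument applies $DS^{n-k}\colon \calR_n \to \calR_k$, under which $R(\lambda)$ becomes a projective cover in $\calR_k$; since bipartitions with maximal $k(\lambda)$ are in bijection with projective covers (i.e.\ $\theta$ is injective on that stratum), uniqueness follows. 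You need some argument of this kind --- restriction to a stratum where $\theta$ is already known to be injective --- and it is not supplied.

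A smaller issue: your remark that positivity ``forces the $k$ caps to be non-nested, so that the defect equals the cap count'' is confused. By the paper's definition, $d(\lambda)$ \emph{is} the number of caps in the cap diagram, regardless of nesting. What actually needs checking is that the constructed bipartition $(\mu,\mu^*)$ produces exactly $k$ caps, i.e.\ that the number of $\vee\wedge$-pairs in its weight diagram is $k$; this comes from the explicit formula $\mu = (v_1, v_2+1, \ldots, v_k+k-1)$ as in the paper, not from any non-nesting condition. Part 2 of your proposal is correct and essentially matches the paper's argument once you note that for $i>k$ the positions $\lambda_i - i +1$ reduce to $-k,\ldots,-n+1$.
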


In particular $[\lambda] \to R(\lambda)$ gives a bijection between the positive weights of length $k$ and mixed tensors given by partitions of length $k$. 

\begin{proof} Proof of 1). We construct $R(\lambda)$ explicitly. To an irreducible highest weight we associate its cup diagram with $n$ cups. Since the length of $[\lambda^{\dagger}]$ is $k$, exactly $k$ $\wedge$'s are are bound in a cup with a $\vee$ associated to one of the $\lambda^{\dagger}_1,\ldots,\lambda^{\dagger}_k$. Label the $k$ $\wedge$'s from the rightmost to the leftmost position by $\{ v_1, v_2, \ldots, v_k\}$. Then define the partition $\lambda = (v_1,v_2+1, v+3 +2, \ldots,v_k +k -1)$. Then $l(\lambda) = d(\lambda) = k$. The $k$ cups of $\lambda$ agree with the $k$ cups of $[\lambda^{\dagger}]$ associated to the nontrivial $\lambda_i^{\dagger}$. By construction (and the positivity of $[\lambda^{\dagger}]$) the largest label in a cup in the cup diagram of $\lambda$ is at a vertex $\geq k$. We obtain the highest weight in $R(\lambda)$ according to the rules of section \ref{sec:theta} by switching all labels at vertices $\geq v_1$ and the first $v_1 +n - 2k$ labels at vertices $\leq v_1$ which are not part of a cup. Since the leftmost cup has its leftmost label at a vertex $\geq 2-k$, this means switching exactly all the free labels at vertices $\geq -n+1$. This switches all labels $\vee$ at vertices $\leq v_1$. Since the length is $k$ we have $\wedge's$ at all vertices $\-k$. The $n-k$ rightmost $\wedge$'s at the positions $-n+1, -n+2,\ldots, -n+(n-k)$ will be switched to $\vee$'s. These $n-k$ $\wedge$'s give the $n-k$ zeros in $\lambda^{\dagger}$. Uniqueness: We apply $DS' := DS^{n-k}: \calR_n \to R_k$. Then $DS'(R(\lambda))$ is the projective cover of a unique irreducible module and the mixed tensors of defect $k$ are in bijection with the projective covers of irreducible modules. We show $soc(DS'(R(\lambda))) = [\lambda_1^{\dagger},\ldots,\lambda_k^{\dagger}]$ which implies our assertions about the uniqueness of the mixed tensor of length $k$ with prescribed socle. Since $\lambda$ does not depend on $n$ we get the same weight and cup diagram of $\lambda$. The highest weight of the socle is obtained as above by switching all labels at vertices $\geq V_1$ and the first $v_1 +k - 2k$ labels at vertices $\leq v_1$ which are not part of a cup. This means that we do not switch the $n-k$ leftmost labels at the vertices $-n+1, -n+2,\ldots, -n+(n-k)$.

Proof of 2). We use 1). We start with the partition $\lambda$. We have seen that $l(\lambda) \leq n$ means switching the free $\wedge$'s at vertices $\geq -n+1$ and $\leq max(k(\lambda),M)$ to $\wedge$'s and vice versa. Similarly all the free vertices $\geq max(k(\lambda),M)$ are labelled by $\vee$'s which are switched to $\wedge$'s. We obtain $A_{\lambda}$ from $[\lambda^{\dagger}]$ by interchanging the $\vee$'s with the $\wedge$'s in the $d(\lambda)$ cups of $\underline{\lambda}$. Hence to get the weight diagram of $A_{\lambda}$ from the weight diagram of $\lambda$ means switching all labels at vertices $\geq -n + 1$. The $\wedge$'s at the vertices $\geq -n+1$ are at the vertices $\lambda_1,\lambda_2-1,\ldots,\lambda_k - k + 1, -k,\ldots,-n+1$, hence the $\vee$'s in the weight diagram of $A_{\lambda}$ are at the vertices $\lambda_1,\lambda_2-1,\ldots,\lambda_k - k + 1, -k,\ldots,-n+1$.
\end{proof}



\begin{example} Assume $[\lambda^{\dagger}] = [\lambda_1^{\dagger},\ldots,\lambda_k^{\dagger},0,\ldots,0]$ with $\lambda^{\dagger}_1 > \lambda^{\dagger}_2 > \ldots > \lambda^{\dagger}_k$. The $k$ $\wedge$'s in the $k$ cups are at the vertices $\lambda^{\dagger}_1+1,\lambda_2^{\dagger}, \lambda^{\dagger}_3 - 1,\ldots, \lambda^{\dagger}_k -k +2$, hence $\lambda = (\lambda^{\dagger}_1+1,\lambda_2^{\dagger}, \lambda^{\dagger}_3 - 1,\ldots, \lambda^{\dagger}_k -k +2)$.
\end{example}

\begin{remark} If the $\wedge$'s of $[\lambda^{\dagger}]$ in the $k$ cups are at the vertices $v_1,\ldots, v_k$ so that $\lambda = (v_1,v_2+1,\ldots,v_k +k-1)$, then $A_{\lambda} = [v_1,v_2+1,\ldots,v_k +k -1,0,\ldots,0]$. 
\end{remark}

\begin{lem} \label{lem:positivity} Let $\lambda$ be an $(n|n)$-cross partition. The socle of $R(\lambda)$ is positive if and only if $l(\lambda) \leq n$. The highest weight constituent $A_{\lambda}$ is positive if $l(\lambda) \leq n$.
\end{lem}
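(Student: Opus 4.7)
I would handle statement (2) first: it follows directly from the computation in the proof of Theorem \ref{thm:existence}(2), or equivalently from the Remark preceding this lemma. For $l(\lambda) = k \leq n$, the $\vee$'s of $A_\lambda$ sit at $\{\lambda_i - i + 1 : 1 \leq i \leq k\} \cup \{-k, -k-1, \ldots, -n+1\}$, yielding $A_\lambda = [\lambda_1, \ldots, \lambda_k, 0, \ldots, 0]$, whose $n$-th coordinate is nonnegative.

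For statement (1), direction $(\Leftarrow)$ is immediate from Theorem \ref{thm:existence}(1): the bijection constructed there sends a positive weight of length $k \leq n$ to a partition $\lambda$ of length $k$ whose $R(\lambda)$ has that weight as socle, so every $\lambda$ with $l(\lambda) \leq n$ yields a positive socle.

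For direction $(\Rightarrow)$, I argue the contrapositive. Suppose $k := l(\lambda) > n$. The strategy is to identify the leftmost finite $\vee$-label in the bipartition weight diagram of $(\lambda, \lambda^*)$, which sits at position $1 - k$, and to show that (i) it is contained in a cup of the cup diagram, and (ii) therefore it survives the algorithm $\theta$ unchanged. Point (ii) is clear from the description of $\theta$ in section \ref{sec:theta}, because $\theta$ only switches labels at positions $\geq T = \max(d(\lambda)+1, M+1)$, which lies strictly beyond the rightmost cup endpoint $M$, or at free (non-cup) vertices below $T$. Granting (i), the preserved $\vee$ at $1-k$ forces $\lambda^{\dagger}_n \leq (1-k) + (n-1) = n-k < 0$, so $\lambda^{\dagger}$ is not positive.

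The main obstacle is proving (i). I plan to argue by contrapositive: if the leftmost $\vee$ at $1-k$ is \emph{not} contained in a cup, then $\lambda \supseteq \rho_{k}$, where $\rho_k = (k, k-1, \ldots, 1)$ is the staircase. For this I use the standard identification of the finite portion of the weight diagram with the boundary walk of the Young diagram of $\lambda$ from the SW corner $(k,0)$ to the NE corner $(0,\lambda_1)$, where $\vee$ corresponds to a rightward step and $\wedge$ to an upward step. The running counter $\#\vee - \#\wedge$ after $s$ steps equals $(i+j) - k$ for the current lattice point $(i,j)$. A standard fact about greedy bracket matching shows the leftmost $\vee$ is free iff this counter is $\geq 1$ at every step $s \geq 1$, equivalently the walk satisfies $i+j \geq k+1$ after leaving the SW corner. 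This forces $\lambda_i \geq k+1-i$ for $i=1,\ldots,k$, i.e.\ $\lambda \supseteq \rho_k$. Since $k > n$ we have $\rho_k \supseteq \rho_{n+1}$, so $\lambda \supseteq \rho_{n+1}$. But a direct check using the equivalence $\lambda_i + \lambda^*_{n+2-i} \geq n+2 \iff (i,n+2-i) \in \lambda$ identifies the $(n|n)$-cross condition with $\lambda \not\supseteq \rho_{n+1}$. This contradicts $R(\lambda) \neq 0$, completing the argument.
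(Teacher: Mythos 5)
Your treatment of statement (2) and of direction $(\Rightarrow)$ of statement (1) is correct. In particular your lattice-path argument for $(\Rightarrow)$ is valid: the $(n|n)$-cross condition for a bipartition $(\lambda,\lambda^*)$ is indeed equivalent to $\lambda\not\supseteq\rho_{n+1}$ (the borderline case $\lambda_i+\lambda^*_{n+2-i}=n+1$ cannot occur, so your $\geq n+2$ threshold is harmless), and the staircase containment forces the leftmost $\vee$ into a cup, whence it is preserved by $\theta$.

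The gap is in direction $(\Leftarrow)$. You deduce it from the ``bijection'' following Theorem \ref{thm:existence}, but the map constructed in the proof of part 1) sends a positive weight of length $k$ to a partition $\lambda$ with \emph{both} $l(\lambda)=k$ \emph{and} $d(\lambda)=k$. Its image is therefore only the set $\{\lambda : l(\lambda)=d(\lambda)=k\}$, not all partitions of length $k$, and so the conclusion ``every $\lambda$ with $l(\lambda)\leq n$ yields a positive socle'' does not follow from it. Concretely, for $n=2$ take $\lambda=(1,1)$: here $l(\lambda)=2$ but $d(\lambda)=1$, so $\lambda$ is not in the image of the map from Theorem \ref{thm:existence}(1); the socle of $R((1,1))$ is $[1,0]$, a weight of length $1$, and both $R((1,1))$ and $R((2))$ have this socle. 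Hence the inverse-of-the-bijection argument simply does not reach the partitions with $d(\lambda)<l(\lambda)$. Direction $(\Leftarrow)$ needs its own application of the $\theta$-algorithm. This is not hard: for $l(\lambda)=k\leq n$ all vertices $<1-k$ carry free $\wedge$'s, the number of free vertices switched below $T$ is $X+n-d(\lambda)$, the number of free vertices in $[1-k,T-1]$ is $T+k-1-2d(\lambda)$, and the difference is exactly $n-k\geq 0$; these $n-k$ extra switches land at $-k,\ldots,1-n$, so the leftmost $\vee$ of $\lambda^\dagger$ sits at a vertex $\geq 1-n$ and therefore $\lambda^\dagger_n\geq 0$. (As a bonus, the same count with $k>n$ shows the leftmost $\vee$ at $1-k$ survives whether or not it lies in a cup, so the lattice-path detour is correct but not actually needed for $(\Rightarrow)$.)
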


\begin{proof} This is an exercise using the algorithm for $\theta$ in section \ref{sec:theta}.
\end{proof}

We define the degree $deg[\lambda]$ of an arbitrary maximally atypical highest weight as $\sum_{i=1}^n \lambda_i$. With this definition the constituent of highest weight in $R(\lambda)$ is the constituent of largest degree. If $[\lambda]$ is positive, it defines a partition $\lambda$ and $deg[\lambda] = |\lambda|$.

\begin{lem} \label{lem: deg-estimate} We have $degA_{\lambda} \leq |\lambda|$ with equality if and only if $l(\lambda) \leq n$.
\end{lem}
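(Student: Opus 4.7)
The plan is to extract from the construction of $A_\lambda$ the combinatorial fact that all $n$ of its $\vee$-positions lie in the set $I_\wedge = \{p_i : i \geq 1\}$, where $p_i := \lambda_i - i + 1$ are the positions of the $\wedge$-labels in the weight diagram of $(\lambda,\lambda^*)$ (with $\lambda_i = 0$ for $i > l(\lambda)$). Since $A_\lambda$ is obtained from $\lambda^\dagger$ by flipping the $d(\lambda)$ cups of $\underline{\lambda}$, its $\vee$-positions consist of the cup right endpoints -- which are $\wedge$-labels of $\lambda$ by construction -- together with the remaining $n-d(\lambda)$ $\vee$-positions of $\lambda^\dagger$ that lie outside those cups. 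Tracking the algorithm $\theta$ from Section \ref{sec:theta}, these extra positions arise from free $\wedge$-vertices of $\lambda$ switched to $\vee$: the parameter $X$ in the algorithm equals the number $|F^\vee_{<T}|$ of free $\vee$-vertices of $\lambda$ below $T$, so every free $\vee$-vertex of $\lambda$ at a position $<T$ is among the first free vertices switched (all becoming $\wedge$), leaving only $\wedge$-switched positions as extra $\vee$'s in $\lambda^\dagger$. Hence all $n$ $\vee$-positions of $A_\lambda$ lie in $I_\wedge$.

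Granted the claim, since $p_1 > p_2 > \cdots$ is strictly decreasing, the sum of any $n$ of the $p_i$'s is at most $\sum_{i=1}^n p_i$, giving
\[
\deg A_\lambda \;=\; \sum(\vee\text{-positions of }A_\lambda) \,+\, \binom{n}{2} \;\leq\; \sum_{i=1}^n p_i \,+\, \binom{n}{2} \;=\; \sum_{i=1}^n \lambda_i \;\leq\; |\lambda|,
\]
with the last inequality strict precisely when $l(\lambda) > n$. Conversely, in the case $l(\lambda) \leq n$, Theorem \ref{thm:existence} gives $A_\lambda = [\lambda_1,\ldots,\lambda_k,0,\ldots,0]$; its $\vee$-positions are exactly $p_1,p_2,\ldots,p_n$ (the last $n-k$ of them being the ray positions $-k,-k-1,\ldots,1-n$), so both inequalities above collapse to equalities and $\deg A_\lambda = |\lambda|$.

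The main obstacle is the verification of the identity $X = |F^\vee_{<T}|$ underlying the claim. I would establish this by case-splitting on $T = \max(d(\lambda)+1, M+1)$ and using the parenthesis-matching description of the cup diagram to show that if a $\vee$-vertex at some position $p$ and a $\wedge$-vertex at some position $q$ are both free, then necessarily $p > q$. Hence the free $\vee$-vertices of $\lambda$ below $T$ are the rightmost free vertices in that range and are therefore the first ones selected for switching by $\theta$, confirming both that they all get switched and that the count matches $X$.
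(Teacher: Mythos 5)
Your proof follows essentially the same route as the paper's: both reduce the lemma to the combinatorial observation that the $n$ $\vee$-positions of $A_\lambda$ sit at $\wedge$-positions $p_i = \lambda_i - i + 1$ of the weight diagram of $(\lambda,\lambda^*)$, and then compute $\deg A_\lambda$ via the $\binom{n}{2}$-shift. Your chain of inequalities, the use of strict monotonicity of the $p_i$, and the appeal to Theorem~\ref{thm:existence} for the ``if'' direction are all correct.

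Where your write-up stops short is in the middle: you isolate the identity $X = |F^\vee_{<T}|$ as the crux and then only sketch a plan (``I would establish this by case-splitting\ldots''). The parenthesis-matching property of the cup diagram (free $\wedge$'s all lie to the left of free $\vee$'s) shows that the free $\vee$'s below $T$ are the first ones switched, but it does not by itself give the numerical equality $X = |F^\vee_{<T}|$, nor even the needed inequality $|F^\vee_{<T}| \leq X + n - d(\lambda)$. The identity is in fact true, but closing it requires one further observation: that $T > p_1$, i.e.\ that no $\wedge$-label of $\lambda$ sits at a vertex $\geq T$. (When $p_1 = M$ this is immediate since $T \geq M+1$; when $p_1 > M$ the $\wedge$ at $p_1$ is free, which forces every $\vee$ at a vertex $\leq p_1$ to be bound in a cup, hence $d(\lambda) = \lambda_1 = p_1$ and $T \geq d(\lambda)+1 > p_1$.) Once $T > p_1$ is in hand, the requirement that $\lambda^\dagger$ have exactly $n$ labels $\vee$ forces $X = |F^\vee_{<T}|$ by a simple count. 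None of this invalidates your argument — the paper's own proof asserts the corresponding geometric picture (``there are at least $n$ $\wedge$'s\ldots at vertices greater or equal\ldots'') with no more derivation than yours — but as written the key step remains a promissory note rather than a proof.
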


\begin{proof} If $l(\lambda) \leq n$ we have seen this in \ref{thm:existence}. If $l(\lambda) >n$, then for the $n$ $\vee$ in the weight diagram of $[\lambda]$ there are at least $n$ $\wedge$'s $\{ \wedge_1,\ldots,\wedge_n\}$ corresponding to $n$ non-trivial $\lambda_i$ $\{\lambda_{i_1},\ldots,\lambda_{i_n}\}$ at vertices greater or equal to the vertices of the $n$ $\vee$.  Then $degA_{\lambda} \leq \sum_{i=1}^n \lambda_{i_j}$. Since the length of $\lambda$ is larger than $n$, $|\lambda| > \sum_{i=1}^n \lambda_{i_j}$.
\end{proof} 

\begin{example} i) The mixed tensors with $soc(R(\lambda)) = B^k, \ k \neq 0$, are the projective covers $P(B^k)$. We have \begin{align*} P(B^k) & = R((n+k)^n) \quad & k \geq 1 \\ P(B^{n-r}) & = R(n^r) & r>n. \end{align*} The mixed tensors with socle $\one$ are the modules  $R(k^k), \  k \in \{1,\ldots,n\}$. We remark that the constituent of highest weight in $R(k^k)$ is $[k,\ldots,k,0,\ldots,0]$ and the constituent of highest weight in $R((n+k)^n)$ is $[n+k,n+k,\ldots,n+k]$ for $k \in [-n+1,\infty)$.

ii) The mixed tensors $R(\lambda)$ with $A_{\lambda} = B^{\ldots}$ are the projective covers $P(B^k)$ with constituent of highest weight $B^{k+n}$ and the $R(k^n)$ for $1 < k < n$ with highest weight constituent $B^k$ and defect $k$.

\end{example}



\section{The symmetric and alternating powers}\label{symmetric-powers}

We specialise the results of the previous section now to the case $d(\lambda) = 1$. We define \[ \A_{S^i}: = R(i;1^i) = R(i) \text{ and } \A_{\Lambda^i}:= (\A_{S^i})^{\vee} = R(1^i;i) = R(1^i).\]

\begin{lem} If $d(\lambda) = 1$, then $R(\lambda) = \A_{S^i}$ or $\A_{\Lambda^i}$ for some $i>0$.
\end{lem}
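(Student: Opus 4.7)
The plan is to analyze the weight-diagram cup structure of $\lambda=(\mu,\mu^*)$ directly (using Proposition \ref{max-atypical} to restrict to $\lambda^R = \mu^*$ in the maximally atypical case, with $\mu := \lambda^L$) and to match it against the shape of $\mu$. First I would verify by direct inspection that both $\A_{S^i}=R((i),(1^i))$ and $\A_{\Lambda^i}=R((1^i),(i))$ have defect $1$: for $\A_{S^i}$ the formulas for $I_\wedge, I_\vee$ from Section 2 give $I_\wedge = \{i\}\cup\Z_{\leq -1}$ and $I_\vee = \Z_{\geq 0}\setminus\{i\}$, producing a weight diagram of shape $\cdots\wedge\wedge\vee^i\wedge\vee\vee\cdots$ with a single cup $(i-1,i)$; the other case is its dual.

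For the converse, suppose $d(\lambda)=1$ with $\mu\neq\emptyset$. I would identify the ``middle region'' of the weight diagram as the finite interval $[1-l(\mu),\mu_1]$: outside this interval all labels are constant (default $\wedge$ on the left, default $\vee$ on the right), and inside it there are exactly $\mu_1$ many $\vee$'s and $l(\mu)$ many $\wedge$'s, beginning with a $\vee$ at $1-l(\mu) = 1-\mu^*_1$ and ending with a $\wedge$ at $\mu_1$. This is a routine verification from the formulas $I_\wedge = \{\mu_i - i + 1\}$ and $I_\vee = \{j - \mu^*_j\}$. Interpreting cups as bracket matching ($\vee = \,($, $\wedge = \,)$, processed left-to-right with a stack), the defect equals the number of matched pairs in this middle word.

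A case analysis on the second symbol of the middle region then finishes the argument. If it is $\vee$, the stack reaches size $\geq 2$ and any $\wedge$ appearing before the final position would pop (producing a second match), so $d(\lambda)=1$ forces the middle to be $\vee^{\mu_1}\wedge$, hence $l(\mu)=1$ and $\mu=(i)$, giving $R(\lambda)=\A_{S^i}$. If it is $\wedge$, it immediately pops the initial $\vee$ (the unique match) and leaves the stack empty; any later $\vee$ in the middle would push and then be popped by some subsequent $\wedge$ (one exists because the middle terminates in $\wedge$), producing a second match. Hence the middle must be $\vee\wedge^{l(\mu)}$, so $\mu_1=1$ and $\mu=(1^j)$, giving $R(\lambda)=\A_{\Lambda^j}$. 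The main obstacle will be the second case: one must carefully check that after the first pop, the insertion of any further $\vee$ necessarily produces a second pop before the middle region closes. This follows because from the new push onward the stack stays $\geq 1$ until some $\wedge$ pops it, and at least one such $\wedge$ is guaranteed to occur since the middle terminates in $\wedge$.
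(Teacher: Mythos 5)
Your proof is correct, but it takes a genuinely different route from the paper's. The paper's argument is much terser: it asserts that $d(\lambda)=1$ allows at most one jump $\lambda_j>\lambda_{j+1}$ (implicitly because each jump produces an adjacent $\vee\wedge$ in the weight diagram and hence a cup), deduces that $\lambda$ must be a rectangle $(b^a)$, and then notes that $a>1$ and $b>1$ together force $d(\lambda)>1$, leaving only $(a)$ or $(1^a)$. Your approach instead makes everything explicit: you first establish the exact shape of the weight diagram as a finite window $[1-l(\mu),\mu_1]$ that opens with $\vee$, closes with $\wedge$, contains $\mu_1$ $\vee$'s and $l(\mu)$ $\wedge$'s, and has constant labels outside; you then reduce the defect to bracket-matching in that window and run a case analysis on the second symbol. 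This is more elementary and self-contained -- it proves the facts the paper's argument takes for granted -- at the cost of more bookkeeping.

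One small imprecision in Case 1: the claim that \emph{any} $\wedge$ before the final position would pop is not literally true (after enough pops the stack can empty out). What the argument really needs, and what does hold, is that the \emph{first two} $\wedge$'s always pop: after the initial $\vee\vee$ the stack is $\geq 2$, so the first $\wedge$ pops with stack remaining $\geq 1$, and since only $\vee$'s (pushes) can occur between the first and second $\wedge$, the second $\wedge$ also pops. Hence $l(\mu)\geq 2$ forces $d(\lambda)\geq 2$, which is all you need. Case 2 is fine as written. With that minor fix your proof is complete.
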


\begin{proof} For $d(\lambda) = 1$ there can be at most one jump $\lambda_j > \lambda_{j+1}$ in the partition, hence $\lambda = (a,0,\ldots)$ for some $a$ or $\lambda = (\underbrace{b,b,\ldots,b}_{a \ times},0,\ldots)$ for $a>1$. For $b>1$ two $\vee$ will occur, hence $d(\lambda) > 1$. 
\end{proof}

\begin{lem} $\A_{S^i} = R(i;0) \otimes R(0;1^i)$ and $\A_{\Lambda^j} =  R(1^j;0) \otimes R(0;j)$.  
\end{lem}

\begin{proof} We want to compute $((i);0) \otimes (0; 1^i)$ in $R_t$, hence the sum $\sum_{\nu} \sum_{\kappa \in P} c_{\kappa,\nu^L}^{(i)} c_{\kappa,\nu^R}^{(1^i)}$, hence we search the pairs $(\kappa,\nu), \ (\kappa,\nu^{*})$ such that $c_{\kappa,\nu}^{\lambda}$ respectively $c_{\kappa, \nu^{*}}^{\lambda^{*}} \neq 0$. The Pieri rules tells one that the only such pairs are the pairs \[ ((0), (i)) \longleftrightarrow ((0), (1^i)) \text{ and } \ ((1), (i-1)) \longleftrightarrow ((1), (1^{i-1})).\] Hence \[ (i;0) \otimes (0;1^i) = (i) \oplus (i-1).\] in $R_t$. Now clearly $\lift(i) = (i) \oplus (i-1)$, hence the result.
\end{proof}

We define $S^i = [i,0,\ldots,0]$ for integers $i \geq 1$.

\begin{lem} The Loewy structure of the $\A_{S^i}$ is given by ($n \geq 2$) \begin{align*} \A_{S^1} & = (\one, S^1, \one) \\ \A_{S^i} & = (S^{i-1}, S^i \oplus S^{i-2}, S^{i-1}) \quad 1<i \neq n \\ \A_{S^n} & = (S^{n-1}, S^{n} \oplus S^{n-2} \oplus B^{-1}, S^{n-1}). \end{align*}
\end{lem}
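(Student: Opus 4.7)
The plan is to treat Loewy length, socle and head, and the middle layer separately.

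For the Loewy length: from the formulae $I_{\wedge}(\lambda) = \{i,-1,-2,\ldots\}$ and $I_{\vee}(\lambda) = \{0,1,\ldots,i-1,i+1,i+2,\ldots\}$ for $\lambda = (i;1^i)$, the weight diagram has no $\times$ and no $\circ$, and a single cup $(i-1,i)$. Thus $d(\lambda) = 1$ and Corollary \ref{loewy-length} gives Loewy length $3$; rigidity is automatic.

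For the socle (and head, which coincide in the ungraded category by Theorem \ref{thm:graded-multiplicities}(i)): I would run the algorithm from section \ref{sec:theta}. With $k(\lambda) = 1$, $M = i$, $T = i+1$ and $X = \max(0,i-1)$, one switches all labels at vertices $\geq T$ together with the first $X + n - 1$ free labels at vertices $<T$, scanning right to left. The $n$ surviving $\vee$'s end up at $\{i-1\} \cup \{-1, -2, \ldots, 1-n\}$, giving socle $=$ head $= S^{i-1}$ (with the convention $S^0 = \one$), which matches the claim in all three cases.

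For the middle layer I would combine two ingredients. First, Theorem \ref{thm:existence}(2) applied to the positive weight $[\lambda] = S^i$ of length $1$ identifies $S^i$ as the constituent of highest weight with multiplicity $1$ in the middle. Second, the Comes--Wilson character formula together with the lift $lift_0((i;1^i)) = (i;1^i) + (i-1;1^{i-1})$ computed in the proof of the preceding lemma (Pieri with $\kappa \in \{(0),(1)\}$) yields $ch(\A_{S^i}) = s_{(i;1^i)} + s_{(i-1;1^{i-1})}$. Subtracting $2\,ch(S^{i-1}) + ch(S^i)$ and recursively expanding the composite super-Schur characters $s_{(k;1^k)}$ in $\calR_n$, bootstrapping from $s_{(0;0)} = 1$ upward, pins down the residual: $ch(S^{i-2})$ for $1<i\ne n$, $ch(S^{n-2}) + ch(B^{-1})$ for $i=n$, and nothing for $i=1$ (the $S^{-1}$ term being absent).

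The main obstacle is the clean identification of the extra $B^{-1}$ summand when $i = n$. This is the boundary case $l(\lambda) = n + 1 > n$, at which $\lambda^R = (1^n)$ saturates the $(n|n)$-hook and $s_{(n;1^n)}$ picks up an additional Berezin twist in its expansion into simple characters. I expect the cleanest route is either a direct computation of $s_{(n;1^n)}$ at the hook boundary using the formula for composite super-Schur polynomials, or a structural argument via dimension counts together with the known behaviour of $\A_{S^{n-1}}$ (treated by the $1<i\ne n$ case) to exclude any other atypical candidate.
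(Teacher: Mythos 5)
Your computation of the socle/head (via the $\theta$-algorithm of section \ref{sec:theta}) and of the Loewy length (via Corollary \ref{loewy-length} and $d(\lambda)=1$) matches the paper's approach, and your identification of $S^i$ in the middle layer via Theorem \ref{thm:existence}(2) is a valid ingredient. The divergence is in how you pin down the \emph{remaining} middle-layer constituent, and here the proposal has a genuine gap.

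The character argument as you describe it does not close. The Comes--Wilson formula expresses $ch\,\A_{S^i}$ in the basis of composite super-Schur polynomials $s_{\mu}$, while you need it in the basis of irreducible characters $ch\,L(\nu)$ to read off composition factors. The transition between those two bases \emph{is} the Brundan Kazhdan--Lusztig data — exactly the thing the lemma is implicitly computing. Your proposed ``bootstrap'' (computing $ch\,S^0, ch\,S^1, \ldots$ successively by peeling off the known socle/head and highest-weight constituent from $ch\,\A_{S^k}$) is circular: at the step where you write $ch\,\A_{S^i} - 2\,ch\,S^{i-1} - ch\,S^i$ and declare the residual to be $ch\,S^{i-2}$, you are asserting the composition series you are trying to establish, not deducing it. Knowing $ch\,\A_{S^i}$ as a polynomial does not by itself rule out an unrecognised extra constituent in the middle. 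You flag exactly this difficulty for $i=n$, where the extra $B^{-1}$ appears, but the same incompleteness is present for generic $i$; it is just less visible because the answer is simpler.

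The paper's proof sidesteps the character expansion entirely: it applies the graded multiplicity formula of Theorem \ref{thm:graded-multiplicities}(iii), $[G^t_{\Delta\Gamma}L(\zeta)]=\sum_{\mu\subset\alpha\to^t\zeta,\ red(\underline{\mu}t)=\underline{\zeta}}[L(\mu)]$. Since $t$ has a unique cup at $(i-1,i)$ and a unique cap, there are only two admissible $\alpha$'s (differing by the orientation of that cup), and for each $\alpha$ the lower-reduction condition admits exactly two $\mu\subset\alpha$. This hands you the four composition factors $S^{i-1}, S^{i-1}, S^i, S^{i-2}$ (plus the extra $B^{-1}$ in the boundary case $i=n$, which simply shows up as another admissible $\mu$) \emph{and} their grading degrees, hence their Loewy layers, all in one sweep. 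If you want to salvage your route, you would have to supply an independent expansion of $s_{(k;1^k)}$ in irreducible $Gl(n|n)$-characters, e.g.\ via Moens--van der Jeugt or a Jacobi--Trudi-type identity at the hook boundary — but at that point you have duplicated the Brundan--Stroppel combinatorics in a less convenient form.
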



\begin{proof} We sketch the computation for $\A_{S^i}, \ 1 < i < m$. The module in the socle can be computed by applying $\theta$. The matching $t$ looks schematically like (picture for $i=4$)

\begin{center}
\medskip

  \begin{tikzpicture}
 \draw (-5,0) -- (6,0);
\foreach \x in {} 
     \draw (\x-.1, .2) -- (\x,0) -- (\x +.1, .2);
\foreach \x in {} 
     \draw (\x-.1, -.2) -- (\x,0) -- (\x +.1, -.2);
\foreach \x in {} 
     \draw (\x-.1, .1) -- (\x +.1, -.1) (\x-.1, -.1) -- (\x +.1, .1);

\begin{scope} [yshift = -3 cm]

 \draw (-5,0) -- (6,0);
\foreach \x in {} 
     \draw (\x-.1, .2) -- (\x,0) -- (\x +.1, .2);
\foreach \x in {} 
     \draw (\x-.1, -.2) -- (\x,0) -- (\x +.1, -.2);
\foreach \x in {} 
     \draw (\x-.1, .1) -- (\x +.1, -.1) (\x-.1, -.1) -- (\x +.1, .1);
\end{scope}

\draw [-,black,out=270,in=90](-4,0) to (-4,-3);
\draw [-,black,out=270,in=90](-3,0) to (-3,-3);
\draw [-,black,out=270,in=90](-2,0) to (-2,-3);
\draw [-,black,out=270,in=90](-1,0) to (-1,-3);
\draw [-,black,out=270,in=90](2,0) to (0,-3);
\draw [-,black,out=270,in=90](3,0) to (1,-3);
\draw [-,black,out=270,in=90](4,0) to (2,-3);
\draw [-,black,out=270,in=90](5,0) to (5,-3);

\draw [-,black,out=270, in=270](0,0) to (1,0);
\draw [-,black,out=90, in=90](3,-3) to (4,-3);

\end{tikzpicture}

\medskip
\end{center}

with the upper cup at the vertices $(0,1)$ and the lower one at the vertices $(i-1,i)$. To determine the remaining composition factors we search the $\gamma$ with $ \gamma \subset \beta \rightarrow^t \zeta, \ red(\underline{\gamma}t) = \underline{\zeta}$. Since $t$ and $\zeta$ are fixed and the matching has to be consistently oriented this determines $\beta$ up to the position at the unique cup in $t$ at position $(i-1,i)$. Now consider $\gamma$ where $\gamma$ is obtained from $\lambda^{\dagger} = S^{i-1}$ by moving the $\vee$ at position $i-1$ to position $i-2$. This gives a cup at position $(i-2,i-1)$. The lower reduction property is satisfied and gives the weight $S^{i-2}$. No other $ \gamma \subset \lambda^{\dagger}$ fulfill the summation conditions. The second possible case for $\beta$ (switching the $\wedge$ with the $\vee$ in the rightmost cup, hence moving $\vee$ one to the right) gives the module $[S^i] = [i,0,\ldots,0]$. As in the case of $\beta = \lambda^{\dagger}$ a second $\gamma \subset [S^i]$ may be obtained by moving the rightmost $\vee$ one to the left. The corresponding module is $[S^{i-1}]$ and gives the second copy of $[S^{i-1}]$. One can check that no other weight diagrams fulfill the summation conditions. The Loewy layers can be determined from the number of lower circles in $red(\underline{\gamma}t) = \underline{\one}$. The remaining cases can be treated in the same way. 
\end{proof}

\begin{remark} We obtain a recursive algorithm to compute the tensor product $L(v) \otimes S^i$ where $L(v)$ is a typical module in the $m=n$-case. The tensor product $L(v) \otimes \A$ (where $\A = \A_{S^1}$) is known since both modules are in the image of $F_{n|n}$. Since $L(v)$ is projective and $\A = (\one, S^1, \one)$, it splits into $2 L(v) \oplus L(v) \otimes S^1$. Removing the two $L(v)$ we obtain $L(v) \otimes S^1$. Similarly $L(v) \otimes \A_{S^1} = L(\nu) \otimes S^2 \oplus 2 L(\nu) \otimes S^1 \oplus L(v)$ which gives a formula for $L(v) \otimes S^2$. Iterating this procedure gives the decomposition of $L(v) \otimes S^i$ for any $i$ since a projective representation is determined by its composition factors by theorem \ref{projective-k-0}. In particular it gives an algorithm to decompose $L(v) \otimes L[a,b]$ where $L(v)$ is a typical $GL(2|2)$-module  and $L[a,b]$ is a maximal atypical representation of $GL(2|2)$. For the $psl(2|2)$-case see also \cite{Goetz-Quella-Schomerus-psl}.
\end{remark}

\begin{example} We want to compute $L(2,2|1,1) \otimes L(2,1|-1,-2)$ in $\calR_2$. We have $L(2,2|1,1) = R(2^3;0)$, so we compute $(2^3;0) \otimes ( (1;1) + (0;0)$ in $R_t$. This gives us \begin{align*} & L(2,2|1,1)  \otimes \A = \\ & \ \ L(3,1|4,3) \oplus L(3,2|1,0) \oplus L(2,1|3,1) \oplus L(2,2|2,0) \oplus 2 L(2,2|1,1). \end{align*} Removing $ 2L(2,2|1,1)$ we get the decomposition of $L(2,2|1,1) \otimes S^1$ and after twisting with $B$ we get \begin{align*} & L(2,2|1,1)  \otimes L(2,1|-1,-2) = \\ & \ \ L(4,2|3,2) \oplus L(4,3|0,-1) \oplus L(3,2|1,1) \oplus L(3,2|2,0) \oplus L(1,1|1,-1).\end{align*}
\end{example}



\begin{remark} For $m>n$ the tensor product $R(i,0) \otimes R(0,1^i)$ splits into a sum of two irreducible modules \[ R(i,0) \otimes R(0,1^i) = R(i;1^i) \oplus R(i-1,1^{i-1}). \] In particular the adjoint representation $V \otimes V^{\vee}$ decomposes as \[ V \otimes V^{\vee} = \one \oplus L(1,0,\ldots,0 \ | \ 0,\ldots,0,-1).\] 
\end{remark}



\section{Remarks on tensor products}\label{sec:tensor-products} 

We can embed any positive $[\lambda]$ of length $k$ in the socle of a mixed tensor of defect $k$ or as highest weight constituent. In the $S^i \otimes S^j$-case this permits us to obtain the decomposition of $S^i \otimes S^j$ \cite{Heidersdorf-Weissauer-gl-2-2} using a closed formula for the $\A_{S^i} \otimes \A_{S^j}$ tensor product. Copying the approach in the $\A_{S^i} \otimes \A_{S^j}$-case seems to be hopeless because general $R(\lambda)$ have lots of composition factors which are difficult to determine. We content ourselves with the following observations. The estimate in proposition \ref{comp} on the composition factors could be obtained from a restriction to $GL(n) \times GL(n)$, but the approach for the proof shows more and is also needed for \ref{proj-covers}.

\subsection{Composition factors.} As before we consider only bipartitions of the form $(\nu,\nu^*)$ and and we identify such a bipartition with the partition $\nu$.

\begin{lem} $\Gamma_{\lambda \mu}^{\nu}$ is zero unless $l(\nu) \leq l(\lambda) + l(\mu)$.
\end{lem}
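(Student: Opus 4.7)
The plan is to read off the bound directly from the explicit Comes--Wilson formula for $\Gamma_{\lambda\mu}^{\nu}$ quoted earlier in the text. If $\Gamma_{\lambda\mu}^{\nu}\neq 0$ then at least one summand
\[ c_{\kappa\alpha}^{\lambda^L}\, c_{\kappa\beta}^{\mu^R}\, c_{\gamma\eta}^{\lambda^R}\, c_{\gamma\theta}^{\mu^L}\, c_{\alpha\theta}^{\nu^L}\, c_{\beta\eta}^{\nu^R} \]
is nonzero, so I would fix a tuple $(\alpha,\beta,\eta,\theta,\kappa,\gamma)$ of partitions for which every one of these six Littlewood--Richardson coefficients is positive, and work entirely at the level of such tuples.

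The key inputs are the standard length estimates for LR-coefficients: whenever $c_{\rho\sigma}^{\tau}\neq 0$ one has $\rho,\sigma\subseteq\tau$ (so $l(\rho),l(\sigma)\leq l(\tau)$) and, dually, $l(\tau)\leq l(\rho)+l(\sigma)$ (because the skew shape $\tau/\rho$ already contains nonempty rows $l(\rho)+1,\ldots,l(\tau)$ and can have at most $l(\sigma)$ nonempty rows since its content is $\sigma$). Applying the first inequality to $c_{\kappa\alpha}^{\lambda^L}$ gives $l(\alpha)\leq l(\lambda^L)$, applying it to $c_{\gamma\theta}^{\mu^L}$ gives $l(\theta)\leq l(\mu^L)$, and applying the second inequality to $c_{\alpha\theta}^{\nu^L}$ gives $l(\nu^L)\leq l(\alpha)+l(\theta)$. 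Chaining produces $l(\nu^L)\leq l(\lambda^L)+l(\mu^L)$.

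Under the identification of this section, where a bipartition of the form $(\rho,\rho^{*})$ is recorded by the single partition $\rho$ and its length is taken to be $l(\rho)$, this is exactly the asserted estimate $l(\nu)\leq l(\lambda)+l(\mu)$. The argument is pure bookkeeping via LR combinatorics; the only point that needs care is keeping the bipartition-length convention of the opening section separate from the partition-length convention in force here, so that the inequality on $L$-parts is interpreted correctly. No serious obstacle arises.
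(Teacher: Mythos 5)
Your proof is correct and matches the paper's argument: both fix a nonvanishing summand in the Comes–Wilson formula, bound $l(\alpha)\leq l(\lambda^L)$ and $l(\theta)\leq l(\mu^L)$ via containment, then use $l(\nu^L)\leq l(\alpha)+l(\theta)$ for $c_{\alpha\theta}^{\nu^L}$, and finally invoke the $(\nu,\nu^*)$ identification in force in this section. The only addition you make is spelling out the standard column-length justification of the LR bound, which the paper takes as known.
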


\begin{proof} The Littlewood-Richardson coefficients $c_{\lambda \mu}^{\nu}$ are zero unless $l(\nu) \leq l(\lambda) + l(\mu)$ and $l(\nu) \geq max(l(\lambda), l(\mu))$. In the sum \[ \sum_{\kappa \in P} c_{\kappa \alpha}^{\lambda^L} c_{\kappa \beta}^{\mu^*} \] $c_{\kappa \alpha}^{\lambda^L} = 0$ unless $l(\alpha ) \leq l(\lambda^L)$. Similarly $c_{\gamma \theta}^{\mu^L} = 0$ unless $l(\theta) \leq l(\mu^L)$. Hence any $\nu^L$ with non-vanishing $c_{\alpha \theta}^{\nu^L}$ satisfies $l(\nu^L) \leq l(\alpha) + \l(\theta) \leq l(\lambda^L) + \l(\mu^L)$.
\end{proof}

\begin{lem} If $\Gamma_{\lambda \mu}^{\nu} \neq 0$, then $\nu_1 \leq (\lambda + \mu)_1$.
\end{lem}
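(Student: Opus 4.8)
The plan is to read the bound directly off the explicit formula for $\Gamma_{\lambda\mu}^{\nu}$, using only elementary containment properties of Littlewood--Richardson coefficients; none of the earlier structure theory (Khovanov algebras, Koszulity, the Duflo--Serganova functor) is needed. Under the convention of this section $\Gamma_{\lambda\mu}^{\nu}$ is the coefficient attached to the bipartitions $(\lambda,\lambda^*)$, $(\mu,\mu^*)$, $(\nu,\nu^*)$, so
\[ \Gamma_{\lambda \mu}^{\nu} = \sum_{\alpha,\beta,\eta,\theta \in P} \Big(\sum_{\kappa \in P} c_{\kappa \alpha}^{\lambda}\, c_{\kappa \beta}^{\mu^*}\Big)\Big(\sum_{\gamma \in P} c_{\gamma \eta}^{\lambda^*}\, c_{\gamma \theta}^{\mu}\Big)\, c_{\alpha \theta}^{\nu}\, c_{\beta \eta}^{\nu^*}. \]
If $\Gamma_{\lambda\mu}^{\nu}\neq 0$, some summand is nonzero, so I would fix partitions $\alpha,\beta,\eta,\theta,\kappa,\gamma$ for which all six coefficients $c_{\kappa\alpha}^{\lambda}$, $c_{\kappa\beta}^{\mu^*}$, $c_{\gamma\eta}^{\lambda^*}$, $c_{\gamma\theta}^{\mu}$, $c_{\alpha\theta}^{\nu}$, $c_{\beta\eta}^{\nu^*}$ are nonzero.

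Then I would invoke two standard facts about Littlewood--Richardson coefficients. First, $c_{\kappa\sigma}^{\tau}\neq 0$ forces $\sigma\subseteq\tau$ as Young diagrams, in particular $\sigma_1\leq\tau_1$; applying this to $c_{\kappa\alpha}^{\lambda}$ and to $c_{\gamma\theta}^{\mu}$ gives $\alpha_1\leq\lambda_1$ and $\theta_1\leq\mu_1$. Second, $c_{\sigma\tau}^{\nu}\neq 0$ forces $\nu_1\leq\sigma_1+\tau_1$: the skew shape $\nu/\sigma$ has exactly $\nu_1-\sigma_1$ boxes in its top row, and in any Littlewood--Richardson tableau those boxes all carry the label $1$, so $\tau_1$, the multiplicity of $1$ in the content, is at least $\nu_1-\sigma_1$. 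Applying this to $c_{\alpha\theta}^{\nu}$ gives $\nu_1\leq\alpha_1+\theta_1$. Chaining the three inequalities,
\[ \nu_1 \;\leq\; \alpha_1+\theta_1 \;\leq\; \lambda_1+\mu_1 \;=\; (\lambda+\mu)_1, \]
which is the assertion.

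I do not expect a genuine obstacle: the argument parallels the preceding length estimate, and the only step that needs a word of justification is the second Littlewood--Richardson fact, which is immediate from the Yamanouchi (lattice) condition forcing the top row of a Littlewood--Richardson tableau to consist entirely of $1$'s.
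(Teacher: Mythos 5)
Your argument is correct and is exactly what the paper has in mind: the paper's proof is the one-liner ``Follows at once from the corresponding property of the $c_{\lambda\mu}^{\nu}$,'' and your chain $\nu_1 \le \alpha_1+\theta_1 \le \lambda_1+\mu_1$, using $c_{\kappa\sigma}^{\tau}\ne 0 \Rightarrow \sigma\subseteq\tau$ together with $c_{\sigma\tau}^{\nu}\ne 0 \Rightarrow \nu_1\le\sigma_1+\tau_1$, is precisely the ``corresponding property'' applied to the product formula, mirroring the paper's proof of the preceding length estimate. Your justification of the width bound via the Yamanouchi condition (the top row of a Littlewood--Richardson skew tableau must consist entirely of $1$'s) is sound.
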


\begin{proof} Follows at once from the corresponding property of the $c_{\lambda \mu}^{\nu}$.
\end{proof}

\begin{lem} If $c_{\lambda \mu}^{\nu} \neq 0$, then $\Gamma_{\lambda \mu}^{\nu} = (c_{\lambda \mu}^{\nu})^2$. These $\nu$ are exactly the $\nu$ with degree $|\lambda| + |\mu|$. If $\nu$ is any other partition with $\Gamma_{\lambda \mu}^{\nu} \neq 0$, then $|\nu| < |\lambda| + |\mu|$.
\end{lem}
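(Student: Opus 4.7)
The plan is a direct computation from the formula
\[ \Gamma_{\lambda \mu}^{\nu} = \sum_{\alpha,\beta,\eta,\theta,\kappa,\gamma} c_{\kappa \alpha}^{\lambda^L} c_{\kappa \beta}^{\mu^R} c_{\gamma \eta}^{\lambda^R} c_{\gamma \theta}^{\mu^L} c_{\alpha \theta}^{\nu^L} c_{\beta \eta}^{\nu^R}, \]
specialized via the convention of Section \ref{maximal-atypical-m=n} to $\lambda^L = \lambda$, $\lambda^R = \lambda^*$, and similarly for $\mu$ and $\nu$. The first step is a degree count: since each Littlewood--Richardson coefficient $c_{\rho\sigma}^{\tau}$ vanishes unless $|\rho|+|\sigma|=|\tau|$, summing the six such relations arising in a nonzero summand yields
\[ 2|\lambda|+2|\mu| \;=\; 2|\kappa|+2|\gamma|+2|\nu|, \]
so that $|\nu| = |\lambda|+|\mu|-|\kappa|-|\gamma|$. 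In particular $|\nu|\le |\lambda|+|\mu|$, and when $|\nu|<|\lambda|+|\mu|$ we recover the third assertion of the lemma.

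Next, restricting to the extremal case $|\nu|=|\lambda|+|\mu|$, only contributions with $\kappa=\gamma=\emptyset$ survive. Using $c_{\emptyset,\sigma}^{\tau}=\delta_{\sigma\tau}$, the six-fold sum collapses to the single term with $\alpha=\lambda$, $\beta=\mu^*$, $\eta=\lambda^*$, $\theta=\mu$, giving
\[ \Gamma_{\lambda\mu}^{\nu} \;=\; c_{\lambda,\mu}^{\nu}\cdot c_{\mu^*,\lambda^*}^{\nu^*}. \]

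The third step is to invoke the standard symmetries of Littlewood--Richardson coefficients: $c_{\rho\sigma}^{\tau}=c_{\sigma\rho}^{\tau}$ (commutativity of $\otimes$ on Schur functors) and $c_{\rho\sigma}^{\tau}=c_{\rho^*\sigma^*}^{\tau^*}$ (transpose duality, equivalent to applying the involution $\omega$ on symmetric functions). Combining them gives $c_{\mu^*,\lambda^*}^{\nu^*}=c_{\lambda^*,\mu^*}^{\nu^*}=c_{\lambda,\mu}^{\nu}$, so $\Gamma_{\lambda\mu}^{\nu}=(c_{\lambda,\mu}^{\nu})^2$. In particular the extremal $\nu$ with $\Gamma_{\lambda\mu}^{\nu}\neq 0$ are exactly the ones with $c_{\lambda\mu}^{\nu}\neq 0$, which by the degree bound $|\nu|\leq|\lambda|+|\mu|$ and the classical LR condition are exactly the $\nu$ of degree $|\lambda|+|\mu|$ in the support of $c_{\lambda\mu}^{\cdot}$.

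There is essentially no obstacle beyond careful bookkeeping; the only non-mechanical ingredient is the transpose-duality symmetry of the Littlewood--Richardson coefficients, which is classical. The degree count simultaneously proves the second and third assertions, and the collapse of the six-fold sum together with the symmetry proves the first.
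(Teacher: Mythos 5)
Your proof is correct and follows essentially the same route as the paper's: a degree count using the vanishing condition on Littlewood--Richardson coefficients forces $|\kappa|+|\gamma|=|\lambda|+|\mu|-|\nu|$, the extremal case $|\nu|=|\lambda|+|\mu|$ collapses the sum to the single $\kappa=\gamma=\emptyset$ term, and the transpose-duality symmetry $c_{\rho\sigma}^{\tau}=c_{\rho^*\sigma^*}^{\tau^*}$ (together with commutativity) yields the square. The paper states the same argument more tersely, and you have spelled out the symmetries more explicitly, but there is no methodological difference.
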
     

\begin{proof} We get $\Gamma_{\lambda \mu}^{\nu} = (c_{\lambda \mu}^{\nu})^2$ by putting $\kappa = 0$ and $\gamma = 0$ in the expression for $\Gamma_{\lambda,\mu}^{\nu}$: If $\kappa = 0$, then $\alpha = \lambda^L$ and $\beta = \mu^R$. If $\gamma = 0$, then $\eta = \lambda^R$ and $\theta = \mu^L$. Then $\Gamma_{\lambda,\mu}^{\nu} = c_{\lambda^L \lambda^R}^{\nu^L}  c_{(\lambda^L)^* (\lambda^R)^*}^{\nu^R}$. Since $c_{\lambda \mu}^{\nu} =  c_{\lambda^* \mu^*}^{\nu^*}$ we get  $\Gamma_{\lambda \mu}^{\nu} = (c_{\lambda \mu}^{\nu})^2$ if we only consider maximally atypical contributions $\nu = (\nu,\nu^*)$. In general $c_{\lambda \mu}^{\nu} \neq 0$ implies $|\lambda| + |\mu| = |\mu|$, hence \begin{align*} |\nu^L| & = |\lambda| +  |\mu| - |\kappa| - |\gamma| \\ |\nu^R| & = |\lambda| +  |\mu| - |\kappa| - |\gamma|. \end{align*} and for non-trivial $\kappa$ or $\gamma$ the partition $\nu$ cannot satisfy $c_{\lambda \mu}^{\nu} \neq 0$.
\end{proof}

\textbf{Notation.} We call any $\nu$ with $c_{\lambda \mu}^{\nu} \neq 0$ a classical solution of $\lambda \mu$ or $\Gamma_{\lambda \mu}^{\nu}$.

\begin{lem} In $R_0$ \[ R(\lambda) \otimes R(\mu) = \bigoplus_{|\nu| = |\lambda| + |\mu|} (c_{\lambda \mu}^{\nu})^2 R(\nu) \oplus \bigoplus_{|\nu| < |\lambda| + |\mu|} d_{\lambda \mu}^{\nu} R(\nu)\] for some coefficients $d_{\lambda \mu}^{\nu}$. 
\end{lem}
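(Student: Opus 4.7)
The plan is to combine the Comes-Wilson tensor product formula displayed in the ``Tensor products'' paragraph of Section 5 with the preceding lemma, and then transfer the identity from $R_t$ to $R_0$ via the ring isomorphism $lift_0$. The preceding lemma has already done all the combinatorial work: it identifies $\Gamma_{\lambda\mu}^{\nu}$, for self-conjugate bipartitions $\nu = (\nu,\nu^*)$ (which by the convention of Section 11 are the ones denoted $R(\nu)$ for $\nu$ a partition), as $(c_{\lambda\mu}^{\nu})^2$ exactly when $|\nu| = |\lambda| + |\mu|$, while any further nonzero self-conjugate contributions have strictly smaller degree.

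First I would write out $R(\lambda) \cdot R(\mu) = \sum_{\nu} \Gamma_{\lambda\mu}^{\nu}\, \nu$ in $R_t$ and isolate the self-conjugate part using the preceding lemma. Next I would transfer to $R_0$ using that $lift_0 \colon R_0 \to R_t$ is a ring isomorphism and that $lift_0(R(\nu)) = \nu + \sum_{\mu,\,|\mu|<|\nu|} D_{\nu\mu}\,\mu$. Since each swap of a $\vee\wedge$-pair strictly reduces the total degree $|\mu^L| + |\mu^R|$, this base change is lower-unitriangular with respect to the degree filtration. Inverting it, the top-degree coefficients in the $R_0$-expansion coincide with those in the $R_t$-expansion, so the coefficient of $R(\nu)$ for $|\nu| = |\lambda|+|\mu|$ is exactly $(c_{\lambda\mu}^{\nu})^2$. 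The remaining terms either already have $|\nu| < |\lambda|+|\mu|$ in $R_t$ or arise from the triangular correction; in both cases they land in strictly lower degree and are collected into the integers $d_{\lambda\mu}^{\nu}$.

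The main obstacle is reconciling the statement, which only indexes $R(\nu)$ by partitions (self-conjugate bipartitions), with the fact that the Comes-Wilson formula in $R_t$ a priori produces all bipartitions. To handle this I would use the involution $I$ of Section 11: both $R(\lambda)$ and $R(\mu)$ are $I$-fixed, hence so is their tensor product, and the symmetry $\Gamma_{\lambda\mu}^{(\nu^L,\nu^R)} = \Gamma_{\lambda\mu}^{((\nu^R)^*,(\nu^L)^*)}$ holds (obtained by applying the identity $c_{\alpha\beta}^{\gamma} = c_{\alpha^*\beta^*}^{\gamma^*}$ to each of the six Littlewood-Richardson factors defining $\Gamma$). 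Any non-self-conjugate bipartition therefore appears together with its $I$-image with the same multiplicity, so these contributions pair up into $I$-orbits of size two lying in strictly smaller total degree; they do not affect the top-degree term and can be absorbed, together with the self-conjugate contributions of smaller degree, into the single $d_{\lambda\mu}^{\nu}$ integer.

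The technical core is the verification that the Comes-Wilson coefficient satisfies $\Gamma_{\lambda\mu}^{\nu} = (c_{\lambda\mu}^{\nu})^2$ at top degree for self-conjugate $\nu$; this is essentially the content of the preceding lemma. Once this is in hand, the remainder of the argument is a bookkeeping exercise using the triangularity of $lift_0$ and the $I$-symmetry just noted.
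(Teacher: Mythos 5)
Your core argument matches the paper's: compute $\lambda\mu$ in $R_t$ via the $\Gamma$-coefficients, pin the top-degree ones to $(c_{\lambda\mu}^{\nu})^2$ using the preceding lemma, and observe that the degree-triangularity of $lift_0$ --- applied first to the inputs $\lambda,\mu$ and then inverted on the product --- leaves the top-degree coefficients untouched. That is exactly the route the paper takes, and it is sound.

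The extra step you introduce to dispose of non-self-conjugate bipartitions is where the gap is. The $I$-symmetry $\Gamma_{\lambda\mu}^{(\nu^L,\nu^R)} = \Gamma_{\lambda\mu}^{((\nu^R)^*,(\nu^L)^*)}$ you verify is correct, but your conclusion that the size-two $I$-orbits sit in strictly smaller degree is false. They occur already at top degree: taking $\kappa=\gamma=0$ in the $\Gamma$-formula gives the contribution $c_{\lambda\mu}^{\nu^L}\, c_{\lambda^*\mu^*}^{\nu^R}$, which is nonzero for many $\nu^R\neq(\nu^L)^*$. Concretely, for $\lambda=\mu=(1)$ the product $\lambda\mu$ in $R_t$ contains $((2),(2))$ and $((1,1),(1,1))$ at top degree with multiplicity $1$, and neither has the form $(\nu,\nu^*)$. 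So these terms cannot be absorbed into the lower-degree part indexed by $d_{\lambda\mu}^{\nu}$ as you propose. It is fair to note that the paper's own proof is silent on this point as well; the lemma is implicitly a statement about the self-conjugate (maximal atypical) part of the decomposition, which is all that the subsequent proposition uses. You were right to flag the mismatch between partitions and bipartitions, but the $I$-based resolution does not close it.
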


\begin{proof} To calculate the tensor product in $R_t$ we have to compute $\lift(\lambda) \otimes \lift(\mu)$ in $R_t$. Now $\lift(\lambda)  = \lambda + \sum_i \lambda^i$ with partitions $\lambda^i$ of degree strictly smaller then the degree of $\lambda$. Likewise for $\mu$. Hence the partitions $\nu$ of maximal degree cannot occur in any other tensor product from the partitions obtained from $\lift(\lambda)$ respectively $\lift(\mu)$ other then $\lambda \otimes \mu$. To pass from $R_t$ to $R_0$ we have to take $\lift^{-1}$ of the tensor product. Since the lift is strictly degree decreasing, none of the partitions $\nu$ can occur in in the lift of another partition.
\end{proof}

Note that in general a classical solution $\nu$ will not be $(n|n)$-cross. Hence in $\calR_n$ the sum above only incorporates $\nu$ which are $(n|n)$-cross. However the mixed tensor $R(\lambda + \mu)$ occurs always in the decomposition $R(\lambda) \otimes R(\mu)$ in $\calR_n$ due to following lemma.

\begin{lem} If $l(\lambda) \leq n$, then $\lambda$ is $(n|n)$-cross. 
\end{lem}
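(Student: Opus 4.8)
The plan is to verify the defining inequality for $(n|n)$-crossness directly, with the single choice of index $i = n+1$. Recall that the bipartition $(\lambda,\lambda^*)$ is $(n|n)$-cross precisely when there is an index $1 \leq i \leq n+1$ with $\lambda_i + \lambda^*_{n+2-i} < n+1$, where as always a partition is continued by an infinite string of zeros.

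First I would take $i = n+1$. Then the quantity to control is $\lambda_{n+1} + \lambda^*_{1}$. Since $l(\lambda) \leq n$, the partition $\lambda$ has no part in position $n+1$, so $\lambda_{n+1} = 0$. On the other hand $\lambda^*_1$, the first (largest) part of the conjugate partition, is by definition the number of nonzero parts of $\lambda$, i.e.\ $\lambda^*_1 = l(\lambda)$. Hence $\lambda_{n+1} + \lambda^*_1 = 0 + l(\lambda) \leq n < n+1$, so the defining condition of $(n|n)$-crossness holds at $i = n+1$, and we are done.

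The only things to be careful about are bookkeeping: reading off from the definition that the choice $i = m+1 = n+1$ pairs $\lambda^L_{n+1} = \lambda_{n+1}$ with $\lambda^R_{m+2-i} = \lambda^R_1 = \lambda^*_1$, and recalling the standard identity $\lambda^*_1 = l(\lambda)$ together with the zero-padding convention for partitions. There is no real obstacle here; the statement is an immediate unwinding of the definitions. Its role is exactly to guarantee that the maximally atypical mixed tensors $R(\nu)$ attached to partitions $\nu$ of length at most $n$ are nonzero in $\calR_n$ — in particular the leading term $R(\lambda+\mu)$ of the tensor product $R(\lambda)\otimes R(\mu)$ survives.
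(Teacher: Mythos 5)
Your proof is correct and is essentially identical to the paper's: both choose $i=n+1$, observe $\lambda_{n+1}=0$ because $l(\lambda)\leq n$, and use $\lambda^*_1=l(\lambda)\leq n$ to verify the defining inequality for $(n|n)$-crossness. The only cosmetic difference is that you spell out the zero-padding and the identity $\lambda^*_1=l(\lambda)$ explicitly, while the paper leaves them implicit.
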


\begin{proof} A bipartition $\lambda$ is $(n|n)$-cross if and only if at least one of inequalities $\lambda_i + \lambda_{n+2-i}^* \leq n$ for $i=1,\ldots, n+1$ is satisfied. If $l(\lambda) \leq n$, then $\lambda_1^* \leq n$, hence $\lambda_{n+1} + \lambda_1^* \leq n$.
\end{proof}

\begin{lem} \label{lem: degree} Let $\nu$ be a classical solution of length $\leq n$ in $R(\lambda) \otimes R(\mu)$. Let $[\nu']$ be a constituent in $R(\lambda) \otimes R(\mu)$. Then $deg[\nu'] \leq deg \ A_{\nu}$ with equality if and only if $[\nu'] = A_{\nu'}$ with $\nu'$ a classical solution of length $\leq n$.
\end{lem}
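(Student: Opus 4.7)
The plan is to chain three inequalities: (a) the degree of any constituent of an indecomposable summand $R(\nu')$ is bounded by the degree of its unique highest-weight constituent $A_{\nu'}$; (b) $\deg A_{\nu'}\le|\nu'|$ by Lemma \ref{lem: deg-estimate}; (c) $|\nu'|\le|\lambda|+|\mu|$ by the preceding lemma on $\Gamma_{\lambda\mu}^{\nu'}$. Since by assumption $\nu$ is a classical solution of length $\le n$ we have $\deg A_\nu=|\nu|=|\lambda|+|\mu|$, which matches the global upper bound, and then equality in the chain forces $\nu'$ to be a classical solution of length $\le n$ with $[\nu']=A_{\nu'}$.

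In detail, I would first recall from the previous lemma that
\[ R(\lambda)\otimes R(\mu) \;=\; \bigoplus_{|\nu''|=|\lambda|+|\mu|} (c_{\lambda\mu}^{\nu''})^2\,R(\nu'') \;\oplus\; \bigoplus_{|\nu''|<|\lambda|+|\mu|} d_{\lambda\mu}^{\nu''}\,R(\nu''), \]
where all $\nu''$ appearing are $(n|n)$-cross. Any composition factor $[\nu']$ of the tensor product is thus a composition factor of some $R(\nu'')$ in this list, and by the definition of degree together with the statement preceding Lemma \ref{lem: deg-estimate} the factor $A_{\nu''}$ is the unique constituent of $R(\nu'')$ of maximal degree, so $\deg[\nu']\le\deg A_{\nu''}$.

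Combining with Lemma \ref{lem: deg-estimate} one obtains
\[ \deg[\nu']\;\le\;\deg A_{\nu''}\;\le\;|\nu''|\;\le\;|\lambda|+|\mu|. \]
Since $\nu$ is a classical solution of length $\le n$, Lemma \ref{lem: deg-estimate} gives $\deg A_\nu=|\nu|=|\lambda|+|\mu|$, which is the right-hand side; hence $\deg[\nu']\le\deg A_\nu$. For equality, all three inequalities must be sharp: the third forces $|\nu''|=|\lambda|+|\mu|$, i.e.\ $\nu''$ is a classical solution; the second forces $l(\nu'')\le n$ by Lemma \ref{lem: deg-estimate}; and the first forces $[\nu']=A_{\nu''}$. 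Setting $\nu'=\nu''$ gives the claimed characterisation.

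The only delicate point, and the one I would check carefully, is the use of \emph{degree} rather than the Bruhat order: the bound $\deg[\nu']\le\deg A_{\nu''}$ is invoked via the sentence in the text that identifies $A_{\nu''}$ as the constituent of largest degree in $R(\nu'')$. All other steps are bookkeeping with the formulas for $\Gamma_{\lambda\mu}^\nu$ and with Lemma \ref{lem: deg-estimate}, so no further combinatorics is required. In particular, no hypothesis that $\lambda$ or $\mu$ have length $\le n$ is needed; what matters is only that \emph{some} classical solution of length $\le n$ exists, which supplies the maximal achievable degree $|\lambda|+|\mu|$.
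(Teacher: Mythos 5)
Your proof is correct and follows exactly the route the paper intends: chain the degree bound from Lemma~\ref{lem: deg-estimate} ($\deg A_{\nu''}\le|\nu''|$, sharp iff $l(\nu'')\le n$), the degree bound $|\nu''|\le|\lambda|+|\mu|$ from the $\Gamma_{\lambda\mu}^{\nu}$ lemma, and the fact that $A_{\nu''}$ is the unique constituent of maximal degree in $R(\nu'')$, then trace back equality. The paper compresses this into a one-line reference to the degree estimates of section~\ref{existence}; you have simply made that argument explicit, including the correct observation that only the existence of \emph{some} classical solution of length $\le n$ is used, not any hypothesis on $\lambda$ or $\mu$.
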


\begin{proof} This follows from the degree estimates in section \ref{existence}.\end{proof} 

\begin{prop} \label{comp} Assume $\nu$ is a classical solution with $l(\nu) \leq n$. Then $[\lambda] \otimes [\mu]$ contains the composition factor $[\nu]$ with multiplicity $(c_{\lambda \mu}^{\nu})^2$.
\end{prop}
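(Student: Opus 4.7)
The plan is to extract the multiplicity of $[\nu]$ in $[\lambda]\otimes[\mu]$ by comparing with $R(\lambda)\otimes R(\mu)$ in the Grothendieck group and killing all the correction terms via a degree bound. First I apply the preceding decomposition lemma in $K_0(\calR_n)$:
\[ [R(\lambda)\otimes R(\mu)] \;=\; \sum_{\sigma\ \text{classical}} (c_{\lambda\mu}^{\sigma})^2\,[R(\sigma)] \;+\; \sum_{|\tau|<|\lambda|+|\mu|} d_{\lambda\mu}^{\tau}\,[R(\tau)]. \]
For a classical solution $\sigma$ of length $\leq n$, Theorem \ref{thm:existence}(2) identifies $A_\sigma=[\sigma]$ as a composition factor of multiplicity one in $R(\sigma)$, and Lemma \ref{lem: deg-estimate} says it is the unique composition factor of maximal degree $|\sigma|=|\lambda|+|\mu|$. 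For classical $\sigma$ of length $>n$ the same lemma gives $\deg A_\sigma<|\sigma|$, so any composition factor of $R(\sigma)$ has degree strictly less than $|\lambda|+|\mu|$, and likewise for the correction summands $R(\tau)$. Reading off the coefficient of $[\nu]$ thus gives multiplicity exactly $(c_{\lambda\mu}^{\nu})^2$ in $[R(\lambda)\otimes R(\mu)]$.

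Next I transfer this from $R(\lambda)\otimes R(\mu)$ to $[\lambda]\otimes[\mu]$. Theorem \ref{thm:existence}(2) gives $[R(\lambda)]=[\lambda]+\sum_i[\alpha_i]$, and since the Bruhat order on the max atypical block is compatible with degree (moving a $\vee$ to the left in the weight diagram strictly decreases a partition entry), one has $|\alpha_i|<|\lambda|$; analogously $[R(\mu)]=[\mu]+\sum_j[\beta_j]$ with $|\beta_j|<|\mu|$. Expanding
\[ [R(\lambda)][R(\mu)] \;=\; [\lambda][\mu]+[\lambda]\sum_j[\beta_j]+\sum_i[\alpha_i][\mu]+\sum_{i,j}[\alpha_i][\beta_j], \]
I need only show that no cross term on the right contributes a composition factor $[\nu]$.

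This reduces to a general degree bound: for any max atypical irreducibles $[\gamma],[\delta]$ in $\calR_n$ (normalized as positive partitions of length $\leq n$), every composition factor $[\rho]$ of $[\gamma]\otimes[\delta]$ satisfies $|\rho|\leq|\gamma|+|\delta|$. Every weight of $[\gamma]\otimes[\delta]$ is $\leq\gamma+\delta$ in the standard dominance order of $\mathfrak{gl}(n|n)$, and the linear functional $\chi(\lambda_1,\ldots,\lambda_n\mid\lambda_{n+1},\ldots,\lambda_{2n})=\sum_{i=1}^n\lambda_i$ vanishes on the even positive roots $\epsilon_i-\epsilon_j$, $\delta_i-\delta_j$ and takes value $+1$ on the odd positive roots $\epsilon_i-\delta_j$. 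Hence $\chi$ is non-negative on $\Delta^+$ and monotone under dominance; for positive partitions of length $\leq n$ we have $\chi=|\cdot|$, and the bound follows. Applied to the three cross terms, each contributes only composition factors of degree strictly less than $|\lambda|+|\mu|=|\nu|$, so the $[\nu]$-multiplicity in $[\lambda][\mu]$ equals that in $[R(\lambda)][R(\mu)]$, namely $(c_{\lambda\mu}^{\nu})^2$.

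The main obstacle I anticipate is the degree bookkeeping underlying the dominance argument: one must verify that composition factors of max atypical tensor products remain max atypical (which follows from block-invariance of atypicality), that they can be normalized consistently so that $\chi$ agrees with $|\cdot|$, and that the cross-term composition factors really have strictly smaller degree, not merely $\leq$. Once these points are confirmed, the rest is straightforward Grothendieck-group algebra.
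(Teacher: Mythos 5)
Your proposal is correct and reaches the same conclusion as the paper, but the key step — showing that the cross terms $[\lambda]\otimes[\beta_j]$, $[\alpha_i]\otimes[\mu]$, $[\alpha_i]\otimes[\beta_j]$ contribute no copy of $[\nu]$ — is argued by a genuinely different method. The paper re-embeds each positive composition factor $[\lambda^i]$, $[\mu^j]$ into a mixed tensor $R(\lambda^i)$, $R(\mu^j)$, decomposes the cross-term tensor products $R(\lambda)\otimes R(\mu^j)$ etc.\ into summands $R(\theta)$, and bounds $\deg A_\theta$ using the estimates of Section~\ref{existence}; since this requires the factors to be positive, the paper then has to run a separate Berezin-twist argument to treat the non-positive $[\lambda^i]$, $[\mu^j]$. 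You instead bound the degree of any composition factor of $[\gamma]\otimes[\delta]$ directly through the dominance order and the linear functional $\chi(\lambda)=\sum_{i=1}^n\lambda_i$, which is non-negative on all positive roots of $\mathfrak{gl}(n|n)$. This sidesteps the mixed-tensor re-embedding and handles non-positive weights automatically (the functional $\chi$ is defined on all weights, not just on positive ones), so you never need the Berezin-twist calibration. The cost is that you have to notice that $\chi$ is monotone under dominance and that the Bruhat ordering on a maximally atypical block is compatible with $\chi$ (a one-step swap moving a $\vee$ to position $p-1$ decreases $\sum p_i$, hence $\chi$, by one); both are easy but should be stated explicitly. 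One small side remark: the assertion that composition factors of $[\gamma]\otimes[\delta]$ remain maximally atypical is not needed for your argument (and is not true in general — a tensor product of two atypical modules can have typical constituents); what you actually use is only that the relevant $[\nu]$ is maximally atypical and that the $\chi$-bound rules it out of the cross terms, which is unaffected by the atypicality of the other constituents.
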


\begin{proof} We know that $[\lambda]$ and $[\mu]$ are the constituents of highest weight in $R(\lambda)$ and $R(\mu)$. Let \begin{align*} [R(\lambda)]  = \sum_i [\lambda^i] + [\lambda], \ \  [R(\mu)]  = \sum_j [\mu^j] + [\mu] \end{align*} with $[\lambda] > [\lambda^i]$ and $[\mu] > [\mu^j]$ for all $i,j$ in the Bruhat order. Assume first that all $[\lambda^i]$ and $[\mu^j]$ are positive. Then they define the mixed tensors $R(\lambda^i)$ and $R(\mu^j)$ and $|\lambda| > |\lambda^j|$ and $|\mu| > |\mu^j|$ for all $i,j$. Accordingly none of the mixed tensors $R(\nu)$ with $|\nu| = |\lambda| + |\mu|$ can appear in a tensor product \[ R(\lambda) \otimes R(\mu^j), \ R(\lambda^i) \otimes R(\mu), \ R(\lambda^i) \otimes R(\mu^j).\] Now \begin{align*} [R(\lambda] \otimes [R(\mu)] & = [\lambda] \otimes [\mu] \\ & +  \sum_j [\lambda] \otimes [\mu^j] + \sum_i [\lambda^i] \otimes [\mu] + \sum_{i,j} [\lambda^i] \otimes [\mu^j] \end{align*} and similarly for $R(\lambda) \otimes R(\mu^j), \ R(\lambda^i) \otimes R(\mu)$ and $R(\lambda^i) \otimes R(\mu^j)$. We claim: Since the $R(\nu)$ do not appear in any of these tensor products, their constituent of highest weights does not appear as a composition factor in any of these tensor products. If this claim is true, none of the tensor products $[\lambda] \otimes [\mu^j], \ [\lambda^i] \otimes [\mu]$ and $[\lambda^i] \otimes [\mu^j]$ can contain $[\nu]$ as a composition factor, hence $[\nu]$ must be a composition factor of $[\lambda] \otimes [\mu]$. For the proof of the claim we distinguish two cases. Since $l(\nu) \leq n$, $[\nu]$ is positive. Consider first a summand $R(\theta)$ with $l(\theta) \leq n$. Then $[\theta]$ is positive and $\sum \theta_i < \sum \nu_i$. Since $[\theta]$ is the constituent of highest weight of $R(\theta)$, all constituents are smaller then $[\nu]$. If $R(\theta)$ is a summand with $l(\theta) > n$, $deg \ A_{\theta} < deg[\nu]$. This proofs the claim. Finally we remove the assumption that all $[\lambda^i]$ and $[\mu^j]$ are positive. If $[\lambda^i]$ is not positive we twist with $Ber^{- \lambda^i_n}$. Similarly for $[\mu^j]$. Call these modules $[\tilde{\lambda}]$ respectively $[\tilde{\mu}]$. Then $deg(\tilde{\lambda}) = deg(\lambda^i) + n (-1) \lambda^i_n$ and $deg(\tilde{\mu}) = deg(\mu^j) + n (-1) \mu^j_n$. Embed the modules $[\tilde{\lambda}]$ respectively $[\tilde{\mu}]$ as constituents of highest weight in $R(\tilde{\lambda})$ respectively $R(\tilde{\mu})$ as in \ref{thm:existence}. In $R(\tilde{\lambda}) \otimes R(\tilde{\mu}) = \bigoplus R(\tilde{\nu})$  all constituents $[\nu]$ have degree \begin{align*} deg [\nu] & \leq deg(\tilde{\lambda}) + deg(\tilde{\mu}) \\ & = deg(\lambda^i) + n (-1) \lambda^i_n + deg(\mu^j) + n (-1) \mu^j_n \\ & < deg(\lambda) + n (-1) \lambda^i_n + deg(\mu) + n (-1) \mu^j_n \\ &= deg(\nu) +  n (-1) \lambda^i_n +  n (-1) \mu^j_n. \end{align*} Since $[\lambda^i] \otimes [\mu^j] = B^{- \lambda^i_n} \otimes B^{- \mu^j_n} ( [\tilde{\lambda}] \otimes [\tilde{\mu}])$, every constituent in $[\lambda^i] \otimes [\mu^j]$ has degree $\leq deg(\lambda^i) + deg(\mu^j) < deg(\lambda) + deg(\mu)$.
\end{proof}

\begin{example} $S^i \otimes S^j$ in $\calR_2$ with $i>j$: In this case one can show \cite{Heidersdorf-Weissauer-gl-2-2} \[ \A_{S^i} \otimes \A_{S^j} = \A_{S^{i+j}} + R(i+j-1,1) \oplus R(i+j-2,2) \oplus \ldots \oplus R(i,j) \oplus \tilde{R}\] where $\tilde{R}$ represents the summands with degree $ <  i+j$. All the classical solutions have length $\leq 2$, hence their highest weights occur in the $S^i \otimes S^j$ tensor product. The corresponding irreducible representations are \[S^{i+j}, BS^{i+j-2}, \ldots, B^j S^i.\] In fact \cite{Heidersdorf-Weissauer-gl-2-2} these constituents give half of the constituents in the middle Loewy layer of $M = S^i \otimes S^j$, the other half given by their twists with $B^{-1}$: \[ B^{-1}(S^{i+j} + BS^{i+j-2} + \ldots + B^j S^i).\]
\end{example}


\subsection{Projective covers} \label{proj-covers} Projective covers in a tensor product of irreducible representations or in iterated tensor products are particularly interesting. The main reason is that an indecomposable module in $\calR_n$ which is a direct summand in an iterated tensor product of irreducible representations is in the kernel of $DS$ if and only if its is projective.   \cite{Heidersdorf-Weissauer-Tannaka}. We show that we can exclude projective summands in $L(\lambda) \otimes L(\mu)$ if the weights are \textit{small}.

\begin{lem} If $P$ is a projective cover occurring as a direct summand in the decomposition $[\lambda^{\dagger}] \otimes [\mu^{\dagger}]$ with multiplicity $k$, then $R(\lambda) \otimes R(\mu)$ contains $P$ as a direct summand with multiplicity at least $k$.
\end{lem}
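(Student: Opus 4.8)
The plan is to realise $P^{\oplus k}$ as a quotient of $R(\lambda)\otimes R(\mu)$ and then split it off using projectivity.

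First I would recall from Theorem \ref{thm:graded-multiplicities}(i) that $R(\lambda)=G^{t}_{\Delta\Gamma}L(\zeta)$ has irreducible head and irreducible socle differing only by a grading shift; forgetting the grading these coincide, and by the definition of $\lambda^{\dagger}$ as the highest weight of the socle this common irreducible quotient is $[\lambda^{\dagger}]=L(\lambda^{\dagger})$. Hence there is a surjection $\pi_{\lambda}\colon R(\lambda)\twoheadrightarrow[\lambda^{\dagger}]$, and likewise $\pi_{\mu}\colon R(\mu)\twoheadrightarrow[\mu^{\dagger}]$. Since the tensor product on $\calR_n$ is exact (we work over a field), the map $\pi_{\lambda}\otimes\pi_{\mu}\colon R(\lambda)\otimes R(\mu)\twoheadrightarrow[\lambda^{\dagger}]\otimes[\mu^{\dagger}]$ is again surjective.

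Next, by hypothesis $P$ is an indecomposable projective occurring in $[\lambda^{\dagger}]\otimes[\mu^{\dagger}]$ with multiplicity $k$, so I would write $[\lambda^{\dagger}]\otimes[\mu^{\dagger}]=P^{\oplus k}\oplus Q$ for some complement $Q$. Composing $\pi_{\lambda}\otimes\pi_{\mu}$ with the projection onto the summand $P^{\oplus k}$ produces a surjection $R(\lambda)\otimes R(\mu)\twoheadrightarrow P^{\oplus k}$. A finite direct sum of projectives is projective, so this surjection splits, whence $P^{\oplus k}$ is a direct summand of $R(\lambda)\otimes R(\mu)$. Since $P$ is indecomposable, the Krull--Schmidt theorem in the finite-length category $\calR_n$ then yields that $P$ appears in $R(\lambda)\otimes R(\mu)$ with multiplicity at least $k$, as claimed.

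I do not anticipate a real obstacle here: apart from formal facts (exactness of $\otimes$, projectivity of a finite sum of projectives, Krull--Schmidt), the only structural input is the identification of the ungraded head of $R(\lambda)$ with $L(\lambda^{\dagger})$, which is immediate from Theorem \ref{thm:graded-multiplicities}(i) together with the definition of $\lambda^{\dagger}$. I would also remark why only an inequality is obtained: $R(\lambda)\otimes R(\mu)$ additionally receives the tensor products of the remaining composition factors of $R(\lambda)$ and $R(\mu)$, and these may contribute further copies of $P$, so equality cannot be expected in general.
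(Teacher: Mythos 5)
Your proof is correct and follows essentially the same route as the paper: use the surjections $R(\lambda)\twoheadrightarrow[\lambda^{\dagger}]$ and $R(\mu)\twoheadrightarrow[\mu^{\dagger}]$ onto the irreducible heads, tensor them to get a surjection onto $[\lambda^{\dagger}]\otimes[\mu^{\dagger}]$, project onto the projective part, and split. You spell out a couple of the implicit steps (exactness of $\otimes$, Krull--Schmidt) a bit more fully, but there is no substantive difference in method.
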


\begin{proof} We embed $[\lambda^{\dagger}]$ and $[\mu^{\dagger}]$ as the socles of the mixed tensors $R(\lambda)$ and $R(\mu)$. Projection of these modules on the top gives \begin{align*} \xymatrix{ 0 \ar[r] & ker(\varphi) \ar[r] & R(\lambda) \ar[r]^{\varphi} & [\lambda^{\dagger}] \ar[r] & 0 \\  0 \ar[r] & ker(\psi) \ar[r] & R(\mu) \ar[r]^{\psi} & [\mu^{\dagger}] \ar[r] & 0. } \end{align*} This gives the surjection $R(\lambda) \otimes R(\mu) \twoheadrightarrow [\lambda^{\dagger}] \otimes [\mu^{\dagger}]$. If $[\lambda^{\dagger}] \otimes [\mu^{\dagger}] = \bigoplus M_i \oplus \bigoplus P_i$ we get a surjection $R(\lambda) \otimes R(\mu) \twoheadrightarrow \bigoplus P_i$. Since the $P_i$ are projective this surjection has to split and hence the $\bigoplus P_i$ are direct summands in $R(\lambda) \otimes R(\mu)$. 
\end{proof}

This result implies that some tensor products $[\lambda^{\dagger}] \otimes [\mu^{\dagger}]$ do not have maximally atypical projective summands. Indeed if $|\lambda|+ |\mu| < n(n+1)/2$ or $l(\lambda) + l(\mu) <n$, no projective cover can occur in $R(\lambda) \otimes R(\mu)$ since the smallest degree of a partition defining a projective cover is $n(n+1)/2$ and the smallest length is $<n$ (the minimal projective cover is $R(n,n-1,\ldots,1) = P[n-1,n-2,\ldots,1,0]$). More generally the Loewy length of any subquotient of a module $M$ is smaller or equal to the one of $M$, hence we have \[ ll(R(\lambda) \otimes R(\mu) ) \geq ll(L(\lambda^{\dagger} \otimes L(\mu^{\dagger})).\] 

\begin{cor} If $l([\lambda]) + l([\mu]) < k$, the Loewy length of any maximal atypical direct summand is $<2k+1$. In particular if $l([\lambda]) + l([\mu]) < n$, $[\lambda] \otimes [\mu]$ does not have a maximal atypical projective summand.
\end{cor}

\begin{example} $S^i \otimes S^j$ does not contain any atypical projective summands. Indeed for $n=2$ this follows from \cite{Heidersdorf-Weissauer-gl-2-2}. For $n \geq 3$ none of the mixed tensors in the decomposition of $\A_{S^{i+1}} \otimes \A_{S^{j+1}}$ is projective. In fact one can show that $S^i \otimes S^j$ does not have maximal atypical summands of superdimension 0 \cite{Heidersdorf-Weissauer-Tannaka}.
\end{example}

\begin{lem} Suppose $\nu$ is a classical solution $c_{\lambda \mu}^{\nu} \neq 0$ and $l(\nu) \leq n$. If $[\nu]$ is a composition factor in an indecomposable projective module $P = R(\theta)$ occuring in $[\lambda] \otimes [\mu]$, then $[\nu] = A_{\theta}$.
\end{lem}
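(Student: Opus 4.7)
The plan is to chase degrees: on one hand $[\nu]$ will achieve the maximum possible degree among all constituents of $R(\lambda)\otimes R(\mu)$, and on the other hand the maximum-degree constituent of $R(\theta)$ is $A_\theta$.

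First I would observe that since $c_{\lambda\mu}^{\nu}\neq 0$ one has $|\nu|=|\lambda|+|\mu|$, and since $l(\nu)\leq n$ Lemma \ref{lem: deg-estimate} gives $\deg[\nu]=|\nu|=|\lambda|+|\mu|$. By Lemma \ref{lem: degree} applied to $R(\lambda)\otimes R(\mu)$, this is the maximum value of $\deg$ on any composition factor of the tensor product. Since $[\nu]$ is a composition factor of $R(\theta)$ and $A_\theta$ is the constituent of $R(\theta)$ of largest degree (the constituent of highest weight in the Bruhat order, which in the maximally atypical setting also has the largest degree), I would conclude $\deg[\nu]\leq\deg A_\theta$. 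But $A_\theta$ is itself a composition factor of $R(\lambda)\otimes R(\mu)$, so Lemma \ref{lem: degree} gives $\deg A_\theta\leq|\lambda|+|\mu|=\deg[\nu]$, forcing $\deg A_\theta=\deg[\nu]$.

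To upgrade this degree equality to $[\nu]=A_\theta$ I would use that $[\nu]\leq A_\theta$ in the Bruhat order (since $A_\theta$ is by definition the constituent of highest weight in $R(\theta)$) together with the strict monotonicity of degree along the Bruhat order: a Bruhat cover swaps a pair $\vee\wedge$ into $\wedge\vee$, moving one $\vee$ strictly to the right and hence strictly increasing the corresponding coordinate, so it strictly increases $\deg=\sum_k\lambda_k$. Thus equal degrees force equal weights, giving $[\nu]=A_\theta$.

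The main obstacle is the strict monotonicity of degree along Bruhat covers; this is a short combinatorial verification using the bijection between the positions of the $\vee$'s in the weight diagram and the coordinates of the weight, via the formula $I_{\wedge}(\lambda)=\{\lambda_1,\lambda_2-1,\ldots\}$ encoding the positions of the $\vee$'s. Everything else is a direct application of Lemmas \ref{lem: deg-estimate} and \ref{lem: degree}.
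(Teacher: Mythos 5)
Your proof is correct and, at heart, is the same degree-chasing argument as the paper's, but with a slightly different routing that arguably tidies up one step. The paper gets the bound on $\deg A_\theta$ indirectly: it observes that $(A_\theta)_i\geq\nu_i$ coordinate-wise, deduces that $A_\theta$ is positive, then asserts $l(\theta)\leq n$, then uses Lemma~\ref{lem: deg-estimate} to write $|\theta|=\sum_i(A_\theta)_i$, and finally bounds $|\theta|\leq|\nu|$ because $R(\theta)$ is a summand of $R(\lambda)\otimes R(\mu)$. Your route bounds $\deg A_\theta$ directly via Lemma~\ref{lem: degree} (since $A_\theta$ is itself a constituent of $R(\lambda)\otimes R(\mu)$), which lets you skip the paper's step ``$A_\theta$ positive $\Rightarrow l(\theta)\leq n$'' --- a step that, as stated, isn't literally covered by Lemma~\ref{lem:positivity}, which only gives the implication in the other direction for $A_\lambda$. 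Your final upgrade from $\deg A_\theta=\deg[\nu]$ to $[\nu]=A_\theta$ uses strict monotonicity of degree along the Bruhat order, whereas the paper uses coordinate-wise dominance $(A_\theta)_i\geq\nu_i$ together with equal sums; these are equivalent reformulations of the same combinatorial fact about how moving a $\vee$ rightward affects the coordinates. Two minor notational points: the equality $\deg[\nu]=|\nu|$ for a positive weight of length $\leq n$ is immediate from the definition of $\deg$ and doesn't really need Lemma~\ref{lem: deg-estimate} (you do need that lemma to get $\deg A_\nu=|\nu|$ when invoking Lemma~\ref{lem: degree}), and the set encoding the $\vee$-positions of a highest weight is $I_\times(\lambda)\cap I_\circ(\lambda)$ in the paper's notation, not $I_\wedge$ (which is reserved for bipartitions).
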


\begin{proof} By definition $(A_{\theta})_i \geq \nu_i$ for all $i \in \{1,\ldots,n\}$. In particular $[\theta]$ is positive. Hence $l(\theta) \leq n$. Hence \[ |\theta| = \sum_{i=1}^{n} (A_\theta)_i.\] Since $P=R(\theta)$ is a summand in $R(\lambda) \otimes R(\mu)$, $|\theta| \leq \nu|$. But if $[\nu] \neq A_{\theta}$, then $\sum_{i=1}^{n} (A_{\theta})_i > \sum_{i=1}^{n} \nu_i$, a contradiction.
\end{proof}

\begin{cor} If $\nu$ satisfies $c_{\lambda \mu}^{\nu} \neq 0$   and $l(\nu) \leq n$ and $d(\nu)<n$, $[\nu]$ is not a composition factor of a projective module $P$ in $[\lambda] \otimes [\mu]$.
\end{cor}

\begin{proof} By the last lemma we have $P = R(\nu)$. But $R(\nu)$ is projective if and only if $d(\nu) < n$.
\end{proof}

\begin{remark} For the significance of some of these estimates see \cite{Heidersdorf-Weissauer-gl-2-2} \cite{Heidersdorf-Weissauer-Tannaka}. Indeed if we know that a projective cover cannot occur in a tensor product, we can often exclude the existence of other maximal atypical direct summands of superdimension 0.
\end{remark}



\pagestyle{plain}
\phantomsection
\bibliographystyle{alpha}
\bibliography{main-mixed-tensors}{}

\end{document}